\documentclass[12pt,reqno]{amsart}
\usepackage{amsmath, amsthm, amssymb}
\usepackage{mathtools}

\usepackage[margin=1in]{geometry}
\usepackage{parskip}
\usepackage[shortlabels]{enumitem}

\usepackage{color}

\usepackage[bookmarks,colorlinks,breaklinks]{hyperref}
\hypersetup{linkcolor=red,citecolor=blue,
  filecolor=dullmagenta,urlcolor=darkblue}

\usepackage[numbers]{natbib}
\usepackage{cleveref}

\usepackage{graphicx}
\usepackage{tikz}
\usetikzlibrary{matrix, arrows}
\usetikzlibrary{cd}

\makeatletter
\def\thm@space@setup{%
  \thm@preskip=0.5em\thm@postskip=\thm@preskip%
}
\makeatother

\newtheoremstyle{named}{}{}{\\itshape}{}{\bfseries}{.}{.5em}{\thmnote{#3's }#1}
\theoremstyle{named}

\theoremstyle{plain}
\newtheorem{thm}{Theorem}[section]
\newtheorem{prop}[thm]{Proposition}
\newtheorem{lem}[thm]{Lemma}
\newtheorem{cor}[thm]{Corollary}
\theoremstyle{definition}
\newtheorem{defn}[thm]{Definition}

\newtheorem{exmpl}[thm]{Example}

\theoremstyle{remark}
\newtheorem{rmk}[thm]{Remark}
\crefformat{footnote}{#2\footnotemark[#1]#3}

\newcommand{\gl}[2]{\mathrm{GL}_{#1}({#2})}
\renewcommand{\sl}[2]{\mathrm{SL}_{#1}({#2})}
\newcommand{\so}[2]{\mathrm{SO}_{#1}({#2})}
\renewcommand{\sp}[2]{\mathrm{Sp}_{#1}({#2})}
\newcommand{\spin}[2]{\mathrm{Spin}_{#1}({#2})}

\newcommand{\brho}{\bar\rho}

\newcommand{\bkp}{\bar\kappa}

\newcommand{\Sel}[3]{H_{#1}^1({#2},{#3})}

\newcommand{\sel}[3]{h_{#1}^1({#2},{#3})}





\newcommand{\gal}[2]{\mathrm{Gal}{(#1 / #2)}}
\newcommand{\galQ}{\Gamma_{\rats}}

\newcommand{\rats}{\mathbb{Q}}
\newcommand{\reals}{\mathbb{R}}

\newcommand{\cmplx}{\mathbb{C}}
\newcommand{\ints}{\mathbb{Z}}
\newcommand{\Qp}{\rats_p}

\newcommand{\Ql}{\rats_l}
\newcommand{\Zl}{\ints_l}
\newcommand{\bQ}{\overline{\mathbb Q}}

\newcommand{\bQl}{\overline\rats_l}

\newcommand{\bp}{\begin{pmatrix}}
\newcommand{\ep}{\end{pmatrix}}
\newcommand{\mc}{\mathcal}
\newcommand{\mf}{\mathfrak}
\newcommand{\mb}{\mathbb}


\newcommand{\frob}[1]{\mathrm{Fr}_{#1}}


\newcommand{\Zmod}[1]{\mathbb{Z}/#1 \mathbb{Z}}
\newcommand{\Zmodu}[1]{\left(\mathbb{Z}/#1 \mathbb{Z}\right)^\times}

\newcommand{\rsurj}{\twoheadrightarrow}

\newcommand{\op}[1]{\operatorname{#1}}





\makeatother
\begin{document}

\title{Algebraic monodromy groups of $l$-adic representations of $\gal{\overline\rats}{\rats}$}

\begin{abstract}
In this paper we prove that for any connected reductive algebraic group $G$ and a large enough prime $l$, there are continuous homomorphisms
$$\gal{\bQ}{\rats} \to G(\bQl)$$ with Zariski-dense image, in particular we produce the first such examples for $\op{SL}_n, \op{Sp}_{2n}, \op{Spin}_n, E_6^{sc}$ and $E_7^{sc}$. To do this, we start with a mod-$l$ representation of $\gal{\bQ}{\rats}$ related to the Weyl group of $G$ and use a variation of Stefan Patrikis' generalization of a method of Ravi Ramakrishna to deform it to characteristic zero.
\end{abstract}

\author{Shiang Tang}
\address{Department of Mathematics, University of Utah, Salt Lake City, UT 84112-0090}
\email{tang@math.utah.edu}

\maketitle

\section{Introduction}

For a split connected reductive group $G$ and a prime number $l$, it is natural to study two types of continuous representations of $\Gamma_{\rats}=\gal{\overline\rats}{\rats}$: the mod $l$ representations
$$\brho:\Gamma_{\rats} \to G(\overline{\mb F}_l)$$
and the $l$-adic representations
$$\rho:\Gamma_{\rats} \to G(\bQl)$$ where we use the discrete topology for $G(\overline{\mb F}_l)$ and the $l$-adic topology for $G(\bQl)$.
Mod $l$ representations of $\Gamma_{\rats}$ are closely related to the inverse Galois problem for finite groups of Lie type, which asks for the existence of surjective homomorphisms $\brho: \Gamma_{\rats} \rsurj G(k)$ for $k$ a finite extension of $\mb F_l$. It is still wide open, even for small groups such as $\mathrm{SL}_2$. 
If we replace $\galQ$ by $\Gamma_F$ for some number field $F$, it is not hard to show that every finite group is a Galois group over \emph{some} number field, but if we insist on $\galQ$ then the problem becomes very difficult.
On the other hand, we can ask for its analogues in the $l$-adic world:

Question 1: Are there continuous homomorphisms $\rho: \Gamma_{\rats} \to G(\bQl)$ with Zariski-dense image? 

We also ask a refined question which takes geometric Galois representations (in the sense of \cite{fm}) into account:

Question 2: Are there continuous geometric Galois representations $\rho: \Gamma_{\rats} \to G(\bQl)$ with Zariski-dense image?

This paper gives a complete answer to Question 1. We shall call a reductive group $G$ an $l$-adic algebraic monodromy group, or simply an $l$-adic monodromy group for $\galQ$ if the homomorphisms in Question 1 exist, and a geometric $l$-adic monodromy group for $\galQ$ if the homomorphisms in Question 2 exist. 
We prove the following theorem which gives an almost complete answer to Question 1: 

\begin{thm}(Main Theorem)\label{main}
Let $G$ be a connected reductive algebraic group. Then there are continuous homomorphisms 
$$\rho_l: \Gamma_{\rats} \to G(\bQl)$$ with Zariski-dense image for large enough primes $l$.
\end{thm}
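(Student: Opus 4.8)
The plan is to reduce to the semisimple simply connected case and then construct $\rho_l$ by a deformation argument in the style of Ramakrishna--Patrikis. First I would handle the structure theory: for a general connected reductive $G$, write $G$ as an almost-direct product of its derived group $G^{\mathrm{der}}$ (semisimple) and a central torus, and further decompose $G^{\mathrm{der}}$ up to isogeny into simple factors. A torus $\mathbb{G}_m^r$ admits Zariski-dense $l$-adic representations of $\galQ$ easily (e.g.\ via a product of characters with independent infinite image, such as cyclotomic-type characters), so the real content is the semisimple part. Since Zariski-density can be checked after passing to the simply connected cover --- a Zariski-dense image in $G^{sc}(\bQl)$ pushes forward to a Zariski-dense image in any isogenous quotient --- it suffices to treat each simple simply connected factor, and then assemble the pieces by taking the product representation on a suitable finite-index subgroup, checking that the Goursat-type obstruction to density of the product can be killed (this is where having $l$ large and the factors ``independent'' matters).

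For a fixed simple simply connected $G$ over $\bQl$, the strategy is: (1) pick a prime $l$ large relative to $G$ (in particular larger than the Coxeter number, coprime to $|W|$ and to relevant bad primes), and produce a mod-$l$ representation $\brho:\galQ\to G(\overline{\mathbb{F}}_l)$ whose image is controlled by the Weyl group $W$ of $G$ --- concretely, one takes a surjection $\galQ\to W$ (which exists since $W$ is realizable over $\rats$, e.g.\ via a suitably chosen extension, using that $W$ often occurs as a Galois group of a polynomial over $\rats$) composed with an embedding $W\hookrightarrow N(T)(\overline{\mathbb{F}}_l)\subset G(\overline{\mathbb{F}}_l)$ coming from a principal $\sl{2}{}$ or the ``principal'' homomorphism, arranged so that the image is not contained in any proper parabolic and the centralizer is just the center; (2) verify the local conditions needed to run Ramakrishna's method: choose an auxiliary finite set of primes, impose that $\brho$ is unramified outside a controlled set, ramified in a prescribed (e.g.\ Steinberg or principal-unipotent) way at an auxiliary prime, and odd/geometric at $\infty$ and $l$, then check the relevant Galois cohomology groups --- the adjoint Selmer group $\Sel{}{\rats}{\mathrm{ad}\,\brho}$ and its dual --- vanish or can be annihilated by adding Taylor--Wiles--Ramakrishna primes; (3) deduce that the universal deformation ring is smooth of the expected dimension, pick a $\Zl$-point of it, and take $\rho_l$ to be the corresponding characteristic-zero lift; (4) argue that the image of $\rho_l$ is Zariski-dense by showing the Zariski closure $H\subseteq G$ is a reductive (or at least full-rank) subgroup containing a maximal torus (from the Weyl-group image mod $l$ and the ramification we built in) and a regular unipotent, hence by a classification of such subgroups equals $G$.

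The main obstacle I expect is step (2)--(4) together: getting a mod-$l$ residual representation that is simultaneously (a) geometrically realizable, (b) has ``big'' enough image that the Zariski closure of any lift is forced to be all of $G$, and (c) satisfies the cohomological vanishing / liftability hypotheses of Ramakrishna's method --- these pull in opposite directions, since big image tends to make $\mathrm{ad}\,\brho$ have nontrivial invariants and Selmer groups. This is precisely the point where Patrikis' generalization is needed, and the delicate part is checking the local deformation conditions at the auxiliary primes are smooth of the right dimension and that the global Euler characteristic formula balances, which forces the choice of $l$ large; I would also need to ensure the Weyl-group cover of $\rats$ can be chosen with prescribed local behavior (ramification only where allowed), which is a mild but nontrivial use of inverse Galois theory for the solvable-by-$W$ situation or of known realizations of $W$ as a Galois group. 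Finally, for the assembly step across simple factors, the obstacle is a Goursat/independence argument ensuring the product map $\galQ\to\prod_i G_i(\bQl)$ still has Zariski-dense image, which I would arrange by choosing the auxiliary ramified primes for different factors to be disjoint.
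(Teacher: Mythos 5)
Your global architecture matches the paper's (reduce to simple simply connected factors, handle the central torus separately, assemble the product with Goursat's lemma), but the core construction has a genuine gap. You propose to take a surjection $\galQ \to \mc W$ and compose it with ``an embedding $\mc W \hookrightarrow N_G(T)(\overline{\mb F}_l)$.'' No such embedding exists in the key cases: the extension $1 \to T(k) \to N_G(T)(k) \to \mc W \to 1$ does not split for $\op{SL}_{2n}$, $\op{Sp}_{2n}$, $\op{Spin}_n$ or $E_7^{sc}$ (this is the Adams--He result), and circumventing this---by passing to subgroups such as $A_n \subset S_n$, replacing $(\Zmod{2})^n$ by a lift isomorphic to $(\Zmod{4})^n$, or using a nilpotent preimage together with Shafarevich's theorem---is a substantial portion of the argument. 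Moreover the residual image must contain the full torus $T(k)$, not merely a finite lift of $\mc W$: it is the presence of $T(k)$ that forces $\brho(\mf g)$ to decompose into only two or three irreducible summands ($\mf t$ plus the root spaces grouped by length), which is what makes the deformation step feasible. Your invocation of ``a principal $\op{SL}_2$ or the principal homomorphism'' conflates this with a different construction, one that requires either that $\rho^{\vee}$ lift to $G$ (false for $\op{SL}_{2n}$, $\op{Sp}_{2n}$, $E_7^{sc}$) or that $\sl{2}{k}$ be known as a Galois group over $\rats$ (open).

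Two further points. First, you ask that $\brho$ be ``odd/geometric at $\infty$ and $l$.'' The paper deliberately imposes \emph{no} condition at $l$; this contributes an extra $\dim_k \mf g$ to the right-hand side of Wiles' formula and is precisely what compensates for the failure of oddness of these Weyl-group-type residual representations, whose $h^0(\Gamma_{\reals},\brho(\mf g))$ is far larger than what the geometric numerology would tolerate. With a geometric condition at $l$ your step (2) would not balance. Second, even with the correct residual representation, the standard Ramakrishna--Patrikis criterion (a single element $\sigma$ and root $\alpha$ detecting \emph{every} irreducible summand of $\brho(\mf g)$ in both the $\mf l_{\alpha}$ and $\mf g_{-\alpha}$ coordinates) \emph{fails}: no choice works for the summand $\mf t$. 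The paper's fix is a two-stage argument---first annihilate the $\mf t$-Selmer group by adding auxiliary Ramakrishna primes, after which any surviving Selmer class has image not contained in $\mf t$ and the usual argument applies to the remaining summands. You flag that ``this is where Patrikis' generalization is needed,'' but that generalization as it stands does not apply here; identifying this failure and repairing it is the paper's main technical contribution and is absent from your outline.
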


The key cases of our main theorem are contained the following theorem:

\begin{thm}\label{key cases}
For a simple algebraic group $G$, there are infinitely many continuous homomorphisms
$$\rho_l: \galQ \to G(\bQl)$$ with Zariski-dense image for $l$ large enough. We impose the condition $l\equiv 1(4)$ for $G=B_n^{sc}, C_n^{sc}$, and impose $l\equiv 1(3)$ for $G=E_7^{sc}$. 
\end{thm}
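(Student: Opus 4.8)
The strategy is to produce a mod-$l$ representation whose image lands in the normalizer of a maximal torus and surjects onto the Weyl group, and then to lift it to characteristic zero by Ramakrishna's method in the reductive-group setting developed by Patrikis.

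\textbf{Step 1: the residual representation.} Fix a Chevalley $\ints$-form of $G$ with maximal torus $T\subset B$ and Weyl group $W=N_G(T)/T$, and let $\widetilde W\subset G(\ints)$ be Tits' extended Weyl group, a finite subgroup surjecting onto $W$ (an extension of $W$ by a subgroup of $T[2]$). The first task is to construct a continuous $\brho_l\colon\galQ\to G(\overline{\mb F}_l)$ with image $\widetilde W$, or a prescribed extension of it by a $2$-torsion subgroup of $T$, such that: the composite $\galQ\to\widetilde W\rsurj W$ is onto; $\brho_l$ is unramified outside a finite set $S$, tamely ramified away from $l$, and of Fontaine--Laffaille type at $l$ (automatic for $l$ large, as $|\widetilde W|$ is then prime to $l$); $\brho_l$ is \emph{odd}, i.e.\ a complex conjugation maps to an involution of split Cartan type; and the residual image is large enough that for every root $\alpha$ there are infinitely many primes $q$ with $\brho_l(\frob q)$ a regular-type element of $N_G(T)$ on which $\operatorname{Ad}$ takes the value $\bkp(\frob q)$ of the mod-$l$ cyclotomic character $\bkp$ as an eigenvalue along the $\alpha$-root space --- the input for a Ramakrishna auxiliary condition supported on $U_\alpha$. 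Realizing $W$ over $\rats$ with prescribed ramification is classical (in the classical types $W$ is a product of symmetric and hyperoctahedral groups; for the exceptional types one uses explicit known realizations), and the congruences $l\equiv 1\ (4)$ for $B_n^{sc},C_n^{sc}$ and $l\equiv 1\ (3)$ for $E_7^{sc}$ are imposed precisely to force the relevant roots of unity into $\mb F_l$: these are needed to split the pertinent central extension compatibly with $\bkp$, to arrange oddness, and to secure the auxiliary primes.

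\textbf{Step 2: the deformation-theoretic input (the crux).} With $\brho_l$ in hand, the plan is to verify the hypotheses of the reductive Ramakrishna lifting theorem: that $H^0(\galQ,\mf g)$ is the centre of $\mf g$ and $H^0(\galQ,\mf g(1))=0$; that $\brho_l$ has image large enough for the Chebotarev arguments; that the chosen local conditions (Fontaine--Laffaille at $l$, a fixed liftable condition at the bad primes of $S$, the odd condition at $\infty$) are smooth of the expected relative dimension; and --- the heart of the matter --- that the associated dual Selmer group can be annihilated after enlarging $S$ by finitely many auxiliary primes $q$ each carrying a smooth ``Ramakrishna'' condition supported on a root subgroup. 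This last point reduces to a visibility statement: every nonzero dual Selmer class restricts nontrivially to the decomposition group at some admissible auxiliary $q$; and this in turn is a representation-theoretic computation with $\mf g$ and its Tate twist $\mf g(1)$, regarded as $\galQ$-modules \emph{through the Weyl group} (using an invariant form to identify $\mf g^*\cong\mf g$). I expect this verification --- carrying it out uniformly over all simple $G$ and controlling the interaction between $\mf g$ and $\mf g(1)$ --- to be the main obstacle. It is exactly here that the Weyl-group input is advantageous over Patrikis' principal-$\mathrm{SL}_2$ input, which for the classical groups cannot by itself force Zariski-density (e.g.\ the principal $\mathrm{SL}_2$ of $\mathrm{SL}_{2n}$ already lands in $\mathrm{Sp}_{2n}$, whereas $W(A_{2n-1})$ does not embed into $W(C_n)$ compatibly with the root data).

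\textbf{Step 3: lifting and Zariski-density.} Granting Step 2, the lifting theorem produces a continuous $\rho_l\colon\galQ\to G(\roi{E})\subset G(\bQl)$, for some finite extension $E/\Ql$, deforming $\brho_l$, unramified outside a finite set and Fontaine--Laffaille at $l$. Let $H\subseteq G$ be the Zariski closure of its image. Since the reduction of $\rho_l$ surjects onto $W$, a standard reduction argument shows the image lies in no proper parabolic of $G$, so $\rho_l$ is $G$-irreducible and $H^{\circ}$ is reductive. Moreover the auxiliary primes can be, and are, chosen so that for each simple root $\alpha$ some $q$ among them has $\rho_l(I_q)$ conjugate to a nontrivial subgroup of $U_\alpha$, and some other $q'$ has $\rho_l(I_{q'})$ conjugate to a nontrivial subgroup of $U_{-\alpha}$; a nontrivial closed subgroup of $U_{\pm\alpha}\cong\mathbb G_a$ in characteristic zero is all of it, so $H\supseteq U_\alpha,U_{-\alpha}$ for every simple root $\alpha$, and these generate $G$; hence $H=G$. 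Finally, letting the set of auxiliary primes (or the prime $l$) vary produces infinitely many such $\rho_l$, distinguished for instance by their ramification sets, which completes the proof. (Since the local condition at $l$ is Fontaine--Laffaille, these $\rho_l$ are in fact geometric, and thus also answer Question 2 for the $G$ in question.)
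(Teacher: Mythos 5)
Your overall strategy (residual image inside $N_G(T)$ surjecting onto the Weyl group, then a Ramakrishna-type lifting) is the right one, but there are genuine gaps at both ends. First, the residual representation: you propose taking the image to be Tits' extended Weyl group $\widetilde W$, a finite extension of $\mc W$ by a subgroup of $T[2]$. Realizing $\widetilde W$ over $\rats$ is itself an unsolved embedding problem in general --- the sequence $1\to T\to N_G(T)\to \mc W\to 1$ does not split for $\op{SL}_{2n}$, $\op{Sp}_{2n}$, $\op{Spin}_n$, $E_7$, and the Tits group is a non-split extension, so Shafarevich-type results do not apply (the paper's Remark \ref{2.8} notes this is open already for $\op{SL}_n$, $n$ even). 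The paper's workaround is to pass to $N'=\pi^{-1}(\mc W')$ for a smaller $\mc W'$ (e.g.\ $A_n$ in type $A$, or a $(\ints/4\ints)^n$-cover of the sign group in type $C$) for which the embedding problem becomes solvable. More importantly, the image must contain all of $T(k)$, not just a $2$-torsion piece: it is the presence of $T(k)$ that forces $\brho(\mf g)$ to decompose into at most three irreducible summands ($\mf t$ plus one or two root-space sums, Corollary \ref{2.2}). With image only $\widetilde W$, the module $\mf g_{\Phi}$ fragments further, the summands of $\mf g$ and $\mf g(1)$ can no longer be kept non-isomorphic, and both the Selmer-group argument and the final density argument collapse. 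Relatedly, your oddness claim is false for this construction --- with $\brho(c)$ an even permutation-type involution, $h^0(\Gamma_{\reals},\brho(\mf g))$ is far from minimal (Proposition \ref{4.1}) --- which is why the paper imposes \emph{no} condition at $l$ (gaining $\dim\mf g$ in Wiles' formula) rather than a Fontaine--Laffaille condition; your closing parenthetical that the lifts are geometric both breaks the numerology and contradicts the paper's conditional Example \ref{sl2 mono} for $\op{SL}_2$.

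Second, the step you flag as ``the crux'' is exactly where the paper's new idea lives, and your proposal does not supply it. Even with the correct residual image, the standard criterion (Item 4 of Theorem \ref{pat ram}) fails: there is \emph{no} single regular semisimple $\Sigma$ and root $\alpha'$ such that every irreducible summand of $\brho(\mf g)$ has nonzero $\mf l_{\alpha'}$- and $\mf g_{-\alpha'}$-components --- in particular $\mf t$ fails the $\mf l_{\alpha'}$ condition for the available choices of $\Sigma$ (Lemmas \ref{s:alpha}, \ref{s:alpha:beta}). The paper's resolution is a two-stage argument: first annihilate the $\mf t$-Selmer group $\Sel{\mc L}{\Gamma_S}{\brho(\mf t)}$ by auxiliary Ramakrishna primes (Proposition \ref{kill tSel}, itself requiring the explicit matrix computations comparing $W\cap\mf t$ with $\mf t'\cap\mf t$), and only then run the usual dual-Selmer annihilation, using that any surviving class $\psi$ has $\psi(\Gamma_K)\nsubseteq\mf t$. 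The congruence conditions $l\equiv 1\,(4)$ and $l\equiv 1\,(3)$ arise there because the relevant $\Sigma$ lifts $s_{\beta}s_{\gamma}$ and $\alpha'(\Sigma)$ has order $4$ (resp.\ $3$), which must lie in the image of $\bkp$. Finally, your density argument (all simple root groups $U_{\pm\alpha}$ appearing as inertia images at auxiliary primes) is not something the method delivers --- the auxiliary primes are dictated by the Selmer classes, not by a list of roots; the paper instead fixes the conjugacy class of Frobenius at one extra unramified prime to be a regular semisimple element avoiding $N_G(T)$ and the intermediate subgroups, and concludes via the coarse decomposition of $\brho(\mf g)$. You would also need separate treatment of $\op{SL}_2$, $\op{SL}_3$, $\op{Spin}_7$ and $E_6^{sc}$, where the Weyl-group construction fails.
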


\begin{rmk}\label{remove cong}
Patrikis has shown that $E_7^{ad}, E_8, F_4, G_2$ and the $L$-group of an outer form of $E_6^{ad}$ are geometric $l$-adic monodromy groups for $\galQ$, so we will not discuss these cases in the proof of the above theorem.
The congruence conditions on $l$ for $G=B_n^{sc}, C_n^{sc}, E_7^{sc}$ can be removed using a theorem proved by Fakhruddin, Khare and Patrikis (\cite{fkp}). We record it in Theorem \ref{fkp,main}.
\end{rmk}

It is shown in Cornut-Ray (\cite{cr}) that for sufficiently large regular primes $p$ (i.e., a prime $p$ that does not divide the class number of $\rats(\mu_p)$) and for a simple, adjoint group $G$, there exist a continuous representation of $\galQ$ into $G(\Qp)$ with image between the pro-p and the standard Iwahori subgroups of $G$, which generalizes a theorem of Greenberg (\cite{gre16}) for $\op{GL}_n$. In particular, the image of the Galois group is Zariski-dense. Their construction is non-geometric and is very different from ours. It is unknown whether or not there are infinitely many regular primes, however.     

It is an interesting question whether (for instance) $\op{SL}_n$ can be a geometric monodromy group for $\galQ$. The following example shows that Question 2 is more subtle than Question 1, and we should not expect an answer as clean as Theorem \ref{main}: 

\begin{exmpl}\label{sl2 mono}
Assuming the Fontaine-Mazur and the Langlands conjectures (see \cite{fm} and \cite{bg}), there is no homomorphism $\rho: \galQ \to \sl{2}{\bQl}$ that is unramified almost everywhere, potentially semi-stable at $l$, and has Zariski-dense image. 
\end{exmpl}
\begin{proof}
In fact, by the Fontaine-Mazur and the Langlands conjectures, if such $\rho$ exists, then $\rho=\rho_{\pi}$ for some cuspidal automorphic representation $\pi$ on $\gl{2}{\mb A_{\rats}}$. But $\rho$ is even, i.e., $\det \rho(c)=1$, $\pi_{\infty}$ (the archimedean component of $\pi$) is a principal series representation, and $\pi$ is associated to a Maass form. Therefore, by the Fontaine-Mazur conjecture, $\rho_{\pi}$ has finite image, a contradiction.
\end{proof}

In contrast, Theorem \ref{key cases} shows in particular that $\op{SL}_2$ is an $l$-adic monodromy group for $\galQ$. 
On the other hand, $\op{SL}_2$ can be a geometric $l$-adic monodromy group for $\Gamma_F$ for \emph{some} finite extension $F/\rats$:

\begin{exmpl}\label{sl2 mod form}
Let $f$ be a non-CM new eigenform of weight 3, level $N$, with a nontrivial nebentypus character $\varepsilon$. Such $f$ exist for suitable $N$, see \cite{lmf}.
We write $E$ for the field of coefficients of $f$. Then for all $l$ and $\lambda|l$, there is a continuous representation $r_{f,\lambda}: \galQ \to \gl{2}{E_{\lambda}}$ which is unramified outside $\{v: v|Nl \}$ and $\op{Tr}(r_{f,\lambda}(\frob{p}))=a_p$ for $p$ not dividing $Nl$, with $a_p$ the $p$-th Hecke eigenvalue of $f$. We have $\det (r_{f,\lambda})=\kappa^2 \varepsilon$ where $\kappa$ is the $l$-adic cyclotomic character. By a theorem of Ribet (\cite[Theorem 2.1]{rib85}), for almost all $l$, $\bar r_{f,\lambda}(\galQ)$ contains $\sl{2}{k}$ for a subfield $k$ of $k_{\lambda}$ (the residue field of $E_{\lambda}$). It follows that $r_{f,\lambda}$ has Zariski dense image. If we let $F$ be a finite extension of $\rats$ that trivializes $\varepsilon$, then the image of $r':=\kappa^{-1}\cdot r_{f,\lambda}|_{\Gamma_F}$ lands in $\sl{2}{E_{\lambda}}$ and is Zariski-dense. 
\end{exmpl}

The classical groups $\op{GSp}_n$, $\op{GSO}_n$ are known as geometric $l$-adic monodromy groups. Recent work of Arno Kret and Sug Woo Shin (\cite{ks}) obtains $\op{GSpin}_{2n+1}$ as a geometric $l$-adic monodromy group, and in \cite{kat17}, Nick Katz constructs geometric Galois representations with monodromy group $\op{GL}_n$. On the other hand, most of the exceptional algebraic groups are known as geometric $l$-adic monodromy groups, established in the work of Dettweiler-Reiter, Zhiwei Yun and Stefan Patrikis (\cite{dr}, \cite{yun14} and \cite{pat16}). In \cite{pat16}, Patrikis constructs geometric Galois representations for $\galQ$ with full algebraic monodromy groups for essentially all exceptional groups of adjoint type. Along the way, Patrikis has obtained an extension to general reductive groups of Ravi Ramakrishna's techniques for lifting odd two-dimensional Galois representations to geometric $l$-adic representations in \cite{ram02}. 

For the rest of this section, we sketch the strategy for proving Theorem \ref{key cases}, which makes use of Patrikis' generalization of Ramakrishna's techniques but is very different from his arguments in many ways. For the rest of this section, we assume that $G$ is a simple algebraic group defined over $\Zl$ with a split maximal torus $T$. Let $\Phi=\Phi(G,T)$ be the associated root system. Let $\mc O$ be the ring of integers of an extension of $\Ql$ whose reduction modulo its maximal ideal is isomorphic to $k$, a finite extension of $\mb F_l$. 
We start with a well-chosen mod $l$ representation and then use a variant of Ramakrishna's method to deform it to characteristic zero with big image. Achieving this is a balancing act between two difficulties:
the Inverse Galois Problem for $G(k)$ is difficult, so we want the residual image to be relatively `small'; on the other hand, Ramakrishna's method works when the residual image is `big'. 

Let us recall a construction used in \cite{pat16}. 
Patrikis uses the principal $\mathrm{GL}_2$ homomorphism to construct the residual representation
$$\brho: \Gamma_{\rats} \xrightarrow{\bar r} \gl{2}{k} \xrightarrow{\varphi} G(k)$$ for $\bar r$ a surjective homomorphism constructed from modular forms
and $\varphi$ a principal $\mathrm{GL}_2$ homomorphism (for its definition, see \cite{ser96} and \cite{pat16}, Section 7.1). 
But the principal $\mathrm{GL}_2$ is defined only when $\rho^{\vee}$ (the half-sum of coroots) is in the cocharacter lattice $X_*(T)$, which is not the case for $G=\mathrm{SL}_{2n}, \mathrm{Sp}_{2n}, E_7^{sc}$, etc.
On the other hand, the principal $\mathrm{SL}_2$ is always defined but it is not known whether $\sl{2}{k}$ is a Galois group over $\rats$ and the surjectivity of $\bar r$ is crucial in applying Ramakrishna's method.

For this reason, we use a different construction. To simplify notation, we use $G, T$ to denote $G(k), T(k)$, respectively.
We consider the following exact sequence of finite groups, which we shall refer to as the N-T sequence:
$$1 \to T \to N_G(T) \xrightarrow{\pi} \mc W \to 1$$
where $\mc W=N_G(T)/T$ is the Weyl group of $G$. We want to take $N=N_G(T)$ as the image of the residual representation $\brho$. It turns out that the adjoint action of $N$ on the Lie algebra $\mf g$ over $k$ decomposes into at most three irreducible pieces (Corollary \ref{2.2}), which is very good for applying Ramakrishna's techniques. 
It has been known for a long time that $\mc W$ is a Galois group over $\rats$, but what we need is to realize $N$ as a Galois group over $\rats$. A natural approach would be solving the embedding problem posed by the N-T sequence, i.e., to suppose there is a Galois extension $K/\rats$ realizing $\mc W$, and then to find a finite Galois extension $K'/\rats$ containing $K$ such that the natural surjective homomorphism 
$\gal{K'}{\rats} \rsurj \gal{K}{\rats}$ realizes $\pi: N \rsurj \mc W$. 

This embedding problem is solvable when the sequence \emph{splits}, by an elementary case of a famous theorem of Igor Shafarevich, see \cite{ser:tgt}, Claim 2.2.5. In \cite{ah}, the splitting of the N-T sequence is determined completely: for instance, it does not split for $G=\op{SL}_n, \op{Sp}_{2n}, \op{Spin}_n, E_7$. We find our way out by replacing $N$ with a suitable subgroup $N'$ for which the decomposition of the adjoint representation remains the same, then realizing $N'$ as a Galois group over $\rats$ with certain properties, see Section 2.1.3-2.1.6. Finally,
we define our residual representation $\brho$ to be the composite
$$\Gamma_{\rats} \rsurj N' \to G=G(k)$$ where the first arrow comes from the realization of $N'$ as a Galois group over $\rats$ and the second arrow is the inclusion map. 
We write $\brho(\mf g)$ for the Lie algebra $\mf g/k$ equipped with a $\galQ$-action induced by the homomorphism
$$\galQ \xrightarrow{\brho} G \xrightarrow{Ad} \op{GL}(\mf g).$$

Now we explain how to deform $\brho$ to characteristic zero. This is the hardest part. For a residual representation $$\brho:\galQ \to G(k)$$ unramified outside a finite set of places $S$ containing the archimedean place and a global deformation condition for $\brho$ (which consists of a local deformation condition for each $v\in S$), 
a typical question in Galois deformation theory is to find continuous $l$-adic lifts $$\rho:\galQ \to G(\mc O)$$ of $\brho$ such that for all $v$, $\rho|_{\Gamma_{\rats_v}}$ (we fix an embedding $\bQ \to \overline{\rats}_v$) satisfies the prescribed local deformation condition at $v$. If this can be done, then we can make the image of $\rho$ Zariski-dense in $G(\bQl)$ by specifying a certain type of local deformation condition at a suitable unramified prime.

Ravi Ramakrishna has an ingenious method (\cite{ram02}) for obtaining the desired lifts, which has been generalized and axiomatized in \cite{tay03}, \cite{cht}, \cite{pat16}, etc. 
By the Poitou-Tate exact sequence, if the dual Selmer group
$$\Sel{\mc L^{\perp}}{\Gamma_{\rats,S}}{\brho(\mf g)(1)}$$
associated to the global deformation condition for $\brho$ vanishes, then such lifts exist. Here $\mc L$ (resp. $\mc L^{\perp}$) is the Selmer system (resp. dual Selmer system) of tangent spaces (resp. annihilators of the tangent spaces under the Tate pairing) of the given global deformation condition. 
Ramakrishna discovered that if one imposes additional local deformation conditions of `Ramakrishna type' in place of the unramified ones at a finite set of well-chosen places of $\rats$ disjoint from $S$, then the new dual Selmer group will vanish.
However, this technique is very sensitive to the image of $\brho$, which has to be `big' to make things work:
if $\brho(\mf g)$ is irreducible then all is good, but finding such a $\brho$ can be very difficult. In practice, we would prefer those $\brho$ for which $\brho(\mf g)$ does not decompose too much. 
Unfortunately, the form of Ramakrishna's method in \cite{pat16} (see Theorem \ref{pat ram} and its proof for an account of this) does not work for our $\brho$. 

Inspired by the use of Ramakrishna's method in \cite{cht}, we surmount this by making two observations. For our $\brho$, $\brho(\mf g)$ decomposes into $\brho(\mf t)$ (the Lie algebra of $T$ over $k$ equipped with an irreducible action of $\brho(\galQ)$) and a complement (see Corollary \ref{2.2}). Our \emph{first observation} is that
if $$\Sel{\mc L}{\Gamma_{\rats,S}}{\brho(\mf t)}$$ (see Definition \ref{M Sel}) vanishes, 
then we can kill the full dual Selmer group using Ramakrishna's method; moreover, we \emph{cannot} find an auxiliary prime $w \notin S$ at which the Ramakrishna deformation condition (see Definition \ref{3.1}) satisfies $\sel{\mc L^{\perp}\cup L_w^{Ram,\perp}}{\Gamma_{\rats, S \cup w}}{\brho(\mf g)(1)}<\sel{\mc L^{\perp}}{\Gamma_{\rats,S}}{\brho(\mf g)(1)}$ when 
$\Sel{\mc L}{\Gamma_{\rats,S}}{\brho(\mf t)} \neq \{0\}$.
But it is hard to achieve $\Sel{\mc L}{\Gamma_{\rats,S}}{\brho(\mf t)}=\{0\}$ in general. 

Our \emph{second observation} is as follows: suppose that $0 \neq \sel{\mc L}{\Gamma_{\rats,S}}{\brho(\mf t)} \leq \sel{\mc L^{\perp}}{\Gamma_{\rats,S}}{\brho(\mf t)(1)}$ (the inequality is easy to guarantee), and let $\phi$ be a non-trivial class in $\Sel{\mc L^{\perp}}{\Gamma_{\rats,S}}{\brho(\mf t)(1)}$. We can then find an auxiliary prime $w \notin S$ with a Ramakrishna deformation $L_w^{Ram}$ such that $\phi|_{\Gamma_{\rats_w}} \notin L_w^{Ram,\perp}$, which implies $$\sel{\mc L^{\perp}\cup L_w^{Ram,\perp}}{\Gamma_{\rats, S \cup w}}{\brho(\mf t)(1)} < \sel{\mc L^{\perp}\cup L_w^{\perp}}{\Gamma_{\rats, S \cup w}}{\brho(\mf t)(1)},$$ where $L_w$ is the intersection of $L_w^{Ram}$ and the unramified condition at $w$. 
It turns out that (see the proof of Proposition \ref{kill tSel}) the right side of the inequality equals $\sel{\mc L^{\perp}}{\Gamma_{\rats, S}}{\brho(\mf t)(1)}$; then a double invocation of Wiles' formula gives
$$\sel{\mc L \cup L_w^{Ram}}{\Gamma_{\rats,S\cup w}}{\brho(\mf t)} < \sel{\mc L}{\Gamma_{\rats,S}}{\brho(\mf t)}.$$
By induction, we can enlarge $\mc L$ finitely many times to make $\Sel{\mc L}{\Gamma_{\rats,S}}{\brho(\mf t)}$ vanish, which in turn allows us (see the proof of Theorem \ref{lifting Weyl}) to enlarge $\mc L$ even further to make $\Sel{\mc L^{\perp}}{\Gamma_{\rats,S}}{\brho(\mf g)(1)}$ vanish, as remarked in the first observation.
Thus we obtain an $l$-adic lift $\rho: \galQ \to G(\mc O)$ satisfying the prescribed local deformation conditions. 

The above variation of Ramakrishna's method is devised very specifically for the residual representation we construct. We do not know how to generalize these ideas to the case of an arbitrary residual representation.

\begin{rmk}
For technical reasons, our method does not work for $\op{SL}_2, \op{SL}_3, \op{Spin_7}$; see Proposition \ref{2.9} and Remark \ref{2.14}. Nevertheless, an easy variant of Ramakrishna's original method applies to $\op{SL}_2$, Patrikis' extension of Ramakrishna's method applies to $\op{SL}_3$ and $\op{Spin_7}$. Patrikis' method also applies to $E^{sc}_6$ with minor modifications, and we use it in this paper. Our method should work for $E_6^{sc}$ as well, modulo an instance of the inverse Galois theory, but we do not pursue it here. 
\end{rmk}

\textbf{Acknowledgement}: 
I am extremely grateful for the patience, generosity, and support of my advisor Stefan Patrikis,
without whom this work would be impossible. 
I would also like to thank Gordan Savin for his patience and generous support on a few projects I worked on earlier in my graduate career.  
I thank Jeffrey Adams for answering my questions on his recent joint work with Xuhua He. The University of Utah mathematics department was a supportive and intellectually enriching environment, and I am grateful to all of the faculty members, postdocs, and graduate students who made it so. I would also like to thank the anonymous referees for correcting the mistakes and making many useful comments on an earlier draft of this paper.

\textbf{Notation}: For a field $F$ (typically $\rats$ or $\Qp$), we let $\Gamma_F$ denote $\gal{\overline F}{F}$ for some fixed choice of algebraic closure of $\overline F$ of $F$. When $F$ is a number field, for each place $v$ of $F$ we fix once and for all embeddings $\overline F \to \overline F_v$, giving rise to inclusions $\Gamma_{F_v} \to \Gamma_{F}$. If $S$ is a finite set of places of $F$. we let $\Gamma_{F,S}$ denote $\gal{F_S}{F}$, where $F_S$ is the maximal extension of $F$ in $\overline F$ unramified outside of $S$. If $v$ is a place of $F$ outside $S$, we write $\frob{v}$ for the corresponding arithmetic frobenius element in $\Gamma_{F,S}$. When $F=\rats$, we will sometimes write $\Gamma_v$ for $\Gamma_{F_v}$ and $\Gamma_S$ for $\Gamma_{\rats,S}$.
For a representation $\rho$ of $\Gamma_F$, we let $F(\rho)$ denote the fixed field of $\op{Ker}(\rho)$.

Consider a group $\Gamma$, a ring $A$, an algebraic group $G$ over $\op{Spec}(A)$, and a homomorphism $\rho: \Gamma \to G(A)$. We write $\mf g$ for both the Lie algebra of $G$ and the $A[G]$-module induced by the adjoint action. We let $\rho(\mf g)$ denote the $A[\Gamma]$-module with underlying $A$-module $\mf g$ induced by $\rho$. Similarly, for a $A[G]$-submodule $M$ of $\mf g$, we write $\rho(M)$ for the $A[\Gamma]$-module with underlying $A$-module $M$ induced by $\rho$.

We call an algebraic group \emph{simple} if it is connected, nonabelian and has no proper normal
algebraic subgroups except for finite subgroups. It is sometimes called an almost simple group in the literature. 
Consider a simple algebraic group $G$, we write $G^{sc}$ (resp. $G^{ad}$) for the simply-connected form (resp. adjoint form) of $G$.

Let $\mc O$ be the ring of integers of a finite extension of $\Ql$. We let $\op{CNL}_{\mc O}$ denote the category of complete noetherian local $\mc O$-algebras for which the structure map $\mc O \to R$ induces an isomorphism on residue fields.

All the Galois cohomology groups we consider will be $k$-vector spaces for $k$ a finite extension of $\mb F_l$. We abbreviate $\dim H^n(-)$ by $h^n(-)$.

We write $\kappa$ for the $l$-adic cyclotomic character, and $\bkp$ for its mod $l$ reduction.

\section{Constructions of residual representations}
In this section, we construct residual representations
$$\brho: \galQ \to G(\overline{\mathbb F}_l)$$ for $G$ a simple, \emph{simply-connected} algebraic group.

\subsection{Constructions based on the Weyl groups}
Let $k$ be a finite extension of $\mathbb F_l$. We consider the group of $k$-points of the normalizer of a split maximal torus of $G$ and hope to realize it as the Galois group of some extension of $\rats$. 

\subsubsection{Some group-theoretic results}

We recall a property of the Weyl group of an \emph{irreducible} root system $\Phi$:

\begin{lem}\label{2.1}
The Weyl group $\mc W$ acts irreducibly on the $\cmplx$-vector space spanned by $\Phi$ and transitively on roots of the same length.
\end{lem}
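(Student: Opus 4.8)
The plan is to prove the two assertions separately, using only the defining structure of an irreducible root system. For irreducibility, I would argue by contradiction: suppose $V = \cmplx\otimes(\ints\Phi)$ decomposes as a $\mc W$-stable direct sum $V = V_1\oplus V_2$ with both summands nonzero. Since $\mc W$ is generated by the reflections $s_\alpha$ for $\alpha\in\Phi$, and each $s_\alpha$ fixes the hyperplane $\alpha^\perp$ pointwise and negates $\cmplx\alpha$, a standard observation is that a $\mc W$-stable subspace $U$ must, for each root $\alpha$, either contain $\alpha$ or lie inside $\alpha^\perp$: indeed $s_\alpha$ acts on $U$, and writing $u = c\alpha + u'$ with $u'\in\alpha^\perp$ one gets $s_\alpha(u) - u = -2c\alpha \in U$, so if some $u\in U$ has nonzero $\alpha$-component then $\alpha\in U$, and otherwise $U\subseteq\alpha^\perp$. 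Applying this to $U = V_1$: the set of roots lying in $V_1$ and the set of roots lying in $V_2$ partition $\Phi$ (no root can lie in both, and every root lies in one since each root is visibly not orthogonal to itself). Call these $\Phi_1$ and $\Phi_2$. Then every $\alpha\in\Phi_1$ is orthogonal to every $\beta\in\Phi_2$ (as $\alpha\in V_1\subseteq\beta^\perp$), so $\Phi = \Phi_1\sqcup\Phi_2$ is an orthogonal decomposition into nonempty root subsystems, contradicting irreducibility of $\Phi$. Hence no nontrivial $\mc W$-stable decomposition exists, i.e. $\mc W$ acts irreducibly on $V$.

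For transitivity on roots of the same length, fix a base (simple system) $\Delta\subset\Phi$, so every root is $\mc W$-conjugate to a simple root (a standard fact, provable by induction on the height of a root using that some simple reflection strictly decreases height off the positive chamber). Thus it suffices to show that any two simple roots of the same length are $\mc W$-conjugate. I would use the Dynkin diagram: connected-ness of the diagram (equivalent to irreducibility of $\Phi$) means any two simple roots are joined by a path $\alpha = \beta_0, \beta_1, \dots, \beta_k = \alpha'$ of simple roots with consecutive ones non-orthogonal. When two simple roots $\beta, \beta'$ are non-orthogonal and have the same length, the subsystem they generate is of type $A_2$, and the element $s_\beta s_{\beta'} s_\beta = s_{\beta'} s_\beta s_{\beta'}$ (the order-$2$ relation coming from the $A_2$ braid relation) swaps $\beta$ and $\beta'$; concretely $s_\beta(\beta') = \beta + \beta'$ and then $s_{\beta+\beta'}$ interchanges the two, so $\beta$ and $\beta'$ are $\mc W$-conjugate. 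Composing conjugations along the path, any two simple roots connected by a path through roots all of one length are conjugate. This already handles the simply-laced case entirely. In the multiply-laced case the diagram has one double (or triple) bond separating the long roots from the short roots; removing that bond disconnects the diagram into the long-root part and the short-root part, each of which is connected with all nodes of equal length, so within each part all simple roots are $\mc W$-conjugate by the path argument, hence all long roots are mutually conjugate and all short roots are mutually conjugate.

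The main obstacle — really the only place any care is needed — is the multiply-laced case of the transitivity statement: one must be sure that roots of \emph{different} lengths need not be (and in general are not) conjugate, while roots of the \emph{same} length always are, and this hinges on the combinatorial fact that in an irreducible Dynkin diagram the long simple roots span a connected sub-diagram and likewise the short ones. I would verify this by a quick inspection of the classification of connected Dynkin diagrams ($B_n, C_n, F_4, G_2$ are the relevant ones), or alternatively argue length-preservation directly: $\mc W$ acts by orthogonal transformations so preserves the norm, giving that conjugate roots have equal length, and equality of length among simple roots in a connected sub-chain of single bonds gives conjugacy as above. Everything else is formal manipulation with reflections, so I would keep those computations terse.
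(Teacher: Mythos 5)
The paper does not prove this lemma at all: it is stated as a recalled standard fact about irreducible root systems (it is Bourbaki/Humphreys material), so there is no argument of the paper's to compare against. Your proof is the standard textbook one and is essentially correct. The irreducibility half is airtight: the dichotomy ``$\alpha\in U$ or $U\subseteq\alpha^{\perp}$'' for a $\mc W$-stable subspace $U$, applied to both summands, forces an orthogonal partition of $\Phi$ into two nonempty pieces, contradicting irreducibility. The transitivity half (reduce to simple roots, walk along the Dynkin diagram, observe that the long simple roots and the short simple roots each span a connected subdiagram) is also the standard route and is sound.

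One small inaccuracy worth fixing: in the $A_2$ step you assert that $s_{\beta+\beta'}=s_{\beta}s_{\beta'}s_{\beta}$ interchanges $\beta$ and $\beta'$. In fact $s_{\beta+\beta'}(\beta)=\beta-\langle\beta,(\beta+\beta')^{\vee}\rangle(\beta+\beta')=-\beta'$, so it swaps them only up to sign. The conjugacy conclusion is unaffected --- either note that $-\beta'$ and $\beta'$ are conjugate via $s_{\beta'}$, or use the cleaner computation $s_{\beta}s_{\beta'}(\beta)=s_{\beta}(\beta+\beta')=\beta'$. Everything else, including the appeal to the classification to see that removing the multiple bond leaves the long and short parts each connected, is fine.
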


Let $G=G(k)$ and $T=T(k)$, a maximal split torus of $G$. 
Let $\Phi=\Phi(G,T)$ and $N=N_G(T)$.

\begin{cor}\label{2.2}
For any $\alpha,\beta\in \Phi$ of the same length, there exists $w \in N$ such that
$Ad(w)\mf g_{\alpha}=\mf g_{\beta}$. 
The adjoint action $Ad(N)$ on $\mf g$ decomposes into submodules $\mf t$ and
$$\mf g_{\Phi}:=\sum_{\alpha\in\Phi}\mf g_{\alpha}$$ when $\Phi$ is simply-laced; and is the direct sum of $\mf t$,
$$\mf g_l:=\sum_{\alpha\in\Phi, \:\textrm{$\alpha$ is long}}\mf g_{\alpha},$$
and $$\mf g_s:=\sum_{\alpha\in\Phi,\: \textrm{$\alpha$ is short}}\mf g_{\alpha}$$ otherwise. 

Moreover, as an $N$-module, $\mf t$ is irreducible; and $\mf g_{\Phi},\mf g_{l},\mf g_{s}$ are irreducible if $l$ is sufficiently large.
\end{cor}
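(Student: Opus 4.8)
The plan is to deduce everything from Lemma \ref{2.1} and the Chevalley root-space decomposition $\mf g=\mf t\oplus\bigoplus_{\alpha\in\Phi}\mf g_\alpha$, which holds over $k$ for every $l$: each $\mf g_\alpha$ is a line, $T=T(k)$ acts trivially on $\mf t$ and through $\alpha\colon T\to k^\times$ on $\mf g_\alpha$, and $N$ preserves $\mf t=\op{Lie}(T)\otimes k$ because it normalizes $T$. Given $w\in\mc W$, choose a lift $\tilde w\in N$; from $Ad(t)\,Ad(\tilde w)=Ad(\tilde w)\,Ad(\tilde w^{-1}t\tilde w)$ one reads off that $Ad(\tilde w)$ carries the weight line $\mf g_\alpha$ to the weight line attached to $t\mapsto\alpha(\tilde w^{-1}t\tilde w)=(w\alpha)(t)$, i.e.\ $Ad(\tilde w)\mf g_\alpha=\mf g_{w\alpha}$, a root of the same length as $\alpha$. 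Since $\mc W$ is transitive on roots of a fixed length (Lemma \ref{2.1}), this gives the first assertion; since $\mc W$ preserves lengths, $\mf g_\Phi$ in the simply-laced case and $\mf g_l,\mf g_s$ in general are $Ad(N)$-stable, and together with the $Ad(N)$-stability of $\mf t$ this yields the stated decomposition.

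For the irreducibility of $\mf t$: as $T$ acts trivially, the $N$-action factors through $\mc W$, and under $\mf t\cong X_*(T)\otimes_\ints k$ it is the reduction modulo $l$ of the reflection representation of $\mc W$. By Lemma \ref{2.1} this representation is irreducible over $\cmplx$, hence, being realized over $\ints$, absolutely irreducible over $\rats$; a standard argument then shows that its reduction modulo $l$ stays (absolutely) irreducible for all $l$ outside a finite set, in particular for every $l\nmid|\mc W|$, and enlarging scalars from $\mb F_l$ to $k$ preserves this.

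For the irreducibility of $\mf g_\Phi$, and likewise of $\mf g_l,\mf g_s$: take $l$ large enough that the restricted characters $\alpha|_T\colon T\to k^\times$, $\alpha\in\Phi$, are pairwise distinct and nontrivial, which is possible for $l$ large because $\alpha$ ranges over the fixed finite set $\Phi$. Then $\mf g_\Phi=\bigoplus_{\alpha\in\Phi}\mf g_\alpha$ is precisely the decomposition of $\mf g_\Phi$ into its one-dimensional, pairwise non-isomorphic $T$-weight spaces, so any $T$-submodule — a fortiori any $Ad(N)$-submodule $M$ — is $\bigoplus_{\alpha\in\Psi}\mf g_\alpha$ for some $\Psi\subseteq\Phi$. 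By the weight computation, $Ad(N)$-stability of $M$ forces $w\Psi=\Psi$ for all $w\in\mc W$, so $\Psi$ is a union of $\mc W$-orbits; as $\mc W$ is transitive on $\Phi$ in the simply-laced case (Lemma \ref{2.1}), $\Psi$ is $\varnothing$ or $\Phi$, i.e.\ $M\in\{0,\mf g_\Phi\}$. The non-simply-laced case is identical, run inside $\mf g_l$ (resp.\ $\mf g_s$) with transitivity of $\mc W$ on the long (resp.\ short) roots.

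The only delicate point is the threshold on $l$: the Chevalley decomposition is always available, but the irreducibility of $\mf t$ needs $l\nmid|\mc W|$ (for instance, if $G=\op{SL}_n$ and $l\mid n$ the identity matrix spans a $\mc W$-stable line in $\mf t$), and the argument for $\mf g_\Phi,\mf g_l,\mf g_s$ needs $l$ larger than a bound depending only on $\Phi$ so that distinct roots restrict to distinct characters of $T$. Only finitely many primes are excluded, and away from them the arguments above apply verbatim; all the genuine input is Lemma \ref{2.1}.
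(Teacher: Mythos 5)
Your proof is correct and follows essentially the same route as the paper: a regular element of $T$ separates the root characters so that any submodule of $\mf g_{\Phi}$ (resp.\ $\mf g_l$, $\mf g_s$) is a sum of root lines, and transitivity of $\mc W$ on roots of a given length (Lemma \ref{2.1}) then forces it to be $0$ or everything. You are somewhat more thorough than the paper, which takes the first assertion and the irreducibility of $\mf t$ as immediate from Lemma \ref{2.1}, whereas you spell out the weight computation $Ad(\tilde w)\mf g_\alpha=\mf g_{w\alpha}$ and the mod-$l$ reduction argument for the reflection representation.
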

\begin{proof}
It suffices to show that $\mf g_{\Phi},\mf g_{l},\mf g_{s}$ are irreducible $N$-modules.
We will only show that $\mf g_{\Phi}$ is irreducible, for the other two cases are similar. Take a nonzero vector $X\in \mf g_{\Phi}$, write $X=\sum_{1\leq i \leq k}X_{i}$ where $0\neq X_i \in \mf g_{\alpha_i}$ for some distinct roots $\alpha_1,\alpha_2,\cdots,\alpha_k \in \Phi$. Since $l$ is sufficiently large, we can choose $t\in T$ such that $\alpha(t)$, $\alpha\in\Phi$ are all distinct. We have
$$ad(t^j)X=\sum_i\alpha_i(t)^jX_i$$ where $0\leq j \leq k-1$. As the $X_i$'s are linearly independent and the determinant of the coefficient matrix is nonzero,
$$X,ad(t)X,ad(t^2)X,\cdots,ad(t^{k-1})X$$ are linearly independent and hence they span the same subspace as $X_1,\cdots,X_k$ do. In particular, $X_i\in \mf g_{\alpha_i}$ belongs to the $N$-submodule of $\mf g_{\Phi}$ generated by $X$. Because $N$ acts transitively on the set of root spaces, it follows that $X$ generates $\mf g_{\Phi}$. Therefore, $\mf g_{\Phi}$ is irreducible.   
\end{proof}

\begin{rmk}\label{2.3}
Corollary \ref{2.2} remains valid for a subgroup $N'$ of $N$ that maps onto a subgroup $W'$ of $W$ acting transitively on roots of the same length.
\end{rmk}

\subsubsection{Some results in the inverse Galois theory}
We record some elementary results (with proofs) about inverse Galois theory, some of which are modified in order to satisfy our purposes. See Serre's lecture notes \cite{ser:tgt} for details.

\begin{thm}\label{2.4}
For $n \geq 2$, there are infinitely many polynomials with $\rats$-coefficients that realize the symmetric group of $n$ letters $S_n$ (resp. the alternating group of $n$ letters $A_n$) as a Galois group over $\rats$. Moreover, for $S_n$ (resp. for $A_n$ with $n \geq 4$), the polynomial can be chosen to have at least a pair of non-real roots.
\end{thm}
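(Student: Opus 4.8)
The plan is to combine Hilbert's irreducibility theorem with the classical facts that $S_n$ and $A_n$ occur as regular Galois groups over $\rats(t)$, controlling the behaviour at the real place by hand. For $S_n$ I would argue by reduction mod $p$, which makes the infinitude transparent. Fix distinct primes $p_1,p_2,p_3$ with $p_3>n$, and use the Chinese remainder theorem to produce a monic $f\in\ints[x]$ of degree $n$ that is irreducible mod $p_1$, factors mod $p_2$ as (linear)$\times$(irreducible of degree $n-1$), and factors mod $p_3$ as (irreducible quadratic)$\times$($n-2$ distinct linear factors), all three reductions squarefree; adding multiples of $p_1p_2p_3$ to the lower coefficients then gives infinitely many such $f$. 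By Dedekind's theorem $\op{Gal}(f/\rats)\le S_n$ contains an $n$-cycle (hence is transitive), an $(n-1)$-cycle, and a transposition. A transitive subgroup of $S_n$ containing an $(n-1)$-cycle is primitive (a nontrivial block system would be permuted by the $(n-1)$-cycle, whose unique fixed point then forces every block to be a singleton), and a primitive subgroup of $S_n$ containing a transposition equals $S_n$; hence $\op{Gal}(f/\rats)=S_n$. To obtain a pair of non-real roots, note that the set of coefficient vectors in $\reals^n$ giving polynomials with exactly $n-2$ simple real roots is open, nonempty, and contains boxes of arbitrarily large side (take widely separated real roots and a far-off conjugate pair of complex roots), so it meets the relevant coset of $(p_1p_2p_3\ints)^n$; choosing $f$ there settles the claim for $S_n$.

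For $A_n$, the cases $n=2$ ($A_2=1$) and $n=3$ ($A_3\cong\ints/3\ints$, realized by the cubic subfields of $\rats(\mu_p)$ for the infinitely many primes $p\equiv1\pmod3$) are immediate, and no archimedean condition is imposed there. For $n\ge4$ I would invoke the classical theorem of Hilbert that $A_n$ occurs as a regular Galois group over $\rats(t)$ (see \cite{ser:tgt}) and specialize via Hilbert's irreducibility theorem to obtain infinitely many monic degree-$n$ polynomials over $\rats$ with Galois group $A_n$. For the signature I would use that the Hilbert set may be taken dense in $\reals$, together with the fact that a regular $A_n$-cover of $\mathbb{P}^1_t$ can be chosen to have a real branch point whose inertia is a double transposition; crossing such a point changes the number of real points in a fibre by $0$ or $\pm4$, so some real specialization gives a polynomial with two conjugate pairs of complex roots. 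Note that an involution in $A_n$ moves an even number of points, so ``at least a pair of non-real roots'' forces at least two such pairs, which is exactly why the hypothesis $n\ge4$ appears; intersecting the (dense) Hilbert set with the corresponding open signature condition produces infinitely many polynomials of the required type.

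I expect the genuine content to be entirely in the $A_n$ part: the regular realization of $A_n$ over $\rats(t)$ — elementary when $n$ is odd, where a degree-$n$ polynomial all of whose finite branch cycles are $3$-cycles does the job, but requiring an additional step when $n$ is even — together with the verification that an $A_n$-extension of $\rats$ with a conjugate pair of roots exists once $n\ge4$. The $S_n$ part, and the extraction of ``infinitely many'' from Hilbert's irreducibility theorem, are routine.
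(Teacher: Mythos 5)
Your $S_n$ argument is correct and genuinely different from the paper's. The paper simply cites the explicit one-parameter families in Serre's \emph{Topics in Galois Theory}, Sections 4.4--4.5, together with Hilbert irreducibility, and reads off the signature from the specific polynomial $f(X,T)$ there (its $X$-derivative has few real zeros, so every real specialization has at most three real roots). You instead build the polynomials by hand: congruence conditions via CRT, Dedekind's theorem to get an $n$-cycle, an $(n-1)$-cycle and a transposition, and Jordan's criterion (primitive plus a transposition implies $S_n$), with the archimedean condition arranged by an openness/large-box argument. This is more elementary and self-contained, and it handles all $n\ge 2$ uniformly, whereas the paper treats $n=2,3$ ad hoc.

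The $A_n$ half for $n\ge 4$ has a genuine gap, and it sits exactly where the paper does its (small amount of) real work. You correctly reduce to exhibiting one real specialization of a regular $A_n$-polynomial $h(X,T)$ with a non-real root, and your parity remark (complex conjugation is an even permutation, so one conjugate pair forces two, whence $n\ge 4$) is a nice explanation of the hypothesis. But the wall-crossing argument does not deliver the specialization: first, you do not justify that the cover can be chosen with a \emph{real} branch point whose inertia is a double transposition with real ramification points; second, even granting that, you concede the real-root count may change by $0$ across such a point, and nothing rules out that it equals $n$ on all of $\reals$ minus the branch locus, in which case no specialization with non-real roots is produced. The paper closes this by inspecting the explicit polynomial $h(X,T)$ on page 44 of \emph{loc.\ cit.}: for every real (or rational) $t$ the fiber has at most three real roots, so a conjugate pair is forced once $n\ge 4$. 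To repair your sketch you need some concrete control of the fiber over at least one real $t$ --- for instance, for odd $n$ the family has the form $f(X)-T$ with $f'$ having only two distinct real zeros, so $f(X)-t$ has exactly one real root for $|t|$ large; this explicit input is the one non-routine ingredient your argument omits.
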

\begin{proof}
See \cite{ser:tgt}, Section 4.4 and 4.5. For the last part, we consider the polynomial $f(X,T)$ on page 42 of \emph{loc.cit}. For any rational value of $T$, it has at most three real roots by inspection, so it must have at least a non-real root when $n \geq 4$. For $n=2,3$, it is easy to find such polynomials. The demonstration for $A_n$ is similar, see the polynomial $h(X,T)$ on page 44.
\end{proof}

The next result is an elementary case of a theorem of Igor Shafarevich, which will be used frequently in our constructions:

\begin{thm}\label{2.5}
Let $G$ be a finite group. Suppose that there is a finite Galois extension $K/\rats$ such that $\mathrm{Gal}(K/\rats) \cong G$. Let $H$ be a finite abelian group with exponent $m$. Suppose that there is a \textbf{split} exact sequence of finite groups
$$1\to H \to S \to G \to 1.$$
Then there is a finite Galois extension $M/\rats$ containing $K$ such that the natural surjective homomorphism $\mathrm{Gal}(M/\rats) \rsurj \mathrm{Gal}(K/\rats)$ realizes the surjective homomorphism $S \rsurj G$. In other words, a split embedding problem with abelian kernel always has a proper solution.

Moreover, for any prime $l$ that is outside the ramification locus of $K/\rats$ and prime to $m$, we can choose $M$ so that $l$ is unramified in $M$.
\end{thm}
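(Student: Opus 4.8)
The plan is to follow Serre's treatment of this elementary case of Shafarevich's theorem (cf.\ \cite{ser:tgt}) and then add the refinement controlling ramification at $l$. First I would use the splitting to regard $H$ as a finite module over $\Zmod{m}[G]$ and choose a surjection of $\Zmod{m}[G]$-modules $\Zmod{m}[G]^{\oplus r} \rsurj H$ for suitable $r$. Writing $B := (\Zmod{m})^{\oplus r}$ with trivial $G$-action, one has $\Zmod{m}[G]^{\oplus r} \iso B[G]$ as $G$-modules, so the regular wreath product $B \wr G := B[G] \rtimes G$ surjects onto $H \rtimes G = S$ compatibly with the projections to $G$. Any proper solution of the embedding problem $B \wr G \rsurj G$ whose solution field contains $K$ then pushes forward, via $B\wr G \rsurj S$, to a proper solution of $S \rsurj G$ whose solution field is a subfield of the first one, hence still contains $K$ and has no larger ramification locus. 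So it suffices to realize $B \wr G$ over $\rats$ inside an extension containing $K$ in which $l$ is unramified, under the hypotheses $l \nmid m$ and $l \notin \mathrm{Ram}(K/\rats)$.

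Next I would realize $B$ explicitly: by Dirichlet there are distinct primes $q_1,\dots,q_r \equiv 1 \pmod m$, each different from $l$ and outside $\mathrm{Ram}(K/\rats)$; letting $L$ be the compositum of the degree-$m$ subfields of the $\rats(\mu_{q_i})$ gives $\gal{L}{\rats} \iso B$ with $L/\rats$ ramified only at the $q_i$, so that $l$ is unramified in $L$ and $L \cap K = \rats$. Then I would invoke the classical construction of a wreath product as a Galois group over a Hilbertian field (\cite{ser:tgt}): from the $G$-extension $K/\rats$ and the $B$-extension $L/\rats$ one spreads $L$ out along a primitive element of $K$, obtaining a family of degree-$m$ subextensions of $K$ indexed by the roots of the minimal polynomial and permuted by $G$, and specializes a rational parameter so that, by Hilbert irreducibility, the $|G|$ conjugate $B$-subextensions become jointly independent over $K$. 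The specialized field $M$ then satisfies $M \supseteq K$ and $\gal{M}{\rats} \iso B \wr G$.

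The only real content beyond citation is the ramification at $l$, which I expect to be the main technical point. The extension $M/\rats$ produced above is ramified only at primes in $\mathrm{Ram}(K/\rats) \cup \mathrm{Ram}(L/\rats)$ together with the primes dividing the discriminant of the specialized polynomial; the first two sets avoid $l$ by construction. For the third, I would note that the set of admissible specialization parameters at which the group-theoretic genericity fails is thin, hence of density zero, so it does not exhaust any congruence class modulo a power of $l$. Since $l \nmid m$ and $l$ is unramified in $KL$, restricting the parameter to an appropriate residue class modulo a power of $l$ forces the specialized extension to have good reduction at $l$, and within that class there is still a parameter avoiding the thin set; the resulting $M$ is then unramified at $l$. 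Pushing this solution forward to $S$ as in the first step finishes the argument. The delicate part will be making the interaction between the Hilbert-irreducibility genericity and the $l$-adic congruence condition fully explicit inside the specific shape of Serre's construction; the remaining bookkeeping — finiteness, the cyclotomic realization of $B$, and the push-forward — is routine.
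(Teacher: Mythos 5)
Your reduction to the regular wreath product $B \wr G = B[G] \rtimes G$ via a surjection of $G$-modules $\Zmod{m}[G]^{\oplus r} \rsurj H$, and the push-forward of a solution for $B \wr G$ to one for $S$, are correct and parallel the paper's first step (the paper performs the same reduction, but over $\gal{K(\mu_m)}{\rats}$ rather than over $G$, because its construction is Kummer-theoretic and needs $\mu_m$ in the base field). From there the two arguments genuinely diverge. The paper realizes the induced module by hand: it adjoins $\sqrt[m]{\sigma\theta_i}$ to $L = K(\mu_m)$ for elements $\theta_i$ produced by weak approximation with prescribed valuations at finitely many completely split places and at a place $w_0$ over $l$; the isomorphism $\gal{M'}{L}\iso F$ is read off from Kummer theory, the splitting of $\gal{M'}{\rats}\rsurj\gal{L}{\rats}$ is free from Shapiro's lemma, and unramifiedness at $l$ is immediate because each radicand is an $l$-adic unit and $l\nmid m$. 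You instead invoke the Hilbertian realization of wreath products together with an $l$-adic approximation on the specialization parameter.

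The gap sits in the ``Moreover'' clause, which is the only part of the statement not already contained in Serre's Claim 2.2.5. Your assertion that restricting the parameter to a residue class modulo a power of $l$ ``forces the specialized extension to have good reduction at $l$'' is exactly what needs proof: one must first exhibit a single $l$-adic parameter $t_0$ at which the fibre is unramified over $\Ql$ (equivalently, arrange the parametric family so that its branch locus has good reduction at the \emph{given} prime $l$); only then do Krasner-type continuity and the fact that thin sets meet every nonempty $l$-adic open set finish the job. Nothing in ``$l\nmid m$ and $l$ unramified in $KL$'' supplies such a $t_0$ until the family is pinned down, and your ``spreading $L$ out along a primitive element of $K$'' is not a well-defined construction for general exponent $m$: the degree-$m$ subfield of $\rats(\mu_{q_i})$ does not sit in an evident $\rats$-rational one-parameter family, and the standard wreath-product realization uses a regular realization of $B$ over $\rats(T)$ instead of an arbitrary realization $L/\rats$. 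So the proposal is a viable alternative route for the main assertion, but the decisive step for the refinement at $l$ --- producing an explicit point of good reduction --- is precisely what the paper's weak-approximation choice of the $\theta_i$ accomplishes and what your sketch defers.
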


\begin{proof}
The argument is a minor modification of the proof of \cite{ser:tgt}, Claim 2.2.5. Put $L=K(\mu_m)$.
$H$ can be regarded as a $m$-torsion module on which $\gal{L}{\rats}$ acts. So there is a finite free $(\Zmod{m})[\gal{L}{\rats}]$-module $F$ of which $H$ is a quotient. Suppose that $r$ is the number of copies of $(\Zmod{m})[\gal{L}{\rats}]$ in $F$.
Let $S'$ be the semi-direct product of $\gal{L}{\rats}$ and $F$. To solve the embedding problem posed by $1\to H \to S \to G \to 1$, it suffices to solve the embedding problem posed by 
$$1\to F \to S' \to \mathrm{Gal}(L/\rats) \to 1.$$
\textbf{Claim}: There is a Galois extension $M'/\rats$ that solves the above embedding problem. Moreover, for any prime $l$ that does not divide $m$ and is outside the ramification locus of $K/\rats$, we can choose $M'$ so that $l$ is unramified in $M'$.

To see this, we choose places $v_1,\cdots,v_r$ of $\rats$ away from $l$ such that $v_i$ \emph{splits completely} in $L$. Let $w_i$ be a place of $L$ extending $v_i$, $1\leq i \leq r$. Any place of $L$ extending $v_i$ can be written \emph{uniquely} as $\sigma w_i$ for some $\sigma \in \gal{L}{\rats}$. Let $w_0$ be a place of $L$ extending $l$. For $1\leq i \leq r$, choose $\theta_i\in \mc O_L$ such that for $0\leq j \leq r$, $(\sigma w_j)(\theta_i)=1$ if $\sigma=1$ and $i=j$ and $0$ if not. The existence of $\theta_i$ follows from the Weak Approximation Theorem.
Let 
$$M'=L(\sqrt[m]{\sigma\theta_i}|\sigma\in\gal{L}{\rats},1\leq i\leq r),$$ 
which is Galois over $\rats$, being the composite of $L$ and the splitting field of the polynomial $\prod_i\prod_{\sigma}(T^m-\sigma\theta_i) \in \rats[T]$. It is easy to see that
$\gal{M'}{L}$ is isomorphic to $F$ as $\gal{L}{\rats}$-modules. In fact, for each $i$ there is an isomorphism 
$$\phi_i:\mathrm{Gal}\big(L(\sqrt[m]{\sigma\theta_i}|\sigma\in\gal{L}{\rats})/L\big) \cong (\Zmod{m})[\mathrm{Gal}(L/\rats)].$$ 
For an element $g$ on the left side and any $\sigma\in \gal{L}{\rats}$, $g(\sqrt[m]{\sigma\theta_i})=\zeta_{\sigma} \cdot \sqrt[m]{\sigma\theta_i}$ for some $\zeta_{\sigma}\in \mu_m \cong \Zmod{m}$. 
We then define $$\phi_i(g)=\sum_{\sigma}\zeta_{\sigma}\cdot \sigma \in (\Zmod{m})[\gal{L}{\rats}].$$ 
It is clear that $\phi_i$ is an isomorphism by our choice of $\theta_i$. It follows that $\gal{M'}{L} \cong F$ by linear disjointness.   

Therefore, we obtain an exact sequence
$$1\to F \to \mathrm{Gal}(M'/\rats) \to \mathrm{Gal}(L/\rats) \to 1.$$
Since $F \cong \op{Ind}_{\{1\}}^{\gal{L}{\rats}}\Zmod{m}$, by Shapiro's lemma, $H^2(\gal{L}{\rats},F)=H^2(\{1\},\Zmod{m})=\{0\}$, hence the sequence splits. Thus, $\mathrm{Gal}(M'/\rats)\cong S'$.

It remains to show that $l$ is unramified in $M'$. For any $\sigma$ in $\gal{L}{\rats}$ and for any $i$, $w_0$ is unramified in $L(\sqrt[m]{\sigma\theta_i})$, because $w_0(\sigma\theta_i)=0$ and $l$ does not divide $m$. So $w_0$ is unramified in their composite $M'$. On the other hand, $l$ is unramified in $K$ by assumption and is unramified in $\rats(\mu_m)$ since $l$ does not divide $m$, so $l$ is unramified in $L$. It follows that $l$ is unramified in $M'$, proves the claim.

Finally, letting $M$ be the fixed field of the kernel of the natural surjective homomorphism $S' \rsurj S$, we obtain a solution to the original embedding problem.
\end{proof}

\subsubsection{$\op{SL}_n$} 
Let $G=\sl{n}{k}$, so $\mc W \cong S_n$. By \cite{ah}, the N-T sequence splits only when $n$ is odd. We consider the subgroup $\mc W'=A_n$ of $\mc W$. Let $T$ be the maximal torus of diagonal elements in $\sl{n}{k}$ and let $\Phi$ be $\Phi(G,T)$.

\begin{lem}\label{2.6}
Suppose $n \geq 4$. Then $A_n$, as a subgroup of $\mc W$, acts transitively on $\Phi$.
\end{lem}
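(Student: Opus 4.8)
The plan is to use the explicit description of the root system of type $A_{n-1}$ together with the standard action of the Weyl group. Recall that $\Phi = \Phi(\sl{n}{k}, T)$ can be identified with $\{e_i - e_j : 1 \leq i \neq j \leq n\}$ inside the hyperplane $\sum x_i = 0$ in $\reals^n$, and that $\mc W \cong S_n$ acts by permuting the coordinates $e_1, \dots, e_n$. Under this identification, a permutation $\sigma \in S_n$ sends the root $e_i - e_j$ to $e_{\sigma(i)} - e_{\sigma(j)}$. Since all roots in type $A_{n-1}$ have the same length, it suffices by Lemma \ref{2.1} (applied to $\mc W = S_n$, which certainly acts transitively on roots) to check that the smaller group $A_n$ still acts transitively; equivalently, that the stabilizer in $S_n$ of the root $e_1 - e_2$ is not contained in $A_n$.

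First I would compute that stabilizer: $\sigma(e_1 - e_2) = e_{\sigma(1)} - e_{\sigma(2)} = e_1 - e_2$ forces $\sigma(1) = 1$ and $\sigma(2) = 2$, so $\Stab_{S_n}(e_1 - e_2)$ is exactly the subgroup fixing $1$ and $2$ pointwise, i.e. $S_{\{3, \dots, n\}} \cong S_{n-2}$. When $n \geq 4$ this group contains the transposition $(3\,4)$, which is an odd permutation, hence $\Stab_{S_n}(e_1 - e_2) \not\subseteq A_n$. Therefore the $A_n$-orbit of $e_1 - e_2$ has size $[A_n : \Stab_{A_n}(e_1 - e_2)] = [A_n : A_n \cap S_{n-2}] = \tfrac{1}{2}n! \big/ \tfrac{1}{2}(n-2)! = n(n-1) = |\Phi|$, so $A_n$ acts transitively on $\Phi$. (For the orbit-size count one uses that $S_{n-2}$ is not contained in $A_n$, so $A_n \cap S_{n-2}$ has index $2$ in $S_{n-2}$, matching the index-$2$ drop from $S_n$ to $A_n$.)

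The only real subtlety is the hypothesis $n \geq 4$: for $n = 3$ the stabilizer $S_{n-2} = S_1$ is trivial and lies inside $A_3$, and indeed $A_3$ has order $3$ while $|\Phi| = 6$, so transitivity genuinely fails there; this is why the lemma is stated for $n \geq 4$ and also explains the exclusion of $\op{SL}_3$ noted in the introduction. I expect no further obstacle — the argument is a short orbit–stabilizer computation once the combinatorial model for $\Phi$ and the $S_n$-action is in place. With Lemma \ref{2.6} in hand, Remark \ref{2.3} then applies with $N' = \pi^{-1}(A_n)$ and $W' = A_n$, giving the desired decomposition of the adjoint representation.
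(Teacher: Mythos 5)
Your proof is correct and is in substance the same as the paper's: the paper simply invokes the fact that $A_n$ acts doubly transitively on $\{1,\dots,n\}$ if and only if $n\geq 4$, which is exactly the transitivity on ordered pairs $(i,j)\leftrightarrow e_i-e_j$ that your orbit--stabilizer computation verifies. Your version just makes the standard fact explicit, including the correct observation that the case $n=3$ genuinely fails.
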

\begin{proof}
This follows from the fact that $A_n$ acts doubly transitively on $\{1,2,\cdots,n\}$ if and only if $n \geq 4$.
\end{proof}

Let $N'=\pi^{-1}(\mc W')$, where $\pi$ is the natural map from $N_G(T)$ to $\mc W$.
\begin{lem}\label{2.7}
The following exact sequence of finite groups splits:
$$1 \to T \to N' \to \mc W' \to 1.$$
\end{lem}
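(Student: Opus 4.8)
The plan is to exhibit an explicit splitting by means of permutation matrices. Recall that $N = N_G(T)$ is the group of monomial matrices in $\sl{n}{k}$ — those having exactly one nonzero entry in each row and each column — and that the projection $\pi\colon N \to \mc W = S_n$ sends such a matrix to its underlying permutation, with kernel the diagonal torus $T$. For $\sigma \in S_n$ write $P_\sigma \in \gl{n}{k}$ for the associated permutation matrix; then $\sigma \mapsto P_\sigma$ is a group homomorphism $S_n \to \gl{n}{k}$ satisfying $\pi(P_\sigma) = \sigma$ and $\det P_\sigma = \op{sgn}(\sigma)$.

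First I would observe that the restriction of this homomorphism to $\mc W' = A_n$ takes values in $\sl{n}{k}$: if $\sigma$ is an even permutation then $\det P_\sigma = \op{sgn}(\sigma) = 1$. Since each $P_\sigma$ is monomial it normalizes $T$, so in fact $P_\sigma \in \pi^{-1}(A_n) = N'$, and by construction $\pi(P_\sigma) = \sigma$. Hence $\sigma \mapsto P_\sigma$ defines a homomorphic section $A_n \to N'$ of $\pi|_{N'}$, which is exactly a splitting of the exact sequence $1 \to T \to N' \to \mc W' \to 1$.

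There is essentially no obstacle here, which is the point of passing to $A_n$: the only reason the full N-T sequence $1 \to T \to N \to S_n \to 1$ can fail to split (namely for $n$ even) is that the natural permutation-matrix lift $\sigma \mapsto P_\sigma$ has determinant equal to the sign character, so it does not land in $\sl{n}{k}$ on odd permutations; but this character is trivial on $A_n$, so the same lift works verbatim over $\mc W'$. (When $\op{char}(k) = 2$ the sign character is already trivial and even the full sequence splits, but we only care about $l$ large, hence $\op{char}(k)\neq 2$.)
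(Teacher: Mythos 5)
Your proof is correct and is essentially the paper's own argument: the paper also realizes $A_n$ as the group of even permutation matrices inside $\sl{n}{k}$, notes that it normalizes $T$, and concludes that $N'$ is the semidirect product. Your additional remarks on determinants simply make explicit why this lift lands in $\sl{n}{k}$ on $A_n$ but not on all of $S_n$.
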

\begin{proof}
We think of $A_n$ as a subgroup of $N=N_G(T)$ by realizing it as the group of $n\times n$ \emph{even} permutation matrices. Then $A_n$ normalizes $T$ and 
$N'$ is a semi-direct product of them.
\end{proof}

Let $l$ be large enough. Since $T$ is abelian of exponent $|k|-1$ which is prime to $l$, by Theorems \ref{2.4} and \ref{2.5}, there is a surjection
$\galQ \rsurj N'$ which is unramified at $l$. Moreover, by choosing a rational polynomial with non-real roots that realizes $A_n$ as a Galois group over $\rats$, we can make the complex conjugation map to an element away from the center of $G$. 
Define $\brho$ to be the composite
$$\galQ \rsurj N' \xrightarrow{i} \sl{n}{k}.$$

\begin{rmk}\label{2.8}
It is tricker to realize $N$ as a Galois group over $\rats$. This can be reduced to realizing the `Tits group' $\mc T$ of $\op{SL}_n$ as a Galois group. $\mc T$ can be identified with the group of $n\times n$ signed permutation matrices with determinant one, which is an index two subgroup of the group of $n\times n$ signed permutation matrices. The latter is isomorphic to the Weyl group of type $B_n$, hence known to be a Galois group over $\rats$. As the N-T sequence splits if and only if $n$ is odd, by Theorem \ref{2.5}, $\mc T$ can be realized over $\rats$ for $n$ odd. When $n$ is even, this problem is open except for small $n$, as far as the author knows.     
\end{rmk}

Let $\mf g=\mf{sl}_n(k)$.
\begin{prop}\label{2.9} For $l$ sufficiently large and $n \geq 4$, $\brho(\mf g)$ decomposes into irreducible $\galQ$-modules
$\brho(\mf t)$ and $\brho(\mf g_{\Phi})$.
\end{prop}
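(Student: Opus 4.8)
The plan is to deduce the proposition directly from Corollary~\ref{2.2} and Remark~\ref{2.3}; the only extra ingredients are a reduction-of-scalars observation and the elementary fact about $A_n$ that forces the hypothesis $n\geq 4$.

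First I would reduce irreducibility over $\galQ$ to irreducibility over $N'$. By construction $\brho$ is the composite $\galQ \rsurj N' \xrightarrow{i} \sl{n}{k}$, so the $k[\galQ]$-module $\brho(\mf g)$ is obtained from the $k[N']$-module $\mf g$ (with $N'$ acting via $\mathrm{Ad}\circ i$) by restriction of scalars along the surjection $\galQ \rsurj N'$. Consequently a $k$-subspace of $\mf g$ is $\galQ$-stable iff it is $N'$-stable, and a subquotient is $\galQ$-irreducible iff it is $N'$-irreducible. It therefore suffices to prove $\mf g=\mf t\oplus\mf g_{\Phi}$ as $k[N']$-modules with both summands irreducible.

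Next, the decomposition and the irreducibility of $\mf g_{\Phi}$: the root system $\Phi=\Phi(\sl{n}{k},T)$ is of type $A_{n-1}$, hence simply-laced, so the pieces $\mf g_l,\mf g_s$ of Corollary~\ref{2.2} are absent and the only decomposition offered there is $\mf g=\mf t\oplus\mf g_{\Phi}$. By Lemma~\ref{2.6}, the image $\mc W'=A_n$ of $N'$ in $\mc W$ acts transitively on $\Phi$ (this is precisely where $n\geq 4$ enters), so Remark~\ref{2.3} lets us apply Corollary~\ref{2.2} to $N'$: for $l$ sufficiently large, $\mf g_{\Phi}$ is an irreducible $k[N']$-module and $\mf g=\mf t\oplus\mf g_{\Phi}$.

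It remains to check that $\mf t$ is an irreducible $k[N']$-module. The $N'$-action on $\mf t$ factors through $\mc W'=A_n$, and $\mf t$ is the trace-zero diagonal torus, i.e.\ the reduction mod $l$ of the reflection representation of $S_n$ restricted to $A_n$. For $n\geq 4$ this restricted representation is absolutely irreducible in characteristic $0$ (for $n=3$ it splits, which is why that case must be excluded); since $l\nmid|A_n|$ once $l$ is large, the reduction is semisimple with unchanged Brauer character and stays irreducible over $k$. (Alternatively one can argue directly using the eigenvalues of a $3$-cycle in $A_n\subset N'$ acting on $\mf t$.) Combining the three steps gives $\brho(\mf g)=\brho(\mf t)\oplus\brho(\mf g_{\Phi})$ with both summands $\galQ$-irreducible, as claimed. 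None of the steps is hard; the only point requiring care is the last one --- ensuring that the module $\mf t$, having been cut down from $\mc W$ to $A_n$ and then reduced mod $l$, remains irreducible --- and this is exactly what pins down the condition $n\geq 4$ together with ``$l$ large''.
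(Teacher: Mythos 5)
Your proof is correct and follows essentially the same route as the paper, whose entire proof is to cite Corollary \ref{2.2} together with Remark \ref{2.3}, with Lemma \ref{2.6} supplying the transitivity of $A_n$ on $\Phi$ exactly when $n \geq 4$. Your explicit check that $\mf t$ (the reflection representation) stays irreducible after restricting from $\mc W=S_n$ to $A_n$ and reducing mod $l$ is a worthwhile addition, since the transitivity hypothesis of Remark \ref{2.3} only directly addresses $\mf g_{\Phi}$ and leaves the $\mf t$ summand implicit.
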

\begin{proof}
This follows from Corollary \ref{2.2} and Remark \ref{2.3}.
\end{proof}

There remains the case when $G$ is $\op{SL}_2$ or $\op{SL}_3$. For $\op{SL}_3$, see Section 2.2. For $\op{SL}_2$, see Section 4.2.3.

\subsubsection{$\op{Sp}_{2n}$}
Let $G=\sp{2n}{k}$, then $\mc W$ is isomorphic to a semi-direct product of $S_n$ and $D:=(\Zmod{2})^n$. We fix a maximal split torus $T$ in $\sp{2n}{k}$. The N-T sequence 
does not split by \cite{ah}.

\begin{lem}\label{2.10} Consider the N-T sequence for $\op{Sp}_{2n}$.
The group $S_n \subset \mc W$ has a section to $N$, whereas $D \subset \mc W$ does not have a section to $N$ but there is a subgroup $\tilde D$ of $N$ such that
$\pi(\tilde D)=D$ and $\tilde D \cong (\Zmod{4})^n$. 
Moreover, as subgroups of $N$, $S_n$ normalizes $\tilde D$ and $S_n \cap \tilde D=\{1\}$. 
\end{lem}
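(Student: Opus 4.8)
The plan is to fix the explicit matrix model $\sp{2n}{k}=\{g: g^{T}Jg=J\}$ with $J=\begin{pmatrix}0 & I_n\\ -I_n & 0\end{pmatrix}$, so that $T$ is the diagonal torus $\{\mathrm{diag}(t_1,\dots,t_n,t_1^{-1},\dots,t_n^{-1})\}$ and $\mc W=S_n\ltimes D$ with $S_n$ permuting the $t_i$ and $D=(\Zmod 2)^n$ inverting them individually. For $\sigma\in S_n$ with permutation matrix $P_\sigma$, a one-line check using $P_\sigma^{T}P_\sigma=I$ shows $\mathrm{diag}(P_\sigma,P_\sigma)\in\sp{2n}{k}$; these matrices normalize $T$, form a subgroup of $N$ isomorphic to $S_n$, and project onto the standard copy of $S_n\subset\mc W$, giving the asserted section of $S_n$. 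For the $i$-th sign change $s_i\in D$, let $\tilde s_i\in\sp{2n}{k}$ act as $\begin{pmatrix}0&1\\-1&0\end{pmatrix}$ on the coordinate pair $(i,n+i)$ and as the identity on all other coordinates; a direct computation shows $\tilde s_i\in N$ and $\pi(\tilde s_i)=s_i$.

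To see that $D$ has no section to $N$ I compute squares of arbitrary lifts. Any lift of $s_i$ to $N$ has the form $\tilde s_i t$ with $t\in T$, and since $T$ is abelian and conjugation by $\tilde s_i$ induces the involution $s_i$ on $T$, one gets $(\tilde s_i t)^2=\tilde s_i^{2}\cdot{}^{s_i}t\cdot t$. Now $\tilde s_i^{2}\in T$ is the element with $-1$ in coordinates $i$ and $n+i$ — nontrivial because $l$ is large, hence $l\neq 2$ — whereas ${}^{s_i}t\cdot t$ has $i$-th coordinate $t_i^{-1}t_i=1$; so the $i$-th coordinate of $(\tilde s_i t)^2$ equals $-1\neq 1$. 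Thus $s_i$ admits no lift of order dividing $2$, and since $D$ is elementary abelian of exponent $2$, any section would force such a lift — a contradiction.

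Next I take $\tilde D:=\langle \tilde s_1,\dots,\tilde s_n\rangle$. For $i\neq j$ the matrices $\tilde s_i$ and $\tilde s_j$ act on disjoint coordinate pairs, hence commute, so $\tilde D$ is abelian; and each $\tilde s_i$ has order exactly $4$, since $\tilde s_i^{2}\in T$ is nontrivial and $\tilde s_i^{4}=I$. The elements $\tilde s_1^{2},\dots,\tilde s_n^{2}$ form the standard basis of $T(k)[2]\cong(\Zmod 2)^n$ and are therefore $\Zmod 2$-independent; combined with the fact that $\tilde s_i,\tilde s_i^{3}$ project to $s_i\notin\langle s_j:j\neq i\rangle$, this gives $\langle\tilde s_i\rangle\cap\langle \tilde s_j:j\neq i\rangle=\{1\}$, so $\tilde D\cong\prod_i\langle\tilde s_i\rangle\cong(\Zmod 4)^n$, and clearly $\pi(\tilde D)=\langle s_1,\dots,s_n\rangle=D$.

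Finally, conjugation by $\mathrm{diag}(P_\sigma,P_\sigma)$ permutes the coordinate pairs $(i,n+i)$ according to $\sigma$, hence permutes the generators $\tilde s_i$ of $\tilde D$ among themselves, so $S_n$ normalizes $\tilde D$. For the intersection, an element of $S_n\cap\tilde D$ maps into $S_n\cap D=\{1\}$ in $\mc W$ (since $\mc W$ is a semidirect product), so it lies in $T$; but $\mathrm{diag}(P_\sigma,P_\sigma)\in T$ forces $\sigma=1$, whence $S_n\cap\tilde D=\{1\}$. The only step that needs genuine thought is the square computation ruling out a section of $D$; the rest is bookkeeping with explicit matrices, with the caveat that the "order $4$, not $2$" claim and the independence of the $\tilde s_i^{2}$ in $T[2]$ both use $l\neq 2$, which holds since $l$ is taken large.
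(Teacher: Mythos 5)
Your proof is correct and follows essentially the same route as the paper: the same explicit symplectic matrix model, the block-diagonal permutation matrices as the section of $S_n$, and the order-$4$ rotations on the hyperbolic planes as the generators $\tilde s_i$ of $\tilde D$ (the paper's $d_i$). The one place you go beyond the paper is the square computation $(\tilde s_i t)^2$ showing that no lift of $s_i$ has order $2$ — the paper merely asserts that $D$ has no section — and that argument is valid.
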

We let $\mc W_1$ be the subgroup of $N$ generated by $S_n$ and $\tilde D$.
\begin{proof}
Let $V=k^{2n}$ be the $2n$-dimensional vector space over $k$ endowed with a non-degenerate alternating form $(,)$. We may choose a basis $$e_1,\cdots,e_n,e_1',\cdots,e_n'$$ of $V$ such that $(e_i,e_j')=1$ if and only if $i=j$, and that $(e_i,e_j)=0$, $(e_i',e_j')=0$ for all $i,j$.
The Weyl group of $\mathrm{Sp}_{2n}$ is isomorphic to the semi-direct product of the group $S_n$, which acts by permuting $e_1,\cdots,e_n$, and the group $D:=(\ints/2\ints)^n$, which acts by $e_i \mapsto (\pm 1)_ie_i$. There is an inclusion $S_n \to \mathrm{Sp}_{2n}(k)$ (which is a section to $N \to \mc W$) given as follows: $\forall \sigma \in S_n$, $\sigma$ permutes $e_1,\cdots,e_n$ and $e_1',\cdots,e_n'$ by permuting the indices, which defines an element in $\mathrm{Sp}_{2n}(k)$. There is no such inclusion for $D$. However, we can define a $2$-group $\tilde D$ which embeds into $\mathrm{Sp}_{2n}(k)$ as follows:
for $1\leq i \leq n$, let $d_i$ be an endomorphism of $V$ such that $d_i(e_i)=-e_i'$, $d_i(e_i')=e_i$, and $d_i$ fixes all other basis vectors. It is clear that $d_i \in \mathrm{Sp}_{2n}(k)$. Let $\tilde D$ be the subgroup of $\mathrm{Sp}_{2n}(k)$ generated by $d_i$ for $1\leq i \leq n$. Then $\tilde D \cong (\ints/4\ints)^n$. It is obvious that (as subgroups of $\mathrm{Sp}_{2n}(k)$) $S_n$ normalizes $\tilde D$ and $S_n\cap \tilde D=\{1\}$.
\end{proof}

Therefore, by Theorem \ref{2.4} and \ref{2.5}, $\mc W_1$ can be realized as a Galois group over $\rats$ such that the complex conjugation corresponds to an element away from the center of $G$. The group $N$ is generated by $\mc W_1$ and $T$. We have $\mc W_1$ normalizes $T$ and $\mc W_1 \cap T \cong (\Zmod{2})^n$. Let $S$ be the (abstract) semidirect product of $\mc W_1$ and $T$. By Theorem \ref{2.5}, for $l$ large enough, $S$ can be realized as a Galois group over $\rats$ unramified at $l$. 
Composing the corresponding map $\galQ \rsurj S$ with the natural surjection $S \rsurj N$, we obtain a surjection
$\galQ \rsurj N$ that is unramified at $l$ and for which the complex conjugation maps to an element outside the center of $G$.
Define $\brho$ to be the composite
$$\galQ \rsurj N \xrightarrow{i} \sp{2n}{k}.$$

Let $\mf g=\mf{sp}_{2n}(k)$. The root system $\Phi$ of $\mf g$ is not simply laced.
\begin{prop}\label{2.11} 
For $l$ sufficiently large, $\brho(\mf g)$ decomposes into irreducible $\galQ$-modules $\brho(\mf t)$, $\brho(\mf g_l)$ and $\brho(\mf g_s)$.
\end{prop}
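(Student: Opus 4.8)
The plan is to deduce this directly from Corollary~\ref{2.2}, in essentially the same way Proposition~\ref{2.9} was deduced, the only difference being that here the residual image is the \emph{full} normalizer $N = N_G(T)$ rather than a proper preimage of a subgroup of $\mc W$, so Corollary~\ref{2.2} applies verbatim and we do not even need Remark~\ref{2.3}.

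First I would recall from the construction preceding the proposition that we have a surjection $\galQ \rsurj N$, and that $\brho$ is by definition its composite with the inclusion $i\colon N \hookrightarrow \sp{2n}{k}$. Hence the adjoint representation $\galQ \xrightarrow{\brho} G \xrightarrow{Ad} \op{GL}(\mf g)$ factors through $N$ with image exactly $Ad(N)$, so the $\galQ$-submodules of $\brho(\mf g)$ are precisely the $N$-submodules of $\mf g$. Now $\Phi = \Phi(G,T)$ is the irreducible root system of type $C_n$, so by Lemma~\ref{2.1} the Weyl group $\mc W$ acts transitively on the long roots and transitively on the short roots; thus the hypothesis of Corollary~\ref{2.2} is satisfied (with $N' = N$). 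Corollary~\ref{2.2} then gives, for $l$ sufficiently large, a decomposition of $\mf g$ as a direct sum of irreducible $N$-modules $\mf t \oplus \mf g_l \oplus \mf g_s$. Transporting this through the identification of $\galQ$- and $N$-submodules yields $\brho(\mf g) = \brho(\mf t) \oplus \brho(\mf g_l) \oplus \brho(\mf g_s)$ as a direct sum of irreducible $\galQ$-modules, as claimed.

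There is essentially no obstacle here: all the substantive work has been done in Corollary~\ref{2.2} and in the construction of $\brho$. The only points deserving a moment's attention are that one has genuinely realized the \emph{entire} group $N$ as a Galois quotient of $\galQ$ (this is exactly what the construction via Theorems~\ref{2.4} and~\ref{2.5} achieves, passing through $\mc W_1$ and the semidirect product $S$), and that the acting group is the full Weyl group of type $C_n$, so that the transitivity hypothesis of Corollary~\ref{2.2} is immediate from Lemma~\ref{2.1}. One should also keep in mind that ``$l$ sufficiently large'' in particular forces $l \neq 2$, so that $\mf g = \mf{sp}_{2n}(k)$ has its usual root-space decomposition with one-dimensional root spaces, which is implicit in Corollary~\ref{2.2}.
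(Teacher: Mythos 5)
Your proposal is correct and matches the paper's argument, which likewise deduces Proposition \ref{2.11} directly from Corollary \ref{2.2}; your observation that Remark \ref{2.3} is not needed here because the residual image is the full normalizer $N$ is exactly right and consistent with the paper citing only Corollary \ref{2.2} in this case (as opposed to Proposition \ref{2.9}, which also invokes Remark \ref{2.3}).
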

\begin{proof}
This follows from Corollary \ref{2.2}.
\end{proof}

\subsubsection{$\op{Spin}_{2n}$ and $\op{Spin}_{2n+1}$}
For spin groups, the N-T sequence 
does not split by \cite{ah}.
For $G=\op{Spin}_{2n}$, $\mc W$ is isomorphic to a semi-direct product of $S_n$ and $D:=(\Zmod{2})^{n-1}$ and we let $\mc W'$ be the subgroup generated by $A_n$ and $D$.
For $G=\op{Spin}_{2n+1}$, $\mc W$ is isomorphic to a semi-direct product of $S_n$ and $D:=(\Zmod{2})^{n}$ and we let $\mc W'$ be the subgroup generated by $A_n$ and $D$.
Similar to the symplectic case, we will show that $N'=\pi^{-1}(\mc W')$ is a Galois group over $\rats$.

\begin{lem}\label{2.12} Consider the N-T sequence for $G=\spin{2n}{k}$ or $\spin{2n+1}{k}$.
The map $\pi^{-1}(A_n) \to A_n$ admits a section, and there is a nilpotent subgroup $\tilde D$ of $N$ such that
$\pi(\tilde D)=D$. 
Moreover, as subgroups of $N$, $A_n$ normalizes $\tilde D$ and we let $\mc W_1$ be their product.
\end{lem}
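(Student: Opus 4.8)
The plan is to work inside the Clifford algebra $C=C(V,q)$ of the split quadratic space of dimension $m$ underlying $G=\spin{m}{k}$ (here $m=2n$ for $\spin{2n}{k}$ and $m=2n+1$ for $\spin{2n+1}{k}$), realizing $\mathrm{Spin}$ as the kernel of the spinor norm on the even Clifford group $\Gamma^{+}\subset C^{0}$, together with the central isogeny $\spin{m}{k}\to\so{m}{k}$, $x\mapsto(v\mapsto xvx^{-1})$. Since $N=N_G(T)$ is the preimage of the normalizer of the image of $T$ in $\so{m}{k}$, it suffices to exhibit all the elements we need inside $C$ and to check that they normalize that image torus. Fix a hyperbolic basis $e_1,\dots,e_n,e_1',\dots,e_n'$ of $V$ (plus one anisotropic vector $e_0$ when $m$ is odd), with polar form $b(e_i,e_i')=1$, so that automatically $q(e_i-e_i')=-1$; when $m$ is odd, normalize so that $q(e_0)=-1$ as well — this still gives the split $\so{m}{}$. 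Take the usual model of $\mc W$ on this basis: $S_n$ permutes the hyperbolic pairs, a sign change $s_i$ is the orthogonal map swapping $e_i\leftrightarrow e_i'$ (and, when $m$ is odd, negating $e_0$, which is forced by $\det=1$), and $D$ is generated by the $s_i$ (all of them when $G=\spin{2n+1}{k}$, the even products $s_is_j$ when $G=\spin{2n}{k}$).

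First I would produce the section of $\pi^{-1}(A_n)\to A_n$. The permutation action of $A_n$ on $\langle e_1,\dots,e_n\rangle\iso k^{n}$ has determinant $1$, hence factors through $\sl{n}{k}$, which embeds in $\so{m}{k}$ by $g\mapsto\operatorname{diag}(g,{}^{t}g^{-1})$ (fixing $e_0$ in the odd case) and preserves the hyperbolic form. As $\mathrm{SL}_n$ is simply connected, this homomorphism of algebraic groups lifts uniquely along $\spin{m}{}\to\so{m}{}$, and on $k$-points the composite $A_n\hookrightarrow\sl{n}{k}\to\spin{m}{k}$ gives a subgroup $\widetilde{A_n}\iso A_n$ that normalizes the image torus and maps isomorphically onto $A_n\subset\mc W$; this is the section. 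The point of routing through $\mathrm{SL}_n$ is that it automatically dissolves the (potentially nonzero) obstruction in $H^{2}(A_n,\Zmod{2})$ to lifting $A_n$ directly to $\mathrm{Spin}$.

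Next I would build $\tilde D$. Each relevant sign-change Weyl element is a product of commuting reflections in anisotropic vectors of norm $-1$: the elements $s_is_j=r_{e_i-e_i'}r_{e_j-e_j'}$ in the $\spin{2n}{k}$ case, and $s_i=r_{e_i-e_i'}r_{e_0}$ in the $\spin{2n+1}{k}$ case; correspondingly its lift into $\Gamma^{+}$ is $\delta_{ij}:=(e_i-e_i')(e_j-e_j')$, resp. $\eta_i:=(e_i-e_i')e_0$. The spinor norm of each of these products of two norm-$(-1)$ vectors is $(-1)(-1)=1$, so these elements genuinely lie in $\spin{m}{k}$; they clearly normalize the image torus and they map in $\mc W$ to $s_is_j$, resp. $s_i$. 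Set $\tilde D:=\langle\delta_{ij}\rangle$, resp. $\langle\eta_1,\dots,\eta_n\rangle$. A one-line computation in $C$ gives $\eta_i^{2}=-1$ and $\eta_i\eta_j=-\eta_j\eta_i$ for $i\ne j$ (and likewise for the $\delta_{ij}$), so every element of $\tilde D$ is $\pm$ a squarefree monomial in the generators; hence $\tilde D$ is a finite $2$-group, in particular nilpotent, with $\pi(\tilde D)=D$. Finally, conjugation by the lift $\tilde\sigma$ of $\sigma\in A_n$ acts on $V\subset C$ by the original permutation and fixes $e_0$, so $\tilde\sigma\eta_i\tilde\sigma^{-1}=\eta_{\sigma(i)}$ and $\tilde\sigma\delta_{ij}\tilde\sigma^{-1}=\delta_{\sigma(i)\sigma(j)}$; thus $\widetilde{A_n}$ normalizes $\tilde D$. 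We put $\mc W_1:=\widetilde{A_n}\,\tilde D$; since $\widetilde{A_n}\cap\tilde D\subseteq\widetilde{A_n}\cap T=\{1\}$, this product is in fact semidirect.

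The only genuinely delicate step — everything else being routine bookkeeping in $C$ — is ensuring that the lifts land in $\spin{m}{k}$ rather than merely in $\mathrm{Pin}$ or in $\Gamma^{+}$ with a nonsquare spinor norm; this is exactly the spinor-norm computation above, and it is what forces the choice of quadratic form (taking the anisotropic line and the ``difference'' vectors $e_i-e_i'$ to have norm $-1$, so that each relevant product of two reflections has spinor norm exactly $1$). With a different normalization such as $q(e_0)=1$ one would only obtain the lift after adjoining $\sqrt{-1}$; that is unnecessary for this lemma, so no congruence on $l$ enters here.
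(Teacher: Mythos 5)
Your proof is correct, and it reaches the same three conclusions as the paper's proof by a noticeably more explicit route. For the section over $A_n$, the paper identifies the pullback of $\gl{n}{k}\to \underline G$ along the covering $G\to \underline G$ concretely as the group of pairs $(g,z)$ with $\det g=z^2$, and uses the criterion that a subgroup $H\leq \gl{n}{k}$ lifts if and only if $\det|_H$ is the square of a character (trivially true for $A_n$); you instead invoke simple-connectedness of $\op{SL}_n$ to lift the algebraic homomorphism $\op{SL}_n\to \op{SO}_m$ and restrict to $A_n$. These are two packagings of the same vanishing of the obstruction in $H^2(A_n,\Zmod{2})$. The real divergence is in the construction of $\tilde D$: the paper quotes \cite[Theorem 4.16]{ah} (or ``an elementary matrix calculation'') for a splitting of $\underline\pi^{-1}(D)\to D$ inside the special orthogonal group, defines $\tilde D$ as the full preimage of that copy of $D$ under $G\to \underline G$, and gets nilpotency abstractly from $[\tilde D,\tilde D]\subseteq \ker(G\to\underline G)\cong\mu_2\subseteq Z(G)$; you instead write down explicit generators in the Clifford algebra and check the relations and spinor norms by hand. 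Your version is longer but self-contained, and it makes explicit a point the paper's phrasing leaves implicit: since $\spin{m}{k}\to\so{m}{k}$ is not surjective on $k$-points (the cokernel being measured by the spinor norm), ``the preimage of $D$'' surjects onto $D$ only when the chosen copy of $D$ in $\so{m}{k}$ lies in the image of $\spin{m}{k}$, and your spinor-norm-one computation is exactly what guarantees this; the resulting $\tilde D$, of order $2|D|$ and containing $-1$, coincides with the paper's full preimage. The observation that $\mc W_1=\widetilde{A_n}\tilde D$ is in fact a semidirect product is correct but not needed for the lemma.
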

\begin{proof}
Let $\underline G:=\so{2n}{k}$ or $\so{2n+1}{k}$. 
We have the standard homomorphism $i:\gl{n}{k} \to \underline G$, which restricts to a homomorphism $S_n \to \underline G$. Let $\widetilde{\op{GL}}_n(k)$ be the pullback of $i$ along the covering map $G \to \underline G$. It is a two-fold central extension of $\gl{n}{k}$, which can be identified with the group of pairs $(g,z)$ with $g \in \gl{n}{k}, z \in k^{\times}$, such that $\det g=z^2$, where the multiplication is defined by $(g_1,z_1)\cdot (g_2,z_2)=(g_1g_2,z_1z_2)$. A subgroup $H$ of $\gl{n}{k}$ has a section in $\widetilde{\op{GL}}_n(k)$ if and only if the restriction of $\det$ to $H$ is the square of a character of $H$.
In particular, taking $H=A_n$, we see that $A_n$ has a section to
$\widetilde{\op{GL}}_n(k)$. It follows that $\pi^{-1}(A_n) \to A_n$ admits a section.

The map $\underline{\pi}^{-1}(D) \to D$ has a section, where $\underline{\pi}$ is the natural map $N_{\underline G}(\underline T) \to \mc W$: This follows from \cite[Theorem 4.16]{ah}, or can be seen directly from an elementary matrix calculation. Let $\tilde D \subset G$ be the preimage of $D \subset \underline G$ under the covering map $G \to \underline G$. As $D$ is abelian, $[\tilde D, \tilde D]=Z(G) \cong \mu_2$. In particular, $\tilde D$ is nilpotent. 

Finally, because $A_n$ normalizes $D$ in $\underline G$, $A_n$ normalizes $\tilde D$ in $G$.
\end{proof}

Therefore, by Theorem \ref{2.4} and Shafarevich's Theorem (the group $\tilde D$ is nilpotent, see \cite[IX, Section 6]{nsw} for Shafarevich's Theorem), $\mc W_1$ can be realized as a Galois group over $\rats$ such that the complex conjugation corresponds to an element outside the center of $G$. Let $N'$ be the subgroup of $G$ generated by $\mc W_1$ and $T$ ($\mc W_1$ normalizes $T$). Let $S$ be the (abstract) semidirect product of $\mc W_1$ and $T$. By Theorem \ref{2.5}, for $l$ large enough, $S$ can be realized as a Galois group over $\rats$ unramified at $l$. Composing the corresponding map $\galQ \rsurj S$ with the natural quotient map $S \rsurj N'$, we obtain a surjection
$$\galQ \rsurj N'$$ that is unramified at $l$ and for which the complex conjugation maps to an element outside the center of $G$ (since the complex conjugation corresponds to an element outside the center of $G$ in the realization of $\mc W_1$ as a Galois group over $\rats$).
For $G=\op{Spin}_{2n}$ or $\op{Spin}_{2n+1}$, define $\brho$ to be the composite
$$\galQ \rsurj N' \xrightarrow{i} G(k).$$

Let $\mf g$ be the Lie algebra of $G(k)$. The corresponding root system $\Phi$ is simply laced if $G=\op{Spin}_{2n}$ and is not if $G=\op{Spin}_{2n+1}$.

\begin{prop}\label{2.13}
For $l$ sufficiently large and $n \geq 4$, $\brho(\mf g)$ decomposes into irreducible $\galQ$-modules $\brho(\mf t)$ and $\brho(\mf g_{\Phi})$ when $G=\op{Spin}_{2n}$; and it decomposes into irreducible $\galQ$-modules $\brho(\mf t)$, $\brho(\mf g_l)$ and $\brho(\mf g_s)$ when $G=\op{Spin}_{2n+1}$. 
\end{prop}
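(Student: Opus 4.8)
The plan is to reduce Proposition~\ref{2.13} directly to the general machinery already in place, namely Corollary~\ref{2.2} together with Remark~\ref{2.3}. The only thing that needs checking is that the subgroup $\mc W'$ of $\mc W$ (generated by $A_n$ and $D$) — equivalently the subgroup $N'$ of $N$ whose image under $\pi$ is $\mc W'$ — still acts transitively on the set of roots of each fixed length. Once that transitivity is established, Remark~\ref{2.3} says that the conclusion of Corollary~\ref{2.2} persists verbatim for $N'$ in place of $N$: the adjoint action of $N'$ on $\mf g$ decomposes as $\mf t \oplus \mf g_\Phi$ in the simply-laced case $G = \op{Spin}_{2n}$, and as $\mf t \oplus \mf g_l \oplus \mf g_s$ in the case $G = \op{Spin}_{2n+1}$, with each summand irreducible for $l$ large. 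Irreducibility of $\brho(\mf t)$ as an $N'$-module is the other assertion of Corollary~\ref{2.2}/Remark~\ref{2.3}, and it is exactly the statement that $\mf t$ affords an irreducible representation of $\mc W'$; this too follows once we know $\mc W'$ acts without proper invariant subspaces on the span of $\Phi$, which in turn follows from the transitivity on roots (the reflections through roots of a single $\mc W'$-orbit already generate a subgroup acting irreducibly, by the usual argument behind Lemma~\ref{2.1}).

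So the real content is the root-transitivity of $\mc W'$. First I would recall the explicit description of $\Phi$ and $\mc W$ in type $D_n$ and $B_n$: in a standard basis $\varepsilon_1,\dots,\varepsilon_n$, the roots of $D_n$ are $\pm\varepsilon_i \pm \varepsilon_j$ ($i \neq j$), all of the same length, and $\mc W$ acts by permutations of the indices composed with an even number of sign changes; for $B_n$ one adds the short roots $\pm\varepsilon_i$, and $\mc W$ allows arbitrary sign changes. In either case $D = (\Zmod 2)^{n-1}$ (resp.\ $(\Zmod 2)^n$) realizes the sign-change part, and $A_n$ realizes the even index permutations. For the long roots $\pm\varepsilon_i \pm \varepsilon_j$: using $D$ I can adjust the two signs freely (for $D_n$, any sign pattern on $\{i,j\}$ is achievable by an even total number of sign flips supported on $\{i,j\}\cup\{\text{one spectator}\}$, and such a flip lies in the relevant $(\Zmod 2)^{n-1}$ — here I would want $n \geq 3$ so a spectator index exists), and using the doubly transitive action of $A_n$ on $\{1,\dots,n\}$ (valid for $n \geq 4$, exactly as in Lemma~\ref{2.6}) I can move the unordered pair $\{i,j\}$ to any other pair. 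Hence $\mc W'$ is transitive on long roots. For $B_n$ the short roots $\pm\varepsilon_i$ form a single orbit under $D$ (sign change) and $A_n$ (which is already transitive, hence a fortiori for $n\geq 4$, on singletons), so $\mc W'$ is transitive on short roots as well.

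Having checked transitivity on each length class, I would invoke Remark~\ref{2.3} to transport Corollary~\ref{2.2} to $N'$, and then the decomposition and irreducibility statements of Proposition~\ref{2.13} are immediate, with the hypothesis ``$l$ sufficiently large'' used exactly where Corollary~\ref{2.2} uses it (to find $t \in T$ separating all the characters $\alpha$, $\alpha \in \Phi$), and the hypothesis $n \geq 4$ used exactly to guarantee double transitivity of $A_n$. I would also remark that the map $\brho$ in question factors through $N'$ by construction (it is the composite $\galQ \rsurj N' \xrightarrow{i} G(k)$ of Lemma~\ref{2.12} and the surrounding discussion), so $\brho(\mf g)$, $\brho(\mf t)$, etc.\ are precisely the $N'$-modules just analyzed, pulled back along the surjection $\galQ \rsurj N'$; pulling back along a surjection preserves irreducibility, so nothing is lost in passing from $N'$-modules to $\galQ$-modules.

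The main obstacle — such as it is — is purely bookkeeping: one must be careful that in the $D_n$ case the available sign changes lie in $(\Zmod 2)^{n-1}$ rather than all of $(\Zmod 2)^n$, so that flipping the signs on a two-element set $\{i,j\}$ is phrased as flipping signs on $\{i,j\}$ together with a third spectator index and then correcting, which is why one genuinely needs $n$ large enough (and in particular explains the $n \geq 4$ restriction, matching Lemma~\ref{2.6} and consistent with the excluded case $\op{Spin}_7$ noted in the introduction). There is no serious analytic or cohomological difficulty here; the proposition is a corollary of the general lemma once the combinatorial transitivity is in hand.
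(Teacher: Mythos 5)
Your proposal is correct and follows exactly the paper's route: the paper's proof of Proposition \ref{2.13} simply observes that $\mc W'$ acts transitively on roots of each length precisely when $n \geq 4$ and then cites Corollary \ref{2.2} and Remark \ref{2.3}. You have merely written out the combinatorial transitivity check (sign changes via $D$ with a spectator index in the $D_n$ case, double transitivity of $A_n$ on indices) that the paper leaves implicit.
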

\begin{proof}
Note that the action of $\mc W'$ on $\Phi$ is transitive if and only if $n \geq 4$. Then the proposition follows from Corollary \ref{2.2} and Remark \ref{2.3}.  
\end{proof}

\begin{rmk}\label{2.14}
There remains the case when $G$ is one of $\mathrm{Spin}_4$, $\mathrm{Spin}_5$, $\mathrm{Spin}_6$, $\mathrm{Spin}_7$. But $\spin{4}{\bQl} \cong \sl{2}{\bQl} \times \sl{2}{\bQl}$, $\spin{5}{\bQl} \cong \sp{4}{\bQl}$, 
$\spin{6}{\bQl} \cong \sl{4}{\bQl}$ which are included in other cases. For $\mathrm{Spin}_7$, the half sum of coroots $\rho^{\vee}=3\alpha_1^{\vee}+5\alpha_2^{\vee}+3\alpha_3^{\vee}$ has integer coefficients, so the principal $\op{GL}_2$ map is well-defined, see Section 2.2.
\end{rmk}

\subsubsection{$E_7^{sc}$}
Let $G=E_7^{sc}(k)$. The Weyl group $\mc W$ is isomorphic to the direct product of $[\mc W,\mc W]$ and $\Zmod{2}$. By \cite{ah}, the N-T sequence does not split. We choose a subgroup $\mc W'$ of $\mc W$ which lifts to $N$ as follows. Consider the extended Dynkin diagram of type $E_7$, there is a sub-root system $\Phi'$ of $\Phi$ which is of type $A_7$. The alternating group $A_8$ is a subgroup of $S_8 \cong \mc W(A_7) \leq \mc W=\mc W(E_7)$.

\begin{lem}\label{e7,1}
The group $A_8 \leq \mc W$ lifts to $N$.
\end{lem}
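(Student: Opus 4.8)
The plan is to realize the lift of $A_8$ inside $N = N_G(T)$ by working inside the subgroup of $G = E_7^{sc}(k)$ generated by the $A_7$ sub-root system, in complete analogy with the $\op{Spin}_{2n}$ case (Lemma \ref{2.12}). First I would fix the sub-root system $\Phi'$ of type $A_7$ sitting inside $\Phi = \Phi(E_7)$ coming from the extended Dynkin diagram, and consider the connected reductive subgroup $H \subseteq G$ generated by $T$ together with the root subgroups $U_\alpha$ for $\alpha \in \Phi'$. The derived group of $H$ is a simply-connected or isogeny quotient cover of $\sl{8}{}$; the key point is to determine \emph{which} isogeny form of type $A_7$ it is, i.e. to compute the fundamental group of $H^{der}$ as a subquotient of the center of $E_7^{sc}$. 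Since $Z(E_7^{sc}) \cong \mu_2$, the group $H^{der}$ is the quotient of $\sl{8}{}$ by a subgroup of $\mu_8$ of order $1$ or $2$; matching coweight lattices (the $A_7$-coroot lattice inside the $E_7$-coweight lattice) shows that $H^{der}$ is in fact $\sl{8}{}/\mu_2 \cong \sl{8}{}/\{\pm 1\}$, i.e. the ``half-spin'' form of $\sl{8}{}$ (this is the well-known fact that $E_7$ contains $\op{SL}_8/\mu_2$).

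The main obstacle is therefore precisely the same one resolved in Lemma \ref{2.12}: the permutation subgroup $S_8 \subseteq \mc W(A_7)$ need not lift to $N_{H^{der}}(T \cap H^{der})$ because the obstruction to lifting a finite subgroup of $\gl{8}{}/\text{(center)}$ through the isogeny is governed by whether a determinant-type character becomes a square. Concretely, lifting $A_8 \leq S_8$ through the covering $\sl{8}{} \to \sl{8}{}/\mu_2$ (equivalently, lifting it into the preimage $\widetilde{S_8}$ of the permutation matrices) amounts to asking whether the restriction to $A_8$ of the relevant $\mu_2$-valued character on $S_8$ is trivial; since $A_8$ is perfect for $n = 8 \geq 5$, it has no nontrivial characters to any abelian group, so $A_8$ splits into $\widetilde{S_8}$, hence lifts to $N$. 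I would phrase this exactly as in the $\op{Spin}$ case: a subgroup $P$ of the ambient $\gl{8}{}$-type group has a section into the relevant two-fold cover iff a fixed quadratic character is trivial on $P$, and $A_8^{ab} = 1$ forces triviality.

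So the key steps, in order, are: (1) identify $H = \langle T, U_\alpha : \alpha \in \Phi' \rangle$ and show $H^{der}$ is the form $\sl{8}{}/\mu_2$ of type $A_7$, using the extended Dynkin diagram combinatorics and $Z(E_7^{sc}) = \mu_2$; (2) describe $N_H(T)$ and show that the preimage of $S_8 \subseteq \mc W(A_7)$ under $N_H(T) \to \mc W(A_7)$ is a central $\mu_2$-extension $\widetilde{S_8}$ of $S_8$; (3) observe that since $A_8$ is perfect, the pullback $\mu_2$-extension over $A_8$ splits, giving a section $A_8 \hookrightarrow N_H(T) \subseteq N_G(T) = N$; (4) conclude that $A_8 \leq \mc W$ lifts to $N$. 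Steps (2)–(4) are routine once step (1) is in place; the only genuine content is step (1), the determination of the isogeny type of the $A_7$ subgroup, which can alternatively be quoted from the structure theory of $E_7$ or verified by a short lattice computation with the $E_7$ coweight lattice. One should also double-check that the section lands in $N$ and not merely in $N_H(T)$, which is immediate since $H \subseteq G$ and $T \cap H = T$ is a maximal torus of $G$.
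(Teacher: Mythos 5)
Your overall route --- passing through the $A_7$ subsystem of the extended Dynkin diagram --- is the same as the paper's, but the step where you actually produce the lift has a genuine gap. You assert that since $A_8$ is perfect, ``the pullback $\mu_2$-extension over $A_8$ splits.'' Perfectness only gives $\op{Hom}(A_8,\Zmod{2})=0$, i.e.\ it kills the $H^1$-obstruction; the obstruction to splitting a central $\mu_2$-extension lives in $H^2(A_8,\Zmod{2})$, which is \emph{nonzero}: the Schur multiplier of $A_8$ is $\Zmod{2}$ and $A_8$ has a nonsplit double cover. So ``perfect $\Rightarrow$ central $\mu_2$-extensions split'' is false as a general principle. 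The character criterion you import from the $\op{Spin}$ case is not a general fact about two-fold covers either: in Lemma \ref{2.12} it holds because $\widetilde{\op{GL}}_n$ is \emph{explicitly} the pullback of the squaring map along $\det$, and you have not verified any analogous description of the cover $\op{SL}_8 \to \op{SL}_8/\mu_2$ over your $\widetilde{S_8}$. (Your step (2) is also off as stated: the preimage of $S_8$ under $N_H(T)\to \mc W(A_7)$ is an extension of $S_8$ by the full torus, not by $\mu_2$.)

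The repair is the paper's one-line proof, and it renders your step (1) unnecessary: rather than pinning down the isogeny type of $H^{der}$ and lifting \emph{through} the covering, map \emph{in from} the simply connected group. The even permutation matrices have determinant $1$, so they give an explicit homomorphic section $A_8 \to N_{\op{SL}_8}(T_8)$ of $N_{\op{SL}_8}(T_8)\to S_8=\mc W(A_7)$. Since $\op{SL}_8$ is the simply connected group of type $A_7$, there is a homomorphism $\op{SL}_8 \to E_7^{sc}$ with central kernel whose image is the subgroup generated by the $A_7$ root subgroups, carrying $N_{\op{SL}_8}(T_8)$ into $N=N_G(T)$ compatibly with the projections to the Weyl groups. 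The composite $A_8 \to \op{SL}_8 \to N$ is still a section over $A_8\subset \mc W(A_7)\subset\mc W(E_7)$, because the central kernel consists of scalar matrices and meets the permutation matrices trivially. This is exactly what the paper means by ``$A_8$ lifts to $\op{SL}_8$,'' and no determination of the isogeny type of the $A_7$ subgroup is needed.
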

\begin{proof}
This is because $A_8$ lifts to $\op{SL}_8$.
\end{proof}

\begin{lem}\label{e7,2}
The action of $A_8$ on $\Phi$ has an orbit of size 56 and an orbit of size 70.
\end{lem}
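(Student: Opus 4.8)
The plan is to realize the root system $\Phi$ of type $E_7$ as a union of two orbits under the sub-root system $\Phi'$ of type $A_7$ and then compute how the subgroup $A_8 \leq S_8 = \mathcal W(A_7)$ acts on each. First I would fix the standard model of $E_7$ inside $\reals^8$: the root system $A_7$ sits as $\{e_i - e_j : 1 \leq i \neq j \leq 8\}$ in the hyperplane $\sum x_i = 0$, and the $126$ roots of $E_7$ decompose, relative to this $A_7$, into the $56$ roots of $A_7$ together with $70$ further roots of the form $\pm(e_{i_1} + e_{i_2} + e_{i_3} + e_{i_4}) - \tfrac{1}{2}(e_1 + \cdots + e_8)$ (equivalently $\pm \tfrac12(\epsilon_1,\dots,\epsilon_8)$ with $\epsilon_i = \pm 1$ and $\sum \epsilon_i = 0$, up to the appropriate normalization). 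This is precisely the classical construction coming from the extended Dynkin diagram of $E_7$ with node set giving an $A_7$: the $126 = 56 + 70$ split is forced. So the two candidate orbits are the $56$ roots lying in $\Phi'$ and the $70$ roots outside $\Phi'$.

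Next I would check that $S_8 = \mathcal W(A_7)$, acting by permuting the coordinates $e_1,\dots,e_8$, preserves each of these two sets: it obviously permutes $\{e_i - e_j\}$ transitively (this is Lemma \ref{2.1} for $A_7$, which already gives a single orbit of size $56$), and it permutes the $70$ weight-$4$ sign patterns among themselves since it just relabels indices. Then the content of the lemma is that the index-$2$ subgroup $A_8$ still acts transitively on each set. For the size-$56$ set this is exactly Lemma \ref{2.6} (or the doubly-transitive remark in its proof) applied to $A_8$ on $\{1,\dots,8\}$: $e_i - e_j$ is determined by the ordered pair $(i,j)$, and $A_n$ is doubly transitive for $n \geq 4$. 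For the size-$70$ set, a root outside $\Phi'$ is determined by a choice of $4$-element subset $I \subseteq \{1,\dots,8\}$ (the positions of one sign), with $I$ and its complement giving negatives of each other; $S_8$ acts transitively on $4$-subsets, and I would verify that $A_8$ does too — the stabilizer in $S_8$ of a $4$-set is $S_4 \times S_4$, which is not contained in $A_8$ (it contains odd permutations, e.g. a transposition within one block), so the $A_8$-orbit of any $4$-set has size $70 \cdot [S_8 : (S_4\times S_4)A_8]^{-1} = 70$, i.e. $A_8$ is still transitive on $4$-subsets and hence on the $70$ roots.

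The one genuine point to be careful about is the $56$-set: a priori $A_8$ might have split it into two orbits of size $28$ (this is the phenomenon that occurs, e.g., when an alternating group acts on an orbit whose point-stabilizer in $S_n$ lies inside $A_n$). Here the stabilizer in $S_8$ of the root $e_1 - e_2$ is $S_{\{1,2\},\text{ordered}} \times S_{\{3,\dots,8\}} \cong 1 \times S_6$ together with nothing swapping $1,2$ — more precisely it is $\{1\} \times S_6$ acting on the last six coordinates, since swapping $1$ and $2$ sends $e_1 - e_2$ to $e_2 - e_1 = -(e_1-e_2)$, a different root. That stabilizer $S_6$ contains transpositions, hence is not contained in $A_8$, so the $A_8$-orbit of $e_1 - e_2$ already has the full size $56$. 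So in fact neither orbit splits, and I expect this stabilizer bookkeeping — confirming in each case that the $S_8$-point-stabilizer is not contained in $A_8$ — to be the main (though routine) obstacle. Combining: $A_8$ acts on $\Phi(E_7)$ with exactly the two orbits $\Phi' \cong \{$roots of $A_7\}$ of size $56$ and its complement of size $70$, as claimed.
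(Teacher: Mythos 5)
Your proof is correct and follows essentially the same route as the paper: decompose $\Phi$ into the $56$ roots of $\Phi'$ and the $70$ roots outside it, identify the $S_8$-stabilizers ($1\times S_6$, resp.\ $S_4\times S_4$), and observe that neither is contained in $A_8$, so the $S_8$-orbits do not split. The only difference is that you verify the size-$70$ orbit via the explicit half-integer coordinate model of $E_7 \supset A_7$ rather than the paper's count using the extended Dynkin diagram and the Bourbaki plates, which is a cosmetic variation.
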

\begin{proof}
We first consider the action of $S_8\cong \mc W(A_7)$ on $\Phi$. By Lemma \ref{2.1}, $S_8$ acts transitively on $\Phi'$, which has 56 roots. A straightforward calculation using Plate $E_7$ in \cite{bou} shows that for some $\alpha \in \Phi-\Phi'$, $S_8\cdot \alpha$ has exactly 70 roots (In the extended Dynkin diagram, take $\alpha$ to be the simple root that is not in $\Phi'$, then we let the group generated by the simple reflections in $\Phi'$ act on $\alpha$ and count the number of roots in the orbit). So
the stablizer of $\alpha$ in $S_8$ is isomorphic to $S_4 \times S_4 \subset S_8$. Since $56+70=126$ is the number of roots in $\Phi$, the lemma is true for $S_8$. Now we consider the alternating group $A_8$. Lemma \ref{2.6} implies that $A_8$ still acts transitively on $\Phi'$. As $(S_4 \times S_4) \cap A_8$ (the stablizer of $\alpha$ in $A_8$) has order $\frac{1}{2}|S_4 \times S_4|=288$, the orbit
$A_8\cdot \alpha$ has exactly $|A_8|/288=70$ roots.
\end{proof}

It is clear that $A_8$, considered as a subgroup of $N$, normalizes $T$ and $A_8 \cap T=\{1\}$. Let $N'$ be the subgroup of $G=E_7^{sc}(k)$ generated by $A_8$ and $T$. By Theorem \ref{2.4} and \ref{2.5}, for $l$ large enough, we can find a continuous surjection $$\galQ \rsurj N'$$ that is unramified at $l$ and for which the complex conjugation maps to an element away from the center of $G$. Define $\brho$ to be the composite
$$\galQ \rsurj N' \xrightarrow{i} E_7^{sc}(k).$$ 

Let $\mf g_a$ (resp. $\mf g_b$) be the direct sum of the root spaces corresponding to the orbit of size 56 (resp. size 70) in Lemma \ref{e7,2}.
\begin{prop}\label{e7,3}
For $l$ sufficiently large, $\brho(\mf g)$ decomposes into irreducible $\galQ$-modules $\brho(\mf t)$, $\brho(\mf g_a)$ and $\brho(\mf g_b)$.
\end{prop}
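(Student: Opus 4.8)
\textbf{Proof proposal for Proposition \ref{e7,3}.}
The plan is to imitate the proof of Corollary \ref{2.2}, but to run the separation argument on \emph{each} of the two $A_8$-orbits in $\Phi$ found in Lemma \ref{e7,2}; one cannot simply quote Remark \ref{2.3}, since $A_8$ is not transitive on all of $\Phi$. First I would record the evident $N'$-stable decomposition $\mf g=\mf t\oplus\mf g_a\oplus\mf g_b$: because $A_8$ preserves each of the two orbits of Lemma \ref{e7,2} and $T$ acts on each root space $\mf g_\alpha$ through the character $\alpha$, all three summands are $N'$-submodules of $\mf g$, of dimensions $7$, $56$, $70$ respectively. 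Since $\brho$ factors as $\galQ\rsurj N'\xrightarrow{i}E_7^{sc}(k)$, irreducibility of a summand as a $\galQ$-module is equivalent to its irreducibility as an $N'$-module under $\mathrm{Ad}$, so it suffices to work with $N'$.

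Next I would prove that $\mf g_a$ is irreducible as an $N'$-module. Let $0\neq X\in\mf g_a$ and write $X=\sum_{i=1}^{k}X_i$ with $0\neq X_i\in\mf g_{\alpha_i}$ for distinct roots $\alpha_1,\dots,\alpha_k$ in the size-$56$ orbit. Here the crucial point is that $T\subseteq N'$ (recall $N'=\langle A_8,T\rangle$), so that for $l$ large we may choose $t\in T$ with $\alpha(t)$, $\alpha\in\Phi$, pairwise distinct, and then the Vandermonde argument of Corollary \ref{2.2} shows that $X,\mathrm{ad}(t)X,\dots,\mathrm{ad}(t^{k-1})X$ span the same subspace as $X_1,\dots,X_k$; in particular each $X_i$ lies in the $N'$-submodule generated by $X$. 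By Lemma \ref{e7,2}, $A_8$ acts transitively on the $56$ one-dimensional root spaces making up $\mf g_a$, so translating $X_i$ by suitable elements of $A_8$ fills out all of $\mf g_a$; hence $X$ generates $\mf g_a$ and $\mf g_a$ is irreducible. The identical argument with the size-$70$ orbit shows $\mf g_b$ is irreducible.

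It remains to check that $\mf t$ is an irreducible $N'$-module. Over $\cmplx$ (equivalently over $\bQ$), $\mf t$ is the reflection representation of $\mc W(E_7)$; since the sub-root system $A_7$ has full rank $7$, its Weyl group $\mc W(A_7)\cong S_8$ acts on $\mf t$ as the standard $7$-dimensional irreducible representation of $S_8$, corresponding to the partition $(7,1)$. As $(7,1)$ is not self-conjugate, this representation remains irreducible on restriction to $A_8$, and for $l$ sufficiently large irreducibility of the mod-$l$ reduction follows from irreducibility in characteristic zero; hence $\brho(\mf t)$ is irreducible. Finally the three summands have distinct dimensions $7$, $56$, $70$, hence are pairwise non-isomorphic, so $\mf g=\mf t\oplus\mf g_a\oplus\mf g_b$ is precisely the asserted decomposition of $\brho(\mf g)$ into irreducible $\galQ$-modules.

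The only genuinely delicate point — the ``main obstacle'' — is exactly the departure from Propositions \ref{2.9}--\ref{2.13}: because $A_8$ fails to act transitively on $\Phi$, Remark \ref{2.3} is unavailable, and one must instead verify that the torus-separation step of Corollary \ref{2.2} still applies within each individual $A_8$-orbit. This works precisely because $T\subseteq N'$, with Lemma \ref{e7,2} supplying the intra-orbit transitivity needed to complete the argument for each of $\mf g_a$ and $\mf g_b$.
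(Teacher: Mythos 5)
Your proof is correct and follows exactly the route the paper intends: the paper's own proof is the single line ``very similar to the proof of Corollary \ref{2.2},'' i.e.\ the torus-separation (Vandermonde) argument run inside each $A_8$-orbit together with the intra-orbit transitivity from Lemma \ref{e7,2}. You additionally spell out the irreducibility of $\mf t$ under $A_8$ (via the standard representation of $S_8$ restricted to $A_8$), a detail the paper leaves implicit, and that argument is also correct.
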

\begin{proof}
The proof is very similar to the proof of Corollary \ref{2.2}.
\end{proof}

\subsection{The principal $\op{GL}_2$ construction}

We record some facts on the principal $\op{SL}_2$ and $\op{GL}_2$. For more details, see \cite[Section 1]{ser96}. Let $G/k$ be a simple algebraic group with a Borel $B$ containing a split maximal torus $T$ with unipotent radical $U$. Let $\Phi=\Phi(G,T)$ be the root system of $G$ with the set of simple roots $\Delta$ corresponding to $B$. We fix a pinning $\{x_{\alpha}: \mb G_a \to U_{\alpha} \}$ where $U_{\alpha}$ is the root subgroup in $B$ corresponding to $\alpha$. Let 
$X_{\alpha}=dx_{\alpha}(1)$ for all $\alpha \in \Delta$ and let 
$$X=\sum_{\alpha\in\Delta} X_{\alpha},$$ which can be extended to an $\mf{sl}_2$-triple $(X,H,Y)$ where
$$H=\sum_{\alpha>0} H_{\alpha}$$ with $H_{\alpha}$ the coroot vector corresponding to $\alpha$.

When $p$ is large enough relative to $G$, there is an exponential map 
$$\exp: \op{Lie}(U) \to U$$ which is an isomorphism.

A \emph{principal $\op{SL}_2$ homomorphism} is a homomorphism 
$$\varphi: \op{SL}_2 \to G$$
such that 
$$\varphi(\bp 1&t\\0&1 \ep)=\op{exp}(tX),$$
$$\varphi(\bp t&0\\0&t^{-1} \ep)=2\rho^{\vee}(t)$$
where $\rho^{\vee}$ is the half-sum of the coroots.
$\rho^{\vee}$ is always defined when $G$ is adjoint. Suppose that 
$\rho^{\vee}: \mb G_m \to G^{ad}$ lifts to $G$ and we fix a lift which is again denoted $\rho^{\vee}$. A \emph{principal $\op{GL}_2$ homomorphism} is a homomorphism 
$$\varphi: \op{GL}_2 \to G$$ that extends a principal $\op{SL}_2$ such that
such that
$$\varphi(\bp 1&t\\0&1 \ep)=\op{exp}(tX),$$
$$\varphi(\bp t&0\\0&1 \ep)=\rho^{\vee}(t).$$

By definition, a principal $\op{GL}_2$ factors through $\op{PGL}_2$. 

By examining the list in \cite{bou}, we get
\begin{lem}\label{pr gl2 list}
For $G$ a simple algebraic group, $\rho^{\vee}: \mb G_m \to G^{ad}$ lifts to $G^{sc}$ if and only if $G$ is one of the following types:
$A_{2n}, B_{4n}, B_{4n+3}, D_{4n}, D_{4n+1}, E_6, E_8, F_4, G_2$.
\end{lem}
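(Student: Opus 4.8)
The statement is a finite, type-by-type check, so the plan is essentially a bookkeeping exercise that I want to organize cleanly rather than a conceptual argument. The half-sum of coroots $\rho^\vee$ always lies in the coweight lattice $X_*(T^{ad}) = P^\vee$ (the coweight lattice), and the obstruction to lifting $\rho^\vee\colon \mathbb G_m \to G^{ad}$ to $G^{sc}$ is precisely whether $\rho^\vee$ lies in the coroot lattice $Q^\vee = X_*(T^{sc})$, i.e. whether $\rho^\vee \in Q^\vee$ inside $P^\vee$. So the whole lemma reduces to: for each irreducible root system, decide whether $\rho^\vee \bmod Q^\vee$ is trivial in the fundamental group $P^\vee/Q^\vee$. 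First I would record this reformulation explicitly, since it converts the problem into a computation in a small finite abelian group (cyclic of order $n+1$ for $A_n$, order $2$ for $B_n$, $C_n$, $E_7$, order $4$ or $2\times 2$ for $D_n$, order $3$ for $E_6$, order $1$ for $E_8, F_4, G_2$, etc.).

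Next I would do the computation using the explicit data in Bourbaki \cite{bou}. The clean way is to use the classical fact that $\rho^\vee = \sum_{i} \omega_i^\vee$, the sum of the fundamental coweights, together with the known expression of each $\omega_i^\vee$ modulo $Q^\vee$ in terms of the generator(s) of $P^\vee/Q^\vee$ (these are tabulated in Bourbaki, Planches I–IX). Then $\rho^\vee \bmod Q^\vee$ is just the sum of those residues. Concretely: for $A_n$ one gets $\rho^\vee \equiv \binom{n+1}{2}$ times a generator of $\mathbb Z/(n+1)$, which vanishes iff $n$ is even, giving the $A_{2n}$ case; for $B_n$ (where $P^\vee/Q^\vee \cong \mathbb Z/2$ and only the last coweight $\omega_n^\vee$ is nontrivial modulo $Q^\vee$, being the spin coweight) one counts a parity and finds triviality iff $n \equiv 0, 3 \pmod 4$, giving $B_{4n}, B_{4n+3}$; for $D_n$ one works in $P^\vee/Q^\vee$ (which is $\mathbb Z/4$ for $n$ odd and $(\mathbb Z/2)^2$ for $n$ even) and the analogous parity count yields $D_{4n}, D_{4n+1}$; for $C_n$ a similar count shows $\rho^\vee$ is the nontrivial class for \emph{all} $n$, so $C_n$ never appears; for the exceptional types $E_6$ ($\mathbb Z/3$), $E_7$ ($\mathbb Z/2$), $E_8, F_4, G_2$ (trivial) one simply reads off $\rho^\vee \bmod Q^\vee$ from the tables, finding it trivial for $E_6, E_8, F_4, G_2$ and nontrivial for $E_7$. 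Assembling these gives exactly the list $A_{2n}, B_{4n}, B_{4n+3}, D_{4n}, D_{4n+1}, E_6, E_8, F_4, G_2$.

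The only mild subtlety — and the step most prone to an off-by-one error — is the parity bookkeeping in types $B_n$ and $D_n$, where one must be careful about which fundamental coweights are congruent to the generator of $P^\vee/Q^\vee$ and how many of them there are among $\omega_1^\vee,\dots,\omega_n^\vee$; it is safest to instead use the coordinates from Bourbaki directly, writing $\rho^\vee$ in the standard $\varepsilon_i$-basis (e.g. $\rho^\vee = \sum_i (n-i+c)\varepsilon_i$ for the appropriate constant $c$) and testing membership in $Q^\vee$ via the explicit congruence conditions on coordinate sums that cut out $Q^\vee$ inside $P^\vee$. This is purely mechanical. I would present the argument as: (1) reduce to $\rho^\vee \in Q^\vee$; (2) state $\rho^\vee = \sum \omega_i^\vee$; (3) run through the types, citing \cite{bou} for the lattice data, and in each case the condition reduces to an elementary congruence whose solution set is the asserted list.
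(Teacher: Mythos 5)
Your proposal is correct and takes essentially the same approach as the paper, whose entire proof is the one-line ``By examining the list in \cite{bou}'': the reduction to deciding whether $\rho^\vee=\sum_i\omega_i^\vee$ is trivial in $P^\vee/Q^\vee$ is exactly the computation Bourbaki's plates encode, and your congruence checks (e.g.\ $\binom{n+1}{2}\bmod(n+1)$ for $A_n$, the parity of $n(n+1)/2$ for $B_n$, the half-integrality of $\rho^\vee$ for $C_n$) reproduce the asserted list. Your writeup is simply a more detailed and self-contained version of the paper's citation.
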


The operator $ad(H)$ preserves $\mf g^{X}$ (the centralizer of $X$ in $\mf g$) and 
$$\mf g^{X}= \sum_{m>0} V_{2m},$$ where $V_{2m}$ is the eigenspace of $H$ corresponding to the eigenvalue $2m$.
The following proposition is due to Kostant; see \cite{kos59}. 

\begin{prop}\label{kostant}
The dimension of $\mf g^{X}$ is equal to the rank of $\mf g$. $V_{2m}$ is nonzero if and only if $m$ is an exponent of $\mf g$. Letting $\op{GL}_2$ act on $\mf g$ via $\varphi$, there is an isomorphism of $\op{GL}_2$-representations
$$\mf g \cong \bigoplus_{m>0} \op{Sym}^{2m}(k^2) \otimes \op{det}^{-m} \otimes V_{2m}.$$
\end{prop}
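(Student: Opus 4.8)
The plan is to invoke Kostant's theorem on a principal nilpotent element (\cite{kos59}): the statement over $\cmplx$ is precisely this proposition, and it carries over to our situation once $l$ is large relative to $G$ --- concretely, once $l$ exceeds twice the largest exponent of $\mf g$, so that the representation theory of $\mf{sl}_2$ in the range of weights occurring in $\mf g$ is the same as in characteristic $0$; this is consistent with the standing hypothesis on $l$. It is worth spelling out the structure of the argument, which splits into a soft $\mf{sl}_2$-theoretic part and one genuinely classical input.

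First the $\mf{sl}_2$-decomposition. Viewing $\mf g$ as a module over the principal $\mf{sl}_2$ via $d\varphi$, for $l$ large it is semisimple with the characteristic-$0$ weight combinatorics, hence a direct sum of irreducibles $\op{Sym}^{d}(k^2)$. Since $\varphi$ factors through $\op{PGL}_2$, the element $-I\in\op{SL}_2$ acts trivially on $\mf g$, forcing every $d$ to be even, say $d=2m$; and lifting such a summand to a $\op{GL}_2$-module with trivial central character (again forced by the $\op{PGL}_2$-factorization) requires the twist $\op{Sym}^{2m}(k^2)\otimes\op{det}^{-m}$. The $2m$-eigenspace of $\op{ad}(H)$ on this twisted summand is one-dimensional and is exactly its highest-weight line, which lies in $\mf g^X$; hence the multiplicity of $\op{Sym}^{2m}(k^2)\otimes\op{det}^{-m}$ in $\mf g$ equals $\dim V_{2m}$, and the claimed isomorphism follows once the two numerical statements are established.

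For those, grade $\mf g=\bigoplus_{h}\mf g(h)$ by root height, with $\mf g(0)=\mf t$, so $\op{ad}(H)$ acts on $\mf g(h)$ by the scalar $2h$ and $\op{ad}(X)$ carries $\mf g(h)$ into $\mf g(h+1)$. From $\mf{sl}_2$-theory, $\op{ad}(X)\colon\mf g(h)\to\mf g(h+1)$ is surjective for $h\geq 1$ and injective for $h<0$ (a weight space of negative $H$-weight is never a highest-weight space), and $\op{ad}(X)|_{\mf t}$ is injective because $[X,H']=-\sum_{\alpha\in\Delta}\alpha(H')X_\alpha$ vanishes only for $H'=0$. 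Since $\op{ad}(X)$ is homogeneous of degree $+1$, $\ker\op{ad}(X)=\mf g^X$ is a graded subspace lying in $\bigoplus_{h\geq 1}\mf g(h)$, with $\dim\big(\mf g^X\cap\mf g(h)\big)=p_h-p_{h+1}$ where $p_h$ is the number of positive roots of height $h$; telescoping gives $\dim\mf g^X=p_1=\op{rank}\,\mf g$, and $V_{2m}=\mf g^X\cap\mf g(m)$ has dimension $p_m-p_{m+1}$.

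It remains to see that $p_m>p_{m+1}$ exactly when $m$ is an exponent of $\mf g$. This is the one part that is not formal: it is the classical description of the exponents in terms of root heights (Kostant, Shapiro, Steinberg, Macdonald), $\#\{i:m_i\geq h\}=p_h$, which immediately gives the equivalence and, more precisely, $\dim V_{2m}=\#\{i:m_i=m\}$. Proving this from scratch goes through either the differentials $df_i|_X$ of the fundamental invariants together with the Chevalley restriction theorem, or the theory of the Coxeter element; I expect this to be the real content of the proof, and in the paper it is cleanest to cite \cite{kos59} for it. The only additional point specific to our setting --- the reduction from $\cmplx$ to characteristic $l$ --- is harmless for $l$ in the stated range.
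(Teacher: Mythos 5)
Your proposal is correct and in the end rests on the same source the paper does: the paper gives no argument at all for this proposition, simply attributing it to Kostant and citing \cite{kos59}, which is also where your sketch defers the one non-formal input (the height–exponent identity $p_h=\#\{i:m_i\geq h\}$). The additional $\mf{sl}_2$-theoretic details you supply (centrality forcing even weights and the $\op{det}^{-m}$ twist, the height grading, and the telescoping count giving $\dim\mf g^X=\op{rank}\mf g$) are accurate and consistent with the paper's standing assumption that $l$ is large relative to $G$.
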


Suppose that $\rho^{\vee}: \mb G_m \to G^{ad}$ lifts to $G$.
We take $f$ to be as in Example \ref{sl2 mod form}. By \cite[Theorem 2.1]{rib85}, the projective image of $\bar r_{f,\lambda}$ is either $\op{PGL}_2(k)$ or $\op{PSL}_2(k)$ for a subfield $k$ of $k_{\lambda}$. 
We then define 
$$\brho: \galQ \xrightarrow{\bar r_{f,\lambda}} \gl{2}{k} \xrightarrow{\varphi} G(k).$$
This construction works for all exceptional groups but $E_7^{sc}$ as $\rho^{\vee}: \mb G_m \to E_7^{ad}$ does not lift to $E_7^{sc}$. We will only use this construction for $G$ of type $E_6$, $A_2$ and $B_3$.

\section{Ramakrishna's method and its variants}

Given $\brho: \galQ \to G(k)$ defined in the previous section, we want to obtain an $l$-adic lift $\rho: \galQ \to G(\mc O)$ with $\mc O$ the ring of integers of a finite extension of $\Ql$ whose residue field is $k$ satisfying a given global deformation condition. Just as in \cite{pat16}, we use Ramakrishna's method to annihilate the associated dual Selmer group. The new feature is a double use of Patrikis' extension of Ramakrishna's method when the original form fails to work, see Section 3.2.

\subsection{Ramakrishna's method}

\subsubsection{Ramakrishna deformations}
We list the key points and results of Patrikis' extension of Ramakrishna's method. For proofs, see \cite{pat16}, Section 4.2. For an overview on the deformation theory of ($G$-valued) Galois representations, see \cite{pat16}, Section 3. 

We begin by defining a type of local deformation condition called Ramakrishna's condition, which will be imposed at the auxiliary primes of ramification in Ramakrishna's global argument. Let $F$ be a finite extension of $\Qp$ for $p \neq l$, and let $\brho: \Gamma_F \to G(k)$ be an \emph{unramified} homomorphism such that $\brho(\frob{F})$ is a regular semi-simple element. Let $T$ be the connected component of the centralizer of $\brho(\frob{F})$; this is a maximal $k$-torus of $G$, but we can lift it to an $\mc O$-torus uniquely up to isomorphism, which we also denote by $T$, and then we can lift the embedding over $k$ to an embedding over $\mc O$ which is unique up to $\hat G(\mc O)$-conjugation. By passing to an \'etale extension of $\mc O$, we may assume that $T$ is split.

The following definition is from \cite{pat16}.
\begin{defn}\label{3.1} 
Let $\brho, T$ be as above. For $\alpha \in \Phi(G,T)$, $\brho$ is said to be of Ramakrishna type $\alpha$ if 
$$\alpha(\brho(\frob{F}))=\bkp(\frob{F}).$$

Let $H_{\alpha}=T\cdot U_{\alpha}$ be the subgroup generated by $T$ and the root subgroup $U_{\alpha}$ corresponding to $\alpha$. 
Ramakrishna deformation is a functor
$$\op{Lift}_{\brho}^{Ram}: \op{CNL}_{\mc O} \to \mathbf{Sets}$$
such that for a complete local noetherian $\mc O$-algebra $R$, $\op{Lift}_{\brho}^{Ram}(R)$ consists of all lifts 
$$\rho: \Gamma_F \to G(R)$$ of $\brho$ such that $\rho$ is $\hat G(R)$-conjugate to a homomorphism $\Gamma_F \xrightarrow{\rho'} H_{\alpha}(R)$ with the resulting composite
$$\Gamma_F \xrightarrow{\rho'} H_{\alpha}(R) \xrightarrow{Ad} \op{GL}(\mf g_{\alpha} \otimes R)=R^{\times}$$ equal to $\kappa$.

We shall call such a $\rho$ to be of Ramakrishna type $\alpha$ as well.

We denote by $\op{Def}_{\brho}^{Ram}$ the corresponding deformation functor.
\end{defn}

The following lemma is \cite[Lemma 4.10]{pat16}.
\begin{lem}\label{3.2} 
$\op{Lift}_{\brho}^{Ram}$ is well-defined and smooth.
\end{lem}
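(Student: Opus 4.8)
The statement is \cite[Lemma 4.10]{pat16}; the proof I would give, following \emph{loc.\ cit.}, runs as follows. Smoothness is tested against small surjections $R'\twoheadrightarrow R$ in $\op{CNL}_{\mc O}$, and one may assume the kernel $I$ is square-zero and annihilated by $\mathfrak{m}_{R'}$, hence a finite $k$-vector space. First I would check \emph{well-definedness}: $\op{Lift}_{\brho}^{Ram}$ is a subfunctor of the full lifting functor, since the conditions of factoring through $H_\alpha$ up to $\hat G(R)$-conjugacy and of a prescribed adjoint character being $\kappa$ are stable under morphisms of $\op{CNL}_{\mc O}$-algebras and under $\hat G(R)$-conjugation; and it is nonempty already over $k$, because $\brho$ is unramified and valued in $T(k)\subseteq H_\alpha(k)$, and $\alpha(\brho(\frob{F}))=\bkp(\frob{F})=\kappa\bmod\mathfrak{m}$ says exactly that $\brho$ is of Ramakrishna type $\alpha$.

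For smoothness, given $\rho\in\op{Lift}_{\brho}^{Ram}(R)$ I would conjugate so that $\rho$ factors through $H_\alpha(R)$ with the composite $\Gamma_F\xrightarrow{\rho}H_\alpha(R)\xrightarrow{\op{Ad}}\op{GL}(\mf g_\alpha\otimes R)=R^\times$ equal to $\kappa$, then lift $\rho$ to a homomorphism $\Gamma_F\to H_\alpha(R')$ with the analogous property and conjugate back. Using the split sequence $1\to U_\alpha\to H_\alpha\to T\to 1$, with $U_\alpha\cong\mathbb{G}_a$ and $T$ acting on it through $\alpha$, such a lift is the datum of (a) a lift $\chi\colon\Gamma_F\to T(R')$ of the torus part of $\rho$ subject to $\alpha\circ\chi=\kappa$, and (b) a class in $Z^1(\Gamma_F,R'(\kappa))$ lifting the $U_\alpha$-part. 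The crucial cohomological input is that $\brho(\frob{F})$ being regular semisimple forces $\alpha(\brho(\frob{F}))=\bkp(\frob{F})\neq 1$, so $\bkp|_{\Gamma_F}$ is nontrivial and $H^2(\Gamma_F,-)$ vanishes on $\Gamma_F$-modules with trivial action; this makes the torus lift (a) unobstructed (its obstruction lies in $H^2(\Gamma_F,\op{Lie}(\ker\alpha)\otimes I)=0$) and is what one feeds into the analysis of (b).

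The transfer back to $\op{Lift}_{\brho}^{Ram}$ is the usual device for a deformation condition cut out by $\hat G$-conjugacy into a subgroup: $\hat G$ is a smooth $\mc O$-group scheme so the conjugating element lifts, the ``Ramakrishna type $\alpha$'' condition only refers to $H_\alpha$, and smoothness of the $H_\alpha$-valued lifting functor propagates to its $\hat G$-saturation by the general lemma of \cite{pat16}, \S4.1 (after Ramakrishna and \cite{cht}); the relative dimension of $\op{Lift}_{\brho}^{Ram}$, equivalently the size of the tangent space $L_\alpha$ used later, then drops out of the $h^1$ of the same modules. The step I expect to be the main obstacle is the vanishing of the obstruction governing (b): a priori it is a class in $H^2(\Gamma_F,I(\bkp))$, which is \emph{nonzero}, so one cannot conclude by a bare dimension count or a crude vanishing theorem, and must use the precise shape of $H_\alpha$ — notably the freedom of $U_\alpha$-conjugation and of moving $\chi$ within its fibre over $\alpha$ — to see that the obstruction class in fact dies.
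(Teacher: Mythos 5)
The paper gives no argument for this lemma beyond the citation of \cite[Lemma 4.10]{pat16}, and your skeleton is the right reconstruction of that proof: reduce to $H_\alpha$-valued lifts using smoothness of $\hat G$, split $H_\alpha = T\ltimes U_\alpha$, and lift the torus part using $H^2(\Gamma_F,M)=0$ for trivial modules $M$ (which holds because $\brho(\frob{F})$ regular semisimple forces $\alpha(\brho(\frob{F}))=\bkp(\frob{F})\neq 1$, hence $H^0(\Gamma_F,k(1))=0$). But you stop exactly at the step you yourself flag as the crux, and the mechanism you gesture at there does not work. The $U_\alpha$-part of an $H_\alpha(R)$-valued lift satisfying the Ramakrishna condition is a cocycle $u\in Z^1(\Gamma_F,R(\kappa))$, and the obstruction to lifting it to $Z^1(\Gamma_F,R'(\kappa))$ is the image of $[u]$ under the connecting map $H^1(\Gamma_F,R(\kappa))\to H^2(\Gamma_F,I(\bkp))$. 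Neither $U_\alpha(R')$-conjugation (which changes $u$ only by a coboundary) nor moving $\chi$ within its fibre over $\alpha$ (which leaves the coefficient module unchanged, since the Ramakrishna condition pins $\alpha\circ\chi'=\kappa$) can affect this class, so ``the freedom of $U_\alpha$-conjugation and of moving $\chi$'' cannot kill it.

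What actually kills it is an explicit computation with the tame quotient of $\Gamma_F$ (through which every lift factors, wild inertia being pro-$p$): it is topologically generated by $\frob{F}$ and a generator $\sigma$ of tame inertia subject to $\frob{F}\,\sigma\,\frob{F}^{-1}=\sigma^{q}$, and for coefficients $M(\kappa)$ with $\kappa$ unramified at $F$ and $\kappa(\frob{F})=q$ the cocycle identity imposed by this relation reads $q\cdot c(\sigma)=q\cdot c(\sigma)$, i.e.\ is vacuous. Hence a continuous cocycle is freely determined by its values on $\frob{F}$ and $\sigma$, the map $Z^1(\Gamma_F,R'(\kappa))\to Z^1(\Gamma_F,R(\kappa))$ is surjective, and the connecting map above vanishes even though $H^2(\Gamma_F,I(\bkp))\cong I\neq 0$. (Equivalently: local duality identifies $H^2(\Gamma_F,M(\kappa))$ functorially with $M$ for finite $M$, so $H^2(\Gamma_F,I(\bkp))\to H^2(\Gamma_F,R'(\kappa))$ is injective and $H^1(\Gamma_F,R'(\kappa))\to H^1(\Gamma_F,R(\kappa))$ is onto.) With this supplied, the rest of your outline --- lifting $\chi$ subject to $\alpha\circ\chi'=\kappa$ using $H^2(\Gamma_F,\op{Lie}(T_\alpha)\otimes I)=0$, and transferring back through $\hat G(R')$-conjugation --- goes through as in \cite{pat16}.
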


Consider the sub-torus $T_{\alpha}=\op{Ker}(\alpha)^0$ of $T$, and denote by $\mf t_{\alpha}$ its Lie algebra. There is a canonical decomposition $\mf t_{\alpha} \oplus \mf l_{\alpha}=\mf t$ with $\mf l_{\alpha}$ the one-dimensional torus generated by the coroot $\alpha^{\vee}$. 

The next lemma (\cite{pat16}, Lemma 4.11) is crucial in the global deformation theory.
\begin{lem}\label{3.3}
Assume $\brho$ is of Ramakrishna type $\alpha$.
Let $W=\mf t_{\alpha} \oplus \mf g_{\alpha}$, and let $W^{\perp}$ be the annihilator of $W$ under the Killing form on $\mf g$. Let $L_{\brho}^{Ram}$ (resp. $L_{\brho}^{Ram, \perp}$) be the tangent space of $\op{Def}_{\brho}^{Ram}$ (resp. the annihilator of $L_{\brho}^{Ram}$ under the local duality pairing). 
Then \begin{enumerate}
\item $L_{\brho}^{Ram} \cong H^1(\Gamma_F, \brho(W))$.
\item $\dim L_{\brho}^{Ram}=h^0(\Gamma_F, \brho(\mf g))$.
\item $L_{\brho}^{Ram,\perp} \cong H^1(\Gamma_F, \brho(W^{\perp})(1))$.
\item Let $L_{\brho}^{Ram,\Box}$ (resp. $L_{\brho}^{Ram,\perp,\Box}$) be the preimage in $Z^1(\Gamma_F, \brho(\mf g))$ of $L_{\brho}^{Ram}$ (resp. $L_{\brho}^{Ram,\perp}$). Under the canonical decomposition 
$$\mf g=\bigoplus_{\gamma} \mf g_{\gamma} \oplus \mf t_{\alpha} \oplus \mf l_{\alpha},$$ 
all cocycles in $L_{\brho}^{Ram,\Box}$ (resp. $L_{\brho}^{Ram,\perp,\Box}$) have $\mf l_{\alpha}$ (resp. $\mf g_{-\alpha}$) component equal to zero.  
\end{enumerate}
\end{lem}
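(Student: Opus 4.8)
The plan is to derive all four parts from a single computation — the tangent space of $\op{Lift}_{\brho}^{Ram}$ at $\brho$, which is legitimate because Lemma~\ref{3.2} says this functor is smooth — and then to descend to $\op{Def}_{\brho}^{Ram}$ and dualize. First I would record the linear-algebra picture: since $\brho$ is unramified with $\brho(\frob{F})$ regular semisimple, it factors through $T(k)$, so $\Gamma_F$ acts trivially on $\mf t$ and through the unramified character $\gamma\circ\brho$ on each $\mf g_\gamma$, and the hypothesis that $\brho$ is of Ramakrishna type $\alpha$ says precisely $\alpha\circ\brho=\bkp$ — equivalently $\mf g_\alpha$ carries $\bkp$ and $\mf g_{-\alpha}(1)$ is the trivial module. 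For part (1), a first-order lift $(1+\epsilon z)\brho$ lies in $\op{Lift}_{\brho}^{Ram}(k[\epsilon])$ exactly when, after modifying $z$ by a coboundary $g\mapsto Y-\op{Ad}(\brho(g))Y$ ($Y\in\mf g$), it takes values in $\mf h_\alpha=\op{Lie}(H_\alpha)=\mf t\oplus\mf g_\alpha$ with the $\mf l_\alpha$-component of its $\mf t$-part equal to zero (the first-order form of the condition that the composite on $\mf g_\alpha$ be $\kappa$; it is the Ramakrishna-type hypothesis that makes the constant lift itself satisfy this). Hence the tangent space of $\op{Lift}_{\brho}^{Ram}$ is $Z^1(\Gamma_F,\brho(W))+B^1(\Gamma_F,\brho(\mf g))$ inside $Z^1(\Gamma_F,\brho(\mf g))$, with $W=\mf t_\alpha\oplus\mf g_\alpha$; passing to $\op{Def}_{\brho}^{Ram}$ quotients out $B^1(\Gamma_F,\brho(\mf g))$, and using that $\Gamma_F$ acts trivially on $\mf t_\alpha$ (so $B^1(\Gamma_F,\brho(W))=B^1(\Gamma_F,\brho(\mf g_\alpha))$) together with the fact that regular semisimplicity forces any $\brho(\mf g)$-coboundary landing in $W$ to be a $\mf g_\alpha$-coboundary, the second isomorphism theorem yields $L_{\brho}^{Ram}\cong H^1(\Gamma_F,\brho(W))$.

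Part (2) is then the local Euler characteristic formula: for $p\neq l$ one has $h^1=h^0+h^2$, and $h^2(\brho(W))=h^0(\brho(W)^\vee(1))$ by local duality; since $\bkp$ is nontrivial (because $\alpha(\brho(\frob{F}))\neq 1$), the only nonzero contributions are $h^0(\mf t_\alpha)=\op{rank}G-1$ and $h^0(\mf g_\alpha^\vee(1))=1$ (trivial action on $\mf g_{-\alpha}(1)$), so $h^1(\brho(W))=\op{rank}G=\dim\mf t=h^0(\Gamma_F,\brho(\mf g))$, the last equality again by regular semisimplicity. For part (3) I would combine (1) with local Tate duality. The identity $\langle W,W^\perp\rangle=0$ shows the image of $H^1(\brho(W^\perp)(1))$ in $H^1(\brho(\mf g)(1))$ annihilates $L_{\brho}^{Ram}=\op{im}H^1(\brho(W))$; conversely, since the Killing form is non-degenerate we have $W^{\perp\perp}=W$ and $(W^\perp)^\vee\cong\mf g/W$, so under the duality pairing the full annihilator of $L_{\brho}^{Ram}=\ker\!\big(H^1(\brho(\mf g))\to H^1(\brho(\mf g/W))\big)$ is exactly $\op{im}\,H^1(\brho(W^\perp)(1))$; finally the map $H^1(\brho(W^\perp)(1))\to H^1(\brho(\mf g)(1))$ is injective because the relevant connecting map vanishes — the invariants $H^0(\brho(\mf g)(1))$ surject onto $H^0\big((\brho(\mf g/W^\perp))(1)\big)$, the $\mf g_{-\alpha}$-line surviving by the Ramakrishna-type condition — giving the asserted isomorphism. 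Part (4) is then a direct inspection: $L_{\brho}^{Ram,\Box}=Z^1(\brho(W))+B^1(\brho(\mf g))$, and every $\brho(\mf g)$-coboundary has zero $\mf t$-component while $Z^1(\brho(W))$ has zero $\mf l_\alpha$-component, so every cocycle in $L_{\brho}^{Ram,\Box}$ has vanishing $\mf l_\alpha$-component; dually $L_{\brho}^{Ram,\perp,\Box}=Z^1(\brho(W^\perp)(1))+B^1(\brho(\mf g)(1))$, and here $W^\perp$ omits $\mf g_{-\alpha}$ and every $\brho(\mf g)(1)$-coboundary has zero $\mf g_{-\alpha}$-component — precisely because $\mf g_{-\alpha}(1)$ is the trivial module, i.e.\ because of the Ramakrishna-type hypothesis — so every cocycle in $L_{\brho}^{Ram,\perp,\Box}$ has vanishing $\mf g_{-\alpha}$-component.

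I expect the main obstacle to be part (1): correctly identifying the tangent space of the Ramakrishna functor, in particular seeing that the "$\kappa$ on $\mf g_\alpha$" condition rigidifies exactly the $\mf l_\alpha$-direction, that this is compatible with $\brho$ exactly under the hypothesis $\alpha\circ\brho=\bkp$, and that the coboundaries are tracked correctly when descending from $\op{Lift}_{\brho}^{Ram}$ to $\op{Def}_{\brho}^{Ram}$. The duality bookkeeping in (3) — working with the non-self-dual $W$ through $W^{\perp\perp}=W$ and $(W^\perp)^\vee\cong\mf g/W$ — is the next most delicate point; parts (2) and (4) are routine once the cohomology of $\brho(W)$, $\brho(W^\perp)$ and their Tate twists has been laid out.
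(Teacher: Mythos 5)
Your argument is correct, and it follows essentially the same route as the source of this lemma: the paper gives no proof here but quotes it verbatim from Patrikis (\cite{pat16}, Lemma 4.11), whose proof likewise identifies the tangent space of $\op{Lift}_{\brho}^{Ram}$ as $Z^1(\Gamma_F,\brho(W))+B^1(\Gamma_F,\brho(\mf g))$, uses regular semisimplicity of $\brho(\frob{F})$ to see that the only $\brho(\mf g)$-coboundaries valued in $W$ are $\mf g_{\alpha}$-coboundaries, and then obtains (2)--(4) by the local Euler characteristic formula and Tate duality exactly as you do. Your identification of the first-order form of the ``$\kappa$ on $\mf g_{\alpha}$'' condition as killing the $\mf l_{\alpha}$-direction, and your use of $\alpha\circ\brho=\bkp$ (as unramified characters of all of $\Gamma_F$, not just on Frobenius) to make $\mf g_{-\alpha}(1)$ trivial, are precisely the points the cited proof turns on.
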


\subsubsection{The global argument}
In this section, we assume $G$ is semi-simple.
Let $\brho: \Gamma_{\rats,S} \to G(k)$ be a continuous homomorphism for which $h^0(\galQ,\brho(\mf g))=h^0(\galQ,\brho(\mf g)(1))=0$.
In particular, the deformation functor is representable. The following theorem is proved in \cite[Proposition 5.2]{pat16}.
For a review on global deformation theory and systems of Selmer groups, see \cite[Sections 3.2-3.3]{pat16}.

\begin{thm}\label{pat ram}
Suppose that there is a global deformation condition $\mc L=\{L_v\}_{v \in S}$ consisting of \emph{smooth} local deformation conditions for each place $v \in S$. Let $K=\rats(\brho(\mf g),\mu_l)$.
We assume the following:
\begin{enumerate}
\item $$\sum_{v\in S}(\dim L_v) \geq \sum_{v\in S}h^0(\Gamma_{\rats_v},\brho(\mf g)).$$ 
\item $H^1(\gal{K}{\rats},\brho(\mf g))$ and $H^1(\gal{K}{\rats},\brho(\mf g)(1))$ vanish.
\item Assume Item 2 holds. For any pair of non-zero Selmer classes
$\phi \in \Sel{\mc L^{\perp}}{\Gamma_{\rats,S}}{\brho(\mf g)(1)}$ and $\psi \in \Sel{\mc L}{\Gamma_{\rats,S}}{\brho(\mf g)}$, we can restrict them to $\Gamma_K$ where they become homomorphisms, which are non-zero by Item 2. Letting $K_{\phi}/K$ and $K_{\psi}/K$ be their fixed fields, we assume that $K_{\phi}$ and $K_{\psi}$ are linearly disjoint over $K$. 
\item Consider any $\phi$ and $\psi$ as in the hypothesis of Item 3. There exists an element $\sigma \in \galQ$ such that $\brho(\sigma)$ is a regular semi-simple element of $G$, the connected component of whose centralizer we denote $T$, and such that there exists a root $\alpha \in \Phi(G,T)$ satisfying
(a) $\alpha(\brho(\sigma))=\bkp(\sigma)$.

(b) $k[\psi(\Gamma_K)]$ has an element with non-zero $\mf l_{\alpha}$ component; and 

(c) $k[\phi(\Gamma_K)]$ has an element with non-zero $\mf g_{-\alpha}$ component.
\end{enumerate}
Then there exists a finite set of primes $Q$ disjoint from $S$, and a lift
$\rho: \Gamma_{\rats, S \cup Q} \to G(\mc O)$ of $\brho$ such that
$\rho$ is of type $L_v$ at all $v \in S$ and of Ramakrishna type at all $v \in Q$.
\end{thm}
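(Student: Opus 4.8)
The plan is to run Ramakrishna's global inductive argument, following the standard template of \cite{ram02} and \cite{pat16}, Proposition 5.2. The idea is to build the lift $\rho:\Gamma_{\rats,S}\to G(\mc O)$ one power of $l$ at a time: given a lift $\rho_n:\Gamma_{\rats,S\cup Q_n}\to G(\mc O/\mf m^n)$ of type $\mc L$ at $S$ and of Ramakrishna type at the auxiliary primes $Q_n$, the obstruction to lifting it to $G(\mc O/\mf m^{n+1})$ lies in $H^2(\Gamma_{\rats,S\cup Q_n},\brho(\mf g))$, and by the Poitou--Tate sequence this obstruction can be killed --- and the lifting problem made unobstructed permanently --- once the dual Selmer group $\Sel{\mc L^{\perp}}{\Gamma_{\rats,S\cup Q_n}}{\brho(\mf g)(1)}$ is trivial. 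So the heart of the argument is: starting from $\mc L$ at $S$, add finitely many auxiliary primes $w$, each carrying a Ramakrishna deformation condition $L_w^{Ram}$ of some type $\alpha$, so that the dual Selmer group is successively cut down to zero. Hypothesis (1) guarantees (via Wiles' formula) that the local conditions we impose are large enough that the Selmer group $\Sel{\mc L}{\Gamma_{\rats,S}}{\brho(\mf g)}$ does not shrink when we add Ramakrishna primes, so once the dual Selmer group vanishes the Selmer group is also nonzero and the deformation ring is smooth over $\mc O$, hence has an $\mc O$-point.

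First I would set up the Chebotarev step: given a nonzero dual Selmer class $\phi\in\Sel{\mc L^{\perp}}{\Gamma_{\rats,S}}{\brho(\mf g)(1)}$, the goal is to find a prime $w\notin S$ and a Ramakrishna condition $L_w^{Ram}$ at $w$ such that $\phi|_{\Gamma_{\rats_w}}\notin L_w^{Ram,\perp}$, which by the usual Greenberg--Wiles/Poitou--Tate bookkeeping forces $\sel{\mc L^{\perp}\cup L_w^{Ram,\perp}}{\Gamma_{\rats,S\cup w}}{\brho(\mf g)(1)}<\sel{\mc L^{\perp}}{\Gamma_{\rats,S}}{\brho(\mf g)(1)}$ (one also needs that imposing the Ramakrishna condition does not create new dual Selmer classes supported at $w$, which is where Lemma \ref{3.3}(3),(4) enters). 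To produce such a $w$: restrict $\phi$ and a companion Selmer class $\psi\in\Sel{\mc L}{\Gamma_{\rats,S}}{\brho(\mf g)}$ to $\Gamma_K$, where $K=\rats(\brho(\mf g),\mu_l)$; by hypothesis (2) they become nonzero homomorphisms, and by hypothesis (3) their fixed fields $K_\phi,K_\psi$ are linearly disjoint over $K$. Then apply Chebotarev to the element $\sigma$ furnished by hypothesis (4): pick $\tau\in\Gamma_K$ with $\psi(\tau)$ having nonzero $\mf l_\alpha$-component and $\phi(\tau)$ having nonzero $\mf g_{-\alpha}$-component (using (4b), (4c) together with linear disjointness to choose $\tau$ satisfying both simultaneously), and find a prime $w$ with $\frob{w}$ conjugate to $\sigma\tau$ in the relevant finite Galois group. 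Condition (4a) says $\alpha(\brho(\frob w))=\bkp(\frob w)$, so a Ramakrishna deformation of type $\alpha$ exists at $w$; and the components chosen via $\tau$ ensure $\psi|_{\Gamma_w}\notin (\text{unramified})$ modulo $L_w^{Ram}$ direction and $\phi|_{\Gamma_w}\notin L_w^{Ram,\perp}$, exactly as needed. Here the decomposition $\mf g=\bigoplus_\gamma\mf g_\gamma\oplus\mf t_\alpha\oplus\mf l_\alpha$ of Lemma \ref{3.3}(4) is what tells us which components govern membership in $L_w^{Ram}$ and its annihilator.

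Next I would assemble the induction: repeat the Chebotarev step to produce $w_1,w_2,\dots$, enlarging $\mc L$ to $\mc L\cup\{L_{w_i}^{Ram}\}$ each time, until after finitely many steps $\Sel{\mc L^{\perp}}{\Gamma_{\rats,S\cup Q}}{\brho(\mf g)(1)}=0$ with $Q=\{w_1,\dots,w_r\}$. One must check that hypotheses (2)--(4) persist after each enlargement --- (2) and the existence of $\sigma$ in (4) only depend on $\brho$, not on $\mc L$, and the linear-disjointness condition (3) is reverified for each new pair of classes (this is the point at which one argues, as in \cite{pat16}, that the relevant cohomology classes generate linearly disjoint extensions because the Galois action on $\brho(\mf g)$ and $\brho(\mf g)(1)$ has no $\Gamma_K$-fixed vectors and the extensions are ramified at disjoint sets of primes). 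Once the dual Selmer group vanishes: Wiles' formula together with hypothesis (1) gives $\dim\Sel{\mc L}{\Gamma_{\rats,S\cup Q}}{\brho(\mf g)}=\sum_{v\in S\cup Q}\dim L_v-\sum_{v}h^0(\Gamma_{\rats_v},\brho(\mf g))+h^0-h^0(\cdots(1))\ge 0$ in a way that makes the global deformation functor formally smooth over $\mc O$; then representability (from $h^0(\galQ,\brho(\mf g))=h^0(\galQ,\brho(\mf g)(1))=0$) plus smoothness over $\mc O$ yields a characteristic-zero point $\rho:\Gamma_{\rats,S\cup Q}\to G(\mc O)$, which by construction has type $L_v$ at $v\in S$ and Ramakrishna type at $v\in Q$.

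The main obstacle is the Chebotarev/linear-disjointness step: arranging a single element $\tau\in\Gamma_K$ (equivalently, an element $\sigma\tau$ whose Frobenius we realize) that simultaneously witnesses the nonvanishing of the $\mf l_\alpha$-component of $\psi$ and the $\mf g_{-\alpha}$-component of $\phi$. This is exactly what hypothesis (4), combined with hypothesis (3)'s linear disjointness of $K_\phi$ and $K_\psi$ over $K$, is designed to deliver --- without disjointness one could not choose $\tau$ independently for $\phi$ and $\psi$ --- and verifying that this step goes through, and that it continues to go through as we enlarge $\mc L$ (so that the new auxiliary primes' conditions do not themselves obstruct the next disjointness check), is the crux. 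The remaining bookkeeping --- the Poitou--Tate exact sequence, Wiles' formula applied twice (once for $\brho(\mf g)$, once for the Tate dual), and the deformation-theoretic translation from "dual Selmer vanishes" to "smooth deformation ring" --- is routine and follows \cite{pat16}, Sections 3--5 essentially verbatim.
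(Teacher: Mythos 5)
Your proposal is correct and follows essentially the same route as the paper's own (sketched) proof: a companion Selmer class $\psi$ obtained from Hypothesis (1) via Wiles' formula, a Chebotarev prime $w$ found using Hypotheses (3)--(4) so that $\psi|_{\Gamma_w}\notin L_w^{Ram}$ and $\phi|_{\Gamma_w}\notin L_w^{Ram,\perp}$, a double invocation of Wiles' formula with the codimension-one intersection $L_w=L_w^{unr}\cap L_w^{Ram}$ to show the dual Selmer group strictly shrinks, and induction followed by the standard Poitou--Tate/smoothness argument of \cite{tay03}, Lemma 1.1. The only quibble is your phrase that the Selmer group ``does not shrink'' when Ramakrishna primes are added --- it does shrink in tandem with the dual Selmer group; what is preserved is the inequality of Hypothesis (1), since $\dim L_w^{Ram}=h^0(\Gamma_w,\brho(\mf g))$.
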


\begin{proof}
We sketch the proof for the reader's convenience. By the arguments in \cite{tay03}, Lemma 1.1 (which carry without modification to other groups), it suffices to enlarge $\mc L$ to make the dual Selmer group $\Sel{\mc L^{\perp}}{\Gamma_{\rats,S}}{\brho(\mf g)(1)}$ vanish. We may assume the dual Selmer group is nontrivial and take a nonzero class $\phi$ in it. Item 1 implies by Wiles' formula (Proposition \ref{Wiles}) that $\Sel{\mc L}{\Gamma_{\rats,S}}{\brho(\mf g)}$ is nontrivial. So we can take a nonzero class $\psi$ in it. Item 3, Item 4 and Chebotarev's density theorem all together imply there exists infinitely many $w \notin S$ such that $\psi|w \notin L_{\brho|w}^{Ram}$ and $\phi|w \notin L_{\brho|w}^{Ram,\perp}$. In particular, we have
\begin{equation}\label{psi}
\psi \notin \Sel{\mc L\cup L_{\brho|w}^{Ram}}{\Gamma_{\rats,S\cup w}}{\brho(\mf g)},
\end{equation}
and 
\begin{equation}\label{phi}
\phi \notin \Sel{\mc L^{\perp}\cup L_{\brho|w}^{Ram,\perp}}{\Gamma_{\rats,S\cup w}}{\brho(\mf g)(1)}.
\end{equation}
If we can show 
\begin{equation}\label{shrink}
\Sel{\mc L^{\perp}\cup L_{\brho|w}^{Ram,\perp}}{\Gamma_{\rats,S\cup w}}{\brho(\mf g)(1)} \subset \Sel{\mc L^{\perp}}{\Gamma_{\rats,S}}{\brho(\mf g)(1)},
\end{equation}
then (\ref{phi}) will imply that (\ref{shrink}) is a strict inclusion. The key point now is that if we let $L_w^{unr}$ denote the unramified cohomology at $w$, then $L_w=L_w^{unr} \cap L_{\brho|w}^{Ram}$ is codimension one in $L_w^{unr}$, which, together with a double invocation of Wiles' formula and (\ref{psi}), implies 
$$\Sel{\mc L^{\perp}}{\Gamma_{\rats,S}}{\brho(\mf g)(1)}=\Sel{\mc L^{\perp}\cup L_{w}^{\perp}}{\Gamma_{\rats,S\cup w}}{\brho(\mf g)(1)},$$
from which (\ref{shrink}) follows.
A variation of this argument can be found in the proof of Proposition \ref{kill tSel}. Now for the new Selmer system, Item 1 still holds (Lemma \ref{3.3}, (2)). So we can apply the above argument finitely many times until the dual Selmer group of the enlarged Selmer system vanishes. 
\end{proof}

\subsection{A variant of the global argument}
In this section, we let $G$ be a simple algebraic group and let $\brho: \galQ \to G(k)$ be as in Section 2.1. Recall that $\brho(\galQ)$ is a subgroup of $N_G(T)_k$. For $l=char(k)$ large enough, $\brho(\mf g)$ decomposes into the sum of $\brho(\mf t)$ and another one or two summands depending on whether or not $\Phi(G,T)$ is simply-laced; see Propositions \ref{2.9}, \ref{2.11}, \ref{2.13}, \ref{e7,3}.
We fix a Selmer system $\mc L$.

\begin{prop}\label{item2,3}
Assume that $l$ is large enough. Then Item 2 and Item 3 in Theorem \ref{pat ram} are satisfied.
\end{prop}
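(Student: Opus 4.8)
The plan is to verify the two items separately, both ultimately reducing to statements about the image group $\brho(\galQ) \subseteq N_G(T)_k$ and about the $\galQ$-module structure of $\brho(\mf g)$ and its pieces. Recall from Propositions \ref{2.9}, \ref{2.11}, \ref{2.13}, \ref{e7,3} that for $l$ large, $\brho(\mf g)$ is a direct sum of the irreducible modules $\brho(\mf t)$ and one or two further irreducible summands ($\brho(\mf g_\Phi)$, or $\brho(\mf g_l)$ and $\brho(\mf g_s)$, or $\brho(\mf g_a)$ and $\brho(\mf g_b)$); I will use this decomposition throughout. For Item 2 I must show $H^1(\gal{K}{\rats}, \brho(\mf g))$ and $H^1(\gal{K}{\rats}, \brho(\mf g)(1))$ vanish, where $K = \rats(\brho(\mf g), \mu_l)$; for Item 3 I must show that for nonzero Selmer classes $\phi \in \Sel{\mc L^\perp}{\Gamma_{\rats,S}}{\brho(\mf g)(1)}$ and $\psi \in \Sel{\mc L}{\Gamma_{\rats,S}}{\brho(\mf g)}$, the fixed fields $K_\phi$ and $K_\psi$ of their restrictions to $\Gamma_K$ are linearly disjoint over $K$.

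For Item 2, first I would reduce to each irreducible summand: it suffices to show $H^1(\gal{K}{\rats}, M) = 0$ for $M$ each of $\brho(\mf t), \brho(\mf g_\Phi), \ldots$ and their twists by $\bkp$. The group $\Gamma := \gal{K}{\rats}$ is a quotient of $\brho(\galQ) \times \gal{\rats(\mu_l)}{\rats}$-ish, and more precisely its order is controlled: it is generated by (a quotient of) the finite group $N'$ together with the cyclotomic part, so $|\Gamma|$ involves only $l$, $|k|-1$, and small primes dividing $|\mc W|$. For $l$ large relative to $G$, I would argue that $l \nmid |\Gamma|$ in the relevant sense — actually $l$ can divide $|\Gamma|$ through $\mu_l$, so the cleaner route is: since the action on each summand factors through a group whose order is prime to $l$ on the $N'$-part, and the only $l$-torsion in $\Gamma$ comes from $\op{Gal}(\rats(\mu_l)/\rats) \cong (\Zmod l)^\times$ which has order $l-1$ prime to $l$ — hence $\Gamma$ has order prime to $l$, so all higher cohomology with coefficients in an $\mb F_l$-vector space vanishes. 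Wait: one must be careful that $\brho$ itself is unramified at $l$ and has image of order prime to $l$ (true, since $T(k)$ has order prime to $l$ and $\mc W$ is small), so indeed $|\op{im}\brho|$ is prime to $l$, and $\op{Gal}(\rats(\mu_l)/\rats)$ has order $l-1$; thus $\gcd(|\Gamma|, l) = 1$ and $H^1(\Gamma, -) = 0$ on any $\mb F_l[\Gamma]$-module. This is the clean argument and it handles both $\brho(\mf g)$ and its Tate twist at once.

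For Item 3, the standard approach is to exploit that $\phi|_{\Gamma_K}$ and $\psi|_{\Gamma_K}$ are $\Gamma$-equivariant homomorphisms from $\Gamma_K$ into $\brho(\mf g)(1)$ and $\brho(\mf g)$ respectively, whose images are $\galQ$-submodules of the respective modules; by Item 2 these homomorphisms are nonzero, so their images are nonzero $\galQ$-stable (equivalently $\brho(\galQ)$-stable) submodules. If $K_\phi$ and $K_\psi$ were not linearly disjoint over $K$, the intersection $K_\phi \cap K_\psi$ would be a nontrivial abelian extension of $K$ on which $\galQ$ acts, giving a nonzero $\galQ$-equivariant quotient common to a subquotient of $\brho(\mf g)(1)$ and of $\brho(\mf g)$. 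I would rule this out by comparing Jordan–Hölder constituents: the irreducible constituents of $\brho(\mf g)(1)$ are the twists $\brho(\mf t)(1), \brho(\mf g_\Phi)(1), \ldots$, and those of $\brho(\mf g)$ are the untwisted ones; since $\bkp$ is ramified at $l$ while $\brho$ is unramified at $l$, no twisted constituent is isomorphic to an untwisted one (the cyclotomic character has order $l-1 > 1$ as a character of $\Gamma$, and each $\brho(M)$ is $l$-unramified, so $\brho(M) \not\cong \brho(M')(1)$ for any pair $M, M'$ among our summands — one can detect this on the determinant or via a Frobenius at a prime dividing the conductor of $\bkp$). Hence the two modules share no common irreducible subquotient, forcing the common quotient to be zero and the fields to be linearly disjoint.

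I expect the main obstacle to be the bookkeeping in Item 3 — specifically, pinning down precisely that $\brho(M) \not\cong \brho(M')(1)$ for all relevant pairs of summands $M, M'$, which requires knowing that the cyclotomic twist genuinely changes the isomorphism class. This is where the hypothesis that $l$ is large (so that $\brho$ has image of order prime to $l$ and is unramified at $l$, while $\bkp$ is ramified at $l$) does the real work: it cleanly separates the "geometric" part coming from $N'$ from the cyclotomic part. Once that separation is in hand, both items follow formally.
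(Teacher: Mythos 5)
Your proof is correct, and while your treatment of Item 2 coincides with the paper's (both reduce to the observation that $|\gal{K}{\rats}|$ divides $(l-1)\,|\brho(\galQ)|$, which is prime to $l$, so that $H^1$ of this group with coefficients in an $\mb F_l$-vector space vanishes), your treatment of Item 3 is genuinely different. The paper also reduces Item 3 to showing that no irreducible summand of $\brho(\mf g)$ is isomorphic to one of $\brho(\mf g)(1)$, but it then proceeds case by case: it exhibits explicit elements $\sigma,\tau\in\galQ$ (an order-two element of the alternating subgroup, and a regular semisimple element of $T(k)$ with $\alpha(\brho(\tau))$ a generator of $\Zmodu{l}$) and compares eigenvalue multisets on $\mf t$, $\mf t(1)$, $\mf g_{\Phi}$, $\mf g_{\Phi}(1)$, etc. You instead use ramification at $l$: since $\brho$ is unramified at $l$ by construction while $\bkp|_{I_l}$ is nontrivial (indeed surjects onto $\Zmodu{l}$, as $\rats(\mu_l)/\rats$ is totally ramified at $l$), inertia at $l$ acts trivially on every constituent of $\brho(\mf g)$ and by a nontrivial scalar on every constituent of $\brho(\mf g)(1)$, so the two modules share no constituent. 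This is cleaner and uniform across types, and it is legitimate (the paper itself uses the same fact about $\brho(I_l)$ and $\bkp(I_l)$ in Lemma \ref{4.5}); one small slip is your phrase ``via a Frobenius at a prime dividing the conductor of $\bkp$'' --- the detecting element lies in \emph{inertia} at $l$, not Frobenius. What the paper's longer computation buys, and yours does not, is the pairwise non-isomorphism of all of $\brho(\mf t),\brho(\mf g_l),\brho(\mf g_s)$ and their twists, which is reused later (Lemma \ref{mu_p2}) to verify the multiplicity-freeness hypothesis of Theorem \ref{fkp,main}; your argument only separates twisted from untwisted constituents, which suffices for the proposition as stated but not for that later application.
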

\begin{proof}
For Item 2, note that $|\gal{K}{\rats}|$ divides $(l-1)|\brho(\galQ)|$, which is prime to $l$ by the construction of $\brho$. Since the coefficients field $k$ of $H^1$ has characteristic $l$, this implies the vanishing of $H^1$.

For Item 3, since $\psi: \gal{K_{\psi}}{K}\cong \psi(\Gamma_K)$ and $\phi: \gal{K_{\phi}}{K}\cong \phi(\Gamma_K)$ are $\gal{K}{\rats}$-equivariant isomorphisms, it is enough to check that the irreducible summands in $\mf g$ and $\mf g(1)$ are non-isomorphic. We check this case by case. If $G$ of type $A_n$ (resp. $D_n$), by the construction of $\brho$, the alternating group $A_{n+1}$ (resp. $A_n$) may be identified with a subgroup of $\brho(\galQ)$. We take an element $\sigma \in \galQ$ such that $\brho(\sigma) \in A_{n}$ has order 2. Since $\rats(\brho)$ (the fixed field of $\brho$) is unramified at $l$, $\rats(\brho)$ and $\rats(\mu_l)$ are linearly disjoint over $\rats$, so we may modify $\sigma$ if necessary to make $\bkp(\sigma) \neq 1$. Consider the eigenvalues of $\sigma$ on $\mf t$ and $\mf t(1)$ (here we recall that $\mf t$ is the Lie algebra of the maximal split torus $T$ of $G$ in the construction of $\brho$); the eigenvalues on $\mf t$ are $\pm 1$, whereas none of the eigenvalues on $\mf t(1)$ can be $1$ or $-1$. Thus $\mf t$ is not isomorphic to $\mf t(1)$ as Galois modules. On the other hand, since $T:=T(k) \subset \brho(\galQ)$, we can find $\tau \in \galQ$ such that 
$\brho(\tau)$ is a regular semisimple element for which $\alpha(\brho(\tau))=a$ for $\alpha \in \Delta$, where $a$ is a generator of $\Zmodu{l}$ and $\Delta$ is a fixed set of simple roots in $\Phi$. Again since $\rats(\brho)$ and $\rats(\mu_l)$ are linearly disjoint over $\rats$, we may modify $\tau$ if necessary to make $\bkp(\tau)=a$. Consider the eigenvalues of $\tau$ on $\mf g_{\Phi}$ and $\mf g_{\Phi}(1)$, those on $\mf g_{\Phi}$ are $\{\alpha(\brho(\tau)): \alpha\in \Phi\}$, whereas those on $\mf g_{\Phi}(1)$ are $\{a\cdot \alpha(\brho(\tau)): \alpha\in \Phi\}$. Note that $\alpha(\brho(\tau))=a^{\op{ht}(\alpha)}$ and the order of $a$ is $l-1$, it is clear that these two sets are different when $l$ is large enough. Thus, $\mf g_{\Phi}$ and $\mf g_{\Phi}(1)$ are non-isomorphic as Galois modules.

If $G$ is of type $B_n$, by the construction of $\brho$, the alternating group $A_n$ may be identified with a subgroup of $\brho(\galQ)$. Using the argument in the previous paragraph, we see that $\mf t$ is not isomorphic to $\mf t(1)$ as Galois modules. On the other hand, we need to show that $\brho(\mf g_l), \brho(\mf g_s), \brho(\mf g_l)(1),\brho(\mf g_s)(1)$ are pairwisely non-isomorphic as Galois modules. Just like before we can find $\tau \in \galQ$ such that 
$\brho(\tau)$ is a regular semisimple element for which $\alpha(\brho(\tau))=a$ for $\alpha \in \Delta$ and $\bkp(\tau)=a$, where $a$ is a generator of $\Zmodu{l}$ and $\Delta$ is a fixed set of simple roots in $\Phi$. Since $l$ is large enough, the sets $\{\alpha(\brho(\tau)): \alpha\in \Phi_l\}$, $\{\alpha(\brho(\tau)): \alpha\in \Phi_s\}$,
$\{a\cdot \alpha(\brho(\tau)): \alpha\in \Phi_l\}$, $\{a\cdot \alpha(\brho(\tau)): \alpha\in \Phi_s\}$ must be distinct. So $\tau$ has different eigenvalues on $\brho(\mf g_l), \brho(\mf g_s), \brho(\mf g_l)(1),\brho(\mf g_s)(1)$ and hence they are pairwisely non-isomorphic Galois modules.

The demonstrations are the same for type $C_n$ and $E_7$.

\end{proof}

Let $M$ be a finite dimensional $k$-vector space with a continuous $\galQ$-action. Define its \emph{Tate dual} to be the space $M^{\vee}=\mathrm{Hom}(M,\mu_{\infty})$ equipped with the following $\galQ$-action: 
$$(\sigma f)(m):=\sigma(f(\sigma^{-1}m)).$$

\begin{prop}\label{Tate dual} For any continuous homomorphism $\brho: \galQ \to G(k)$,
$\brho(\mf g)^{\vee}\cong \brho(\mf g)(1)$. For $\brho$ as in Section 2.1, $\brho(\mf t)^{\vee} \cong \brho(\mf t)(1)$.
\end{prop}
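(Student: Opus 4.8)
The plan is to exhibit an explicit $\galQ$-equivariant perfect pairing on $\brho(\mf g)$, from which both claims follow by unwinding definitions. The key structural input is that the Killing form $B(\cdot,\cdot)$ on $\mf g$ is non-degenerate (true for $l$ large relative to $G$) and is $G$-invariant in the sense that $B(Ad(g)X, Ad(g)Y) = B(X,Y)$ for all $g \in G(k)$. Composing $\brho$ with $Ad$, this says exactly that for all $\sigma \in \galQ$ and all $X,Y \in \mf g$, we have $B(\sigma X, \sigma Y) = B(X,Y)$, i.e.\ $B$ is $\galQ$-invariant, where $\sigma$ acts through $\brho$.

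First I would set up the comparison map. Given the $\galQ$-invariant non-degenerate symmetric form $B$, define $\Theta : \brho(\mf g) \to \brho(\mf g)^\vee(-1) = \op{Hom}(\brho(\mf g), k)$ — wait, more carefully: the cyclotomic twist is built into the definition of $M^\vee$, so I would instead directly produce a map $\brho(\mf g) \otimes \bkp \to \brho(\mf g)^\vee$, or equivalently a map $\brho(\mf g)(1) \to \brho(\mf g)^\vee$. Concretely, for $X \in \mf g$ and a chosen identification $\mu_\infty \cap k^\times$-worth of torsion, send $X$ (thought of in the twist $\brho(\mf g)(1)$) to the homomorphism $Y \mapsto B(X,Y) \in \mu_{l^\infty}$ after fixing a primitive root of unity; the twist on the source precisely compensates for the twist hidden in the $\galQ$-action on $M^\vee = \op{Hom}(M,\mu_\infty)$ defined by $(\sigma f)(m) = \sigma(f(\sigma^{-1}m))$. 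The $\galQ$-equivariance of this map is then a one-line check using $\galQ$-invariance of $B$, and it is an isomorphism because $B$ is non-degenerate (the Gram matrix is invertible over $k$). This gives $\brho(\mf g)^\vee \cong \brho(\mf g)(1)$ for arbitrary $\brho$, as claimed. Note this is exactly the statement that $H^1(\Gamma,\brho(\mf g)(1))$ is Tate-dual to $H^1(\Gamma,\brho(\mf g))$, which is why $B$ being available in characteristic $l$ matters.

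For the second assertion, with $\brho$ as in Section 2.1 (so $\brho(\galQ) \subseteq N_G(T)(k)$), I would show that the restriction of $B$ to $\mf t$ remains $\galQ$-invariant and non-degenerate and that $\mf t$ is $\galQ$-stable. Stability of $\mf t$ is Corollary \ref{2.2}. For invariance: the Weyl group acts on $\mf t$ orthogonally for $B|_{\mf t}$ (the Killing form restricted to the Cartan is $W$-invariant, being essentially the Cartan–Killing inner product on the root lattice), and $\brho(\galQ)$ acts on $\mf t$ through $N_G(T)(k) \to \mc W$; hence $B|_{\mf t}$ is $\galQ$-invariant. Non-degeneracy of $B|_{\mf t}$ again holds for $l$ large. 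Then the same construction $X \mapsto (Y \mapsto B(X,Y))$ restricted to $\mf t$ yields $\brho(\mf t)^\vee \cong \brho(\mf t)(1)$.

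The main obstacle — really the only subtlety — is bookkeeping the cyclotomic twist correctly against the nonstandard $\galQ$-action on $M^\vee = \op{Hom}(M,\mu_\infty)$: one must verify that $\op{Hom}(\brho(\mf g),\mu_\infty)$ with the action $(\sigma f)(m)=\sigma(f(\sigma^{-1}m))$ is canonically $\op{Hom}_k(\brho(\mf g),k) \otimes \bkp$ as a $\galQ$-module (this uses that $\mu_\infty \cap k^\times \cong \mb Z/(l-1)$ carries the $\galQ$-action by $\bkp$, and that a $k$-bilinear pairing valued in $k$ can be composed with $k \hookrightarrow \mu_\infty$ only after a noncanonical but harmless choice). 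Once that identification is fixed, both isomorphisms are immediate from invariance plus non-degeneracy of the Killing form, and I would not belabor the routine verifications.
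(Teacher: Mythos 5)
Your proposal is correct and follows essentially the same route as the paper: the paper also uses the non-degenerate $G$-invariant Killing form on $\mf g$ to identify $\brho(\mf g)^{\vee}$ with $\brho(\mf g)(1)$, and for the second claim observes that the Galois action on $\mf t$ factors through the Weyl group and invokes a non-degenerate $W$-invariant form on $\mf t$. Your extra care with the cyclotomic-twist bookkeeping in the definition of $M^{\vee}$ is a reasonable elaboration of what the paper leaves implicit.
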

\begin{proof}
As $l$ is sufficiently large, the Killing form is a non-degenerate $G$-invariant symmetric bilinear form on $\mf g$, which identifies the contragredient representation $\mf g^*$ with $\mf g$, and hence identifies $\brho(\mf g)^{\vee}$ with $\brho(\mf g)(1)$ as Galois modules. 
If $\brho$ is as in Section 2.1, then the Galois action on $\mf t$ factors through $W$. It is easy to see that the standard bilinear form on $\mf t$ is non-degenerate and $W$-invariant. Just as above, we deduce that $\brho(\mf t)^{\vee} \cong \brho(\mf t)(1)$ as Galois modules.
\end{proof}

\begin{defn}\label{M Sel}
Let $\mc L=\{L_v\}_{v \in S}$ be the Selmer system corresponding to a global deformation condition for $\brho$ that is unramified outside a finite set of places $S$, and let $\mc L^{\perp}=\{L_v^{\perp}\}_{v \in S}$ be the associated dual Selmer system.
Define the \emph{$M$-Selmer group} as follows:
$$\Sel{\mc L}{\Gamma_S}{M}=\mathrm{Ker}\left(H^1(\Gamma_S, M)\to \bigoplus_{v\in S}H^1(\Gamma_v, M)/(L_v\cap H^1(\Gamma_v, M))\right),$$
and define the \emph{$M$-dual Selmer group} as follows:
$$\Sel{\mc L^{\perp}}{\Gamma_S}{M^{\vee}}=\mathrm{Ker}\left(H^1(\Gamma_S, M^{\vee})\to \bigoplus_{v\in S}H^1(\Gamma_v, M^{\vee})/(L_v^{\perp}\cap H^1(\Gamma_v, M^{\vee}))\right).$$
\end{defn}

To apply \ref{pat ram}, we need to make sure that Item 1-Item 4 in it are satisfied. By choosing an appropriate $\mc L$, we can make Item 1 hold. Item 2 and Item 3 are satisfied by Proposition \ref{item2,3}. It is tricky to deal with Item 4: the images of $\phi$ and $\psi$, which are $\galQ$-submodules of $\brho(\mf g)$, must satisfy the group-theoretic properties in (b) and (c); if we can find an element $\sigma$ as in Item 4 such that \emph{all} summands of $\brho(\mf g)$ satisfies these properties, then Item 4 will be satisfied. So achieving Item 4 crucially depends on the group-theoretic properties of submodules of $\brho(\mf g)$.   
We need to find a regular semi-simple element $\Sigma$ in $\brho(\galQ)$, the connected component of whose centralizer we denote $T'$, for which there exists a root $\alpha' \in \Phi':=\Phi(G,T')$ such that
\begin{enumerate}
\item $\alpha'(\Sigma) \in \Zmodu{l}$, 
\item Every irreducible summand of $\brho(\mf g)$ has an element with non-zero $\mf l_{\alpha'}$ component,
\item Every irreducible summand of $\brho(\mf g)$ has an element with non-zero $\mf g_{-\alpha'}$ component.
\end{enumerate}
Unfortunately, for our residual representation, there is no such element which meets all three conditions. The rest of this section will show how Item 4 of Theorem \ref{pat ram} can be met by controlling $\psi(\Gamma_K)$ for a given class $\psi$ in the Selmer group.

If we take a regular semi-simple element $\Sigma$ in $T$ (which is the fixed split maximal torus of $G(k)$ we use when constructing the residual representation), then there is no $\alpha \in \Phi$ fulfilling both (2) and (3). Instead, we look for $\Sigma \in N=N_G(T)$ for which $\pi(\Sigma)$ is a nontrivial element in $W$, where $\pi: N_G(T) \rsurj \mc W$ is the canonical quotient map. 

\begin{lem}\label{ngt,s-s}
Assume the characteristic of $k$ is large enough for $G$. Then for any $w \in \mc W$ that fixes a non-central element of $\op{Lie}(T)$, there exist a regular semisimple element $n \in N$ (regular with respect to $G$) such that $\pi(n)=w$. If $G$ is of type $A_1, A_2, B_2, C_2$, then for any $w \in \mc W$, there exist a regular semisimple element $n \in N$ (regular with respect to $G$) such that $\pi(n)=w$.
\end{lem}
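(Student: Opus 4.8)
The fibre $\pi^{-1}(w)$ is the coset $\dot w T$, where $\dot w\in N$ is any fixed lift of $w$, so I would parametrise candidates as $n=\dot w\,t$ with $t\in T(k)$ and show that a sufficiently generic choice works once $|k|$, hence $l$, is large. First I would dispose of semisimplicity: if $m$ is the order of $w$ in $\mathcal W$, then $\pi(n^m)=w^m=1$, so $n^m\in\ker\pi=T$ is semisimple; since $l$ is large we have $l\nmid m$, and then the unipotent part of $n$ (whose order would be a power of $l$ dividing $m$) is trivial. So every lift of $w$ is automatically semisimple, and the only content of the lemma is regularity, i.e.\ that one can arrange $Z_G(n)^0$ to be a maximal torus.

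The main step is a reduction to the Levi subgroup attached to the fixed space. Choose $v\in\mathfrak t^w$ non-central and generic in $\mathfrak t^w$; since $l$ is large we have $\mathfrak z(\mathfrak g)=0$, so $v\neq 0$ and $L:=Z_G(v)=Z_G(\mathfrak t^w)$ is a \emph{proper} Levi subgroup, with $\mathfrak z(\mathfrak l)=\mathfrak t^w$. Because $w$ fixes $\mathfrak t^w=\mathfrak z(\mathfrak l)$ pointwise, $w$ lies in $\mathcal W_L=\mathcal W_{[L,L]}$, so any lift $h\in N_{[L,L]}(T\cap[L,L])$ of $w$ sits inside $N_L(T)\subseteq N_G(T)$ and still maps to $w$ under $\pi$. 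Now take $n=z\,h$ with $z\in Z(L)^0$. Grading $\mathfrak g=\bigoplus_\chi\mathfrak g[\chi]$ by $Z(L)^0$-weights (so $\mathfrak g[0]=\mathfrak l$), the operators $\operatorname{Ad}(z)$ and $\operatorname{Ad}(h)$ commute and both preserve each $\mathfrak g[\chi]$; on $\mathfrak g[\chi]$ with $\chi\neq 0$, $\operatorname{Ad}(z)$ acts by the scalar $\chi(z)$, and for $z$ outside the finitely many proper cosets of the subtori $\ker\chi$ — avoidable once $|k|$ is large — we can ensure $\chi(z)$ is not the inverse of any eigenvalue of $\operatorname{Ad}(h)$ on $\mathfrak g[\chi]$. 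Then $\operatorname{Ad}(zh)$ has no fixed vector off $\mathfrak l$, so $\mathfrak z_{\mathfrak g}(zh)\subseteq\mathfrak l$; since also $zh\in L$, this forces $Z_G(zh)^0=Z_L(zh)^0=Z_L(h)^0=Z(L)^0\cdot Z_{[L,L]}(h)^0$. Hence $n=zh$ is regular semisimple in $G$ exactly when $h$ is regular semisimple in $[L,L]$, where $w$ is now an \emph{elliptic} element.

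It thus remains to produce, inside each semisimple group $[L,L]$ that actually occurs, a regular semisimple lift of the (elliptic) element $w$; for the groups relevant here the simple factors of $[L,L]$ are of types $A_1,A_2,B_2,C_2$ and products thereof, and for each factor I would write such a lift down explicitly. For a Coxeter element of $A_m$ one takes a ``generalized cyclic'' element of $\mathrm{SL}_{m+1}$ whose characteristic polynomial is $X^{m+1}-c$, choosing the scalar $c$ so that its $m+1$ roots are distinct (possible since $l$ is large); the elliptic classes of $B_2\cong C_2$ admit similar explicit regular semisimple representatives. Assembling these over the simple factors gives $h$, and then $n=zh$ is the desired element. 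The case-free part of the argument (semisimplicity and the Levi reduction) handles the general statement; the small-group clause amounts to this last, hands-on verification.

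The main obstacle is precisely this final dimension count: a lift of an elliptic Weyl element is \emph{not} in general regular semisimple — its centralizer can be strictly larger than a maximal torus, because root-orbit ``monodromy'' constants (the scalars by which $\operatorname{Ad}$ of a lift acts on the root spaces of each orbit) may be forced to produce extra fixed vectors on $\mathfrak g$. This is exactly why the hypothesis on $w$ is used: fixing a non-central vector lets us pass to a proper Levi, and for the remaining elliptic classes in the small factors one must check by hand that the explicit lifts above really are regular. Making that check uniform — or correctly delimiting the elliptic classes for which it succeeds — is the part that requires care.
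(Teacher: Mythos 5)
Your proposal follows essentially the same route as the paper: the paper also passes to the proper Levi $M=Z_G(\op{Lie}(T)^w)$ and then perturbs an $M$-regular semisimple lift of $w$ by a generic element of $Z_G(M)^0$, justified by exactly the weight-space observation you spell out (the paper compresses it to ``by looking at the action of simple roots outside $M$ on elements of $Z_G(M)^0$''). The only divergence is at the bottom of the reduction: the paper produces the $M$-regular lift by ``induction on the semisimple rank,'' which is thin for precisely the reason you flag --- by construction $w$ is elliptic in $M$, so the lemma's hypothesis no longer holds there --- whereas you reduce honestly to explicit regular semisimple lifts of elliptic elements in small factors. Since in every application in the paper $w$ is $s_\alpha$ or $s_\beta s_\gamma$, so that $M$ has type $A_1$, $A_1\times A_1$, $A_2$ or $B_2/C_2$, both your explicit constructions and the lemma's second clause cover everything that is actually used.
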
  
\begin{proof}
The second part follows from a straightforward calculation. We will prove the first part. We first make the following observation: if $M$ is a Levi subgroup of $G$, then (by looking at the action of simple roots outside $M$ on elements of $Z_G(M)^0$) for every $M$-regular semisimple element $t \in M$, there is a $G$-regular semisimple element of $tZ_G(M)^0$. If $w$ fixes a non-central element of $\op{Lie}(T)$, then we take $M$ to be $Z_G(\op{Lie}(T)^w)$, which is a proper Levi subgroup of $G$ since the characteristic of $k$ is large enough for $G$. By induction on the semisimple rank, there is a $M$-regular semisimple element $n'$ such that $\pi(n')=w$, so there is a $G$-regular semisimple element $n=n'z$ with $z\in Z_G(M)^0$, and hence $\pi(n)=\pi(n'z)=\pi(n')\pi(z)=\pi(n')=w$. 
\end{proof}

\begin{rmk}
The above lemma should be true without assuming $w$ fixes a non-central element of $\op{Lie}(T)$, but the author does not know how to remove this assumption. For $\op{GL}_n$, one can show that (by matrix calculations) the property holds for all $w \in S_n$ as long as the characteristic of $k$ is large enough.
\end{rmk}

Let $t'$ be a regular semi-simple element in $G$ and $t \in T$ be an element that is conjugate to $t'$. Then $t$ and $t'$ determine a unique bijection between $\Phi=\Phi(G,T)$ and $\Phi'=\Phi(G,T')$ with $T'=Z_G(t')^0$: for any $\alpha \in \Phi$, define $\alpha'\in \Phi'$ such that
$$\alpha'(h)=\alpha(g^{-1}hg)$$ for any $h \in T'$, where $g$ is an element in $G$ such that $g^{-1}t'g=t$. Since $t$ is regular semi-simple, $\alpha'$ is independent of $g$.

Recall that for $\alpha\in\Phi$, $s_{\alpha}$ is the simple reflection in the Weyl group of $\Phi$ associated to $\alpha$.

\begin{lem}\label{s:alpha}
Assume the characteristic of $k$ is not 2. For a long root $\alpha \in \Phi$, let $\Sigma$ be a regular semi-simple element in $N$ such that
$\pi(\Sigma)=s_{\alpha}$ (which exists by Lemma \ref{ngt,s-s}). We fix an element $t\in T$ that is conjugate to $\Sigma$ and let $T'=Z_G(\Sigma)^0$. The elements $\Sigma$ and $t$ determine a bijection between $\Phi$ and $\Phi'$ as above. 
\begin{itemize}
\item If $\Phi$ is of type $A_n$ ($n \geq 1$) or $D_n$ ($n\geq 4$), then the root $\alpha' \in \Phi'$ corresponding to $\alpha$ fulfills (1) and (3). For (2), $\mf g_{\Phi}$ (recall that $\mf g_{\Phi}:=\sum_{\alpha\in\Phi} \mf g_{\alpha}$) has an element with non-zero $\mf l_{\alpha'}$ component, but $\mf t$ does not.
\item If $\Phi$ is of type $B_n$, $C_n$ ($n \geq 2$) or $E_7$, then $\alpha'$ fulfills (1) and $\mf t$ has a vector with non-zero $\mf g_{-\alpha'}$ component. 
\end{itemize}
Moreover, $\mf t' \cap \mf t=\mf t'_{\alpha'} \cap \mf t=W \cap \mf t$, where $W=\mf t'_{\alpha'} \oplus \mf g_{\alpha'}$.
\end{lem}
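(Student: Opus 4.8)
The plan is to push the whole statement into the minimal Levi subgroup attached to $\alpha$, where the computation reduces to $\mf{sl}_2$-linear algebra. Set $M=Z_G(\op{Lie}(T)^{s_\alpha})$, the Levi subgroup of semisimple rank one whose root system is $\{\pm\alpha\}$, and write $\mf m=\op{Lie}(M)=\op{Lie}(T)^{s_\alpha}\oplus\mf s$ with $\mf s:=[\mf m,\mf m]=\mf g_\alpha\oplus k\alpha^\vee\oplus\mf g_{-\alpha}$. Since $\pi(\Sigma)=s_\alpha$ lies in the Weyl group of $M$ we have $\Sigma\in N_M(T)\subseteq M$; and since $\Sigma$ is regular semisimple in $G$, its Cartan $\mf t'=\op{Lie}(T')$ is abelian and contains $\op{Lie}(T)^{s_\alpha}$, whence $\mf t'\subseteq\mf m$, $T'\subseteq M$, $T'=Z_M(\Sigma)^0$, and $\mf t'=\mf m^\Sigma$ (so $\Sigma$ is regular semisimple in $M$ as well). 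I would then take the element $t\in T$ of the statement to be an $M$-conjugate of $\Sigma$, so that the conjugating element $g$ (with $g^{-1}\Sigma g=t$) may be chosen in $M$; then $\op{Ad}(g)$ preserves $\mf m$, hence its derived subalgebra $\mf s$, so the $\mf{sl}_2$-triple attached to $\alpha'$ satisfies $\mf g_{\alpha'}\oplus\mf l_{\alpha'}\oplus\mf g_{-\alpha'}=\op{Ad}(g)\mf s=\mf s$, while $\mf t'=\op{Lie}(T)^{s_\alpha}\oplus\mf s^\Sigma$ and $\mf t'_{\alpha'}=\op{Lie}(T)^{s_\alpha}$.

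It then remains to analyze $\op{Ad}(\Sigma)$ on the three-dimensional space $\mf s$. Since $\op{Ad}(\Sigma)$ acts on $\op{Lie}(T)$ through $s_\alpha$, it sends $\alpha^\vee$ to $-\alpha^\vee$; hence $\mf s^\Sigma$ is a line inside $\mf g_\alpha\oplus\mf g_{-\alpha}$. On $\mf g_\alpha\oplus\mf g_{-\alpha}$ the operator is anti-diagonal in the basis $X_{\pm\alpha}$ (as $s_\alpha(\alpha)=-\alpha$), with trace $0$, and with the diagonal entries of its square equal to $\alpha(\Sigma^2)$ and $\alpha(\Sigma^2)^{-1}$; so its determinant is $\pm1$, and since it fixes a line there its eigenvalues must be $\{1,-1\}$ — this is exactly where $\op{char}(k)\neq2$ is used, excluding $\{\pm i\}$. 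Thus $\op{Ad}(\Sigma)|_{\mf s}$ has eigenvalues $\{1,-1,-1\}$: $\mf l_{\alpha'}=\mf s^\Sigma=kv_0$ is the $+1$-eigenline (and $v_0\in\mf g_\alpha\oplus\mf g_{-\alpha}$ has both root-space components nonzero), $\mf g_{\alpha'}\oplus\mf g_{-\alpha'}$ is the $(-1)$-eigenspace $k\alpha^\vee\oplus kv_1$ with $v_1\in\mf g_\alpha\oplus\mf g_{-\alpha}$ the remaining eigenvector, and $\alpha'(\Sigma)=-1\in\Zmodu{l}$, which is (1).

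The component statements now follow from this $\mf{sl}_2$ picture. The lines $\mf g_{\alpha'}$ and $\mf g_{-\alpha'}$ are the two nonzero eigenlines of $\op{ad}(v_0)$ on $k\alpha^\vee\oplus kv_1$; a one-line computation (using $[v_0,v_1]\in k^\times\alpha^\vee$ and $[v_0,\alpha^\vee]\in k^\times v_1$) shows each has nonzero $\alpha^\vee$-component and nonzero $v_1$-component. Consequently $\mf t=\op{Lie}(T)^{s_\alpha}\oplus k\alpha^\vee$ has trivial $\mf l_{\alpha'}$-component (its first summand is $\mf t'_{\alpha'}$, and $\alpha^\vee$ lies in the $(-1)$-eigenspace), so $\mf t$ fails (2); whereas a nonzero $X_\alpha=av_0+bv_1$ necessarily has $a\neq0$ (else $\op{Ad}(\Sigma)X_\alpha=X_\alpha$, impossible since $\op{Ad}(\Sigma)X_\alpha\in\mf g_{-\alpha}$), so $\mf g_\Phi$ contains an element with nonzero $\mf l_{\alpha'}$-component. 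Similarly $b\neq0$, and expanding $\alpha^\vee$ and $v_1$ along $\mf g_{\alpha'}\oplus\mf g_{-\alpha'}$ each has nonzero $\mf g_{-\alpha'}$-component; hence $\mf t$ (via $\alpha^\vee$) and $\mf g_\Phi$ (via $X_\alpha$) both have an element with nonzero $\mf g_{-\alpha'}$-component, giving (3) for $A_n,D_n$ and the $\mf t$-assertion for $B_n,C_n,E_7$. Finally $\mf t'\cap\mf t=\op{Lie}(T)^{s_\alpha}$ (as $v_0\notin\mf t$), which equals $\mf t'_{\alpha'}\cap\mf t$; and any element of $W=\mf t'_{\alpha'}\oplus\mf g_{\alpha'}$ lies in $\mf t$ only if its $\mf g_{\alpha'}$-component vanishes (otherwise its $v_1$-contribution is a nonzero element of $\mf g_\alpha\oplus\mf g_{-\alpha}$), so $W\cap\mf t=\op{Lie}(T)^{s_\alpha}$ as well.

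The step I expect to demand the most care is the identification $\op{Ad}(g)\mf s=\mf s$, i.e.\ that the $\mf{sl}_2$ of $\alpha'$ sits in the same place as that of $\alpha$: one must genuinely verify that $\Sigma$ is regular semisimple \emph{in $M$} with $T'\subseteq M$ (legitimizing the Levi reduction), and that $t$, hence $g$, can be taken inside $M$; granted these, uniqueness of the derived subalgebra closes the argument, and the rest is routine bookkeeping. (The same picture also shows $\mf g_s$ has trivial $\mf g_{-\alpha'}$-component, since $\mf g_{-\alpha'}\subseteq\mf s\subseteq\mf m$ while $\mf g_s$ lies in the $\op{Ad}(\Sigma)$- and $T'$-stable complement $\bigoplus_{\beta\neq\pm\alpha}\mf g_\beta$ of $\mf m$ when $\alpha$ is long; this is why no claim about $\mf g_s$ is made for $B_n,C_n$.)
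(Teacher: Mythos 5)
Your proof is correct and follows the same strategy as the paper's: both reduce the entire computation to the rank-one subalgebra $\mf s=\mf g_\alpha\oplus k\alpha^\vee\oplus\mf g_{-\alpha}$ attached to $\alpha$ and then carry out an $\mf{sl}_2$-type calculation there, arriving at $\alpha'(\Sigma)=-1$ and the same description of the component structure and of the three intersections with $\mf t$. The only differences are ones of execution --- the paper conjugates explicit $2\times 2$ matrices by a fixed $P$, whereas you diagonalize $\op{Ad}(\Sigma)|_{\mf s}$ abstractly via its $\{1,-1,-1\}$ eigenvalues --- and you make explicit the Levi-reduction step ($T'\subseteq M$, $g\in M$, $\op{Ad}(g)\mf s=\mf s$) that the paper dismisses as ``essentially a $\op{GL}_2$-calculation.''
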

\begin{proof}
It is clear that $\alpha'(\Sigma)=-1$, so (1) is satisfied. We need to show
\begin{itemize}
\item The space $\mf l_{\alpha}$ has nonzero $\mf g_{-\alpha'}$ component.
\item The space $\mf g_{\alpha}$ has nonzero $\mf l_{\alpha'}$ and $\mf g_{-\alpha'}$ component.
\item $\mf t' \cap \mf t=\mf t'_{\alpha'} \cap \mf t=W \cap \mf t$.
\end{itemize}
This is essentially a $\op{GL}_2$-calculation.
We may perform the calculation in the subalgebra of $\mf g$ generated by $\mf g_{\alpha}$, whose root lattice is isomorphic to $\{x_1e_1+x_2e_2| x_i \in \ints, x_1+x_2=0\}$. We take $\alpha=e_1-e_2$ with the corresponding root vector $$X_{\alpha}:=\begin{pmatrix}
0&1\\
0&0\\
\end{pmatrix}.$$

We have 
$\Sigma=\begin{pmatrix}
0&1\\1&0
\end{pmatrix}$, $P=\begin{pmatrix}
1&1\\1&-1
\end{pmatrix}$. Then
$P^{-1} \Sigma P=diag(1,-1)$. 
The first bullet follows from the identity
$$P^{-1}\cdot diag(h_1,h_2) \cdot P=-\frac{1}{2}\begin{pmatrix}
-h_1-h_2&-h_1+h_2\\ -h_1+h_2&-h_1-h_2
\end{pmatrix}.$$
The second bullet follows from the identity
$$P^{-1} \begin{pmatrix} 0&1\\ 0&0 \end{pmatrix} P=-\frac{1}{2}\begin{pmatrix}
-1&1\\ -1&1 \end{pmatrix}.$$
To show the third bullet, note that elements in $\mf t'$ are of the form $\begin{pmatrix}
h&k\\k&h
\end{pmatrix}$, elements in $\mf t'_{\alpha'}$ are of the form $\begin{pmatrix}
h&0\\0&h
\end{pmatrix}$, and elements in $\mf g_{\alpha'}$ are of the form 
$\begin{pmatrix}
x&-x\\ x&-x \end{pmatrix}$. It follows that all three intersections in the third bullet are the one dimensional $k$-vector space spanned by $\begin{pmatrix}
1&0\\0&1
\end{pmatrix}$.

\end{proof}

\begin{lem}\label{s:alpha:beta}
Suppose the characteristic of $k$ is not 2 or 3. Assume that $\Phi$ is of type $B_n, C_n$ ($n \geq 2$) or $E_7$, so $\brho(\mf g)=\brho(\mf t) \oplus \brho(\mf g_l) \oplus \brho(\mf g_s)$ for $B_n$ and $C_n$, and $\brho(\mf g)=\brho(\mf t) \oplus \brho(\mf g_a) \oplus \brho(\mf g_b)$ for $E_7$ (see Section 2.1.4-2.1.6). 
\begin{enumerate}
\item (Type $B_n$ and $C_n$) For a pair of non-perpendicular $\beta,\gamma \in \Phi$ with $\beta$ long and $\gamma$ short, let $\Sigma$ be a regular semi-simple element in $N$ such that
$\pi(\Sigma)=s_{\beta}\cdot s_{\gamma}$ (which exists by Lemma \ref{ngt,s-s}). 
We fix an element $t\in T$ that is conjugate to $\Sigma$ and let $T'=Z_G(\Sigma)^0$. The elements $\Sigma$ and $t$ determine a bijection between $\Phi$ and $\Phi'=\Phi(G,T')$.
Then for all long root $\alpha'$ in the span of $\beta'$ and $\gamma'$ (which is a sub-system of $\Phi'$ of type $C_2$), (3) is satisfied. For (2), $\mf g_l$ (resp. $\mf g_s$) has an element with non-zero $\mf l_{\alpha'}$ component, but $\mf t$ does not. The element $\alpha'(\Sigma)$ has order 4 in $\bar k^{\times}$, hence (1) is satisfied only for $l \equiv 1(4)$.

\item (Type $E_7$) For a pair of non-perpendicular $\beta,\gamma \in \Phi$ with $\mf g_{\beta} \subset \mf g_a$ and $\mf g_{\gamma} \subset \mf g_b$, let $\Sigma$ be a regular semi-simple element in $N$ such that
$\pi(\Sigma)=s_{\beta}\cdot s_{\gamma}$. 
We fix an element $t\in T$ that is conjugate to $\Sigma$ and let $T'=Z_G(\Sigma)^0$. The elements $\Sigma$ and $t$ determine a bijection between $\Phi$ and $\Phi'=\Phi(G,T')$.
Then for all roots $\alpha'$ in the span of $\beta'$ and $\gamma'$ (which is a sub-system of $\Phi'$ of type $A_2$), (3) is satisfied. For (2), $\mf g_a$ (resp. $\mf g_b$) has an element with non-zero $\mf l_{\alpha'}$ component, but $\mf t$ does not. The elements $\alpha'(\Sigma)$ has order 3 in $\bar k^{\times}$, hence (1) is satisfied only for $l \equiv 1(3)$.
\end{enumerate}

Moreover, $W \cap \mf t \subset \mf t'$, where $W=\mf t'_{\alpha'} \oplus \mf g_{\alpha'}$. 
\end{lem}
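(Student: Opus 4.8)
The plan is to reduce every assertion to an explicit rank-two matrix computation, just as the proof of Lemma \ref{s:alpha} reduced to a $\op{GL}_2$-calculation.

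Since $\beta$ and $\gamma$ are non-perpendicular, they span a closed subsystem $\Psi\subseteq\Phi$ of type $C_2$ (when $\Phi$ is of type $B_n$ or $C_n$) or of type $A_2$ (when $\Phi$ is of type $E_7$), and $w:=s_{\beta}s_{\gamma}$ is a Coxeter element of $\mc W(\Psi)$, of order $4$ resp.\ $3$. When $n\geq 3$ (resp.\ in the $E_7$ case) the fixed space $\op{Lie}(T)^{w}$ has codimension two and so contains a non-central vector, so Lemma \ref{ngt,s-s} provides a $G$-regular semisimple $\Sigma\in N$ with $\pi(\Sigma)=w$; the remaining case $\Phi=C_2$ is handled by the second clause of Lemma \ref{ngt,s-s}. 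Following the proof of that lemma, write $\Sigma=n'z$ with $n'$ in the Levi $M:=Z_G(\op{Lie}(T)^{w})$, whose derived group $M_{\mathrm{der}}$ has root system $\Psi$, and $z\in Z_G(M)^0$. Because $z$ centralizes $M_{\mathrm{der}}$, $\op{Ad}(z)$ acts trivially on $\op{Lie}(M_{\mathrm{der}})$; hence $z\in T':=Z_G(\Sigma)^0$, the torus $T'$ contains a maximal torus $T'_M$ of $M_{\mathrm{der}}$, each root $\alpha'$ attached to $\alpha\in\Psi$ is a root of $M_{\mathrm{der}}$ relative to $T'_M$, and for such $\alpha'$ we have $\alpha'(\Sigma)=\alpha'(n')$ while the $\mf l_{\alpha'}$- and $\mf g_{\pm\alpha'}$-components of any vector of $\op{Lie}(M)$ lie in $\op{Lie}(M_{\mathrm{der}})$. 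As $\mf t\subseteq\op{Lie}(M)=\op{Lie}(M_{\mathrm{der}})\oplus\op{Lie}(Z(M)^0)$ and $\op{Lie}(Z(M)^0)\subseteq\mf t'_{\alpha'}$, every assertion about $\mf t$ reduces to the corresponding one for the Cartan $\mf t_{\mathrm{der}}:=\mf t\cap\op{Lie}(M_{\mathrm{der}})$ of $M_{\mathrm{der}}$.

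It thus remains to compute inside $M_{\mathrm{der}}$, whose Lie algebra is $\mf{sp}_4$ (type $C_2$) or $\mf{sl}_3$ (type $A_2$), with $n'$ a regular semisimple lift of the Coxeter element. Diagonalizing $n'$ and conjugating the root vectors and the Cartan by the diagonalizing matrix, exactly as in the $\op{GL}_2$-calculation of Lemma \ref{s:alpha}, one checks directly: (i) for $\alpha'$ a long root of the $C_2$-subsystem, resp.\ any root of the $A_2$-subsystem, $\alpha'(\Sigma)$ is a root of unity of order exactly $4$, resp.\ $3$, so it lies in $\Zmodu{l}$ precisely when $l\equiv 1(4)$, resp.\ $l\equiv 1(3)$, giving condition (1) under those congruences; (ii) the conjugates of $\mf g_{\beta}$ and of $\mf g_{\gamma}$ have non-zero $\mf l_{\alpha'}$-component, so $\mf g_l$ resp.\ $\mf g_a$ and $\mf g_s$ resp.\ $\mf g_b$ contain such elements, whereas the conjugate of $\mf t_{\mathrm{der}}$ (and hence $\mf t$) has zero $\mf l_{\alpha'}$-component, which is exactly the $\mf l_{\alpha'}$-behaviour asserted in (2); (iii) the conjugates of $\mf g_{\beta}$, of $\mf g_{\gamma}$, and of a regular element of $\mf t_{\mathrm{der}}$ all have non-zero $\mf g_{-\alpha'}$-component, which together with the fact that $\mf t,\mf g_l,\mf g_s$ resp.\ $\mf t,\mf g_a,\mf g_b$ exhaust the irreducible summands of $\brho(\mf g)$ (Propositions \ref{2.11}, \ref{2.13}, \ref{e7,3}) yields condition (3).

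For the ``moreover'' clause, note that $\Sigma\in N_G(T)$ forces $\op{Ad}(\Sigma)$ to preserve $\mf t$, whereas on $W=\mf t'_{\alpha'}\oplus\mf g_{\alpha'}$ it acts as the identity on the first summand (since $\Sigma\in T'$) and by the scalar $\alpha'(\Sigma)\neq 1$ on the second. If $v=v_0+v_1\in W\cap\mf t$ with $v_0\in\mf t'_{\alpha'}$ and $v_1\in\mf g_{\alpha'}$, then $(\alpha'(\Sigma)-1)v_1=\op{Ad}(\Sigma)v-v\in\mf t$, so $v_1\in\mf t\cap\mf g_{\alpha'}$, which is $\{0\}$ for $l$ large (a semisimple vector lying in a root space is central, hence zero); thus $v=v_0\in\mf t'_{\alpha'}\subseteq\mf t'$. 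I expect the main obstacle to be steps (ii)--(iii): in contrast with the single-reflection situation of Lemma \ref{s:alpha}, one must control the $\mf l_{\alpha'}$- and $\mf g_{-\alpha'}$-components simultaneously for \emph{every} root $\alpha'$ of the rank-two subsystem determined by a length-two Coxeter lift, and verify the non-vanishing uniformly; keeping track of the central factor $z$ and of how $\mf t$, which is transverse to $T'$, meets the $T'$-root spaces also takes some care, although no new idea is required beyond the matrix computations already used for Lemma \ref{s:alpha}.
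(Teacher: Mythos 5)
Your proposal follows essentially the same route as the paper: both reduce everything to an explicit diagonalization of a lift of the length-two Coxeter element inside the rank-two subalgebra ($\mf{sl}_3$ for $E_7$, $\mf{sp}_4$ for $B_n/C_n$) and then read off $\mf l_{\alpha'}$- and $\mf g_{-\alpha'}$-components of the conjugated root vectors and Cartan elements. Two remarks. First, the paper's proof consists almost entirely of the matrix computations that you compress into ``one checks directly (i)--(iii)''; you have set them up correctly (the diagonalizing matrix is the discrete-Fourier-type matrix in a primitive $3$rd resp.\ $8$th root of unity), but be aware that in the $C_2$ case the conjugate $P^{-1}X_{\beta}P$ does have vanishing entries, so the non-vanishing of the relevant components genuinely has to be checked root-by-root for the long roots $\alpha'$ rather than following from all entries being nonzero as in the $A_2$ case — this is exactly the ``uniformity'' issue you flag, and it does work out, but it is the substance of the lemma rather than a formality. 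Second, your argument for the ``moreover'' clause is genuinely different from, and cleaner than, the paper's: the paper verifies $W\cap\mf t\subset\mf t'$ by another explicit matrix computation (showing the off-diagonal entries of a conjugated element of $W$ vanishing forces the element to be zero in the rank-two subalgebra), whereas you observe that $\op{Ad}(\Sigma)$ preserves $\mf t$, fixes $\mf t'_{\alpha'}$, and scales $\mf g_{\alpha'}$ by $\alpha'(\Sigma)\neq 1$, so the $\mf g_{\alpha'}$-part of any element of $W\cap\mf t$ lands in $\mf t\cap\mf g_{\alpha'}=0$; this eigenvalue argument is shorter, avoids computation entirely, and applies verbatim to the analogous clause of Lemma \ref{s:alpha}. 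Your Levi-theoretic framing of the reduction (via $M=Z_G(\op{Lie}(T)^{w})$ and the decomposition $\Sigma=n'z$) is also somewhat more careful than the paper's implicit identification of $\Sigma$ with its rank-two component, at the cost of a mild unproven assertion that the bijection $\Phi\to\Phi'$ matches the roots of $M$ on both sides — a point the paper glosses over in the same way.
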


\begin{proof}
We first prove (2). Let $\alpha'$ be any root in the span of $\beta'$ and $\gamma'$.
We need to show 
\begin{itemize}
\item $\mf l_{\beta} \oplus \mf l_{\gamma}$ has nonzero $\mf g_{-\alpha'}$ component.
\item $\mf g_{\beta}$ (resp. $\mf g_{\gamma}$) has nonzero $\mf l_{\alpha'}$ component and nonzero $\mf g_{-\alpha'}$ component.
\item $W \cap \mf t \subset \mf t'$.
\end{itemize}
We may perform the calculation in the subalgebra of $\mf g$ generated by $\mf g_{\beta}$ and $\mf g_{\gamma}$, whose root lattice is isomorphic to $\{x_1e_1+x_2e_2+x_3e_3| x_i \in \ints, x_1+x_2+x_3=0\}$. We take $\beta=e_1-e_2, \gamma=e_2-e_3$ with the corresponding root vectors $$X_{\beta}:=\begin{pmatrix}
0&1&0\\
0&0&0\\
0&0&0
\end{pmatrix},$$

$$X_{\gamma}:=\begin{pmatrix}
0&0&0\\
0&0&1\\
0&0&0
\end{pmatrix}.$$
We have 
$$\Sigma=\begin{pmatrix}
0&1&0\\
1&0&0\\
0&0&1
\end{pmatrix}\begin{pmatrix}
1&0&0\\
0&0&1\\
0&1&0
\end{pmatrix}=\begin{pmatrix}
0&0&1\\
1&0&0\\
0&1&0
\end{pmatrix}.$$
Let $r$ be a (fixed) primitive 3-rd root of unity in $\bar k$, and let $$P=\begin{pmatrix}
1&1&1\\
1&r^2&r\\
1&r&r^2
\end{pmatrix},$$
then 
$$P^{-1}\Sigma P=diag(1,r,r^2),$$
which implies $\alpha'(\Sigma)$ has order 3 in $\bar k^{\times}$.
We have $P^{-1}diag(a,b,c)P$ is a nonzero scalar multiple of
$$\begin{pmatrix}
ar+br+cr&ar+b+cr^2&ar+br^2+c\\
ar+br^2+c&ar+br+cr&ar+b+cr^2\\
ar+b+cr^2&ar+br^2+c&ar+br+cr
\end{pmatrix},$$
from which the first bullet follows.
On the other hand, we have
$P^{-1}X_{\beta}P$ is a nonzero scalar multiple of 
$$\begin{pmatrix}
r&1&r^2\\
r&1&r^2\\
r&1&r^2
\end{pmatrix},$$ 
from which it follows that $\mf g_{\beta}$ has nonzero $\mf l_{\alpha'}$ component and nonzero $\mf g_{-\alpha'}$ component for any $\alpha'$. Similarly, $\mf g_{\gamma}$ has nonzero $\mf l_{\alpha'}$ component and nonzero $\mf g_{-\alpha'}$ component for any $\alpha'$. The second bullet follows.
We now show the third bullet for $\alpha'=\beta'$, the calculation is similar for other roots. We have
$$P\begin{pmatrix}
a&x&0\\
0&a&0\\
0&0&b
\end{pmatrix}P^{-1}$$
is a nonzero constant multiple of 
$$\begin{pmatrix}
2ar+br+xr&-a+b+xr^2&-ar^2+br^2+x\\
-ar^2+br^2+xr&br+xr^2&-a+b+x\\
-a+b+xr&-ar^2+br^2+xr^2&2ar+br+x
\end{pmatrix}.$$
A simple calculation shows demanding the off-diagonal entries in the above matrix to be zero will force all of $a,b,x$ to be zero. Thus $W \cap \mf t$ is trivial in the subalgebra of $\mf g$ generated by $\mf g_{\beta}$ and $\mf g_{\gamma}$. It follows that $W \cap \mf t$ is contained in $\mf t'$.

The proof of (1) is very similar to that of (2). The computation may be performed in the subalgebra of $\mf g$ generated by $\mf g_{\beta}$ and $\mf g_{\gamma}$, which is of type $C_2$. Let $\alpha'$ be a long root in the span of $\beta'$ and $\gamma'$.
We will show the three bullets above are true. The roots are 
$\{\pm(e_1-e_2), \pm (e_1+e_2), \pm 2e_1, \pm 2e_2\}$. We fix an alternating form $x_1y_4+x_2y_3-x_3y_2-x_4y_1$ on $k^2$ and let $\beta=e_1-e_2$ and $\gamma=2e_1$ with corresponding root vectors
$$X_{\beta}:=\bp 0&1&0&0\\ 0&0&0&0\\
0&0&0&1\\
0&0&0&0
\ep,$$
$$X_{\gamma}:=\bp 0&0&0&1\\
0&0&0&0\\
0&0&0&0\\
0&0&0&0
\ep.$$

We have 
$$\Sigma=
\bp 0&1&0&0\\
1&0&0&0\\
0&0&0&1\\
0&0&1&0
\ep \bp 0&0&0&1\\
0&1&0&0\\
0&0&1&0\\
-1&0&0&0
\ep
=\bp
0&1&0&0\\
0&0&0&1\\
-1&0&0&0\\
0&0&1&0
\ep.$$
Let $r$ be a (fixed) primitive 8-th root of unity in $\bar k$, and let 
$$P=\bp 1&1&1&1\\
r&r^3&r^5&r^7\\
r^3&r&r^7&r^5\\
r^2&r^6&r^2&r^6
\ep,$$
then 
$$P^{-1}\Sigma P=diag(r,r^3,r^5,r^7),$$
which implies $\alpha'(\Sigma)$ has order 4 in $\bar k^{\times}$.
We have $P^{-1}diag(a,b,-b,-a)P$ is a nonzero scalar multiple of 
$$\bp 
0&a-br^6&0&a-br^2\\ 
a-br^6&0&a+br^2&0\\
0&a+br^6&0&a+br^2\\
a-br^6&0&a-br^2&0
\ep.$$
Let $\alpha' \in \Phi'$ be any long root, then $\alpha'$ satisfies the first bullet. On the other hand,
$P^{-1}X_{\beta}P$ is a nonzero scalar multiple of
$$\bp r-r^7&0&r^5-r^7&r^7-r^3\\
0&r^3+r&2r^5&r^7+r\\
r+r^7&2r^3&r^5+r^7&0\\
r-r^5&r^3-r&0&r^7-r
\ep,$$
and $P^{-1}X_{\gamma}P$ is a nonzero scalar multiple of
$$\bp r^2&r^6&r^2&r^6\\
r^2&r^6&r^2&r^6\\
r^2&r^6&r^2&r^6\\
r^2&r^6&r^2&r^6
\ep,$$
from which the second bullet follows.
We check the third bullet for $\alpha'$ corresponding to $2e_2$, the calculation is similar for other long roots in $\Phi'$.
We have 
$$P \bp a&0&0&0\\
0&0&x&0\\
0&0&0&0\\
0&0&0&-a
\ep P^{-1}$$
is a nonzero scalar multiple of
$$\bp x & a(r^5-r^3)+xr^3 & a(r^5-r^3)-xr^5 & a(r^6-r^2)+xr^6\\
a(r-r^7)+xr^3 & r^6x & 2ar^6-x & a(r^5-r^3)+xr\\
a(r^3-r^5)+xr & 2ar^2-x & -r^6x & a(r^3-r^5)+xr^7\\
-2ar^6+xr^6 & a(r^3-r^5)+xr & a(r^7-r)-xr^3 & -x
\ep.$$
It is easy to see that for the matrix to be diagonal, both of $a$ and $x$ have to be zero. Thus $W \cap \mf t$ is trivial in the subalgebra of $\mf g$ generated by $\mf g_{\beta}$ and $\mf g_{\gamma}$. It follows that $W \cap \mf t$ is contained in $\mf t'$.
\end{proof}

Let $\Sigma, \alpha'$ be as in Lemma \ref{s:alpha}, (1) for $\Phi$ of type $A_n$ or $D_n$, and Lemma \ref{s:alpha:beta} for $\Phi$ of type $B_n, C_n$ or $E_7$. We have $\alpha(\Sigma) \in \Zmodu{l}$ for primes $l$ satisfying an appropriate congruence condition if necessary. We need to modify the element $\Sigma$ to make it land in $\brho(\galQ)$. When $\Phi$ is of type $A_n$ or $D_n$, $\pi(\brho(\galQ))=[\mc W,\mc W]$ (see 2.1.3 and 2.1.5). For any $\alpha\in\Phi$, we write $\mf{sl}^{\alpha}_2$ for the Lie subalgebra of $\mf g$ generated by $\mf g_{\alpha}$ and $\mf g_{-\alpha}$. We replace $s_{\alpha}$ by $s_{\alpha}s_{\beta}$ for some root $\beta$ orthogonal to $\alpha$ such that $[\mf{sl}^{\alpha}_2,\mf{sl}^{\beta}_2]$ is trivial (such $\beta$ exists because $n \geq 4$) and replace $\Sigma$ with a regular semisimple element in $N_G(T)$ that maps to $s_{\alpha}s_{\beta}$ when modulo $T(k)$. Note that $s_{\alpha}s_{\beta}\in [\mc W,\mc W]$: The Weyl group $\mc W$ acts transitively on the irreducible root system $\Phi$, so there exists $w \in \mc W$ such that $w\alpha=\beta$, and hence $ws_{\alpha}w^{-1}=s_{\beta}$; it follows that $s_{\alpha}s_{\beta}=[s_{\alpha},w]$. We again denote this new element by $\Sigma$, which now lands in $\brho(\galQ)$. Lemma \ref{s:alpha}, (1) still holds. When $\Phi$ is of type $B_n$, we again have $\pi(\brho(\galQ))=[\mc W,\mc W]$ (see 2.1.5). As $s_{\beta}s_{\gamma} \in [\mc W,\mc W]$, $\Sigma \in \brho(\galQ)$, so no modification is needed. When $\Phi$ is of type $C_n$, $\pi(\brho(\galQ))=\mc W$ (see 2.1.4), so we automatically have $\Sigma \in \brho(\galQ)$. When $\Phi$ is of type $E_7$, the corresponding element $\Sigma$ is in $\brho(\galQ)$ as well (see 2.1.6).   
Since $\brho$ is unramified at $l$ and $\rats(\mu_l)$ is totally ramified at $l$, $\rats(\brho)$ and $\rats(\mu_l)$ are linearly disjoint over $\rats$. So there exists an element $\sigma \in \galQ$ such that
$\brho(\sigma)=\Sigma$ and $\bkp(\sigma)=\alpha'(\Sigma)$. It follows that $\alpha'(\brho(\sigma))=\bkp(\sigma)$.

\begin{lem}\label{tSelgSel}
Suppose there is a Selmer system $\mc L=\{L_v\}_{v \in S}$ for which the $\mf t$-Selmer group
$\Sel{\mc L}{\Gamma_S}{\brho(\mf t)}$ is trivial. 
We take a pair of non-zero Selmer classes
$\phi \in \Sel{\mc L^{\perp}}{\Gamma_{\rats,S}}{\brho(\mf g)(1)}$ and $\psi \in \Sel{\mc L}{\Gamma_{\rats,S}}{\brho(\mf g)}$.
Then Item 4 of Theorem \ref{pat ram} is satisfied.
\end{lem}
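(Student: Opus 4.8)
The plan is to verify the three sub-conditions (a), (b), (c) of Item 4 for the \emph{single} element $\sigma$ constructed in the discussion immediately preceding the lemma, so that the choice of $\sigma$ is independent of $\phi$ and $\psi$. With that $\sigma$ we have $\brho(\sigma)=\Sigma$ a $G$-regular semisimple element of $\brho(\galQ)$, $T'=Z_G(\Sigma)^0$, and a root $\alpha'\in\Phi(G,T')$ for which $\alpha'(\brho(\sigma))=\bkp(\sigma)$ already holds, which is (a). By Lemmas \ref{s:alpha} and \ref{s:alpha:beta} (together with the modification of $\Sigma$ performed afterwards) this $\alpha'$ has two key features: \emph{(i)} every irreducible summand of $\brho(\mf g)$ contains a vector with non-zero $\mf g_{-\alpha'}$ component, and hence so does every irreducible summand of $\brho(\mf g)(1)$ (Tate twisting leaves the underlying $k$-subspaces unchanged); and \emph{(ii)} every irreducible summand of $\brho(\mf g)$ \emph{other than} $\brho(\mf t)$ contains a vector with non-zero $\mf l_{\alpha'}$ component, whereas $\brho(\mf t)$ does not. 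The whole difficulty is that (ii) fails on $\brho(\mf t)$, so (b) is not automatic; the role of the hypothesis $\Sel{\mc L}{\Gamma_S}{\brho(\mf t)}=0$ is precisely to rule out the bad possibility that $k[\psi(\Gamma_K)]$ lies inside $\brho(\mf t)$.

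Next I would set $V=k[\psi(\Gamma_K)]$ and $W=k[\phi(\Gamma_K)]$. As recalled in Item 3 of Theorem \ref{pat ram}, after restriction to $\Gamma_K$ the classes $\psi$ and $\phi$ become $\gal{K}{\rats}$-equivariant homomorphisms, which are non-zero because $H^1(\gal{K}{\rats},\brho(\mf g))$ and $H^1(\gal{K}{\rats},\brho(\mf g)(1))$ vanish (Proposition \ref{item2,3}); consequently $V$ and $W$ are non-zero $\galQ$-submodules of $\brho(\mf g)$ and $\brho(\mf g)(1)$ respectively. By Propositions \ref{2.9}, \ref{2.11}, \ref{2.13}, \ref{e7,3} and the proof of Proposition \ref{item2,3}, each of $\brho(\mf g)$ and $\brho(\mf g)(1)$ is a direct sum of pairwise non-isomorphic irreducible $\galQ$-modules, so every non-zero submodule of either is the direct sum of a non-empty set of these irreducible summands, and in particular contains at least one of them as a subspace. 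For condition (c) this is already enough: $W$ contains some irreducible summand of $\brho(\mf g)(1)$, which by feature (i) contains a vector with non-zero $\mf g_{-\alpha'}$ component, and hence so does $W$.

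For condition (b) it remains to show $V\not\subseteq\brho(\mf t)$, for then $V$ contains an irreducible summand of $\brho(\mf g)$ other than $\brho(\mf t)$, which by feature (ii) contains a vector with non-zero $\mf l_{\alpha'}$ component, and hence so does $V$. To prove this, suppose $V\subseteq\brho(\mf t)$ and write $\mf g=\mf t\oplus\mf m$ as $\galQ$-modules (Corollary \ref{2.2}), with $p\colon\mf g\rsurj\mf m$ the projection. Then $p\circ\psi$ is a cocycle on $\Gamma_S$ valued in $\brho(\mf m)$ whose restriction to $\Gamma_K$ vanishes, so by inflation--restriction it is inflated from $H^1(\gal{K}{\rats},\brho(\mf m))$, which is a direct summand of $H^1(\gal{K}{\rats},\brho(\mf g))=0$; therefore $\psi$ takes values in $\brho(\mf t)$, i.e. $\psi\in H^1(\Gamma_S,\brho(\mf t))\subseteq H^1(\Gamma_S,\brho(\mf g))$. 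Since $\psi$ is $\mc L$-Selmer we have $\psi|_v\in L_v$ for all $v\in S$, and also $\psi|_v\in H^1(\Gamma_v,\brho(\mf t))$, so $\psi|_v\in L_v\cap H^1(\Gamma_v,\brho(\mf t))$ for every $v\in S$; by Definition \ref{M Sel} this means $\psi\in\Sel{\mc L}{\Gamma_S}{\brho(\mf t)}$, which is $0$ by hypothesis, contradicting $\psi\neq 0$. I expect this last step — translating $k[\psi(\Gamma_K)]\subseteq\brho(\mf t)$ into membership of $\psi$ in the $\mf t$-Selmer group — to be the only substantive point; the remainder is formal once one has the root-space information supplied by Lemmas \ref{s:alpha} and \ref{s:alpha:beta}.
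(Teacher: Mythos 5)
Your proposal is correct and follows essentially the same route as the paper: (c) is immediate from Lemmas \ref{s:alpha} and \ref{s:alpha:beta} since every irreducible summand has a vector with non-zero $\mf g_{-\alpha'}$ component, and (b) reduces to showing $k[\psi(\Gamma_K)]\not\subseteq\brho(\mf t)$ via the vanishing of the $\mf t$-Selmer group. Your inflation--restriction argument translating $\psi(\Gamma_K)\subseteq\mf t$ into $\psi\in\Sel{\mc L}{\Gamma_S}{\brho(\mf t)}$ correctly supplies the one step the paper asserts without detail.
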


\begin{proof}
We need to check Theorem \ref{pat ram}, Item 4, (b) and (c).  
First, Proposition \ref{item2,3} and the inflation-restriction sequence imply that $\psi(\Gamma_K)$ and $\phi(\Gamma_K)$ are nontrivial. 
By Lemma \ref{s:alpha} and \ref{s:alpha:beta}, every irreducible summand of $\brho(\mf g)$ has an element with non-zero $\mf g_{-\alpha'}$ component. In particular, (c) holds.
As $\Sel{\mc L}{\Gamma_S}{\brho(\mf t)}$ is trivial, $\psi(\Gamma_K) \nsubseteq \mf t$, which implies that $k[\psi(\Gamma_K)]$ contains $\mf g_{\Phi}$ when $\Phi$ is of type $A_n$ or $D_n$, $k[\psi(\Gamma_K)]$ contains one of $\mf g_{l}$ and $\mf g_{s}$ when $\Phi$ is of type $B_n$ or $C_n$, and $k[\psi(\Gamma_K)]$ contains one of $\mf g_{a}$ and $\mf g_{b}$ when $\Phi$ is of type $E_7$. It then follows from Lemma \ref{s:alpha} and \ref{s:alpha:beta} that $k[\psi(\Gamma_K)]$ has an element with nonzero $\mf l_{\alpha'}$ component. So (b) holds as well.
\end{proof}

The next proposition achieves the vanishing assumption of the $\mf t$-Selmer in Lemma \ref{tSelgSel} by using of a variant of the cohomological arguments in Ramakrishna's method.
\begin{prop}\label{kill tSel}
Suppose that
$$h_{\mc L}^1(\Gamma_S,\brho(\mf t))\leq h_{\mc L^{\perp}}^1(\Gamma_S,\brho(\mf t)(1)).$$
Then there is a finite set of places $Q$ disjoint from $S$ and a Ramakrishna deformation condition for each $w\in Q$ with tangent space $L_w^{Ram}$ such that
$$H_{\mc L \cup \{L_w^{Ram}\}_{w\in Q} }^1(\Gamma_{S\cup Q},\brho(\mf t))=0.$$
\end{prop}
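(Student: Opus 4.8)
The plan is to induct on $\sel{\mc L}{\Gamma_S}{\brho(\mf t)}$, the inductive step adjoining a single auxiliary prime $w\notin S$ carrying a Ramakrishna deformation condition $L_w^{Ram}$ so that, writing $\mc L'=\mc L\cup\{L_w^{Ram}\}$, one has $\sel{\mc L'}{\Gamma_{S\cup w}}{\brho(\mf t)}<\sel{\mc L}{\Gamma_S}{\brho(\mf t)}$ while $\mc L'$ inherits the hypothesis $\sel{\mc L'}{\Gamma_{S\cup w}}{\brho(\mf t)}\le\sel{\mc L'^{\perp}}{\Gamma_{S\cup w}}{\brho(\mf t)(1)}$. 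If $\sel{\mc L}{\Gamma_S}{\brho(\mf t)}=0$ there is nothing to do; otherwise the hypothesis forces $\sel{\mc L^{\perp}}{\Gamma_S}{\brho(\mf t)(1)}\ge\sel{\mc L}{\Gamma_S}{\brho(\mf t)}\ge 1$, so — using $\brho(\mf t)^{\vee}\cong\brho(\mf t)(1)$ from Proposition \ref{Tate dual} — I fix a nonzero class $\phi\in\Sel{\mc L^{\perp}}{\Gamma_S}{\brho(\mf t)(1)}$.

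To produce $w$, I would use the element $\sigma\in\galQ$ built in the paragraph following Lemma \ref{s:alpha:beta}: $\brho(\sigma)=\Sigma$ is a regular semisimple element of $\brho(\galQ)\subseteq N_G(T)$, $T'=Z_G(\Sigma)^0$, and $\alpha'\in\Phi(G,T')$ satisfies $\alpha'(\brho(\sigma))=\bkp(\sigma)\in\Zmodu{l}$ (this is where one imposes $l\equiv 1(4)$ in types $B_n,C_n$ and $l\equiv 1(3)$ in type $E_7$, as Lemma \ref{s:alpha:beta} requires). By Proposition \ref{item2,3} and inflation–restriction, $\phi|_{\Gamma_K}$ is a nonzero $\gal{K}{\rats}$-equivariant homomorphism $\Gamma_K\to\brho(\mf t)(1)$ (note $\Gamma_K$ acts trivially on $\brho(\mf t)(1)$), and since $\brho(\mf t)(1)$ is irreducible (Corollary \ref{2.2}, Remark \ref{2.3}) its image is all of $\brho(\mf t)(1)$, so $k[\phi(\Gamma_K)]=\mf t$ as a subspace of $\mf g$. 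By Lemma \ref{s:alpha} (resp. Lemma \ref{s:alpha:beta}), condition $(3)$ holds, so in particular $\mf t$ has a vector with nonzero $\mf g_{-\alpha'}$-component. With these inputs the Chebotarev step from the proof of Theorem \ref{pat ram} (imported from \cite{pat16}) yields infinitely many primes $w\notin S$ with $\brho(\frob{w})=\Sigma$ and $\bkp(\frob{w})=\alpha'(\Sigma)$ — so $\brho|_{\Gamma_{\rats_w}}$ is of Ramakrishna type $\alpha'$ and $L_w^{Ram}$ is defined — and with $\phi|_{\Gamma_{\rats_w}}\notin L_w^{Ram,\perp}$: indeed, by Lemma \ref{3.3}(3),(4), membership in $L_w^{Ram,\perp}$ forces a vanishing $\mf g_{-\alpha'}$-component, which the choice of $w$ violates. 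Hence $\phi\notin\Sel{\mc L^{\perp}\cup L_w^{Ram,\perp}}{\Gamma_{S\cup w}}{\brho(\mf t)(1)}$.

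I would then run the cohomological bookkeeping. Set $L_w=L_w^{Ram}\cap L_w^{unr}$. Since $\brho(\mf t)(1)$ is unramified at $w$, every class in $\Sel{\mc L^{\perp}}{\Gamma_S}{\brho(\mf t)(1)}$ is unramified at $w$ and hence lies in $L_w^{\perp}$, giving $\Sel{\mc L^{\perp}}{\Gamma_S}{\brho(\mf t)(1)}\subseteq\Sel{\mc L^{\perp}\cup L_w^{\perp}}{\Gamma_{S\cup w}}{\brho(\mf t)(1)}$; the reverse inclusion — that adjoining $w$ with the condition $L_w$ leaves the dual $\brho(\mf t)(1)$-Selmer group unchanged — follows, exactly as in the proof of Theorem \ref{pat ram}, from two applications of Wiles' formula (Proposition \ref{Wiles}), once one knows that $L_w^{Ram}\cap H^1(\Gamma_{\rats_w},\brho(\mf t))$ has codimension one in the unramified subspace $L_w^{unr}\cap H^1(\Gamma_{\rats_w},\brho(\mf t))$, equivalently that the contribution of $w$ in Wiles' formula for $\brho(\mf t)$ equals $-1$. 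Granting this, $\Sel{\mc L^{\perp}\cup L_w^{Ram,\perp}}{\Gamma_{S\cup w}}{\brho(\mf t)(1)}\subseteq\Sel{\mc L^{\perp}\cup L_w^{\perp}}{\Gamma_{S\cup w}}{\brho(\mf t)(1)}=\Sel{\mc L^{\perp}}{\Gamma_S}{\brho(\mf t)(1)}$, the inclusion strict by the previous paragraph, so $\sel{\mc L'^{\perp}}{\Gamma_{S\cup w}}{\brho(\mf t)(1)}<\sel{\mc L^{\perp}}{\Gamma_S}{\brho(\mf t)(1)}$. Because $\brho(\mf t)$ and $\brho(\mf t)(1)$ are irreducible and nontrivial, all $H^0$-terms in Wiles' formula vanish, so subtracting the formula for $\mc L'$ from that for $\mc L$ gives $\sel{\mc L'}{\Gamma_{S\cup w}}{\brho(\mf t)}-\sel{\mc L'^{\perp}}{\Gamma_{S\cup w}}{\brho(\mf t)(1)}=\sel{\mc L}{\Gamma_S}{\brho(\mf t)}-\sel{\mc L^{\perp}}{\Gamma_S}{\brho(\mf t)(1)}-1$, which both keeps the difference $\le 0$ (so the hypothesis is inherited by $\mc L'$) and, combined with the strict drop of $\sel{\mc L'^{\perp}}{\Gamma_{S\cup w}}{\brho(\mf t)(1)}$, forces $\sel{\mc L'}{\Gamma_{S\cup w}}{\brho(\mf t)}<\sel{\mc L}{\Gamma_S}{\brho(\mf t)}$. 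Iterating finitely often yields the required $Q$.

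The crux — and the step I expect to cost the most work — is the local claim in the third paragraph, that $L_w^{Ram}\cap H^1(\Gamma_{\rats_w},\brho(\mf t))$ drops by exactly one dimension against the unramified $\mf t$-classes: since the Ramakrishna condition $L_w^{Ram}$ is set up for the full adjoint module $\brho(\mf g)$, one must disentangle how it meets the $\brho(\mf t)$-summand and get neither equality (which would produce no decrease) nor a drop by two (which would wreck the double-Wiles comparison and possibly destroy the hypothesis). This is precisely what the "moreover" clauses of Lemmas \ref{s:alpha} and \ref{s:alpha:beta} are designed to supply — $\mf t'\cap\mf t=\mf t'_{\alpha'}\cap\mf t=W\cap\mf t$ in types $A_n,D_n$ and $W\cap\mf t\subseteq\mf t'$ in types $B_n,C_n,E_7$ — together with the vanishing of the $\mf l_{\alpha'}$- and $\mf g_{-\alpha'}$-components recorded in Lemma \ref{3.3}(4) and the decomposition of $\brho(\mf g)$ in Propositions \ref{2.9}, \ref{2.11}, \ref{2.13}, \ref{e7,3}. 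Verifying $\phi|_{\Gamma_{\rats_w}}\notin L_w^{Ram,\perp}$, by contrast, is a routine transcription of the Chebotarev argument from \cite{pat16}.
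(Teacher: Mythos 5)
Your overall architecture matches the paper's: pick a nonzero $\phi\in\Sel{\mc L^{\perp}}{\Gamma_S}{\brho(\mf t)(1)}$, use Chebotarev to find $w\notin S$ of Ramakrishna type $\alpha'$ with $\phi|_{\Gamma_w}\notin L_w^{Ram,\perp}$, force a strict drop of the dual $\mf t$-Selmer group, transfer it to the $\mf t$-Selmer group by a double invocation of Wiles' formula, and induct. But your key local claim is wrong, and it is precisely the step you flag as the crux. You assert that $L_w^{Ram}\cap H^1(\Gamma_w,\brho(\mf t))$ has codimension one in $L_w^{unr}\cap H^1(\Gamma_w,\brho(\mf t))$, i.e.\ that the contribution of $w$ to Wiles' formula for $\brho(\mf t)$ is $-1$. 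In fact the contribution is $0$: one has $\dim\bigl(L_w^{Ram}\cap H^1(\Gamma_w,\brho(\mf t))\bigr)=h^1(\Gamma_w,\brho(W\cap\mf t))=\dim(W\cap\mf t)$, while $h^0(\Gamma_w,\brho(\mf t))=\dim(\mf t'\cap\mf t)=\dim\bigl(L_w^{unr}\cap H^1(\Gamma_w,\brho(\mf t))\bigr)$, and the ``moreover'' clause of Lemma \ref{s:alpha} says exactly that $W\cap\mf t=\mf t'_{\alpha'}\cap\mf t=\mf t'\cap\mf t$, so all these dimensions coincide. The codimension-one phenomenon you are importing belongs to the full module $\brho(\mf g)$ (proof of Theorem \ref{pat ram}), not to its $\mf t$-summand. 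Your stated worry that ``equality would produce no decrease'' has the logic backwards: with contribution $0$ the Wiles defect is preserved exactly, so the strict drop of $h^1_{\mc L^{\perp}}(\cdot,\brho(\mf t)(1))$ forced by $\phi|_{\Gamma_w}\notin L_w^{Ram,\perp}$ already yields the strict drop of $h^1_{\mc L}(\cdot,\brho(\mf t))$, and it also preserves the hypothesis for the next induction step. Worse, if the codimension really were one, your own reverse-inclusion argument would break: the double-Wiles comparison would only give $h^1_{\mc L^{\perp}\cup L_w^{\perp}}-h^1_{\mc L^{\perp}}\leq 1$ rather than the equality $\Sel{\mc L^{\perp}\cup L_w^{\perp}}{\Gamma_{S\cup w}}{\brho(\mf t)(1)}=\Sel{\mc L^{\perp}}{\Gamma_S}{\brho(\mf t)(1)}$ you need, since relaxing $L_w^{unr,\perp}$ to $L_w^{\perp}$ could then enlarge the dual Selmer group.

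A secondary deviation: for types $B_n$, $C_n$, $E_7$ you take $\Sigma$ and $\alpha'$ from Lemma \ref{s:alpha:beta} (the $s_{\beta}s_{\gamma}$ element), which is what imports the congruence conditions $l\equiv 1\ (4)$, $l\equiv 1\ (3)$. The paper's Lemma \ref{3.12} instead uses the single long-root reflection of Lemma \ref{s:alpha} in \emph{all} types for this proposition; there $\alpha'(\Sigma)=-1$, so condition (1) needs no congruence hypothesis (consistent with the statement of Proposition \ref{kill tSel}, which assumes none), the second bullet of Lemma \ref{s:alpha} already guarantees that $\mf t$ has a vector with nonzero $\mf g_{-\alpha'}$-component (all that is needed here, since $\phi(\Gamma_K)\subset\mf t(1)$), and its ``moreover'' clause supplies the exact equalities $W\cap\mf t=\mf t'_{\alpha'}\cap\mf t=\mf t'\cap\mf t$ that make the local contribution vanish. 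The $s_{\beta}s_{\gamma}$ element, whose ``moreover'' clause only gives $W\cap\mf t\subset\mf t'$, is reserved for the later step (Lemma \ref{tSelgSel} and Theorem \ref{lifting Weyl}) where nonzero $\mf l_{\alpha'}$-components in \emph{both} root-space summands are required.
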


We may assume that $H_{\mc L}^1(\Gamma_S,\brho(\mf t))$ is nontrivial, for otherwise we are done. The inequality in Proposition \ref{kill tSel} then implies that $H_{\mc L^{\perp}}^1(\Gamma_S,\brho(\mf t)(1))$ is nontrivial. Let $0\neq \phi \in H_{\mc L^{\perp}}^1(\Gamma_S,\brho(\mf t)(1))$.

\begin{lem}\label{3.12}
There exists $\tau\in\Gamma_{\rats}$ with the following properties: 
\begin{enumerate}
\item $\bar\rho(\tau)$ is a regular semisimple element of $G(k)$, the connected component of whose centralizer we denote $T'$.
\item There exists $\alpha' \in \Phi(G,T')$, such that $\alpha'(\bar\rho(\tau))=\bar\kappa(\tau)$. 
\item $k[\phi(\Gamma_K)]$ has an element with nonzero $\mf g_{-\alpha'}$-component.
\end{enumerate}
\end{lem}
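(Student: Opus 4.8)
The plan is to take $\tau$ to be the element produced in the discussion immediately preceding the lemma and to verify (3) by an irreducibility argument. Concretely, recall that for $\Phi$ of type $A_n$ or $D_n$ ($n\geq 4$), Lemma \ref{s:alpha} --- after the modification $s_\alpha\rightsquigarrow s_\alpha s_\beta$ with $\beta\perp\alpha$ and $[\mf{sl}^{\alpha}_2,\mf{sl}^{\beta}_2]=0$, which ensures $\pi(\Sigma)\in[\mc W,\mc W]=\pi(\brho(\galQ))$ --- and for $\Phi$ of type $B_n,C_n$ (with $l\equiv 1(4)$) or $E_7$ (with $l\equiv 1(3)$), Lemma \ref{s:alpha:beta} produce a regular semisimple $\Sigma\in\brho(\galQ)\subset N_G(T)(k)$ together with a root $\alpha'\in\Phi(G,T')$, $T'=Z_G(\Sigma)^0$, such that $\alpha'(\Sigma)\in\Zmodu{l}$ and such that $\mf t$ (the Lie algebra of the split maximal torus $T$ used to build $\brho$) contains a vector with nonzero $\mf g_{-\alpha'}$-component in the root-space decomposition of $\mf g$ relative to $T'$. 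Since $\brho$ is unramified at $l$ while $\rats(\mu_l)/\rats$ is totally ramified at $l$, the extensions $\rats(\brho)$ and $\rats(\mu_l)$ are linearly disjoint over $\rats$, so there is $\tau\in\galQ$ with $\brho(\tau)=\Sigma$ and $\bkp(\tau)=\alpha'(\Sigma)$. This $\tau$ satisfies (1) because $\Sigma$ is regular semisimple, and (2) because $\alpha'(\brho(\tau))=\alpha'(\Sigma)=\bkp(\tau)$.

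It remains to verify (3), and the plan is to establish the stronger statement $k[\phi(\Gamma_K)]=\mf t$; granting this, (3) is exactly the property of $\mf t$ recalled above. To prove $k[\phi(\Gamma_K)]=\mf t$, note that $\brho(\mf t)(1)$ is a $\galQ$-direct summand of $\brho(\mf g)(1)$, so Proposition \ref{item2,3} (Item 2) gives $H^1(\gal{K}{\rats},\brho(\mf t)(1))=0$; hence the restriction map $H^1(\Gamma_S,\brho(\mf t)(1))\to H^1(\Gamma_K,\brho(\mf t)(1))$ is injective, so $\phi|_{\Gamma_K}$ is a nonzero homomorphism $\Gamma_K\to\brho(\mf t)(1)$ (the target carrying the trivial $\Gamma_K$-action, as $K\supseteq\rats(\brho(\mf t),\mu_l)$), and it is $\gal{K}{\rats}$-equivariant. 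Therefore $k[\phi(\Gamma_K)]$ is a nonzero $k$-submodule of $\brho(\mf t)(1)$ stable under $\galQ$. But $\brho(\mf t)$ is an irreducible $\galQ$-module over $k$ by Corollary \ref{2.2} and Remark \ref{2.3} (cf. Propositions \ref{2.9}, \ref{2.11}, \ref{2.13}, \ref{e7,3}), and a cyclotomic twist of an irreducible module is irreducible, so $\brho(\mf t)(1)$ is irreducible and $k[\phi(\Gamma_K)]=\brho(\mf t)(1)=\mf t$.

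I do not expect a genuine obstacle here: the element $\tau$ is already supplied by Lemmas \ref{s:alpha} and \ref{s:alpha:beta} and the paragraph preceding the lemma, and (3) collapses --- via the irreducibility of $\brho(\mf t)(1)$ as a $\galQ$-module --- to the explicit matrix computations carried out in those lemmas. The only matters requiring attention are the case-by-case bookkeeping over the types, the congruence conditions on $l$ (imposed solely to force $\alpha'(\Sigma)\in\Zmodu{l}$ for types $B_n,C_n,E_7$), and the verification, made in the paragraph before the lemma, that after the modification $s_\alpha\rightsquigarrow s_\alpha s_\beta$ the element $\Sigma$ still lies in $\brho(\galQ)$ for types $A_n$ and $D_n$.
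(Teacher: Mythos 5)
Your proposal is correct and follows essentially the same route as the paper: take $\tau$ with $\brho(\tau)=\Sigma$ and $\bkp(\tau)=\alpha'(\Sigma)$ via the linear disjointness of $\rats(\brho)$ and $\rats(\mu_l)$, and deduce (3) from the fact that $\phi|_{\Gamma_K}\neq 0$ together with the irreducibility of $\brho(\mf t)$, so that $k[\phi(\Gamma_K)]=\mf t$ and the nonzero $\mf g_{-\alpha'}$-component is supplied by the explicit matrix computation. The paper compresses step (3) into ``follows from Lemma \ref{s:alpha},'' but the implicit reasoning is exactly your irreducibility argument, so you have usefully made explicit what the paper leaves tacit. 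The one deviation worth flagging: for $\Phi$ of type $B_n$, $C_n$ or $E_7$ the paper takes $\Sigma$ and $\alpha'$ from the \emph{second bullet of Lemma \ref{s:alpha}} (so $\pi(\Sigma)=s_\alpha$ for a long root and $\alpha'(\Sigma)=-1$), not from Lemma \ref{s:alpha:beta}. Your choice still proves the lemma as stated, but it needlessly imports the congruence conditions $l\equiv 1(4)$, $l\equiv 1(3)$ (which Proposition \ref{kill tSel} does not assume --- they enter only later, in Theorem \ref{lifting Weyl} via Lemma \ref{tSelgSel}), and it forfeits the equality $\mf t'\cap\mf t=\mf t'_{\alpha'}\cap\mf t=W\cap\mf t$ of Lemma \ref{s:alpha}, which is exactly what the subsequent dimension count in the proof of Proposition \ref{kill tSel} uses; Lemma \ref{s:alpha:beta} only provides the weaker inclusion $W\cap\mf t\subset\mf t'$. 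So for the lemma's intended application you should take $\Sigma$ as the paper does.
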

\begin{proof}
We have seen that the groups $H^1(\gal{K}{\rats},\brho(\mf g))$ and $H^1(\gal{K}{\rats},\brho(\mf g)(1))$ are both trivial. In particular, the groups $H^1(\gal{K}{\rats},\brho(\mf t))$ and $H^1(\gal{K}{\rats},\brho(\mf t)(1))$ are both trivial. The restriction-inflation sequence then implies that $\phi(\Gamma_K)$ is nontrivial. 
Now we let $\Sigma$, $\alpha'$ be as in Lemma \ref{s:alpha}, (1) for $\Phi$ of type $A_n$ or $D_n$, and Lemma \ref{s:alpha}, (2) for $\Phi$ of type $B_n$, $C_n$ or $E_7$. If necessary, we can modify $\Sigma$ to make it land in $\brho(\galQ)$, as explained in the paragraph preceding Lemma \ref{tSelgSel}. We have $\alpha(\Sigma) \in \Zmodu{l}$.
Since $\brho$ is unramified at $l$ and $\rats(\mu_l)$ is totally ramified at $l$, $\rats(\brho)$ and $\rats(\mu_l)$ are linearly disjoint over $\rats$. So there exists an element $\tau \in \galQ$ such that
$\brho(\tau)=\Sigma$ and $\bkp(\tau)=\alpha'(\Sigma)$. It follows that $\alpha'(\brho(\tau))=\bkp(\tau)$, proves (2). Statement (3) follows from Lemma \ref{s:alpha}.
\end{proof}

\begin{cor}\label{3.13}
There exist infinitely many places $w\notin S$ such that $\bar\rho|_{\Gamma_w}$ is of Ramakrishna type $\alpha'$ and $\phi|_{\Gamma_w} \notin L_w^{Ram,\perp}$.
\end{cor}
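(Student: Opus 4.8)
The plan is to produce the primes $w$ by a Chebotarev argument applied to a suitable finite extension of $K = \rats(\brho(\mf g),\mu_l)$, using the element $\tau$ constructed in Lemma \ref{3.12} as the target Frobenius class. First I would record that by Lemma \ref{3.12} we have $\tau \in \galQ$ with $\brho(\tau)$ regular semisimple, a root $\alpha' \in \Phi(G,T')$ with $\alpha'(\brho(\tau)) = \bkp(\tau)$, and a $\gamma \in \Gamma_K$ such that the component of $\phi(\gamma)$ in $\mf g_{-\alpha'}$ is nonzero (here I use that $\phi$ is valued in $\brho(\mf t)(1)$, but Lemma \ref{3.12}(3) is about $k[\phi(\Gamma_K)]$, which contains an element with nonzero $\mf g_{-\alpha'}$-component after twisting back into $\brho(\mf g)(1)$ via $\mf t \subset \mf g$). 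Then I would set up the standard field-theoretic picture: $\phi|_{\Gamma_K}$ is a genuine homomorphism $\Gamma_K \to \brho(\mf t)(1)$ (nontrivial, as noted in the proof of Lemma \ref{3.12}), with fixed field $K_\phi/K$, which is abelian over $K$ and Galois over $\rats$.

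The key step is to choose $w$ so that $\brho(\frob{w})$ is conjugate to $\brho(\tau)$, $\bkp(\frob{w}) = \bkp(\tau)$ (so that $\brho|_{\Gamma_w}$ is unramified of Ramakrishna type $\alpha'$ by Definition \ref{3.1}), and simultaneously $\phi(\frob{w}) \notin L_w^{Ram,\perp}$. For the last condition, by Lemma \ref{3.3}(3), $L_w^{Ram,\perp} \cong H^1(\Gamma_w, \brho(W^\perp)(1))$ where $W^\perp$ is the Killing-orthogonal complement of $W = \mf t_{\alpha'} \oplus \mf g_{\alpha'}$; and by Lemma \ref{3.3}(4), the cocycles in $L_w^{Ram,\perp,\Box}$ have vanishing $\mf g_{-\alpha'}$-component. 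So it suffices to arrange that the restriction $\phi|_{\Gamma_w}$, evaluated on $\frob{w}$, has nonzero $\mf g_{-\alpha'}$-component after conjugating $T'$ into place — which is exactly what having $\frob{w}$ act on $K_\phi$ like a chosen element of $\gal{K_\phi}{K}$ achieves. Concretely, I would pass to the compositum $\widetilde K = K_\phi \cdot K(\brho, \mu_l)$ (noting $\rats(\brho)$ and $\rats(\mu_l)$ are linearly disjoint, so the constraints on $\brho(\frob w)$ and $\bkp(\frob w)$ are independent), and apply Chebotarev to find $w$ whose Frobenius in $\gal{\widetilde K}{\rats}$ restricts to the class of $\tau$ on $K(\brho,\mu_l)$ and to an element $g \in \gal{K_\phi}{K}$ with $\phi(g)$ having nonzero $\mf g_{-\alpha'}$-component; such $g$ exists precisely because $\phi(\Gamma_K)$ contains such an element by Lemma \ref{3.12}(3). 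Infinitely many such $w$ exist since Chebotarev produces a positive-density set.

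The main obstacle I anticipate is bookkeeping the dependence on the chosen embeddings and conjugating torus: $T'$ is only defined up to conjugacy, so one must check that the condition "$\phi|_{\Gamma_w}$ has nonzero $\mf g_{-\alpha'}$-component" is well-posed and is detected by the Frobenius class in $\gal{K_\phi}{K}$ rather than depending on auxiliary choices — this is where Lemma \ref{3.3}(4) and the regular-semisimplicity of $\brho(\tau)$ (which makes the bijection $\Phi \leftrightarrow \Phi'$ canonical, as discussed before Lemma \ref{s:alpha}) do the real work. The remaining verification, that $\phi|_{\Gamma_w} \notin L_w^{Ram,\perp}$ genuinely follows from the nonvanishing of that component, is then a direct unwinding of the isomorphism in Lemma \ref{3.3}(3) together with the structure of $W^\perp$; it is routine once the setup is correct. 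I would conclude that the resulting infinite set of places $w$ satisfies all the requirements of Corollary \ref{3.13}.
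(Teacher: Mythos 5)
Your proposal is correct and follows essentially the same route as the paper, whose proof is just a citation of Lemmas \ref{3.3} and \ref{3.12}, Chebotarev, and the argument of Patrikis' Lemma 5.3 --- exactly the Frobenius-class selection in $\gal{K_\phi}{\rats}$ combined with the vanishing of the $\mf g_{-\alpha'}$-component for classes in $L_w^{Ram,\perp}$ that you describe. The only imprecision is phrasing the target Frobenius as simultaneously restricting to $\tau$ on $K$ and lying in $\gal{K_\phi}{K}$; one should instead take it of the form $\tilde\tau g$ with $g\in\gal{K_\phi}{K}$ and use the cocycle relation $\phi(\tilde\tau g)=\phi(\tilde\tau)+\tilde\tau\phi(g)$ to rule out cancellation, which is the standard step in \emph{loc.\ cit.}
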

\begin{proof}
This follows from Lemma \ref{3.3}, \ref{3.12} and Chebotarev's density theorem. See the proof of \cite{pat16}, Lemma 5.3.
\end{proof}

\textbf{Proof of Proposition \ref{kill tSel}}:
Let $w$ be chosen as in Lemma \ref{3.13}. We will show that 
\begin{equation}\label{1}
h^1_{\mc L^{\perp}\cup L_w^{Ram,\perp}}(\Gamma_{S\cup w},\brho(\mf t)(1))<h^1_{\mc L^{\perp}}(\Gamma_{S},\brho(\mf t)(1))
\end{equation}
and 
\begin{equation}\label{2}
h^1_{\mc L\cup L_w^{Ram}}(\Gamma_{S\cup w},\brho(\mf t))-h^1_{\mc L^{\perp}\cup L_w^{Ram,\perp}}(\Gamma_{S\cup w},\brho(\mf t)(1))=h^1_{\mc L}(\Gamma_{S},\brho(\mf t))-h^1_{\mc L^{\perp}}(\Gamma_{S},\brho(\mf t)(1))
\end{equation}
which imply that
$$h^1_{\mc L\cup L_w^{Ram}}(\Gamma_{S\cup w},\brho(\mf t))<h^1_{\mc L}(\Gamma_{S},\brho(\mf t)),$$
from which Proposition \ref{kill tSel} follows by induction.

We first show (\ref{2}): By a double invocation of Wiles' formula (see Proposition \ref{Wiles}), the difference between the two sides equals 
$\dim(L_w^{Ram}\cap H^1(\Gamma_w,\brho(\mf t)))-h^0(\Gamma_w,\brho(\mf t))=h^1(\Gamma_w,\brho(W\cap \mf t))-h^0(\Gamma_w,\brho(\mf t))$. As $H^0(\Gamma_w, \brho(\mf g))=\mf t'$, we have $H^0(\Gamma_w, \brho(\mf t))=\mf t \cap \mf t'$; on the other hand, the action of $\brho(\Gamma_w)$ on $W\cap \mf t$ is a sum of the trivial representation and the cyclotomic character. By an elementary calculation in Galois cohomology, 
$h^1(\Gamma_w, k)=h^1(\Gamma_w, k(1))=1$. It follows that $h^1(\Gamma_w,\brho(W\cap \mf t))=\dim W\cap \mf t$ and 
so $h^1(\Gamma_w,\brho(W\cap \mf t))-h^0(\Gamma_w,\brho(\mf t))=\dim(W\cap \mf t)-\dim(\mf t'\cap \mf t)$, which is zero by Lemma \ref{s:alpha}.

It remains to prove (\ref{1}). Let $L_w=L_w^{unr} \cap L_w^{Ram}$, so $L_w^{\perp}=L_w^{unr,\perp}+L_w^{Ram,\perp}$. We have the following obvious inclusions
\begin{equation}\label{a}
H^1_{\mc L\cup L_w}(\Gamma_{S\cup w},\brho(\mf t)) \subset H^1_{\mc L\cup L_w^{Ram}}(\Gamma_{S\cup w},\brho(\mf t)),
\end{equation}

\begin{equation}\label{a'}
H^1_{\mc L^{\perp}\cup L_w^{Ram,\perp}}(\Gamma_{S\cup w},\brho(\mf t)(1))\subset H^1_{\mc L^{\perp}\cup L_w^{\perp}}(\Gamma_{S\cup w},\brho(\mf t)(1)),
\end{equation}

\begin{equation}\label{b}
H^1_{\mc L\cup L_w}(\Gamma_{S\cup w},\brho(\mf t)) \subset H^1_{\mc L\cup L_w^{unr}}(\Gamma_{S\cup w},\brho(\mf t))=H^1_{\mc L}(\Gamma_S,\brho(\mf t)),
\end{equation}

\begin{equation}\label{b'}
H^1_{\mc L^{\perp}}(\Gamma_S,\brho(\mf t)(1))=H^1_{\mc L^{\perp}\cup L_w^{unr,\perp}}(\Gamma_{S\cup w},\brho(\mf t)(1)) \subset H^1_{\mc L^{\perp}\cup L_w^{\perp}}(\Gamma_{S\cup w},\brho(\mf t)(1)).
\end{equation}

As $\phi|_{\Gamma_w} \notin L_w^{Ram,\perp}$, (\ref{a'}) is a strict inclusion. We claim that (\ref{b'}) is an isomorphism, which will imply (\ref{1}). To prove our claim, we consider (\ref{b}) first. There is an exact sequence 
$$0 \to H^1_{\mc L\cup L_w}(\Gamma_{S\cup w},\brho(\mf t)) \to H^1_{\mc L}(\Gamma_S,\brho(\mf t)) \to (L_w^{unr}\cap H^1(\Gamma_w,\brho(\mf t)))/(L_w\cap H^1(\Gamma_w,\brho(\mf t))).$$
As $$L_w^{unr}=H^1(\Gamma_w/I_w, \brho(\mf g)) \xrightarrow{f \mapsto f(\frob{w})} \mf g/(\brho(\frob{w})-1)\mf g \cong \mf t',$$ the top of its last term is isomorphic to $H^1(\Gamma_w, \brho(\mf t'\cap \mf t))$, which
has dimension $\dim(\mf t'\cap \mf t)$; the bottom of its last term is isomorphic to $H^1(\Gamma_w, \mf t'_{\alpha}\cap \mf t)$, which has dimension $\dim(\mf t'_{\alpha}\cap \mf t)$. By Lemma \ref{s:alpha}, these dimensions are equal. So the last term is zero, and hence (\ref{b}) is an isomorphism. 
A double invocation of Wiles' formula (Proposition \ref{Wiles}) shows
$h^1_{\mc L^{\perp}\cup L_w^{\perp}}(\Gamma_{S\cup w},\brho(\mf t)(1))-h^1_{\mc L^{\perp}}(\Gamma_S,\brho(\mf t)(1))$ equals
$$h^1_{\mc L\cup L_w}(\Gamma_{S\cup w},\brho(\mf t))-h^1_{\mc L}(\Gamma_S,\brho(\mf t))+h^0(\Gamma_w,\brho(\mf t))-\dim(L_w\cap H^1(\Gamma_w,\brho(\mf t))).$$

Because (\ref{b}) is an isomorphism and $\dim(L_w\cap H^1(\Gamma_w,\brho(\mf t)))=h^1(\Gamma_w, \brho(W \cap \mf t' \cap \mf t)) =\dim(W\cap \mf t' \cap \mf t)=\dim(\mf t' \cap \mf t)=h^0(\Gamma_w,\brho(\mf t))$ (Lemma \ref{s:alpha}), the right hand side of the above identity is zero. Therefore, (\ref{b'}) is an isomorphism, which completes the proof of the proposition. \hfill $\Box$

\begin{thm}\label{lifting Weyl}
Let $\mc L=\{L_v\}_{v \in S}$ be a family of smooth local deformation conditions for $\bar\rho$ (the residual representation defined in Section 2.1) unramified outside a finite set of places $S$ containing the real place and all places where $\brho$ is ramified. Suppose that 

$$\sum_{v\in S}\dim L_v \geq \sum_{v\in S} h^0(\Gamma_v,\brho(\mf g)),$$
$$\sum_{v\in S}\dim(L_v \cap H^1(\Gamma_v,\brho(\mf t))) \leq \sum_{v\in S} h^0(\Gamma_v,\brho(\mf t)).$$

Assume $l$ is large enough; in addition, if $\Phi$ is of type $E_7$, assume $l\equiv 1(3)$, and if $\Phi$ is doubly-laced, assume $l\equiv 1(4)$.

Then there is a finite set of places $Q$ disjoint from $S$ and a continuous lift $$\rho:\Gamma_{S\cup Q} \to G(\mc O)$$ of $\bar\rho$ such that
$\rho$ is of type $L_v$ for $v\in S$ and of Ramakrishna type for 
$v\in Q$.
\end{thm}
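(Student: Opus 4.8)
The plan is to deduce Theorem \ref{lifting Weyl} from Theorem \ref{pat ram} in two stages: first enlarge $\mc L$ by finitely many Ramakrishna conditions so as to annihilate the $\mf t$-Selmer group (this is Proposition \ref{kill tSel}), and then apply Theorem \ref{pat ram} to the enlarged system, whose only genuinely delicate hypothesis for our residual representations — Item 4 — is supplied by Lemma \ref{tSelgSel} precisely because the $\mf t$-Selmer group has been killed. I first record the standing hypotheses of Theorem \ref{pat ram}: by Propositions \ref{2.9}, \ref{2.11}, \ref{2.13} and \ref{e7,3}, $\brho(\mf g)$ is a direct sum of irreducible $\galQ$-modules of dimension $>1$, and the same holds for $\brho(\mf g)(1)$, so $h^0(\galQ,\brho(\mf g)) = h^0(\galQ,\brho(\mf g)(1)) = 0$; similarly $h^0(\galQ,\brho(\mf t)) = h^0(\galQ,\brho(\mf t)(1)) = 0$, since $\brho(\mf t)$ is irreducible and nontrivial (its action factoring through $\mc W$). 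In particular the deformation functors involved are representable, and by Proposition \ref{Tate dual}, $\brho(\mf g)^\vee \cong \brho(\mf g)(1)$ and $\brho(\mf t)^\vee \cong \brho(\mf t)(1)$.

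For the first stage, I must check the hypothesis of Proposition \ref{kill tSel}, namely $h^1_{\mc L}(\Gamma_S,\brho(\mf t)) \leq h^1_{\mc L^\perp}(\Gamma_S,\brho(\mf t)(1))$. Applying Wiles' formula (Proposition \ref{Wiles}) to the module $\brho(\mf t)$, whose global $H^0$ and $H^0$ of its Tate dual both vanish, one finds
$$h^1_{\mc L}(\Gamma_S,\brho(\mf t)) - h^1_{\mc L^\perp}(\Gamma_S,\brho(\mf t)(1)) = \sum_{v\in S}\bigl(\dim(L_v\cap H^1(\Gamma_v,\brho(\mf t))) - h^0(\Gamma_v,\brho(\mf t))\bigr),$$
which is $\leq 0$ by the second displayed hypothesis of the theorem. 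Hence Proposition \ref{kill tSel} yields a finite set $Q_0$ of primes disjoint from $S$, together with a Ramakrishna deformation condition $L_w^{Ram}$ at each $w\in Q_0$, such that writing $\mc L_1 = \mc L\cup\{L_w^{Ram}\}_{w\in Q_0}$ and $S_1 = S\cup Q_0$ one has $H^1_{\mc L_1}(\Gamma_{S_1},\brho(\mf t)) = 0$; by Lemma \ref{3.2} the new conditions are smooth, so $\mc L_1$ is again a family of smooth local deformation conditions.

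For the second stage, I verify the four items of Theorem \ref{pat ram} for $(\brho,\mc L_1,S_1)$. Item 1 follows by splitting $S_1 = S\sqcup Q_0$: over $S$ it is the first displayed hypothesis, while for $w\in Q_0$, Lemma \ref{3.3}(2) gives $\dim L_w^{Ram} = h^0(\Gamma_w,\brho(\mf g))$, so $\sum_{v\in S_1}\dim L_v \geq \sum_{v\in S_1}h^0(\Gamma_v,\brho(\mf g))$. Items 2 and 3 hold for $l$ large by Proposition \ref{item2,3}, since they depend only on $\brho$. Item 4 holds by Lemma \ref{tSelgSel}, precisely because $H^1_{\mc L_1}(\Gamma_{S_1},\brho(\mf t)) = 0$; this is where the congruence conditions on $l$ in the statement are used, as the proof of Lemma \ref{tSelgSel} invokes Lemma \ref{s:alpha:beta} for $\Phi$ doubly-laced (resp.\ of type $E_7$), where the element $\alpha'(\Sigma)$ has order $4$ (resp.\ $3$) and so must lie in $\Zmodu{l}$. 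Theorem \ref{pat ram} then produces a finite set $Q_1$ disjoint from $S_1$ and a continuous lift $\rho:\Gamma_{S_1\cup Q_1}\to G(\mc O)$ of $\brho$ that is of type $L_v$ at every $v\in S_1$ and of Ramakrishna type at every $v\in Q_1$; setting $Q = Q_0\cup Q_1$, this $\rho$ is of type $L_v$ at the original places $v\in S$ and of Ramakrishna type at every $v\in Q$, which is what we want.

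The main obstacle is not the assembly above but ensuring that Item 4 of Theorem \ref{pat ram} persists through the induction carried out inside its proof — equivalently, that the Ramakrishna primes adjoined there to shrink the $\brho(\mf g)(1)$-dual Selmer group do not re-inflate the $\mf t$-Selmer group, so that Lemma \ref{tSelgSel} remains applicable at every step. This is the substantive content of the ``moreover'' part of the first observation in the introduction, and I expect it to be handled by the same cohomological bookkeeping as in Proposition \ref{kill tSel} (a double invocation of Wiles' formula, controlled by the intersection statements $W\cap\mf t\subseteq\mf t'$ of Lemmas \ref{s:alpha} and \ref{s:alpha:beta}), re-applying Proposition \ref{kill tSel} to re-annihilate the $\mf t$-Selmer group whenever necessary — its hypothesis being preserved by those primes via the inequality $\dim(W\cap\mf t)\leq\dim(\mf t'\cap\mf t)$ — and using that the $\brho(\mf g)(1)$-dual Selmer group strictly shrinks at each outer step to guarantee termination. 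Threading this interleaving cleanly, rather than any single cohomology computation, is the delicate part.
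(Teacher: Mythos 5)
Your proposal follows the same two-stage route as the paper: first invoke Proposition \ref{kill tSel} (whose hypothesis you correctly derive from the second displayed inequality via Wiles' formula) to annihilate the $\mf t$-Selmer group, then feed the enlarged system into Theorem \ref{pat ram}, with Item 1 preserved by Lemma \ref{3.3}(2), Items 2--3 by Proposition \ref{item2,3}, and Item 4 by Lemma \ref{tSelgSel}; your placement of the congruence conditions is also correct. The one point you flag but do not close --- that the auxiliary Ramakrishna primes adjoined inside the induction of Theorem \ref{pat ram} might re-inflate the $\mf t$-Selmer group and so invalidate Lemma \ref{tSelgSel} at later steps --- is indeed the substantive remaining issue, and the paper settles it by the first of the two mechanisms you guess at, not the second: for a prime $w$ of Ramakrishna type $\alpha'$ with $\alpha'$ as in Lemmas \ref{s:alpha} and \ref{s:alpha:beta}, one has
$$L_w^{Ram}\cap H^1(\Gamma_w,\brho(\mf t))=H^1(\Gamma_w,\brho(W\cap \mf t))\subseteq H^1(\Gamma_w,\brho(\mf t'\cap\mf t))=H^1(\Gamma_w/I_w,\brho(\mf t)),$$
using $W\cap\mf t\subseteq\mf t'\cap\mf t$, so the Ramakrishna condition at $w$ cuts out a subspace of the unramified classes on $\brho(\mf t)$ and hence $\Sel{\mc L'\cup L_w^{Ram}}{\Gamma_{S'\cup w}}{\brho(\mf t)}\subseteq\Sel{\mc L'}{\Gamma_{S'}}{\brho(\mf t)}=\{0\}$ outright. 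No re-application of Proposition \ref{kill tSel} and no interleaved termination argument are needed; the $\mf t$-Selmer group stays trivial at every step and the induction of Theorem \ref{pat ram} runs unchanged. With that containment supplied, your argument is complete and coincides with the paper's.
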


\begin{proof}
The second inequality and Wiles' formula (Proposition \ref{Wiles}) imply that
$h_{\mc L}^1(\Gamma_S,\brho(\mf t))\leq h_{\mc L^{\perp}}^1(\Gamma_S,\brho(\mf t)(1))$. By 
Proposition \ref{kill tSel}, we can enlarge $\mc L$ by adding finitely many Ramakrishna deformation conditions to get a new Selmer system $\mc L'=\{L_v\}_{v \in S'}$ with $S' \supset S$ such that
$H_{\mc L'}^1(\Gamma_{S'},\brho(\mf t))=\{0\}$. By Lemma \ref{3.3} (2), replacing $\mc L$ by $\mc L'$ preserves the first inequality.

We choose $\sigma \in \galQ$ and $\alpha' \in \Phi(G,T')$ as in the paragraph preceding Lemma \ref{tSelgSel} (this is where the congruence conditions for $\Phi$ of type $B_n, C_n, E_7$ come in). Chebotarev's density theorem implies that there are infinitely many places $w \notin S'$ such that $\brho|_w$ is of Ramakrishna type $\alpha'$.
We have for such a prime $w$,
$$H^1_{\mc L' \cup L_w^{Ram}}(\Gamma_{S'\cup w},\brho(\mf t))=0.$$ 
In other words, adding a Ramakrishna local deformation conditions does not make the $\mf t$-Selmer group jump back to a nontrivial group.
Indeed, $L_w^{Ram}\cap H^1(\Gamma_w,\brho(\mf t))=H^1(\Gamma_w,\brho(W\cap \mf t))\subset H^1(\Gamma_w,\brho(\mf t'\cap \mf t))=H^1(\Gamma_w/I_w,\brho(\mf t))$, where the middle inclusion follows from Lemma \ref{s:alpha} and \ref{s:alpha:beta}. So 
$$H^1_{\mc L' \cup L_w^{Ram}}(\Gamma_{S'\cup w},\brho(\mf t)) \subset H_{\mc L'}^1(\Gamma_{S'},\brho(\mf t))=\{0\}.$$ 

Let us check the assumptions of Theorem \ref{pat ram}. By the first inequality in the assumption and Proposition \ref{Wiles}, Item 1 holds. Item 2 and 3 are satisfied by Proposition \ref{item2,3}.  As $H_{\mc L'}^1(\Gamma_{S'},\brho(\mf t))=\{0\}$, Item 4 is satisfied by Lemma \ref{tSelgSel}.
Therefore, by the proof of Theorem \ref{pat ram}, there is a strict inclusion
$$\Sel{\mc L'^{\perp}\cup L_{w}^{Ram,\perp}}{\Gamma_{\rats,S'\cup w}}{\brho(\mf g)(1)} \subset \Sel{\mc L'^{\perp}}{\Gamma_{\rats,S'}}{\brho(\mf g)(1)}.$$
As the $\mf t$-Selmer group is still trivial for the enlarged Selmer system, Item 4 remains valid by Lemma \ref{tSelgSel}. So we can find a prime $w' \notin S' \cup w$ and enlarge the Selmer system $\mc L' \cup L^{Ram}_w$ in the same way so that the dual Selmer group shrinks even further. Applying this argument finitely many times, we can kill the dual Selmer group. Therefore, by the first two lines of the proof of Theorem \ref{pat ram}, we obtain desired $l$-adic lifts.   
\end{proof}

\subsection{Deforming principal $\op{GL}_2$}

We use the notation in Section 2.2. Recall that $\brho$ is the composite $\galQ \to \gl{2}{k} \xrightarrow{\varphi} G(k)$ 
where the first map is constructed from modular forms and the second map is the principal $GL_2$-map.

Patrikis has shown that all simple algebraic groups of exceptional types are geometric monodromy groups for $\galQ$ except for $E_6^{ad}, E_6^{sc}, E_7^{sc}$ (\cite{pat16}). In this section, we follow Patrikis' work and use the principal $\op{GL}_2$ to construct full-image Galois representations into $E_6^{ad}, E_6^{sc}, \op{SL}_3, \op{Spin}_7$. 

The proof of the following theorem is identical to that of \cite{pat16}, Theorem 7.4.
\begin{thm}\label{3.15}
Let $\mc L=\{L_v\}_{v \in S}$ be a family of smooth local deformation conditions for $\bar\rho$ (the residual representation defined in Section 2.2) unramified outside a finite set of places $S$ containing the real place and all places where $\brho$ is ramified. Suppose that 
$$\sum_{v\in S}\dim L_v \geq \sum_{v\in S} h^0(\Gamma_v,\brho(\mf g)).$$ 
Assume $l$ is large enough.

Then there is a finite set of places $Q$ disjoint from $S$ and a continuous lift $$\rho:\Gamma_{S\cup Q} \to G(\mc O)$$ of $\bar\rho$ such that
$\rho$ is of type $L_v$ for $v\in S$ and of Ramakrishna type for 
$v\in Q$.
\end{thm}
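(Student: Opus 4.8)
The plan is to deduce the theorem directly from the abstract lifting result Theorem~\ref{pat ram}, following \cite{pat16}, Theorem 7.4 essentially verbatim; all that is needed is to verify its hypotheses for the principal $\op{GL}_2$ residual representation $\brho$ of Section 2.2. First I would note that in each of the four cases $G=E_6^{ad},E_6^{sc},\op{SL}_3,\op{Spin}_7$ the group is semisimple and, by Lemma~\ref{pr gl2 list}, the principal cocharacter lifts to $G$, so $\brho$ is defined; and Theorem~\ref{pat ram} applies once we check $h^0(\galQ,\brho(\mf g))=h^0(\galQ,\brho(\mf g)(1))=0$. By Kostant's theorem (Proposition~\ref{kostant}), as a module under the principal $\op{SL}_2$ one has $\mf g\cong\bigoplus_m\op{Sym}^{2m}(k^2)\otimes\det^{-m}$, the sum running over the exponents $m$ of $\mf g$ (each of multiplicity one for these $G$); since $\bar r_{f,\lambda}$ has image containing $\op{SL}_2(k)$ up to a subfield (Example~\ref{sl2 mod form}), for $l$ large each summand is an irreducible $\galQ$-module of dimension $2m+1\geq 3$, so neither $k$ nor $k(-1)$ occurs as a subquotient. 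This kills both $H^0$'s and shows in addition that $\brho(\mf g)$ and $\brho(\mf g)(1)$ have no common Jordan--Hölder factor (a matching $\op{SL}_2$-type forces $m_1=m_2$, and then the cyclotomic twist is not trivial on $\galQ$).

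Next I would dispatch Items 1--3 of Theorem~\ref{pat ram}. Item 1 is precisely the hypothesis of the present theorem. For Items 2 and 3, put $K=\rats(\brho(\mf g),\mu_l)$; the point is that $\gal{K}{\rats}$ has, up to a prime-to-$l$ quotient, the form $\op{SL}_2(k)$ or $\op{PSL}_2(k)$, so the vanishing of $H^1(\gal{K}{\rats},\brho(\mf g))$ and $H^1(\gal{K}{\rats},\brho(\mf g)(1))$ reduces, summand by summand and via inflation--restriction, to the classical vanishing of $H^1(\op{SL}_2(\mb F_l),\op{Sym}^{2m}(\mb F_l^2))$ for $l$ large relative to $m$ (see \cite{pat16}, Section 7 and the references there). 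Item 3, linear disjointness of $K_\phi$ and $K_\psi$ over $K$, then follows formally: $\gal{K_\psi}{K}$ and $\gal{K_\phi}{K}$ are $\gal{K}{\rats}$-equivariantly isomorphic to subspaces of $\brho(\mf g)$ and $\brho(\mf g)(1)$, which share no Jordan--Hölder factor, so no nontrivial common subextension can exist.

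The real content, and the step I expect to be the main obstacle, is Item 4; but it is handled exactly as in \cite{pat16}, Theorem 7.4. One produces $\sigma\in\galQ$ with $\brho(\sigma)$ the image under the principal $\op{GL}_2$ map of a suitable regular semisimple element of $\op{GL}_2(k)$ --- hence regular semisimple in $G$, with centralizer torus $T'$ --- together with a root $\alpha\in\Phi(G,T')$ satisfying $\alpha(\brho(\sigma))=\bkp(\sigma)$; since the principal cocharacter composed with $\alpha$ surjects onto $\Zmodu{l}$, no congruence condition on $l$ is needed here, in contrast with the Weyl-group construction of Section 2.1. One then checks, using the $\op{SL}_2$-representation theory of the Kostant summands $\op{Sym}^{2m}(k^2)$, that \emph{every} nonzero $\galQ$-stable subspace of $\brho(\mf g)$ contains a vector with nonzero $\mf l_\alpha$ component and a vector with nonzero $\mf g_{-\alpha}$ component; applied to $k[\psi(\Gamma_K)]$ and $k[\phi(\Gamma_K)]$, which are nonzero by Item 2 and inflation--restriction, this yields conditions (b) and (c) of Item 4. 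With Items 1--4 in hand, Theorem~\ref{pat ram} produces the finite set $Q$ disjoint from $S$ and the lift $\rho:\Gamma_{S\cup Q}\to G(\mc O)$ of $\brho$ that is of type $L_v$ at each $v\in S$ and of Ramakrishna type at each $v\in Q$, completing the proof.
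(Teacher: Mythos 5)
Your proposal is correct and follows essentially the same route as the paper, which simply declares the proof identical to that of \cite{pat16}, Theorem 7.4: verify Items 1--4 of Theorem~\ref{pat ram} for the principal $\op{GL}_2$ residual representation and invoke it. The one computation you defer in Item 4 --- that every irreducible summand of $\brho(\mf g)$ has vectors with nonzero $\mf l_\alpha$ and $\mf g_{-\alpha}$ components --- is exactly what the paper supplies separately in Lemma~\ref{lie excep}, Lemma~\ref{lie An} and Corollary~\ref{lie Bn}.
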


In \cite[Lemma 7.6]{pat16}, the following fact is verified using Magma.
\begin{lem}\label{lie excep}
Assume $l$ is large enough for $\mf g$. For $\mf g$ of exceptional type, there is a root $\alpha \in \Phi$ such that every irreducible submodule of $\brho(\mf g)$ has a vector with nonzero $\mf l_{\alpha}$ component and a vector with nonzero $\mf g_{-\alpha}$ component.
\end{lem}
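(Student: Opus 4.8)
The plan is to reduce Lemma~\ref{lie excep} to Kostant's description of $\mf g$ as a $\op{GL}_2$-module (Proposition~\ref{kostant}) and then to a finite, explicit nonvanishing check inside a Chevalley basis of each exceptional Lie algebra. First I would observe that for $l$ large the projective image of $\bar r_{f,\lambda}$ is $\op{PGL}_2(k)$ or $\op{PSL}_2(k)$ and every exponent $m$ of $\mf g$ satisfies $2m<l$, so that each $\op{Sym}^{2m}(k^2)\otimes\det^{-m}$ stays an irreducible $k[\galQ]$-module and these are pairwise non-isomorphic (their dimensions differ). Since each exceptional Lie algebra has $\op{rank}\mf g$ pairwise distinct exponents (visible from the list in \cite{bou}), every multiplicity space $V_{2m}$ in Proposition~\ref{kostant} is one-dimensional, so $\brho(\mf g)=\bigoplus_{m}W_m$ with $W_m\cong\op{Sym}^{2m}(k^2)\otimes\det^{-m}$, $m$ running over the exponents, and every irreducible $\galQ$-submodule of $\brho(\mf g)$ is exactly one of the $W_m$. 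Thus it suffices to exhibit one $\alpha\in\Phi$ for which each $W_m$ contains a vector with nonzero $\mf l_{\alpha}$ component and a vector with nonzero $\mf g_{-\alpha}$ component.

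Next I would translate these two conditions through the principal $\mathfrak{sl}_2$-triple $(X,H,Y)$ with $H=2\rho^{\vee}$. Under $\op{ad}(H)$ the algebra $\mf g$ is graded by twice the height, with $\mf g^{(0)}=\mf t$ and $\mf g^{(2d)}=\bigoplus_{\op{ht}\beta=d}\mf g_{\beta}$, while inside $W_m$ the $H$-eigenvalues are $2m,2m-2,\dots,-2m$, each with one-dimensional weight space. Since $\mf l_{\alpha}\subset\mf t=\mf g^{(0)}$, only the weight-$0$ line of $W_m$, spanned by $h_m:=\op{ad}(Y)^m e_m$ where $e_m$ spans $V_{2m}\subset\mf g^X$, can have nonzero $\mf l_{\alpha}$ component, and that happens precisely when $\alpha(h_m)\neq 0$ (viewing $h_m\in\mf t$). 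Since $\mf g_{-\alpha}\subset\mf g^{(-2\op{ht}\alpha)}$, a vector of $W_m$ with nonzero $\mf g_{-\alpha}$ component can only lie in the weight-$(-2\op{ht}\alpha)$ space of $W_m$, which is nonzero iff $\op{ht}\alpha\le m$; as $1$ is always an exponent this forces $\alpha$ to be a simple root, and then the condition becomes: the $\mf g_{-\alpha}$ component of $\op{ad}(Y)^{m+1}e_m\in\bigoplus_{\beta\in\Delta}\mf g_{-\beta}$ is nonzero. So the lemma reduces to finding a simple root $\alpha$ such that for every exponent $m$ one has $\alpha(h_m)\neq 0$ and the $\mf g_{-\alpha}$ component of $\op{ad}(Y)^{m+1}e_m$ is nonzero.

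These are finitely many nonvanishing statements — one candidate $\alpha$, finitely many exponents, for each of $G_2,F_4,E_6,E_7,E_8$ — and I would check them by direct computation in a Chevalley basis: write $Y=\sum_{\beta\in\Delta}r_{\beta}Y_{\beta}$ explicitly, compute a basis of $\mf g^X=\ker\op{ad}(X)$ together with its $H$-eigenvalues, and evaluate the iterated brackets $\op{ad}(Y)^m e_m$ and $\op{ad}(Y)^{m+1}e_m$, reading off the $\mf l_{\alpha}$ and $\mf g_{-\alpha}$ components. For $G_2$ and $F_4$ this is short enough to do by hand; for $E_6,E_7,E_8$ it is most efficiently done on a computer, exactly as in \cite[Lemma~7.6]{pat16}, and one records the good simple root $\alpha$ obtained in each type. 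The main obstacle is that there is no conceptual shortcut forcing all of these components to be simultaneously nonzero — the content of the lemma is precisely this explicit (machine-assisted, at the large ranks) verification that some simple root works. Once $\alpha$ is in hand nothing else is needed here: the element $\sigma\in\galQ$ with $\alpha(\brho(\sigma))=\bkp(\sigma)$ required to complete Item~4 of Theorem~\ref{pat ram} is produced, as in the surrounding arguments, from the regular semisimple elements of $T(k)$ inside $\brho(\galQ)$ together with the linear disjointness of $\rats(\brho)$ and $\rats(\mu_l)$.
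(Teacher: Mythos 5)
Your proposal is correct and matches the paper's treatment: the paper does not prove this lemma itself but cites the Magma verification in \cite[Lemma 7.6]{pat16}, and your reduction via Kostant's decomposition to checking the $\mf l_{\alpha}$-component of $\op{ad}(Y)^{m}e_m$ and the $\mf g_{-\alpha}$-component of $\op{ad}(Y)^{m+1}e_m$ for a simple root $\alpha$ is exactly the scheme the paper carries out by hand for types $A_n$ and $B_n$ in Lemma \ref{lie An} and Corollary \ref{lie Bn}. You also correctly identify that the remaining content is a finite explicit (machine-assisted) nonvanishing check, which is precisely what \cite{pat16} supplies.
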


For our purpose, we only need to establish its analogs for $\mf g$ of type $A_n$ and $B_n$. 

\begin{lem}\label{lie An}
Assume $l$ is large enough for $\mf g$. For $\mf g$ of type $A_n$, there is a root $\alpha \in \Phi$ such that every irreducible submodule of $\brho(\mf g)$ has a vector with nonzero $\mf l_{\alpha}$ component and a vector with nonzero $\mf g_{-\alpha}$ component.
\end{lem}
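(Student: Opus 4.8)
The plan is to pin down the irreducible constituents of $\brho(\mf g)$ via Kostant's theorem, and then to choose $\alpha$ to be an \emph{extreme} simple root, for which every relevant component is forced to come from a single matrix unit, so that no cancellation can occur.

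Write $\mf g=\mf{sl}_{n+1}$; its exponents are $1,2,\dots,n$, each of multiplicity one, so Proposition \ref{kostant} gives an isomorphism of $\op{GL}_2$-modules (acting through $\varphi$)
$$\brho(\mf g)\;\cong\;\bigoplus_{m=1}^{n}W_m,\qquad W_m:=\op{Sym}^{2m}(k^2)\otimes\op{det}^{-m}.$$
Each $W_m$ factors through $\op{PGL}_2$ and, for $l$ large (say $l>2n$), is an irreducible $\op{PSL}_2(k)$-module, and the $W_m$ are pairwise non-isomorphic. Since the projective image of $\bar r_{f,\lambda}$ contains $\op{PSL}_2(k)$ and $\varphi$ factors through $\op{PGL}_2$, the group $\brho(\galQ)$ contains the principal $\op{PSL}_2(k)$; hence the irreducible $\galQ$-submodules of $\brho(\mf g)$ are exactly $W_1,\dots,W_n$, and it suffices to produce one simple root $\alpha$ for which each $W_m$ has a vector with nonzero $\mf l_\alpha$-component and a vector with nonzero $\mf g_{-\alpha}$-component. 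Grade $\mf g$ by $\operatorname{ad}(2\rho^\vee)$: the zero weight space is exactly $\mf t$ (as $\langle\gamma,2\rho^\vee\rangle=2\op{ht}(\gamma)\neq0$ for every root $\gamma$), $\mf g_{-\alpha}$ sits in weight $-2\op{ht}(\alpha)$, and $W_m$ has weights $-2m,-2m+2,\dots,2m$. Thus only a simple $\alpha$ can have $\mf g_{-\alpha}$ meet $W_1$; and since the $\mf t$-component of a vector equals its weight-zero component, $W_m$ has a vector with nonzero $\mf l_\alpha$-component iff its weight-zero subspace $(W_m)_0$ contains a vector $Z$ with $\alpha(Z)\neq0$.

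Now realize $\mf{sl}_{n+1}$ through the principal $\op{SL}_2$, i.e.\ as $\mf{sl}$ of the irreducible $(n+1)$-dimensional $\op{SL}_2$-representation, with rows and columns indexed $0,\dots,n$. In a normalized weight basis the raising operator is $X=\sum_{j=0}^{n-1}E_{j,j+1}$ (the sum of simple root vectors) and the lowering operator is $Y=\sum_{k=0}^{n-1}c_kE_{k+1,k}$ with $c_k=(n-k)(k+1)\neq0$, so for $1\leq m\leq n$
$$Y^m=\sum_{i=0}^{n-m}d_i\,E_{i+m,\,i},\qquad d_i=\prod_{j=0}^{m-1}c_{i+j}=\prod_{j=0}^{m-1}(n-i-j)(i+j+1)\neq0 .$$
This $Y^m$ is a nonzero traceless matrix killed by $\operatorname{ad}(Y)$, hence a lowest weight vector of weight $-2m$; as $\op{Sym}^{2m}$ is the only summand of $\mf g$ containing such a vector, $Y^m$ generates $W_m$. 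So $\operatorname{ad}(X)^{m-1}(Y^m)$ and $Z_m:=\operatorname{ad}(X)^m(Y^m)$ lie in $W_m$, of weight $-2$ and $0$ respectively (so $Z_m\in\mf t$). Take $\alpha=\alpha_1$, so that $\mf g_{-\alpha_1}=kE_{1,0}$ and $\alpha_1(Z)=Z_{0,0}-Z_{1,1}$ for diagonal $Z$. Since $\operatorname{ad}(X)$ acts on matrix units by $E_{p,q}\mapsto E_{p-1,q}-E_{p,q+1}$ — it can only decrease the first index or increase the second — among the summands $E_{i+m,i}$ of $Y^m$ only the $i=0$ term can contribute to $E_{1,0}$, and only $i=0$ to the diagonal entry $E_{0,0}$, with no cancellation. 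A short computation then gives: the $E_{1,0}$-coefficient of $\operatorname{ad}(X)^{m-1}(Y^m)$ equals $d_0=n!\,m!/(n-m)!$; $(Z_m)_{0,0}=d_0$; and $(Z_m)_{1,1}=d_1-m\,d_0$ for $m<n$ (respectively $-n\,d_0$ for $m=n$). Using the telescoping identity $d_1/d_0=(n-m)(m+1)/n$ one gets
$$\alpha_1(Z_m)=(Z_m)_{0,0}-(Z_m)_{1,1}=\frac{m(m+1)}{n}\,d_0\quad(m<n),\qquad\alpha_1(Z_n)=(n+1)\,d_0 .$$
These are all nonzero rationals, so for $l$ large enough for $\mf g$ — in particular $l>2n$ and $l$ prime to the finitely many integers $d_0$ and $m(m+1)d_0/n$ ($1\leq m\leq n$) — they are nonzero in $k$. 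Hence, with $\alpha=\alpha_1$, the vector $Z_m$ has nonzero $\mf l_{\alpha_1}$-component and $\operatorname{ad}(X)^{m-1}(Y^m)$ has nonzero $\mf g_{-\alpha_1}$-component for every $m$, which proves the lemma.

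The only real idea is the choice of the extreme simple root $\alpha_1$: because $\operatorname{ad}(X)$ moves matrix units monotonically, this forces every contribution to the $\mf g_{-\alpha_1}$- and $\mf l_{\alpha_1}$-components of $W_m$ to come from a single summand of $Y^m$, so no cancellation is possible and the non-vanishing is immediate from the explicit entries of $Y^m$. Everything else is bookkeeping; the one point to track is that ``$l$ large enough'' must exclude the finitely many prime divisors of the nonzero integers produced by the computation.
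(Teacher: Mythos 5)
Your proof is correct and takes essentially the same approach as the paper: both choose $\alpha$ to be the first simple root $e_1-e_2$ and verify the two non-vanishing statements by an explicit computation with the principal $\mf{sl}_2$-triple, reducing to the non-vanishing mod $l$ of finitely many explicit integers. The only difference is cosmetic — the paper starts from the highest weight vectors $v_{2h}\in\mf g^{X}$ and applies $(\op{ad}Y)^{h}$ and $(\op{ad}Y)^{h+1}$, whereas you start from the lowest weight vectors $Y^{m}$ and apply $\op{ad}(X)^{m}$ and $\op{ad}(X)^{m-1}$, your corner-entry no-cancellation observation playing the role of the paper's recursive binomial computation.
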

\begin{proof}
Let $\mf g=\mf{sl}_{n+1}$ and let $\alpha_{i,j}=e_i-e_j$, $i \neq j$ be the roots of $\mf g$. Let $E_{i,j}$ be the $n+1$ by $n+1$ matrix that has $1$ at the $(i,j)$-entry and zeros elsewhere. The $sl_2$-triple associated to $\alpha_{i,j}$ is $\{X_{i,j}:=E_{i,j}, H_{i,j}=E_{i,i}-E_{j,j}, Y_{i,j}:=E_{j,i}\}$.
Let $$X=X_{1,2}+X_{2,3}+\cdots +X_{n,n+1},$$ 
$$H=\sum_{i<j}H_{i,j}=k_1H_{1,2}+k_2H_{2,3}+\cdots +k_nH_{n,n+1},$$
$$Y=k_1Y_{1,2}+k_2Y_{2,3}+\cdots +k_nY_{n,n+1},$$
where $k_i:=i(n-i+1)$. The triple $\{X,H,Y\}$ is an $sl_2$-triple containing the regular unipotent element $X$.

A straightforward calculation gives for $i<j$
$$[Y,X_{i,j}]=k_iX_{i+1,j}-k_{j-1}X_{i,j-1}.$$
Put $h=j-i$ and apply the above identity recursively, we obtain
$$(adY)^hX_{i,j}$$
$$=(-1)^hk_{i,j}\big( H_{i,i+1}-\binom{h-1}{1}H_{i+1,i+2}+\binom{h-1}{2}H_{i+2,i+3}+\cdots +(-1)^{h-1}\binom{h-1}{h-1}H_{j-1} \big),$$
where $k_{i,j}=k_ik_{i+1}\cdots k_{j-1}$.
By Proposition \ref{kostant},
$$\mf g^{X}=\sum_{h=1}^n <v_{2h}>$$
where $v_{2h}=\sum_{j-i=h} X_{i,j}$. Then we have
$$(adY)^hv_{2h}=h_1H_{1,2}+\cdots +h_nH_{n,n+1}$$
with $h_1=(-1)^hk_{1,h+1}$, $h_2=(-1)^{h-1}\binom{h-1}{1}k_{1,h+1}+(-1)^hk_{2,h+2}$ and $h_i=(-1)^{h-1}h_{n-i+1}$.
Since $(adY_{i,i+1})H_{i-1,i}=Y_{i,i+1}$, $(adY_{i,i+1})H_{i,i+1}=-2Y_{i,i+1}$ and $(adY_{i,i+1})H_{i+1,i+2}=Y_{i,i+1}$, 
$$(adY)^{h+1}v_{2h}=(adY)(h_1H_{1,2}+\cdots +h_nH_{n,n+1})$$
$$=k_1(-2h_1+h_2)Y_{1,2}+k_2(h_1-2h_2+h_3)Y_{2,3}+\cdots +k_n(h_{n-1}-2h_n)Y_{n,n+1}.$$

One computes $$h_2-2h_1=(-1)^{h-1}(h+1)!h(n-1)(n-2)\cdots (n-h+1) \neq 0.$$
So if we let $\alpha=\alpha_{1,2}$ and suppose $l$ is large enough for $\mf g$, then the submodule of $\brho(\mf g)$ generated by $v_{2h}$ has a vector with nonzero $\mf g_{-\alpha}$ component, that is, the vector $(adY)^{h+1}v_{2h}$; the vector
$(adY)^{h}v_{2h}$ has nonzero $\mf l_{\alpha}$ component, as $\alpha(h_1H_{1,2}+\cdots +h_nH_{n,n+1})=2h_1-h_2$ which is nonzero.
\end{proof}

\begin{cor}\label{lie Bn}
Assume $l$ is large enough for $\mf g$. For $\mf g$ of type $B_n$, there is a root $\alpha \in \Phi$ such that every irreducible submodule of $\brho(\mf g)$ has a vector with nonzero $\mf l_{\alpha}$ component and a vector with nonzero $\mf g_{-\alpha}$ component.
\end{cor}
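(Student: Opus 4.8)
The plan is to deduce this from Lemma~\ref{lie An} by realizing $\mathfrak g = \mathfrak{so}_{2n+1}$ as a subalgebra of $\tilde{\mathfrak g} := \mathfrak{sl}_{2n+1}$ that is stable under a principal $\mathrm{SL}_2$. First I would equip $k^{2n+1}$ with the non-degenerate symmetric form given by the antidiagonal matrix $J$, so that $\mathfrak{so}_{2n+1} = \{X : X^{T}J + JX = 0\}$ has Cartan $\mathfrak t_B = \mathfrak g \cap \mathfrak t_A$ sitting inside the diagonal torus $\mathfrak t_A$ of $\tilde{\mathfrak g}$. Restriction to $\mathfrak t_B$ identifies the coordinate characters $\epsilon_1, \dots, \epsilon_{2n+1}$ of $\tilde{\mathfrak g}$ with $e_1, \dots, e_n, 0, -e_n, \dots, -e_1$, so the $\tilde{\mathfrak g}$-root $\alpha_{1,2} = \epsilon_1 - \epsilon_2$ restricts to the long $B_n$-root $\bar\alpha := e_1 - e_2$, with root space $\mathfrak g_{\bar\alpha} = k(E_{1,2} - E_{2n,2n+1})$ and with $\bar\alpha^{\vee}$ pairing non-trivially against $\alpha_{1,2}$. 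An elementary linear-algebra check then shows that for any $v \in \mathfrak g$ the projection of $v$ onto $\mathfrak l_{\bar\alpha}$ (resp.\ onto $\mathfrak g_{-\bar\alpha}$) computed in $\mathfrak g$ is nonzero if and only if the projection of $v$ onto $\mathfrak l_{\alpha_{1,2}}$ (resp.\ onto $\mathfrak g_{-\alpha_{1,2}}$) computed in $\tilde{\mathfrak g}$ is nonzero.

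Next I would transport the principal $\mathfrak{sl}_2$-triple $(X, H, Y)$ of Lemma~\ref{lie An} --- applied to $\mathfrak{sl}_{2n+1}$, i.e.\ with the integer denoted $n$ there set equal to $2n$ --- into $\mathfrak{so}_{2n+1}$. The sign-modified staircase $X' := \sum_i (-1)^i E_{i,i+1}$ lies in $\mathfrak{so}_{2n+1}$ and equals $DXD^{-1}$ for a suitable diagonal matrix $D$; since $D$ lies in the torus, conjugation by $D$ fixes $\mathfrak t_A$ pointwise and hence fixes every root subspace and every line $\mathfrak l_\gamma$. Thus the conclusion of Lemma~\ref{lie An} holds verbatim for the triple $(X', H, Y') = D(X, H, Y)D^{-1}$ and the \emph{unchanged} root $\alpha_{1,2}$ --- equivalently, one simply re-runs the computation in the proof of Lemma~\ref{lie An} with $X'$ in place of $X$, the sign changes only scaling the relevant components by nonzero constants. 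Because $(X', H, Y')$ now lies in $\mathfrak{so}_{2n+1}$, this subalgebra is stable under the corresponding principal $\mathrm{SL}_2$; and since the exponents $1, 2, \dots, 2n$ of $A_{2n}$ are pairwise distinct while those of $B_n$ are $1, 3, \dots, 2n - 1$, Proposition~\ref{kostant} forces $\mathfrak{so}_{2n+1}$ to be exactly the sum of the irreducible summands of $\mathfrak{sl}_{2n+1}$ (for this principal $\mathrm{SL}_2$) attached to the exponents $m \in \{1, 3, \dots, 2n-1\}$.

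Finally, for $l$ large each such summand is an irreducible $\galQ$-module (being $\operatorname{Sym}^{2m}(k^2) \otimes \det^{-m}$ composed with $\brho$), so the irreducible submodules of $\brho(\mathfrak g)$ are precisely these summands, each of which coincides with one of the irreducible submodules generated by a highest-weight vector $v_{2m}$ in the proof of Lemma~\ref{lie An}. By that lemma such a submodule contains a vector with nonzero $\mathfrak l_{\alpha_{1,2}}$-component and a vector with nonzero $\mathfrak g_{-\alpha_{1,2}}$-component; since these vectors lie in the summand and hence inside $\mathfrak{so}_{2n+1}$, the transfer recorded in the first paragraph shows they have nonzero $\mathfrak l_{\bar\alpha}$- and $\mathfrak g_{-\bar\alpha}$-components inside $\mathfrak g$. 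Taking $\alpha = \bar\alpha = e_1 - e_2$ then proves the corollary. The only step requiring genuine care is the transport of the \emph{specific} triple of Lemma~\ref{lie An} into $\mathfrak{so}_{2n+1}$ without perturbing $\mathfrak t_A$ or the root $\alpha_{1,2}$, which the diagonal conjugation above handles; the remaining points are the routine bookkeeping of matching $A_{2n}$-root data with $B_n$-root data under restriction to $\mathfrak t_B$.
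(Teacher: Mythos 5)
Your argument is correct, and it reaches the same computational core as the paper by a different organizational route. The paper's own proof stays inside $\mf{so}_{2n+1}$: it writes down the principal $\mf{sl}_2$-triple and the highest-weight vectors $v_{2i}$ explicitly as elements of $\mf{so}_{2n+1}$ (with the same $\pm$ sign pattern you introduce via the diagonal matrix $D$) and then asserts that the recursion from the proof of Lemma \ref{lie An} runs identically, producing nonzero $\mf l_{\alpha}$- and $\mf g_{-\alpha}$-components for $\alpha=e_1-e_2$. You instead treat Lemma \ref{lie An} (for $A_{2n}$) as a black box and transport its conclusion through the embedding $\mf{so}_{2n+1}\hookrightarrow \mf{sl}_{2n+1}$: the diagonal conjugation matching the two principal triples, the multiplicity-one structure of Kostant's decomposition (exponents $1,3,\dots,2n-1$ of $B_n$ sitting inside $1,2,\dots,2n$ of $A_{2n}$, with the dimension count $\sum_{j=1}^{n}(4j-1)=2n^2+n$ confirming that $\mf{so}_{2n+1}$ is exactly the sum of the odd-exponent summands), and the comparison of the two root-space decompositions. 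What your route buys is that it makes explicit a point the paper glosses over: the "same calculation" in the paper produces $A_{2n}$-components, and one still has to know these detect the $B_n$-components. Your "elementary linear-algebra check" is exactly that bridge, and it does hold: no nonzero $A_{2n}$-root restricts to zero on $\mf t_B$, so the $\mf t_A$- and $\mf t_B$-components of an element of $\mf{so}_{2n+1}$ agree, and both $\mf l_{\alpha_{1,2}}$ and $\mf l_{\bar\alpha}$ are detected by the same functional $h\mapsto h_1-h_2$ since $\alpha_{1,2}(\bar\alpha^{\vee})=2\neq 0$; likewise $\mf g_{-\bar\alpha}$ is the line $k(E_{2,1}-E_{2n+1,2n})$ inside $\tilde{\mf g}_{\epsilon_2-\epsilon_1}\oplus\tilde{\mf g}_{\epsilon_{2n+1}-\epsilon_{2n}}$ (the only two $A_{2n}$-root spaces restricting to $-\bar\alpha$), so its component is nonzero exactly when the $\tilde{\mf g}_{-\alpha_{1,2}}$-component is. Since you flag this check rather than prove it, I would just urge you to include the two or three lines above in a final write-up; with that, the argument is complete.
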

\begin{proof}
Let $\mf g=\mf{so}_{2n+1}$. Let $V=k^{2n+1}$ be a vector space equipped with a bilinear form $x_1y_{2n+1}+x_2y_{2n}+\cdots x_{2n+1}y_1$ with matrix $J$. Then $\mf g$ can be identified with
$$\{X \in M_{2n+1}(k)| XJ+JX^t=0\}.$$ The roots of $\mf g$ are
$e_i-e_j$, $e_i+e_j$, $-e_i-e_j$, $\pm e_i$ for $1 \leq i \neq j\leq n$. We choose a set of simple roots $\Delta=\{e_1-e_2,e_2-e_3,\cdots e_{n-1}-e_n, e_n\}$. The $sl_2$-triple associated to $e_i-e_{i+1}$ is
$\{X_i:=X_{i,i+1}-X_{2n-i+1,2n-i+2}, H_i:=H_{i,i+1}+H_{2n-i+1,2n-i+2}, Y_i:=Y_{i,i+1}-Y_{2n-i+1,2n-i+2}\}$, and the $sl_2$-triple associated to $e_n$ is
$\{X_n:=X_{n,n+1}-X_{n+1,n+2}, H_n:=2H_{n,n+1}+2H_{n+1,n+2}, Y_n:=2Y_{n,n+1}-2Y_{n+1,n+2}\}$. 
Let
$$X=\sum_i X_i,$$
$$H=\sum_{1\leq i \leq n-1}i(2n-i+1)H_i+\frac{1}{2}n(n+1)H_n,$$
$$Y=\sum_{1\leq i \leq n-1}i(2n-i+1)Y_i+\frac{1}{2}n(n+1)Y_n.$$
A straightforward calculation shows that $X,H,Y$ form an $sl_2$-triple containing the regular unipotent element $X \in \mf g$.

Corresponding to the exponents $1,3,\cdots ,2n-1$ of $\mf g$, we put
$$v_{2\cdot 1}=X_{1,2}+\cdots +X_{n,n+1}-X_{n+1,n+2}-\cdots -X_{2n,2n+1} \in \mf{so}_{2n+1},$$
$$v_{2 \cdot 3}=X_{1,4}+\cdots +X_{n-1,n+2}-X_{n,n+3}-\cdots -X_{2n-2,2n+1} \in \mf{so}_{2n+1},\cdots,\cdots,$$
$$v_{2 \cdot (2n-1)}=X_{1,2n}-X_{2,2n+1} \in \mf{so}_{2n+1}.$$
Then $\mf g^X=\sum_{i=1,3,\cdots,2n-1}<v_{2i}>$. Let $\alpha=e_1-e_2$, the same calculation as in the proof of Lemma \ref{lie An} gives $(adY)^iv_{2i}$ has a nonzero $\mf l_{\alpha}$ component
and $(adY)^{i+1}v_{2i}$ has a nonzero $\mf g_{-\alpha}$ component for any exponent $i$.

\end{proof}

\subsection{Removing the congruence conditions on $l$} 
In this section, we use a result in \cite{fkp} to remove the congruence condition we have imposed for $G$ of type $B_n, C_n, E_7$ in Theorem \ref{lifting Weyl}. The following theorem is a simplified version of \cite[Theorem 1.3]{fkp}: as we are not considering geometric lifts, we relax the condition at $l$ and only require the right hand side of Wiles' formula (Proposition \ref{Wiles}) to be non-negative. It applies to the residual representations we construct and allows us to deform it to an $l$-adic representation with Zariski-dense image for almost all primes $l$. However, their argument is very different and much more complicated than ours, so we only use it to remove the congruence conditions.

\begin{thm}\label{fkp,main}
Suppose that there is a global deformation condition $\mc L=\{L_v\}_{v \in S}$ consisting of \emph{smooth} local deformation conditions for each place $v \in S$. Let $K=\rats(\brho(\mf g),\mu_l)$.
We assume the following:
\begin{enumerate}
\item $$\sum_{v\in S}(\dim L_v) \geq \sum_{v\in S}h^0(\Gamma_{\rats_v},\brho(\mf g)).$$ 
\item The field $K$ does not contain $\mu_{l^2}$.
\item The groups $H^1(\gal{K}{\rats},\brho(\mf g))$ and $H^1(\gal{K}{\rats},\brho(\mf g)(1))$ vanish.
\item The spaces $\brho(\mf g)$ and $\brho(\mf g)(1)$ are semi-simple $\mb F_l[\galQ]$-modules (equivalently, $k[\galQ]$-modules) having no common $\mb F_l[\galQ]$-sub-quotient, and that neither contains the trivial representation.
\item The space $\brho(\mf g)$ is multiplicity-free as a $\mb F_l[\galQ]$-module.
\end{enumerate}
Then there exists a finite set of primes $Q$ disjoint from $S$, and a lift
$\rho: \Gamma_{\rats, S \cup Q} \to G(\mc O)$ of $\brho$ such that
$\rho$ is of type $L_v$ at all $v \in S$.
\end{thm}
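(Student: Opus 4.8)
The plan is to deduce this from \cite[Theorem 1.3]{fkp}. The one point needing attention is that \emph{loc. cit.} produces a \emph{geometric} lift, and so imposes a Fontaine--Laffaille (or ordinary) condition at $l$ and demands the full geometric Wiles inequality, whereas here we ask only for an $l$-adic lift that is of type $L_v$ at the places $v\in S$. So what I would actually do is re-run the argument of \cite{fkp} with that $l$-adic Hodge-theoretic input stripped out, while keeping track of where the condition at $l$ gets used.

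Recall the common skeleton of Ramakrishna-type arguments: by \cite[Lemma 1.1]{tay03} (which applies to $G$-valued representations without change, cf. the proof of Theorem \ref{pat ram}), it suffices to produce a finite set of primes $Q$, disjoint from $S$, together with a smooth local deformation condition of Ramakrishna type (Definition \ref{3.1}) at each $w\in Q$, so that the dual Selmer group $\Sel{\mc L^{\perp}}{\Gamma_{\rats,S}}{\brho(\mf g)(1)}$ of the enlarged Selmer system vanishes; Item 1, which is preserved under enlarging $\mc L$ by Ramakrishna conditions (Lemma \ref{3.3}(2)), then yields the $\mc O$-valued lift by the usual obstruction-theoretic argument. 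So the whole content is an inductive step: given a Selmer system with nonzero dual Selmer group, find a single auxiliary prime at which it strictly drops.

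For that step I would follow \cite{fkp}. Given a nonzero $\phi\in\Sel{\mc L^{\perp}}{\Gamma_{\rats,S}}{\brho(\mf g)(1)}$, restrict it to $\Gamma_K$; by Item 3 and inflation--restriction it is there a nonzero homomorphism, and by Items 4 and 5 its image $\phi(\Gamma_K)\subseteq\brho(\mf g)(1)$ is the direct sum of a nonempty set of the (distinct) irreducible constituents of $\brho(\mf g)(1)$. One produces similarly a companion class $\psi$ in the primal Selmer group (nonzero by Item 1 and Wiles' formula, Proposition \ref{Wiles}), with image a sum of constituents of $\brho(\mf g)$. As in Lemma \ref{3.3} and the proof of Proposition \ref{kill tSel}, to make the dual Selmer group drop at an auxiliary prime $w$ one needs a root $\alpha'$, relative to a maximal torus $T'$ through $\brho(\frob{w})$, with $\alpha'(\brho(\frob{w}))=\bkp(\frob{w})$, such that $k[\phi(\Gamma_K)]$ has a vector with nonzero $\mf g_{-\alpha'}$-component and $k[\psi(\Gamma_K)]$ has one with nonzero $\mf l_{\alpha'}$-component, and one then fabricates such a $w$ by Chebotarev. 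The decisive difference with Theorem \ref{lifting Weyl} is that, because $\brho(\mf g)$ is semisimple and multiplicity-free (Items 4 and 5), one does not need a single root $\alpha'$ adapted to \emph{all} constituents of $\mf g$ simultaneously --- which is precisely the demand that forced $l\equiv 1\pmod 4$ and $l\equiv 1\pmod 3$ in Lemma \ref{s:alpha:beta} --- but only one adapted to the constituents actually occurring in $\phi(\Gamma_K)$ and $\psi(\Gamma_K)$, and the multiplicity-free hypothesis is exactly what lets one peel the argument apart constituent by constituent and run the induction. Item 2, that $\mu_{l^2}\not\subseteq K$, supplies the linear disjointness needed for $\brho(\frob{w})$, $\bkp(\frob{w})$ and the positions of $\phi(\frob{w})$, $\psi(\frob{w})$ to be prescribed independently in the Chebotarev step, while the ``no common constituent, no trivial constituent'' parts of Item 4 are what make the Selmer bookkeeping of the two classes decouple.

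The main obstacle, and the only genuine work, is to verify that in \cite{fkp} every use of the condition at $l$ is \emph{purely numerical}, i.e.\ enters only to keep the right-hand side of Wiles' formula nonnegative, so that it may be replaced throughout by our Item 1; granting this, the conclusion is verbatim that of \cite[Theorem 1.3]{fkp} with the word ``geometric'' deleted, namely a lift $\rho:\Gamma_{\rats,S\cup Q}\to G(\mc O)$ of $\brho$ which is of type $L_v$ at every $v\in S$.
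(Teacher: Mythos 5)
There is a genuine gap here, and it starts with a mischaracterization of how \cite[Theorem 1.3]{fkp} is proved. The paper itself gives no proof of Theorem \ref{fkp,main}: it is stated as a simplified citation of \cite[Theorem 1.3]{fkp} (with the geometric condition at $l$ relaxed to the purely numerical Item 1), and the text explicitly warns that the argument of \cite{fkp} ``is very different and much more complicated than ours.'' Your sketch instead presents the FKP proof as the standard Ramakrishna skeleton --- kill the dual Selmer group one class at a time at auxiliary primes $w$ of Ramakrishna type $\alpha'$ with $\alpha'(\brho(\frob{w}))=\bkp(\frob{w})$. That is not what FKP do; their method proceeds by lifting successively modulo $l^n$ and uses auxiliary \emph{trivial} primes $w\equiv 1\ (l)$, $w\not\equiv 1\ (l^2)$ with $\brho|_{\Gamma_w}$ trivial, and Item 2 ($\mu_{l^2}\not\subseteq K$) exists precisely so that Chebotarev produces such primes --- not for the ``linear disjointness of $\brho(\frob{w})$ and $\bkp(\frob{w})$'' you describe, which only ever involves $\mu_l$.

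This matters because the Ramakrishna-type induction you propose would not prove the theorem as stated: it would reimpose exactly the congruence conditions that Theorem \ref{fkp,main} is invoked to remove. If the constituent of $\brho(\mf g)(1)$ carrying $\phi(\Gamma_K)$ is a root-space summand ($\mf g_l$, $\mf g_s$ for types $B_n,C_n$, or $\mf g_a$, $\mf g_b$ for $E_7$), the only elements $\Sigma\in\brho(\galQ)$ and roots $\alpha'$ produced in the paper for which that summand has nonzero $\mf g_{-\alpha'}$-component are those of Lemma \ref{s:alpha:beta}, where $\alpha'(\Sigma)$ has order $4$ (resp.\ $3$); the equation $\alpha'(\brho(\frob{w}))=\bkp(\frob{w})$ is then solvable only when $l\equiv 1\ (4)$ (resp.\ $l\equiv 1\ (3)$). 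Restricting attention ``to the constituents actually occurring in $\phi(\Gamma_K)$ and $\psi(\Gamma_K)$'' is already what Item 4 of Theorem \ref{pat ram} does, and it does not escape this obstruction; nor does multiplicity-freeness. Moreover, as noted in the introduction (the ``first observation''), when $\Sel{\mc L}{\Gamma_{\rats,S}}{\brho(\mf t)}\neq 0$ one \emph{cannot} make the dual Selmer group drop at a Ramakrishna prime at all, which is why Theorem \ref{lifting Weyl} needs the separate $\mf t$-Selmer preprocessing. So the inductive step at the heart of your sketch fails in precisely the cases this theorem is meant to handle. The honest proof is simply to quote \cite[Theorem 1.3]{fkp}, checking that relaxing the condition at $l$ only changes the local term $\dim L_l-h^0(\Gamma_l,\brho(\mf g))$ entering Wiles' formula, which Item 1 accommodates.
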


\begin{lem}\label{mu_p2}
Let $k=\mb F_l$ and let $\brho: \galQ \to G(k)$ be as in Section 2.1.4-2.1.7. Then Theorem \ref{fkp,main}, (2)-(5) hold.
\end{lem}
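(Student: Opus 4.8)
The plan is to verify conditions (2)--(5) of Theorem \ref{fkp,main} one at a time, exploiting the explicit description of $\brho$ from Section 2.1. First, recall the key structural facts: $\brho(\galQ)$ factors as $\galQ \rsurj N' \hookrightarrow G(k)$ where $N'$ is a subgroup of $N_G(T)_k$ mapping onto a subgroup $W'$ of the Weyl group $\mc W$ acting transitively on roots of each length, and $\brho$ is \emph{unramified at $l$}. Consequently $|\brho(\galQ)|$ is prime to $l$, and $K = \rats(\brho(\mf g),\mu_l)$ has degree over $\rats$ dividing $(l-1)\cdot|\brho(\galQ)|$. Also, by Corollaries \ref{2.2}--\ref{e7,3}, $\brho(\mf g)$ decomposes as $\brho(\mf t)$ plus one summand $\brho(\mf g_\Phi)$ (type $D_n$) or two summands $\brho(\mf g_l)\oplus\brho(\mf g_s)$ (types $B_n$, $C_n$) or $\brho(\mf g_a)\oplus\brho(\mf g_b)$ (type $E_7$), each irreducible as a $k[\galQ]$-module for $l$ large.

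For (3): this is already established --- it is Item 2 of Theorem \ref{pat ram}, verified in Proposition \ref{item2,3}, and the argument there (that $H^1$ of a group of order prime to $l$ with coefficients in an $\mb F_l$-vector space vanishes) applies verbatim. For (2): since $\brho$ is unramified at $l$, the ramification index of $l$ in $K$ equals that of $l$ in $\rats(\mu_l)$, which is $l-1$; but $l \nmid (l-1)$, so $K$ cannot contain $\mu_{l^2}$ (whose field of definition $\rats(\mu_{l^2})$ is ramified at $l$ with index $l(l-1)$). For (4): semisimplicity of $\brho(\mf g)$ and $\brho(\mf g)(1)$ follows from the decomposition into irreducibles just recalled (the Galois action on each summand is irreducible for $l$ large), together with the observation that twisting by $\bkp$ preserves semisimplicity; that neither contains the trivial representation and that they share no common sub-quotient is exactly what was checked in the proof of Proposition \ref{item2,3} (Item 3 of Theorem \ref{pat ram})---there one compares eigenvalues of well-chosen elements $\sigma$ (order $2$ in an alternating subgroup, with $\bkp(\sigma)\neq 1$) on $\mf t$ versus $\mf t(1)$, and eigenvalues of a regular semisimple $\tau$ with $\alpha(\brho(\tau))=a$ a generator of $\Zmodu l$ on $\mf g_\bullet$ versus $\mf g_\bullet(1)$, finding the eigenvalue multisets distinct for $l$ large. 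For (5): I must check $\brho(\mf g)$ is multiplicity-free, i.e. the irreducible summands $\brho(\mf t)$, $\brho(\mf g_\Phi)$ (or $\brho(\mf g_l),\brho(\mf g_s)$, or $\brho(\mf g_a),\brho(\mf g_b)$) are pairwise non-isomorphic. The summands have different dimensions in every case except possibly a numerical coincidence, but more robustly one separates them by the same eigenvalue-of-$\tau$ argument already used in Proposition \ref{item2,3}: $\mf t$ has $\tau$-eigenvalue $1$ with some multiplicity while $\mf g_l,\mf g_s,\mf g_a,\mf g_b$ do not contain $1$ among their eigenvalues (these are $\{\alpha(\brho(\tau)) : \alpha \in \Phi_\bullet\}$ with $\alpha(\brho(\tau)) = a^{\mathrm{ht}(\alpha)}$, $a$ of order $l-1$), and the long/short (resp. $\mf g_a$/$\mf g_b$) summands are separated as in the $B_n$ discussion of Proposition \ref{item2,3}.

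The main obstacle I anticipate is condition (5), the multiplicity-free statement: while dimension counts dispatch most cases immediately, one wants a uniform argument, and the cleanest is to reuse the regular-semisimple element $\tau$ of Proposition \ref{item2,3}, noting that for $l$ sufficiently large the eigenvalue multisets $\{a^{\mathrm{ht}(\alpha)}\}$ attached to the long and short root subsystems are genuinely different (they have different cardinalities and different power-sums), and are disjoint from the $\{\pm 1\}$ appearing on $\mf t$. Everything else is either quoted directly from Proposition \ref{item2,3}/\ref{item2,3}'s proof or is the elementary ramification observation for (2). I would organize the write-up as four short numbered paragraphs mirroring conditions (2)--(5), citing Proposition \ref{item2,3} for the bulk of (3), (4), (5) and giving the one-line ramification argument for (2).
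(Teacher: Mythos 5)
Your proposal is correct. For conditions (3)--(5) you follow exactly the paper's route: the paper's proof is a one-line citation of the decomposition of $\brho(\mf g)$ and the proof of Proposition \ref{item2,3}, and your unpacking of what that proof supplies (vanishing of $H^1(\gal{K}{\rats},-)$ because $|\gal{K}{\rats}|$ is prime to $l$; semisimplicity from the explicit irreducible decomposition; pairwise non-isomorphism of the summands of $\brho(\mf g)$ and $\brho(\mf g)(1)$ via eigenvalues of the elements $\sigma$ and $\tau$, which also gives multiplicity-freeness) is faithful to it. Where you genuinely diverge is condition (2). The paper argues group-theoretically: from linear disjointness it gets $\gal{K}{\rats}\cong (\op{Im}(\brho)/Z)\times\Zmodu{l}$, observes that $\op{Im}(\brho)$ is perfect (type $B_n$, $E_7$) or has commutator subgroup of index $2$ (type $C_n$), so $\gal{K}{\rats}^{ab}$ has order at most $2(l-1)$ and cannot surject onto $\Zmodu{l^2}$, which has order $l(l-1)$. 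You instead argue via ramification: since $\rats(\brho(\mf g))$ is unramified at $l$, the inertia group at $l$ in $\gal{K}{\rats}$ injects into $\gal{K}{\rats(\brho(\mf g))}\subseteq\Zmodu{l}$, so the ramification index of $l$ in $K$ divides $l-1$, whereas $\rats(\mu_{l^2})$ has ramification index $l(l-1)$ at $l$. Your argument is sound and arguably more robust, since it uses only the unramifiedness of $\brho$ at $l$ and needs no case-by-case knowledge of the commutator structure of the residual image; the paper's argument, by contrast, makes the group-theoretic mechanism behind (2) explicit. Either write-up is acceptable.
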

\begin{proof}
By the decomposition of $\brho(\mf g)$ and the proof of Proposition \ref{item2,3}, (3)-(5) hold. It remains to show (2). Since by construction $\brho$ is unramified at $l$, $\rats(\brho(\mf g))$ and $\rats(\mu_l)$ are linearly disjoint over $\rats$. It follows that 
$$\gal{K}{\rats} \cong (Im(\brho)/Z) \times \Zmodu{l}$$ where $Z$ denotes the center of $G(k)$. Assume $K$ contains $\mu_{l^2}$, then
there would be a surjection 
$$\gal{K}{\rats}^{ab} \rsurj \Zmodu{l^2}.$$
On the other hand, we have by the construction of $\brho$ that $Im(\brho)'=Im(\brho)$ for $G$ of type $B_n$ and $E_7$, $Im(\brho)'$ is of index two in $Im(\brho)$ for $G$ of type $C_n$. It follows that the order of $(Im(\brho)/Z)^{ab}$ is at most two, and hence the order of $\gal{K}{\rats}^{ab}$ is at most $2(l-1)$. But this is impossible since $\Zmodu{l^2}$ has order $l(l-1)$ and $l \neq 2$.  

\end{proof}

\section{Simple, simply-connected groups as monodromy groups}

In this section, we prove Theorem \ref{key cases} for $G$ a simple, simply-connected algebraic group. Recall that there are two different constructions for the residual representation $\brho: \galQ \to G(k)$: one has image a large index subgroup of $N_G(T)_k$ with the properties that $\brho$ is unramified at $l$ and $ad\brho(c)$ is nontrivial; the other factors through a principal $GL_2$ such that $\brho(c)=\rho^{\vee}(-1)$.

\subsection{Local deformation conditions}

We need to define several local deformation conditions for deforming the mod $p$ representations.

\subsubsection{The archimedean place}
Recall that in Section 2.1.3 through Section 2.1.5, we construct
the residual representations by first realizing $S_n$ or $A_n$ as a Galois group over $\rats$ and then repeatedly applying Theorem \ref{2.5} to build the Galois extension realizing $N$ or a subgroup of it over $\rats$. We write $c$ for the nontrivial element in $\Gamma_{\reals}$, the complex conjugation.

\begin{prop}\label{4.1}
Let $G$ be of classical type and $\brho: \galQ \to G(k)$ be as in Section 2.1.2-2.1.5. In particular, $ad\brho(c)$ is nontrivial. 
Then
\begin{enumerate}
\item For $G$ of type $A_{n-1}$, $h^0(\Gamma_{\reals}, \brho(\mf g)) \leq n^2-2n+1$.
\item for $G$ of type $B_{n}$, $h^0(\Gamma_{\reals}, \brho(\mf g)) \leq 2n^2-3n+2$.
\item for $G$ of type $C_{n}$, $h^0(\Gamma_{\reals}, \brho(\mf g)) \leq 2n^2-3n+4$.
\item for $G$ of type $D_{n}$, $h^0(\Gamma_{\reals}, \brho(\mf g)) \leq 2n^2-5n+4$.
\end{enumerate}
\end{prop}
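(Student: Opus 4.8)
The plan is to compute $h^0(\Gamma_\reals,\brho(\mf g)) = \dim \mf g^{\brho(c)}$, the dimension of the $+1$-eigenspace of $ad\,\brho(c)$ on $\mf g$, and bound it from above by using that $\brho(c)$ is a non-central element of $N_G(T)_k$. The key point is that $\brho(c)$ has order $2$ (it is a complex conjugation), so $Ad\,\brho(c)$ is an involution on $\mf g$, and $\dim \mf g^{Ad\,\brho(c)}$ is determined by the trace: $\dim \mf g^{Ad\,\brho(c)} = \tfrac12(\dim\mf g + \tr(Ad\,\brho(c)))$. So the real content is to bound $\tr(Ad\,\brho(c))$ from above, equivalently to show that $Ad\,\brho(c)$ has ``enough'' $-1$ eigenvalues; and the worst case — the one producing the stated bounds — should be when $\brho(c)$ is as close to central as an honest involution in $N_G(T)$ can be, i.e.\ when $\pi(\brho(c)) \in \mc W$ is either trivial (so $\brho(c) \in T(k)$ is a nontrivial element of order $2$) or is a single reflection.

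Concretely I would split into two cases according to the image $\bar w := \pi(\brho(c)) \in \mc W$. If $\bar w = 1$, then $\brho(c) = t \in T(k)$ is an involution, not central; $Ad\,t$ acts trivially on $\mf t$ and by $\alpha(t) = \pm 1$ on each root space $\mf g_\alpha$, so $\dim \mf g^{Ad\,t} = \mathrm{rk}\,G + \#\{\alpha : \alpha(t) = 1\}$, and one must check that a non-central involution in $T(k)$ kills at least a controlled number of root spaces — this reduces to a combinatorial count on $\Phi$ of ``how few roots can simultaneously pair trivially with a given non-central $2$-torsion point of $T$,'' which for the classical root systems $A_{n-1}, B_n, C_n, D_n$ is an elementary exercise (a non-central sign vector in the relevant coordinate description forces a definite number of roots to pick up a $-1$). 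If $\bar w \neq 1$, then $\bar w$ is an involution in the Weyl group, hence a product of orthogonal reflections; the fixed space $\mf t^{\bar w}$ has dimension $\mathrm{rk}\,G - (\text{number of reflections})$, which is strictly smaller, and on $\bigoplus_\alpha \mf g_\alpha$ the element $\bar w$ permutes root spaces, contributing to the trace only through the roots it fixes (each with a sign), so again one gets an upper bound that is no larger — indeed generally smaller — than the $\bar w = 1$ case. Thus the supremum over admissible $\brho(c)$ is attained (or at least bounded) by the $\bar w = 1$ analysis, and one reads off the four numerical bounds by plugging in the classical root data: e.g.\ for $A_{n-1}$, $\dim\mf g = n^2-1$ and the minimal number of roots sent to $-1$ by a non-central diagonal $2$-torsion element in $\mathrm{SL}_n$ is $2(n-1)$, giving $h^0 \le (n^2-1) - 2(n-1) = n^2-2n+1$; the $B_n, C_n, D_n$ bounds come out of the analogous counts, being careful that for $C_n$ the center is nontrivial (hence the slightly larger constant $+4$) and for $D_n$ one uses $\dim\mf g = 2n^2-n$.

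The step I expect to be the main obstacle is pinning down, uniformly across the four classical types, the precise minimum number of root spaces on which $Ad\,\brho(c)$ acts by $-1$ — that is, making the case $\bar w = 1$ fully rigorous and checking that no choice of non-central involution in $N_G(T)$ (with the constraints coming from the construction in Section 2.1, where $\pi(\brho(c))$ lies in $[\mc W,\mc W]$, resp.\ $\mc W$, and is non-central) beats it. This is where one must actually use the explicit realizations of $N'$ from Sections 2.1.2–2.1.5: the complex conjugation was arranged to map to an element away from the center of $G$, and one needs that this non-centrality is ``visible'' already at the level of $\bar w$ or else forces $t$ off the center of $T$. Once that combinatorial minimum is established, assembling the bound is pure bookkeeping with $\dim\mf g$ and the root counts, and the case $\bar w \ne 1$ is a strictly easier sub-case that can be dispatched quickly by the reflection-counting remark above.
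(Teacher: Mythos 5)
Your proposal is correct and is essentially the paper's argument: conjugate the non-central involution $\brho(c)$ into the diagonal torus, count the root spaces it fixes in terms of the number $d$ of $+1$-eigenvalues on the standard representation (the paper's $f=2\bigl(\binom{d}{2}+\binom{n-d}{2}\bigr)$ for type $A_{n-1}$, with analogous explicit formulas for $B_n$, $C_n$, $D_n$), and maximize over $0<d<n$, the maximum occurring at $d=n-1$. The one streamlining the paper makes is that your case split on $\bar w=\pi(\brho(c))$ is unnecessary: since $l\neq 2$, an order-two element of $G(k)$ is semisimple and hence conjugate into a maximal torus, so only your $\bar w=1$ analysis is ever needed --- which is just as well, because your trace/permutation bound for $\bar w\neq 1$ is not strictly smaller as claimed (for $\bar w$ a transposition in type $A_{n-1}$ it evaluates to exactly $n^2-2n+1$), though it still yields the required inequality.
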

\begin{proof}

Let $f$ be the number of root vectors that are fixed by $\brho(c)$. Let us recall that $ad\brho(c)$ is nontrivial by Theorem \ref{2.4}.

If $G$ is of type $A_{n-1}$, we have $G(k)\cong \op{SL}(V)$ with $V=k^n$. Let $d:=\dim V^{\brho(c)}$. Then $$f=2\big(\binom{d}{2}+\binom{n-d}{2}\big).$$

If $G$ is of type $B_n$, $G(k)/\mu_2\cong\op{SO}(V)$ with $V=k^{2n+1}$ a $k$-vector space equipped with the non-degenerate symmetric bilinear form $x_1y_{2n+1}+x_2y_{2n}+\cdots +x_{2n+1}y_1$. We may assume $\brho(c)$ is conjugate to $diag(\epsilon_1,\cdots,\epsilon_n,1,\epsilon_n,\cdots,\epsilon_1)$ in $\op{SO}(V)$  
and let $d$ be the number of 1's among $\epsilon_1,\cdots,\epsilon_n$. Then $$f=4\big(\binom{d}{2}+\binom{n-d}{2}\big)+2d.$$

If $G$ is of type $C_n$, $G(k)\cong\op{Sp}(V)$ with $V=k^{2n}$ a $k$-vector space equipped with the non-degenerate alternating bilinear form $x_1y_{2n}+\cdots x_ny_{n+1}-x_{n+1}y_n-\cdots -x_{2n}y_1$. We may assume $\brho(c)$ is conjugate to $diag(\epsilon_1,\cdots,\epsilon_n,\epsilon_n,\cdots,\epsilon_1)$  
and let $d$ be the number of 1's among $\epsilon_1,\cdots,\epsilon_n$. Then $$f=4\big(\binom{d}{2}+\binom{n-d}{2}\big)+2n.$$

If $G$ is of type $D_n$, a quotient of $G(k)$ is isomorphic to $\op{SO}(V)$ with $V=k^{2n}$ a $k$-vector space equipped with the non-degenerate symmetric bilinear form $x_1y_{2n}+x_2y_{2n-1}+\cdots +x_{2n}y_1$. We may assume $\brho(c)$ is conjugate to $diag(\epsilon_1,\cdots,\epsilon_n,\epsilon_n,\cdots,\epsilon_1)$ in $\op{SO}(V)$  
and let $d$ be the number of 1's among $\epsilon_1,\cdots,\epsilon_n$. Then 
$$f=4\big(\binom{d}{2}+\binom{n-d}{2}\big).$$

By the construction of $\brho$ (see the paragraph before Remark \ref{2.8} in Section 2.1.3 for type $A_n$; for other types, see Sections 2.1.4, 2.1.5 and 2.1.6), $ad\brho(c)$ is nontrivial, which implies $0<d<n$. So $f$ attains its maximum when $d=n-1$. Since 
$$h^0(\Gamma_{\reals}, \brho(\mf g))=rk(\mf g)+f,$$
the upper bounds can then be computed easily.
\end{proof}

\begin{prop}\label{4.2}
Let $G$ be of type $E_7$ and $\brho: \galQ \to G(k)$ be as in Section 2.1.6. In particular, $ad\brho(c)$ is nontrivial. Then 
$h^0(\Gamma_{\reals}, \brho(\mf g)) \leq 7+126-14=119$.
\end{prop}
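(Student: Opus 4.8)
The plan is to follow the same bookkeeping as in the proof of Proposition \ref{4.1}: compute $h^0(\Gamma_{\reals},\brho(\mf g))$ as the dimension of the fixed subspace of the involution $ad\,\brho(c)$ on $\mf g=\mf e_7$, and then bound it by controlling the size of the $(-1)$-eigenspace.

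First I would reduce to a maximal torus. Since $c^2=1$ and $l$ is odd, $\brho(c)$ is a semisimple element of $G(\overline{\mb F}_l)$ of order dividing $2$, and it is non-central because $ad\,\brho(c)\neq 1$. The quantity $h^0(\Gamma_{\reals},\brho(\mf g))=\dim_k\mf g^{ad\,\brho(c)}$ is unaffected by base change to $\overline{\mb F}_l$ and depends only on the $G(\overline{\mb F}_l)$-conjugacy class of $\brho(c)$, so I may assume $\brho(c)$ lies in a fixed split maximal torus $T'$ of $G_{\overline{\mb F}_l}$. For $l$ large one has the adjoint decomposition $\mf g=\mf t'\oplus\bigoplus_{\alpha\in\Phi'}\mf g_\alpha$ with $\Phi'=\Phi(G,T')$, on which $ad\,\brho(c)$ acts trivially on $\mf t'$ and by the scalar $\alpha(\brho(c))\in\{\pm 1\}$ on $\mf g_\alpha$. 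Hence $h^0(\Gamma_{\reals},\brho(\mf g))=\mathrm{rk}(\mf g)+|\Phi_1|=7+|\Phi_1|$, where $\Phi_1=\{\alpha\in\Phi':\alpha(\brho(c))=1\}$.

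The crux is then to bound $|\Phi_1|$. Its complement $\Psi=\Phi'\setminus\Phi_1$ is non-empty (as $\brho(c)$ is non-central) and is stable under $\alpha\mapsto-\alpha$, so $|\Psi|$ is even; moreover $\Phi_1$ is a \emph{proper} closed sub-root-system of $E_7$ (it is the root system of the identity component of the centralizer of $\brho(c)$). The point is that every proper closed sub-root-system of $E_7$ has at most $72$ roots --- the largest being the $E_6$-subsystem --- which one reads off from the Borel--de Siebenthal classification of maximal-rank subsystems together with the list of Levi subsystems. Thus $|\Phi_1|\leq 72$, so $|\Psi|\geq 126-72=54\geq 14$, and $h^0(\Gamma_{\reals},\brho(\mf g))=7+|\Phi_1|\leq 7+112=119$. (In fact this argument gives the sharper bound $\leq 79$, attained by the involution of type $E_6T_1$; but the weaker bound in the stated form matches Proposition \ref{4.1} and is all that is needed for Theorem \ref{lifting Weyl}.)

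I do not expect a genuine obstacle: everything is formal once $\brho(c)$ has been conjugated into a torus, and the only non-formal ingredient is the elementary fact that a proper sub-root-system of $E_7$ omits at least $14$ roots --- equivalently, that a non-central semisimple involution of $E_7$ acts by $-1$ on at least $14$ root spaces --- which can alternatively be extracted from the classification of involutions of $E_7$ (centralizers of types $A_7$, $A_1D_6$, $E_6T_1$). One could also argue by cases on the image $\pi(\brho(c))\in\mc W$, but the torus reduction disposes of all cases uniformly and is cleaner.
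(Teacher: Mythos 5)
Your proof is correct, but the key step is carried out differently from the paper. Both arguments begin the same way: conjugate the semisimple involution $\brho(c)$ into a split maximal torus, observe that $\mf t$ contributes $7$ to the fixed space and each root space $\mf g_\alpha$ contributes $1$ or $0$ according to $\alpha(\brho(c))=\pm 1$, so that the statement reduces to showing at least $14$ roots satisfy $\alpha(\brho(c))=-1$. The paper gets this lower bound by restricting to the $A_7$-subsystem $\Phi'\subset\Phi$ that underlies the construction of $\brho$ in Section 2.1.6 (where $\brho(c)$ lies in the copy of $A_8\subset\mc W(A_7)$ and acts nontrivially on $\mf g_{\Phi'}$), and then reusing the type-$A$ count from Proposition \ref{4.1}, which for a nontrivial involution in type $A_7$ already forces a $-1$-eigenspace of dimension at least $2\cdot\mathrm{rk}(A_7)=14$. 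You instead invoke a classification fact about $E_7$: the set $\Phi_1=\{\alpha:\alpha(\brho(c))=1\}$ is the (proper, closed) root system of $Z_G(\brho(c))^0$, and the possible types --- $A_7$, $A_1D_6$, $E_6$ --- all have at most $72$ roots, giving $|\Phi\setminus\Phi_1|\geq 54\geq 14$. Your route is more uniform (it uses only that $ad\,\brho(c)\neq 1$, not the specific shape of the residual representation) and yields the sharper bound $h^0(\Gamma_{\reals},\brho(\mf g))\leq 79$; its one external input is the Borel--de Siebenthal/involution classification for $E_7$, which should be cited, whereas the paper's argument is self-contained given Proposition \ref{4.1} but does rely on knowing that $\brho(c)$ sits noncentrally in the $A_7$-subgroup coming from the construction.
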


\begin{proof}
Suppose that $\brho(c) \in T(k)$ for a maximal torus $T$ split over $k$. Let $\Phi=\Phi(G,T)$, and $\mf g_{\Phi}=\sum_{\alpha\in \Phi} \mf g_{\alpha}$ be the $k$-subspace of $\brho(\mf g)$ generated by all root vectors. The Lie algebra $\mf t$ of $T(k)$, which has $k$-dimension 7, is clearly fixed by $ad\brho(c)$. Thus, it suffices to show that the $-1$-eigenspace of $ad\brho(c)|\mf g_{\Phi}$ has $k$-dimension at least 14. 
We consider $ad\brho(c)|\mf g_{\Phi'}$ where $\Phi' \subset \Phi$ is of type $A_7$. This action is nontrivial. By the $A_n$-calculation in the proof of Proposition \ref{4.1} (letting $n=7$), the $-1$-eigenspace of $ad\brho(c)|\mf g_{\Phi'}$ has dimension at least twice the rank of $\Phi'$, proves the proposition. 
\end{proof}

The following lemma is clear.
\begin{lem}\label{4.3}
The dimension $$\dim_k \big( \op{Sym}^{2n}(k^2)\otimes \op{det}^{-n} \big)^{diag(1,-1)}$$ equals 
$n$ when $n$ is odd, and $n+1$ when $n$ is even.
\end{lem}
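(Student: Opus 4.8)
The plan is a one-line weight computation carried out in the standard monomial basis. Write $k^2 = ke_1\oplus ke_2$, so that $\op{diag}(1,-1)$ acts by $e_1\mapsto e_1$ and $e_2\mapsto -e_2$. Then $\{e_1^{2n-i}e_2^i : 0\le i\le 2n\}$ is a $k$-basis of $\op{Sym}^{2n}(k^2)$ (this is a basis of monomials, hence valid in any characteristic), and $\op{diag}(1,-1)$ acts on it diagonally, by $(-1)^i$ on $e_1^{2n-i}e_2^i$. Since $\det\op{diag}(1,-1)=-1$, the twist by $\op{det}^{-n}$ contributes a further scalar $(-1)^n$, so $\op{diag}(1,-1)$ fixes the basis vector $e_1^{2n-i}e_2^i\otimes\op{det}^{-n}$ precisely when $i\equiv n\pmod 2$.

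It then remains to count such indices $i$ in $\{0,1,\dots,2n\}$. If $n$ is odd, these are the odd values $1,3,\dots,2n-1$, of which there are $n$; if $n$ is even, these are the even values $0,2,\dots,2n$, of which there are $n+1$. This yields the asserted dimensions. The only point worth keeping in mind is that $l\neq 2$, so that $\op{diag}(1,-1)$ is a genuine involution and $\op{det}^{-n}$ is meaningful; this is automatic here since $l$ is taken sufficiently large. There is no genuine obstacle — the statement is indeed clear once the eigenbasis is written down.
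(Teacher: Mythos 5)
Your computation is correct, and it is exactly the verification the paper omits: the paper simply states the lemma as ``clear,'' with the implicit argument being precisely this diagonalization of $\op{diag}(1,-1)$ on the monomial basis of $\op{Sym}^{2n}(k^2)$ twisted by $\det^{-n}$, followed by the parity count of $i\equiv n\pmod 2$ in $\{0,\dots,2n\}$. Nothing further is needed.
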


\begin{cor}\label{4.4}
Let $\brho: \galQ \to G(k)$ be as in Section 2.2. Then 
$h^0(\Gamma_{\reals}, \brho(\mf g))=4$ for $G=\op{SL}_3$,
$h^0(\Gamma_{\reals}, \brho(\mf g))=9$ for $G=\op{Spin}_7$,
$h^0(\Gamma_{\reals}, \brho(\mf g))=38$ for $G=E_6^{sc}$.
\end{cor}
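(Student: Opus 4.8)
The plan is to compute $h^0(\Gamma_{\reals}, \brho(\mf g))$ directly from the Kostant decomposition in Proposition \ref{kostant} together with the explicit description of $\brho(c)$. Recall that for the principal $\op{GL}_2$ construction of Section 2.2 we have $\brho(c)=\varphi(\bar r_{f,\lambda}(c))$, and since $f$ has weight $3$ and nontrivial nebentypus, $\det \bar r_{f,\lambda}(c)=-1$, so after conjugating, $\bar r_{f,\lambda}(c)$ is $\op{GL}_2$-conjugate to $\op{diag}(1,-1)$ (its eigenvalues are $\{+1,-1\}$). Hence $\brho(c)$ acts on $\mf g$ through the element $\op{diag}(1,-1) \in \op{GL}_2$ acting via $\varphi$, and by Proposition \ref{kostant},
$$\mf g^{\brho(c)} \cong \bigoplus_{m \text{ exponent of } \mf g} \big( \op{Sym}^{2m}(k^2) \otimes \op{det}^{-m} \big)^{\op{diag}(1,-1)} \otimes V_{2m},$$
where each $V_{2m}$ is one-dimensional (as $m$ ranges over the exponents). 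Therefore $h^0(\Gamma_{\reals}, \brho(\mf g)) = \sum_{m} \dim_k \big( \op{Sym}^{2m}(k^2) \otimes \op{det}^{-m} \big)^{\op{diag}(1,-1)}$, and by Lemma \ref{4.3} each summand contributes $m$ if $m$ is odd and $m+1$ if $m$ is even.

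First I would list the exponents of each relevant Lie algebra. For $\mf g$ of type $A_2=\mf{sl}_3$ the exponents are $1,2$, giving $1 + (2+1) = 4$. For $\mf g$ of type $B_3 = \mf{so}_7 = \op{Lie}(\op{Spin}_7)$ the exponents are $1,3,5$, all odd, giving $1+3+5 = 9$. For $\mf g$ of type $E_6 = \op{Lie}(E_6^{sc})$ the exponents are $1,4,5,7,8,11$; applying Lemma \ref{4.3} this gives $1 + 5 + 5 + 7 + 9 + 11 = 38$. These are exactly the claimed values, so the corollary follows once the eigenvalue computation for $\brho(c)$ is in place.

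The only genuine point requiring care — and the place I would be most careful — is the claim that $\brho(c)$ acts via $\op{diag}(1,-1)$ rather than, say, via a scalar or via $\op{diag}(1,1)$. This is where the hypotheses "weight $3$" and "nontrivial nebentypus" on $f$ in Example \ref{sl2 mod form} are used: oddness of $r_{f,\lambda}$ forces $\det r_{f,\lambda}(c) = -1$, so $r_{f,\lambda}(c)$ has distinct eigenvalues $\pm 1$, and the principal $\op{GL}_2$ map $\varphi$ factors through $\op{PGL}_2$, so only the image of $\bar r_{f,\lambda}(c)$ in $\op{PGL}_2(k)$ — equivalently, the $\op{GL}_2$-conjugacy class of $\op{diag}(1,-1)$ up to center — matters. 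Since $l$ is assumed large, $l \neq 2$ and the characteristic does not interfere with the eigenvalue computation or with Kostant's theorem (which requires $l$ large relative to $\mf g$). Note that, in contrast to Propositions \ref{4.1} and \ref{4.2}, here we get an exact equality rather than an upper bound, because the action of $\brho(c)$ is pinned down completely by $\varphi$ and $\op{diag}(1,-1)$, with no freedom in the "number of $+1$ eigenvalues" parameter.
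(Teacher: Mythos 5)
Your proposal is correct and matches the paper's proof, which likewise deduces the result from Proposition \ref{kostant} and Lemma \ref{4.3} together with the observation that $c$ maps to $\operatorname{diag}(1,-1)$ in $\mathrm{GL}_2$ by oddness of $r_{f,\lambda}$. Your exponent lists and the resulting sums ($1+3=4$, $1+3+5=9$, $1+5+5+7+9+11=38$) are all accurate.
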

\begin{proof}
This follows from Lemma \ref{4.3} and Proposition \ref{kostant}. Note that the complex conjugation maps to $diag(1,-1)$ in $\op{GL}_2$ because the representation $r_{f,\lambda}$ in the last paragraph of Section 2 is odd.
\end{proof}

\subsubsection{The place $l$}
As we are not looking for geometric $l$-adic Galois representations in this paper, we impose \emph{no condition} at the place $l$. So the tangent space is $H^1(\Gamma_l, \brho(\mf g))$. By the local Euler characteristic formula, 
$$h^1(\Gamma_l, \brho(\mf g))=h^0(\Gamma_l, \brho(\mf g))+h^2(\Gamma_l, \brho(\mf g))+\dim_k \mf g.$$ 

\begin{lem}\label{4.5} Let $\brho$ be as in Section 2.1 or Section 2.2. Then
$h^2(\Gamma_l, \brho(\mf g))=0$ for large enough primes $l$.
\end{lem}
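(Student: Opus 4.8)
The statement is $h^2(\Gamma_l, \brho(\mf g)) = 0$ for $l$ large. By local Tate duality, $H^2(\Gamma_l, \brho(\mf g)) \cong H^0(\Gamma_l, \brho(\mf g)^\vee)^*$, and by Proposition \ref{Tate dual} (the Killing form argument, valid since $l$ is large) we have $\brho(\mf g)^\vee \cong \brho(\mf g)(1)$. So it suffices to show $H^0(\Gamma_l, \brho(\mf g)(1)) = 0$, i.e.\ that $\brho(\mf g)(1)$ has no $\Gamma_l$-invariant vectors. Equivalently, $\brho|_{\Gamma_l}$ acting on $\mf g$ has no vector on which $\Gamma_l$ acts by $\bkp^{-1}$.

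**Key steps.** First, recall that by construction (Section 2.1 or Section 2.2) the representation $\brho$ is \emph{unramified at $l$}: in Section 2.1 the surjections onto $N'$, $N$, etc.\ are arranged via Theorem \ref{2.5} to be unramified at $l$, and in Section 2.2 $\brho$ factors through $\bar r_{f,\lambda}$ which is unramified at $l$ for $l \nmid Nl$ — wait, more carefully, in the principal $\op{GL}_2$ case $\brho$ \emph{is} ramified at $l$, so one must instead use that $\brho|_{\Gamma_l}$ has image in a Borel and analyze the $H^0$ directly via Kostant's decomposition (Proposition \ref{kostant}); I would treat the two constructions separately. In the Section 2.1 case, since $\brho$ is unramified at $l$, the action of $\Gamma_l$ on $\mf g$ factors through $\op{Fr}_l$, which acts through a finite group of order dividing $(l-1)|\brho(\galQ)|$, a quantity \emph{prime to $l$} by construction. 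Hence the eigenvalues of $\brho(\op{Fr}_l)$ on $\mf g$ are roots of unity of order prime to $l$. Meanwhile $\bkp(\op{Fr}_l)$ is a generator of $\Zmodu{l}$, an element of order $l-1$. A $\Gamma_l$-invariant vector in $\brho(\mf g)(1)$ would force some eigenvalue $\mu$ of $\brho(\op{Fr}_l)$ on $\mf g$ to satisfy $\mu \cdot \bkp(\op{Fr}_l) = 1$; but this is an equation in $\overline{\mb F}_l^\times$ between a root of unity of bounded order (the bound on $|\brho(\galQ)|$ and the degrees of the relevant number fields is independent of $l$, since the finite groups $N'$, etc.\ depend only on $G$ and the residue field degree $[k:\mb F_l]$, which we may fix) and a primitive $(l-1)$-th root of unity, which is impossible once $l$ is large enough.

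**The main obstacle.** The delicate point is the principal $\op{GL}_2$ case of Section 2.2, where $\brho$ is \emph{ramified} at $l$ because $\bar r_{f,\lambda}|_{\Gamma_l}$ is ramified. There I would instead argue: by Serre/Ribet, $\bar r_{f,\lambda}|_{\Gamma_l}$ is, up to twist, an extension involving $\bkp$ and the trivial character (in the non-ordinary case one passes to $\Gamma_{\rats_{l^2}}$ and uses the fundamental characters of level $2$), so the eigenvalues of $\brho(g)$ on $\mf g$ for $g \in \Gamma_l$ are governed by Proposition \ref{kostant}: $\mf g \cong \bigoplus_m \op{Sym}^{2m}(k^2)\otimes\det^{-m}\otimes V_{2m}$ with $m$ ranging over the exponents of $\mf g$, all bounded in terms of $G$ alone. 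Computing $H^0(\Gamma_l, \brho(\mf g)(1))$ then reduces to asking whether the character $\bkp^{-1}$ appears in $\op{Sym}^{2m}\otimes\det^{-m}$ restricted to (a twist of) the image of inertia; since the exponents $2m$ are bounded independently of $l$ while $\bkp$ has order $l-1$, the Jordan--H\"older factors of $\op{Sym}^{2m}(\bkp\text{-stuff})$ are powers $\bkp^j$ with $|j| \le 2m$ bounded, so $\bkp^{-1}$ cannot occur once $l - 1 > 2\max_m m + 1$. Assembling: in both cases the conclusion follows from the single principle that the relevant eigenvalue exponents are bounded uniformly in $l$ while $\bkp$ has order growing with $l$. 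I would write the unramified case in full and remark that the Section 2.2 case is handled by the same counting once one invokes the structure of $\bar r_{f,\lambda}|_{\Gamma_l}$ and Proposition \ref{kostant}.
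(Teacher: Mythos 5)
Your reduction via local Tate duality to $H^0(\Gamma_l,\brho(\mf g)(1))=0$ and the split into the two constructions is exactly how the paper proceeds, but your treatment of the Section 2.1 (unramified) case contains a genuine misstep. You argue by comparing eigenvalues of $\brho(\frob{l})$ with $\bkp(\frob{l})$, asserting that $\bkp(\frob{l})$ is a generator of $\Zmodu{l}$ of order $l-1$. But $\bkp$ is \emph{totally ramified} at $l$ (it is already surjective when restricted to $I_{\rats_l}$), so it does not factor through the unramified quotient: the value $\bkp(\frob{l})$ depends on the choice of Frobenius lift and can perfectly well equal $1$. Consequently the equation $\mu\cdot\bkp(\frob{l})=1$ on a single Frobenius lift yields no contradiction (for instance $\brho(\frob{l})$ certainly has eigenvalue $1$ somewhere on $\mf g$). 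The correct --- and much shorter --- argument, which is the paper's, looks at inertia instead: since $\brho$ is unramified at $l$ by construction, $I_{\rats_l}$ acts on $\brho(\mf g)(1)$ through $\bkp$ alone, and $\bkp|_{I_{\rats_l}}$ is nontrivial, so already $\brho(\mf g)(1)^{I_{\rats_l}}=0$ and a fortiori $H^0(\Gamma_l,\brho(\mf g)(1))=0$. Your ``order prime to $l$ versus order $l-1$'' heuristic points at the right phenomenon but is applied to the wrong elements of $\Gamma_l$.

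For the Section 2.2 (principal $\op{GL}_2$) case you correctly notice that $\brho$ is ramified at $l$, and you propose an inertial-weight count using Proposition \ref{kostant} together with the Deligne--Fontaine description of $\bar r_{f,\lambda}|_{I_l}$ (ordinary: extension of the trivial character by $\bkp^{2}$ up to unramified twist; supersingular: sum of squares of level-two fundamental characters). This is a genuinely different route from the paper, which instead proves the key Lemma \ref{4.6} by an explicit Fontaine--Laffaille computation following Weston (exhibiting an $\mc O$-basis of $D^0$ and showing $f_0$ vanishes modulo $\lambda$ there). Your route can be made to work and is arguably more conceptual, but as written it is only a sketch, and the supersingular case is not quite as you state: there the Jordan--H\"older factors of $\op{Sym}^{2m}(\bar r_f)\otimes\det^{-m}\otimes\bkp$ on inertia are powers $\psi^{e}$ of a fundamental character of order $l^2-1$ with $|e|=O(ml)$, not powers $\bkp^{j}$ with $|j|\le 2m$, so the bound ``$l-1>2\max_m m+1$'' is not the right one; one must instead check that the relevant exponent, linear in $l$ with coefficients bounded in terms of the exponents of $\mf g$, is nonzero and of absolute value less than $l^2-1$ for $l$ large. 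With that computation supplied, your argument gives a legitimate alternative proof of Lemma \ref{4.6}.
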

\begin{proof}
By local duality, it suffices to show that
$h^0(\Gamma_l, \brho(\mf g)(1))=0$.
For the representation $\brho$ in Section 2.1, $\brho(I_{\rats_l})$ is trivial by construction but $\bkp(I_{\rats_l})$ is nontrivial, so $\brho(\mf g)(1)^{I_{\rats_v}}$ is trivial. In particular, $h^0(\Gamma_l, \brho(\mf g)(1))=0$. 
For the representation $\brho$ in Section 2.2, Proposition \ref{kostant} and the lemma below imply $h^0(\Gamma_l, \brho(\mf g)(1))=0$. 
\end{proof}

\begin{lem}\label{4.6}
We have $h^0(\Gamma_l,\op{Sym}^{2m}(\bar r_f)\otimes \op{det}(\bar r_f)^{-m} \otimes \bkp)=0$ for $m\geq 1$ and large enough primes $l$ (relative to $m$).
\end{lem}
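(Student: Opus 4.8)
\emph{Proof idea.} The plan is to pass to the inertia subgroup $I_l\subset\Gamma_l$ and show that already $H^0(I_l,\op{Sym}^{2m}(\bar r_f)\otimes\op{det}(\bar r_f)^{-m}\otimes\bkp)=0$; since a $\Gamma_l$-invariant vector is a fortiori $I_l$-invariant, this suffices. First I would pin down what ``$l$ large relative to $m$'' should mean: (i) $l$ prime to the level $N$ of $f$, so $\bar r_f$ is unramified at $l$ except through the nebentypus, i.e. $\op{det}(\bar r_f)=\bkp^2\bar\varepsilon$ with $\bar\varepsilon|_{I_l}$ trivial, whence over $I_l$ the module in question is isomorphic to $\op{Sym}^{2m}(\bar r_f|_{I_l})\otimes\bkp^{1-2m}$; and (ii) $l$ in the Fontaine--Laffaille range for a weight-$3$ form, so that $\bar r_f|_{\Gamma_l}$ has its classical shape (Deligne/Fontaine, as used throughout Serre's conjecture): either it is \emph{ordinary}, with $\bar r_f|_{I_l}^{\mathrm{ss}}\cong\bkp^2\oplus 1$, or it is \emph{supersingular}, with $\bar r_f|_{I_l}\otimes\bar k\cong\psi^2\oplus\psi^{2l}$ for $\psi$ a fundamental character of level $2$ (of order $l^2-1$), in which case $\bkp|_{I_l}=\psi^{l+1}$.

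Next I would reduce to a statement about Jordan--H\"older constituents: if a representation $V$ of $I_l$ has $V^{I_l}\neq 0$, then the trivial character occurs as a constituent of $V^{\mathrm{ss}}$; hence it is enough to check that the trivial character of $I_l$ is \emph{not} a constituent of $\op{Sym}^{2m}(\bar r_f|_{I_l}^{\mathrm{ss}})\otimes\bkp^{1-2m}$ (after base change to $\bar k$, which does not affect vanishing of $H^0$). In the ordinary case the constituents are $\bkp^{\,2j+1-2m}|_{I_l}$ for $0\le j\le 2m$; each exponent is odd with absolute value at most $2m+1$, and since $\bkp|_{I_l}$ has order $l-1$, such a power is trivial only if the exponent vanishes, which is impossible. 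In the supersingular case the constituents are $\psi^{e_j}$ with
$$e_j=(2j-2m+1)+l\,(2m-2j+1),\qquad 0\le j\le 2m,$$
that is $e_j=a+lb$ with $a=2j+1-2m$ and $b=2m+1-2j$ both odd of absolute value at most $2m+1$; since $\psi$ has order $l^2-1=(l-1)(l+1)$ and $|e_j|\le(2m+1)(l+1)$, once $l>2m+2$ we get $|e_j|<l^2-1$, so $\psi^{e_j}=1$ forces $e_j=0$, which forces $b=0$ (as $|a|<l$) and then $a=0$, contradicting that $a$ is odd.

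Putting these together, for every $l$ prime to $N$ with $l>2m+2$ (which also puts $l$ in the Fontaine--Laffaille range) no constituent is trivial, so $H^0(I_l,-)=0$ and hence $h^0(\Gamma_l,\op{Sym}^{2m}(\bar r_f)\otimes\op{det}(\bar r_f)^{-m}\otimes\bkp)=0$. The only input beyond elementary arithmetic of tame inertia characters is the classical description of $\bar r_f|_{I_l}$ for $l\nmid N$; that is the step I expect to need the most care — one must be sure the tame inertia weights are exactly $\{0,2\}$ and keep the level-$2$ fundamental characters straight in the supersingular case — but for weight-$3$ forms it is entirely standard.
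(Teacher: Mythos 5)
Your argument is correct, and it takes a genuinely different route from the paper's. The paper follows Weston's method: it works inside the Fontaine--Laffaille category, sets $D=(\op{Sym}^{2m}(D_f)\otimes D_{\varepsilon^{-m}})(2m-1)$, identifies $H^0(\Gamma_l,-)$ with $\ker(1-f_0)$ on $D^0/\lambda D^0$, writes down an explicit basis $\{x^iy^{2m-i-1}w\colon i\geq m\}$ of $D^0$, and uses the valuation bounds $v(a),v(b)\geq 2$ (forced by $\det f_0=l^2\varepsilon(l)$ and the filtration) to show $f_0\equiv 0$ on $D^0$ mod $\lambda$. You instead restrict to inertia, invoke the Deligne--Fontaine description of $\bar r_{f}|_{I_l}$ for $l\nmid N$ (ordinary: $\bkp^{2}\oplus 1$ up to semisimplification; supersingular: $\psi^{2}\oplus\psi^{2l}$), and check by elementary tame-character arithmetic that the trivial character is never a Jordan--H\"older constituent; your exponent computations in both cases are correct, and the reduction steps (invariants force a trivial constituent; base change to $\bar k$ is harmless) are sound. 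The trade-off: your proof is shorter, proves the stronger statement that even the $I_l$-invariants vanish, and yields the explicit bound $l>2m+2$ (plus $l\nmid N$), whereas the paper's bound is left implicit in the compatibility of the Fontaine--Laffaille functor with $\op{Sym}^{2m}$; on the other hand, the paper's argument treats the ordinary and supersingular cases uniformly and needs no input beyond Fontaine--Laffaille theory itself, while yours imports the weight-$3$ case of the local description underlying Serre's conjecture (standard, but a deeper black box). The one step you rightly flag as needing care --- that the tame inertia weights are exactly $\{0,2\}$, resp. that the level-$2$ fundamental characters enter with exponents $2$ and $2l$ --- is indeed the only nontrivial external input, and it holds for weight $3$ once $l>3$.
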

\begin{proof}
The argument is similar to the proof of \cite{wes04}, Proposition 4.4. Let $K$ be a finite extension of $\Ql$ with ring of integers $\mc O$ and residue field $k$. Let $v: K^{\times} \to \rats$ be the valuation on $K$, normalized so that $v(l)=1$. We briefly recall the setting in Section 4.1 of \cite{wes04}. For $a<b$, let $\mc{MF}^{a,b}(\mc O)$ denote the category of filtered Dieudonn\'e $\mc O$-modules $D$ equipped with a decreasing filtration of $\mc O$-modules $\{D_i\}_{i \in \ints}$ and a family of $\mc O$-linear maps $\{f_i: D^i \to D\}$ satisfying $D^a=D$ and $D^b=0$ (see \cite[Definition 4.1]{wes04}). Let 
$\mc G^{a,b}(\mc O)$ denote the category of finite type $\mc O$-module subquotients of crystalline $K$-representations $V$ with $D_{crys}^a(V)=D_{crys}(V)$ and $D_{crys}^b(V)=0$.  Fontaine-Laffaille define a functor 
$$\mc U: \mc{MF}^{a,a+l}(\mc O) \to \mc G^{a,a+l}(\mc O)$$
satisfying a list of properties including $\mc U$ that is stable under formation of sub-objects and quotients, and compatible with tensor products for $l$ large enough. In particular, $\mc U$ is compatible with symmetric powers for large enough $l$.
Let $\varepsilon: \Gamma_l \to \mc O^{\times}$ be an unramified character of finite order and let $\mc O(\varepsilon)$ denote a free $\mc O$-module of rank 1 with $\Gamma_l$-action via $\varepsilon$. Then $\mc O(\varepsilon) \in \mc G^{0,1}(\mc O)$, so that there is $D_{\varepsilon} \in \mc{MF}^{0,1}(\mc O)$ such that $\mc U(D_{\varepsilon})=\mc O(\varepsilon)$. This $D_{\varepsilon}$ is a free $\mc O$-module of rank one with $D_{\varepsilon}=D_{\varepsilon}^0$ and $f_0: D_{\varepsilon}^0 \to D_{\varepsilon}$ the multiplication by $\varepsilon^{-1}(l):=\varepsilon^{-1}(\frob{l})$ where the Frobenius element is arithmetic.

Let $f$ be a newform of weight 3, level $N$, and character $\varepsilon$. Let $K\supset E_{\lambda}$ and let $r_f:=r_{f,\lambda}: \galQ \to \gl{2}{K}$ be the Galois representation associated to $f$ of weight $k$ (see Example \ref{sl2 mod form}). We fix an embedding $\Gamma_l \to \galQ$, and let $V_f$ be a two dimensional $K$-vector space on which $\Gamma_l$ acts via $r_f|\Gamma_l$, and fix a $\Gamma_l$-stable $\mc O$-lattice $T_f \subset V_f$. If $l$ does not divide $N$, then $V_f$ is crystalline and $T_f \in \mc G^{0,3}(\mc O)$. Thus for $l>k$ there exists $D_f \in \mc{MF}^{0,3}(\mc O)$ with $\mc U(D_f) \cong T_f$. The filtration on $D_f$ satisfies
$\op{rk}_{\mc O}(D_f^i)=2$ if $i\leq 0$, $\op{rk}_{\mc O}(D_f^i)=1$ if $1 \leq i\leq 2$, and $\op{rk}_{\mc O}(D_f^i)=0$ if $i\geq 3$. Choose an $\mc O$-basis $x,y$ of $D_f$ with $x$ an $\mc O$-generator of $D_f^1$. Let $a,b,c,d \in \mc O$ be such that
$f_0x=ax+by$, $f_0y=cx+dy$. Then $a+d=a_l$ and $ad-bc=l^2\varepsilon(l)$. We have $v(a), v(b) \geq 2$.

Let $\bar r_f: \Gamma_l \to \gl{2}{k}$ be the Galois representation $T_f/\lambda T_f$. Since $\det r_f=\kappa^2\varepsilon$, we have
$$\op{Sym}^{2m}(T_f)\otimes \op{det}(T_f)^{-m} \otimes \kappa=(\op{Sym}^{2m}(T_f)\otimes \mc O(\varepsilon^{-m}))(1-2m).$$
When $l$ is large enough relative to $m$, by \cite{fm87}, Proposition 1.7, we can take 
$$D=(\op{Sym}^{2m}(D_f)\otimes D_{\varepsilon^{-m}})(2m-1).$$
Further, since $(\op{Sym}^{2m}(T_f)\otimes \op{det}(T_f)^{-m} \otimes \kappa)/\lambda$ is a realization of $\op{Sym}^{2m}(\bar r_f)\otimes \op{det}(\bar r_f)^{-m} \otimes \bkp$, we have
$$H^0(\Gamma_l,\op{Sym}^{2m}(\bar r_f)\otimes \op{det}(\bar r_f)^{-m} \otimes \bkp)=\op{ker}(1-f_0: D^0/\lambda D^0 \to D/\lambda D).$$

By the definition of Tate twists and tensor products of filtered Dieudonn\'e $\mc O$-modules, we have
$$D^0=(\op{Sym}^{2m}(D_f)\otimes D_{\varepsilon^{-m}})^{2m-1}$$
$$=\sum_{i_1+\cdots+i_{2m}+j=2m-1}D_f^{i_1}\cdots D_f^{i_{2m}}\cdot D_{\varepsilon^{-m}}^j$$
$$=\sum_{i_1+\cdots+i_{2m}=2m-1}D_f^{i_1}\cdots D_f^{i_{2m}}\cdot D_{\varepsilon^{-m}}^0.$$
To make the sum nonzero, there must be at least $m$ indices that are greater than or equal to 1, so at least $m$ indices must be two since $x \in D_f^2$ as well as $D_f^1$. It follows that $\{x^iy^{2m-i-1}w|i\geq m\}$ is an $\mc O$-basis of $D^0$, where $w$ is an $\mc O$-generator of $D_{\varepsilon^{-m}}$.
We compute
$$f_0(x^iy^{2m-i-1}w)=\frac{\varepsilon^m(l)}{l^{2m-1}}(ax+by)^i(cx+dy)^{2m-i-1}.$$
Since $v(a),v(b) \geq 2$ and $i \geq m$, all the coefficients of $x$ and $y$ have positive valuations. Therefore, $f_0(x^iy^{2m-i-1}w) \equiv 0$ modulo $\lambda$, which implies $f_0: D^0/\lambda D^0 \to D/\lambda D$ is zero. It follows that  
$H^0(\Gamma_l,\op{Sym}^{2m}(\bar r_f)\otimes \op{det}(\bar r_f)^{-m} \otimes \bkp)$ is trivial.
\end{proof}

\begin{cor}\label{4.7}
$h^1(\Gamma_l, \brho(\mf g))=h^0(\Gamma_l, \brho(\mf g))+\dim_k \mf g$.
\end{cor}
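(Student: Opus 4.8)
The plan is to combine the local Euler characteristic formula at $l$ with the vanishing of $h^2$ just established. Recall that immediately before Lemma \ref{4.5} we recorded, via the local Euler characteristic formula for $\Gamma_l = \Gamma_{\rats_l}$ acting on the finite $k$-vector space $\brho(\mf g)$ (noting that $l = \op{char}(k)$, so the relevant local term is $\dim_k \mf g$),
$$h^1(\Gamma_l, \brho(\mf g)) = h^0(\Gamma_l, \brho(\mf g)) + h^2(\Gamma_l, \brho(\mf g)) + \dim_k \mf g.$$

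First I would invoke Lemma \ref{4.5}, which gives $h^2(\Gamma_l, \brho(\mf g)) = 0$ for $l$ large enough, whether $\brho$ is the representation of Section 2.1 or of Section 2.2. Substituting this into the displayed formula immediately yields
$$h^1(\Gamma_l, \brho(\mf g)) = h^0(\Gamma_l, \brho(\mf g)) + \dim_k \mf g,$$
which is exactly the claim. There is no genuine obstacle here: the corollary is a bookkeeping consequence of the two facts already in hand, and the only thing to be careful about is that the hypothesis ``$l$ large enough'' is inherited from Lemma \ref{4.5} (and, through it, from Lemma \ref{4.6} in the principal $\op{GL}_2$ case). Accordingly the write-up will be a single sentence citing the local Euler characteristic formula and Lemma \ref{4.5}.
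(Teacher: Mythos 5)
Your proposal is correct and matches the paper's reasoning exactly: Corollary \ref{4.7} is obtained by substituting the vanishing $h^2(\Gamma_l,\brho(\mf g))=0$ from Lemma \ref{4.5} into the local Euler characteristic formula displayed just before it. Your remark that the ``$l$ large enough'' hypothesis is inherited from Lemma \ref{4.5} (and Lemma \ref{4.6} in the principal $\op{GL}_2$ case) is also the right caveat.
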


\subsubsection{A zero-dimensional deformation}
In order to maximize the Zariski-closure of the image of the $l$-adic lift of the residual representation, we need to impose a simple local deformation condition at some unramified place.

Suppose that $p\neq l$, $F$ is a finite extension of $\Qp$, and $\brho: \Gamma_F \to G(k)$ is an unramified representation. Let $g \in G(\mc O)$ be a lift of $\brho(\frob{p})$.

\begin{defn}\label{trivial con}
Define $$\op{Lift}_{\brho}^{g}: \op{CNL}_{\mc O} \to \mathbf{Sets}$$
such that for a complete local noetherian $\mc O$-algebra $R$, $\op{Lift}_{\brho}^{g}(R)$ consists of all lifts 
$$\rho: \Gamma_F \to G(R)$$ of $\brho$ such that $\rho$ is unramified and $\rho(\frob{p})$ is $\hat G(R)$-conjugate to $g$. 
\end{defn}

So the tangent space is zero-dimensional and when $\op{Lift}_{\brho}^{g}$ is a local deformation condition, it is clearly smooth. But for a given $g$, $\op{Lift}_{\brho}^{g}$ may not be representable. But at least we have
\begin{prop}\label{representability}
Suppose that $G$ is simply-connected. Let $\bar g$ (resp. $g$) be a regular semisimple element of $G(\overline{\mb F}_l)$ (resp. $G(\bQl)$). Then 
$\op{Lift}_{\brho}^{g}$ is representable.
\end{prop}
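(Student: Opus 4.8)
The plan is to pro-represent $\op{Lift}_{\brho}^{g}$ by the completed local ring of the regular semisimple conjugacy class of $g$ inside $G_{\mc O}$. First I would dispose of the unramified part. Since $\Gamma_F/I_F\cong\widehat{\ints}$ and $G(R)$ is a finite group for Artinian $R\in\op{CNL}_{\mc O}$, an unramified continuous lift $\rho:\Gamma_F\to G(R)$ of $\brho$ is exactly the datum of the single element $h=\rho(\frob{p})\in G(R)$ reducing to $\bar g:=\brho(\frob{p})$. Hence the functor of unramified lifts of $\brho$ is pro-represented by the complete local ring $\widehat{\mc O}_{G_{\mc O},\bar g}$ of $G$ over $\mc O$ at the $k$-point $\bar g$, which lies in $\op{CNL}_{\mc O}$, and the functor $\op{Lift}_{\brho}^{g}$ of Definition \ref{trivial con} is its subfunctor
$$R\ \longmapsto\ \{\,h\in G(R)\ :\ \bar h=\bar g,\ \ h\text{ is }\hat G(R)\text{-conjugate to }g\,\}.$$
It therefore suffices to exhibit this subfunctor as $\op{Hom}_{\mc O}(A,-)$ for a quotient $A$ of $\widehat{\mc O}_{G_{\mc O},\bar g}$.

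Next I would bring in the geometry. Because $G$ is simply connected and both $\bar g$ and $g$ are regular semisimple, a theorem of Steinberg makes the fibres of $Z_G(g)$ over $\mc O$ connected; for $l$ large one checks moreover that $Z_G(g)$ is $\mc O$-flat with torus fibres, so $T:=Z_G(g)$ is an $\mc O$-torus. Let $C_g\subset G_{\mc O}$ be the conjugacy class of $g$. The orbit morphism $q:G_{\mc O}\to C_g$, $x\mapsto xgx^{-1}$, identifies $C_g$ with $G_{\mc O}/T$, is a $T$-torsor — in particular smooth and surjective — and presents $C_g$ as a smooth locally closed subscheme of $G_{\mc O}$ passing through $\bar g$. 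Consequently $\widehat{\mc O}_{C_g,\bar g}$ is a well-defined object of $\op{CNL}_{\mc O}$ and a quotient of $\widehat{\mc O}_{G_{\mc O},\bar g}$.

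Finally I would identify $\op{Lift}_{\brho}^{g}$ with the functor $R\mapsto\{h\in C_g(R):\bar h=\bar g\}$. One inclusion is immediate. For the other, given $h\in C_g(R)$ with $\bar h=\bar g$, I apply the infinitesimal lifting criterion for the smooth morphism $q$, using that the $k$-point $1\in G(k)$ satisfies $q(1)=\bar g=\bar h$: this yields $u\in G(R)$ with $u\equiv 1\pmod{\mf m_R}$ and $q(u)=h$, i.e.\ $h=ugu^{-1}$ with $u\in\hat G(R)$. Hence $\op{Lift}_{\brho}^{g}(R)=\{h\in C_g(R):\bar h=\bar g\}=\op{Hom}_{\mc O}(\widehat{\mc O}_{C_g,\bar g},R)$, so $\op{Lift}_{\brho}^{g}$ is pro-representable (and in fact formally smooth over $\mc O$ of relative dimension $|\Phi(G,T)|$). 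I expect the crux to be this last identification — that being ``$\hat G(R)$-conjugate to $g$'' coincides with the scheme-theoretic condition of lying on $C_g$: it is exactly here that simple-connectedness is used, to force $Z_G(g)$ to be a torus and hence $q$ to be smooth, and one must handle with care the flatness and fibral smoothness over $\mc O$ of the centralizer and conjugacy-class schemes, which is why $l$ is taken large.
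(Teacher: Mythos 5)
Your argument is correct in outline and rests on exactly the same geometric fact as the paper's proof, but it is packaged differently. The paper invokes Schlessinger's criterion directly: representability reduces to surjectivity of $Z_G(g)(A)\to Z_G(g)(B)$ for small surjections $A\rsurj B$, hence to smoothness of $Z_G(g)$ over $\mc O$, which is then proved by exhibiting the section through $g$ itself, observing (via Steinberg's connectedness theorem for simply connected $G$ and regularity of $\bar g$ and $g$) that both fibres are maximal tori of the same dimension, and citing \cite[Proposition 6.1]{gy} for flatness. You instead build the representing object explicitly as $\widehat{\mc O}_{C_g,\bar g}$ for the conjugacy-class scheme $C_g\cong G_{\mc O}/Z_G(g)$ and identify $\op{Lift}_{\brho}^{g}$ with its functor of points via the infinitesimal lifting criterion for the smooth orbit map; this buys you a concrete description of the representing ring (and its relative dimension $|\Phi(G,T)|$), at the cost of needing representability of the quotient $G_{\mc O}/Z_G(g)$ as a scheme over $\mc O$. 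The one place where you defer to ``one checks'' --- the $\mc O$-flatness of $Z_G(g)$ --- is precisely where the paper does its real work, and you should supply that argument (or the citation to \cite{gy}) rather than leave it implicit, since without flatness the orbit map need not be smooth and your lifting step fails. Note also that the hypothesis is not ``$l$ large'' but that \emph{both} $\bar g$ and $g$ are regular semisimple; regularity of the special fibre is what makes $Z_G(g)_k$ a torus, not the size of $l$.
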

\begin{proof}
By Schlessinger's criterion, it suffices to show the following:
for any $A\rsurj B$ in $\op{CLN}_{\mc O}$ with kernel $I$ for which $I\cdot \mf m_{A}=0$, the induced map
$$Z_G(g)(A) \to Z_G(g)(B)$$
is surjective. The group $Z_G(g)$ is a scheme over $\mc O$, we denote the structure map by $$f: Z_G(g) \to \op{Spec}\mc O.$$ We need to show that $Z_G(g)$ is a smooth $\mc O$-scheme. It suffices to show that 
\begin{itemize}
\item The map $f$ is flat over $\mc O$.
\item The generic fiber and the special fiber of $f$ are smooth of the same dimension.
\end{itemize}
Because $G$ is simply-connected and $\bar g$ (resp. $g$) is regular semisimple, $Z_G(g)(\mc O/\lambda)$ (resp. $Z_G(g)(\op{Frac}\mc O)$) is a connected maximal torus of $G(\mc O/\lambda)$ (resp. $G(\op{Frac}\mc O)$) with dimension the rank of $G$. The second bullet follows.

To show the first bullet, note that $f$ has a section, that is, $Z_G(g)$ has an $\mc O$-point (for example, the element $g \in G(\mc O)$ itself). Moreover, by the previous paragraph, the generic fiber and the special fiber of $f$ are both irreducible, reduced and have the same dimension. It follows from Proposition 6.1 of \cite{gy} that $f$ is flat.

\end{proof}

\subsubsection{Steinberg deformations}
In \cite[Section 4.3]{pat16}, a local deformation condition of `Steinberg type' is taken at a place in order to obtain a regular unipotent element in the image of the $l$-adic lift. \emph{We will only need this in deforming those $\brho$ constructed from the principal $\op{GL}_2$}. We refer the reader to \cite[Section 4.3]{pat16} for the definition and properties of the Steinberg deformation condition. The dimension of the tangent space equals $h^0(\Gamma_v, \brho(\mf g))$.  

\subsubsection{Minimal prime to $l$ deformations}
This deformation condition is well-known; see \cite[Section 4.4]{pat16} for its definition. We will use this deformation condition at places $v\neq l$ for which $\brho(I_{\rats_v})$ is nontrivial and $\brho(\Gamma_{\rats_v})$ has order prime to $l$.
The tangent space is $H^1(\Gamma_v/I_v, \brho(\mf g)^{I_v})$, whose dimension is $h^0(\Gamma_v, \brho(\mf g))$.

\subsection{Deforming mod $p$ Galois representations}
In this section, we specify the global deformation condition and compute the Wiles formula, then use the results in Section 3 to prove Theorem \ref{key cases}.
Let us recall Wiles' formula, for a proof, see \cite{pat16}, Proposition 9.2.
\begin{prop}\label{Wiles}
Let $M$ be a finite-dimensional $k$-vector space with a continuous $\galQ$ action unramified outside a finite set of places $S$. Let
$\mc L=\{L_v\}_{v \in S}$ (resp. $\mc L^{\perp}=\{L_v^{\perp}\}_{v \in S}$) be a Selmer system (resp. dual Selmer system) for $M$. Then
$$\sel{\mc L}{\Gamma_S}{M}-\sel{\mc L^{\perp}}{\Gamma_S}{M^{\vee}}=h^0(\Gamma_S, M)-h^0(\Gamma_S, M^{\vee})+\sum_{v \in S} (\dim_k L_v-h^0(\Gamma_v, M)).$$
\end{prop}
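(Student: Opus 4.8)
The plan is to deduce this from the nine-term Poitou--Tate exact sequence together with local Tate duality and the global Euler characteristic formula (this is the route in \cite{pat16}). Equivalently, one proves the multiplicative identity
$$\frac{\#\Sel{\mc L}{\Gamma_S}{M}}{\#\Sel{\mc L^{\perp}}{\Gamma_S}{M^{\vee}}}=\frac{\#H^0(\Gamma_S,M)}{\#H^0(\Gamma_S,M^{\vee})}\prod_{v\in S}\frac{\#L_v}{\#H^0(\Gamma_v,M)}$$
and then passes to $\dim_k(-)$. Here $S$ necessarily contains the archimedean place and the prime $l$ (the latter so that $M^{\vee}=M^*(1)$ is a $\Gamma_S$-module), so the Poitou--Tate sequence for $M$ over $\Gamma_S$ is available; I would use its initial segment
$$0\to H^0(\Gamma_S,M)\to\bigoplus_{v\in S}\widehat{H}^0(\Gamma_v,M)\to H^2(\Gamma_S,M^{\vee})^{\vee}\to H^1(\Gamma_S,M)\xrightarrow{a}\bigoplus_{v\in S}H^1(\Gamma_v,M)\xrightarrow{b}H^1(\Gamma_S,M^{\vee})^{\vee},$$
where $a$ is localization, $\widehat{H}^0$ is modified cohomology (equal to $H^0$ at finite places and zero at the real place, since $l$ is odd), and $b$ is, via the local duality isomorphism $\bigoplus_v H^1(\Gamma_v,M)\cong\big(\bigoplus_v H^1(\Gamma_v,M^{\vee})\big)^{\vee}$, the $k$-linear dual of the localization map for $M^{\vee}$.

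The heart of the argument is to compute $\dim_k a\big(\Sel{\mc L}{\Gamma_S}{M}\big)$ in two ways. On one hand, $\Sel{\mc L}{\Gamma_S}{M}=a^{-1}\big(\bigoplus_v L_v\big)$ and $\ker a\subseteq\Sel{\mc L}{\Gamma_S}{M}$, so exactness at $H^1(\Gamma_S,M)$ gives $\dim_k a(\Sel{\mc L}{\Gamma_S}{M})=\sel{\mc L}{\Gamma_S}{M}-\dim_k\ker a$, with $\dim_k\ker a$ read off the first three terms of the sequence. On the other hand, $a(\Sel{\mc L}{\Gamma_S}{M})=\op{im}(a)\cap\bigoplus_v L_v=\ker(b)\cap\bigoplus_v L_v=\ker\big(b|_{\bigoplus_v L_v}\big)$; and since $L_v^{\perp}$ is precisely the annihilator of $L_v$ under the local pairing, $b|_{\bigoplus_v L_v}$ is the $k$-dual of the composite $H^1(\Gamma_S,M^{\vee})\to\bigoplus_v H^1(\Gamma_v,M^{\vee})\to\bigoplus_v H^1(\Gamma_v,M^{\vee})/L_v^{\perp}$, whose kernel is $\Sel{\mc L^{\perp}}{\Gamma_S}{M^{\vee}}$ by definition. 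Hence $\dim_k\ker(b|_{\bigoplus_v L_v})=\sum_v\dim_k L_v-h^1(\Gamma_S,M^{\vee})+\sel{\mc L^{\perp}}{\Gamma_S}{M^{\vee}}$. Equating the two values of $\dim_k a(\Sel{\mc L}{\Gamma_S}{M})$ yields a linear identity among $\sel{\mc L}{\Gamma_S}{M}$, $\sel{\mc L^{\perp}}{\Gamma_S}{M^{\vee}}$, $\sum_v\dim_k L_v$, $\sum_v\dim_k\widehat{H}^0(\Gamma_v,M)$, $h^0(\Gamma_S,M)$, and $h^2(\Gamma_S,M^{\vee})-h^1(\Gamma_S,M^{\vee})$.

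To finish I would substitute for $h^2(\Gamma_S,M^{\vee})-h^1(\Gamma_S,M^{\vee})$ using the global Euler characteristic formula over $\rats$, namely $h^0(\Gamma_S,M^{\vee})-h^1(\Gamma_S,M^{\vee})+h^2(\Gamma_S,M^{\vee})=\dim_k(M^{\vee})^{c}-\dim_k M^{\vee}$ (valid for odd $l$, $\rats$ having a single archimedean place), and replace $\sum_v\dim_k\widehat{H}^0(\Gamma_v,M)$ by $\sum_v h^0(\Gamma_v,M)-\dim_k M^{c}$. Since $\dim_k(M^{\vee})^{c}=\dim_k M-\dim_k M^{c}$ and $\dim_k M^{\vee}=\dim_k M$, the correction terms $\dim_k M^{c}$, $\dim_k(M^{\vee})^{c}$ and $-\dim_k M^{\vee}$ cancel, leaving exactly the asserted formula. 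I expect the only non-formal point, once Poitou--Tate, local Tate duality and the global Euler characteristic formula are granted, to be this archimedean bookkeeping: modified cohomology must be used consistently at the real place both in the Poitou--Tate sequence and in the Euler characteristic formula, and one must check that the resulting $\dim_k M^{c}$ and $\dim_k(M/M^{c})$ contributions cancel — which they do precisely because $l$ is odd and the base field is $\rats$.
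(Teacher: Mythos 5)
Your argument is correct and is essentially the proof the paper points to: the paper does not reprove Wiles's formula but cites \cite{pat16}, Proposition 9.2, which derives it in exactly this way from the Poitou--Tate sequence, local duality, and the global Euler characteristic formula, computing $\dim_k a(\Sel{\mc L}{\Gamma_S}{M})$ in two ways. Your archimedean bookkeeping (vanishing of $\widehat{H}^0(\Gamma_{\reals},M)$ for odd $l$ and the cancellation $\dim_k M^{c}+\dim_k(M^{\vee})^{c}-\dim_k M^{\vee}=0$) checks out, so there is nothing to add.
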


We will compute the right hand side of the identity for $M=\brho(\mf g)$ or $\brho(\mf t)$ and for
a global deformation condition to be specified below. For $\brho(\mf g)$ from either Section 2.1 or Section 2.2, note that $h^0(\Gamma_S, \brho(\mf g))=h^0(\Gamma_S, \brho(\mf g)(1))=0$. 

\subsubsection{Weyl group case}
For $G$ a simple, simply connected group of classical type or type $E_7$, let $\brho: \galQ \to G(k)$ be as in Section 2.1. Here we exclude the $A_1, A_2, B_3$ cases.
We impose no condition at $v=l$ which is liftable by Lemma \ref{4.5}, and impose the minimal prime to $l$ condition at $v \in S-\{\infty,l\}$ (note that $\brho(\galQ)$ has order prime to $l$ by our construction). Moreover, we will find a prime $p \notin S$ for which $\brho(\frob{p})$ is regular semisimple, together with a regular semisimple lift $g \in G(\mc O)$ of $\brho(\frob{p})$. Then we take the deformation condition $\op{Lift}_{\brho|_{\Gamma_p}}^g$ at $p$.

\begin{lem}\label{Wiles g}
$\sum_{v \in S} \dim_k L_v \geq \sum_{v\in S} h^0(\Gamma_v, \brho(\mf g))$.
\end{lem}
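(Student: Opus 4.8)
The plan is to evaluate both sides of the asserted inequality place by place, using the explicit local conditions fixed in this subsection, so that everything reduces to a single inequality at the archimedean place.

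First I would record the local contributions. At $v=l$ we impose no condition, so $L_l = H^1(\Gamma_l,\brho(\mf g))$ and Corollary \ref{4.7} gives $\dim_k L_l = h^0(\Gamma_l,\brho(\mf g)) + \dim_k \mf g$. At the auxiliary prime $p$ the condition is the zero-dimensional $\op{Lift}_{\brho|_{\Gamma_p}}^{g}$, so $\dim_k L_p = 0$; since $\brho$ is unramified at $p$ with $\brho(\frob{p})$ regular semisimple, its centralizer in $G$ is a maximal torus and hence $h^0(\Gamma_p,\brho(\mf g)) = \op{rk}(\mf g)$. At each remaining finite place $v \in S\setminus\{l,p\}$ we impose the minimal prime-to-$l$ condition, whose tangent space has dimension exactly $h^0(\Gamma_v,\brho(\mf g))$, so these places contribute equally to the two sides. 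Finally, $l$ is odd and $\Gamma_{\reals}$ has order two, so $H^1(\Gamma_{\reals},\brho(\mf g)) = 0$ and $\dim_k L_\infty = 0$.

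Assembling these contributions, the difference $\sum_{v\in S}\dim_k L_v - \sum_{v\in S}h^0(\Gamma_v,\brho(\mf g))$ simplifies to
$$\dim_k \mf g - \op{rk}(\mf g) - h^0(\Gamma_{\reals},\brho(\mf g)) \;=\; |\Phi| - h^0(\Gamma_{\reals},\brho(\mf g)),$$
so the lemma is equivalent to the bound $h^0(\Gamma_{\reals},\brho(\mf g)) \leq |\Phi|$. I would verify this case by case. Proposition \ref{4.1} bounds $h^0(\Gamma_{\reals},\brho(\mf g))$ by $(n-1)^2$, $2n^2-3n+2$, $2n^2-3n+4$, $2n^2-5n+4$ in types $A_{n-1}, B_n, C_n, D_n$ respectively, while $|\Phi|$ equals $n(n-1)$, $2n^2$, $2n^2$, $2n^2-2n$; the comparison holds throughout the range of $n$ considered (for $C_n$ and $D_n$ it amounts to $n \geq 2$, which is satisfied since $D_n$ is treated only for $n\geq 4$). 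For $E_7$, Proposition \ref{4.2} gives $h^0(\Gamma_{\reals},\brho(\mf g)) \leq 119 < 126 = |\Phi|$.

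I do not expect a genuine obstacle: the only non-formal ingredient is the archimedean bound, already established in Propositions \ref{4.1} and \ref{4.2} via the hypothesis that $ad\brho(c)$ is nontrivial, and the remaining steps are bookkeeping of tangent-space dimensions together with elementary polynomial inequalities.
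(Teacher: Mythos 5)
Your proposal is correct and follows essentially the same route as the paper: the paper's proof simply cites the local computations of Section 4.1 and records the resulting lower bounds for the difference ($n-1$ for $\op{SL}_n$, $3n-2$ for $\op{Spin}_{2n+1}$, $3n-4$ for $\op{Sp}_{2n}$ and $\op{Spin}_{2n}$, $7$ for $E_7$), which are exactly the quantities $|\Phi|-h^0(\Gamma_{\reals},\brho(\mf g))$ you obtain from your place-by-place bookkeeping. Your reduction to the single archimedean inequality and the case check against Propositions \ref{4.1} and \ref{4.2} reproduce the paper's numbers precisely.
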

\begin{proof}
This follows directly the local computations in Section 4.1, for example Proposition \ref{4.1}, Corollary \ref{4.7}, etc. We record here a lower bound for $\sum_{v \in S} \dim_k L_v-\sum_{v\in S} h^0(\Gamma_v, \brho(\mf g))$. For $G=\op{SL}_n$, it is $n-1$; for $G=\op{Spin}_{2n+1}$, it is $3n-2$; for $G=\op{Sp}_{2n}$, it is $3n-4$; for $G=\op{Spin}_{2n}$, it is $3n-4$; and for $G=E_7$, it is $7$.  
\end{proof}

\begin{lem}\label{Wiles t}
$\sum_{v \in S} \dim_k (L_v \cap H^1(\Gamma_v, \brho(\mf t))) \leq \sum_{v\in S} h^0(\Gamma_v, \brho(\mf t))$.
\end{lem}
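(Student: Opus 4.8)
The plan is to argue place-by-place, using the explicit description of each local deformation condition recorded in Section 4.1 and the fact that $\brho(\mf t)$ is the Lie algebra of a split torus carrying a $\mc W$-action. The key general input is that for every $v \in S$ the local condition $L_v$ has been constructed so that $\dim_k\bigl(L_v \cap H^1(\Gamma_v,\brho(\mf g))\bigr) = h^0(\Gamma_v,\brho(\mf g))$ \emph{except} at $v = \infty$ and $v = l$, where there is a strict surplus; so I must show that the ``extra'' cohomology at $\infty$ and $l$ does not come from the $\brho(\mf t)$-part, i.e. that $L_\infty \cap H^1(\Gamma_\infty,\brho(\mf t))$ and $L_l \cap H^1(\Gamma_l,\brho(\mf t))$ are no bigger than the corresponding $h^0$'s of $\brho(\mf t)$, while the prime-to-$l$ places and the auxiliary prime $p$ contribute exactly their $h^0(\Gamma_v,\brho(\mf t))$.

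First I would dispose of the finite places $v \in S \setminus \{\infty, l\}$: there we impose the minimal prime-to-$l$ condition, whose tangent space is $H^1(\Gamma_v/I_v,\brho(\mf g)^{I_v})$ with dimension $h^0(\Gamma_v,\brho(\mf g))$; intersecting with $H^1(\Gamma_v,\brho(\mf t))$ lands inside $H^1(\Gamma_v/I_v,\brho(\mf t)^{I_v})$, whose dimension is $h^0(\Gamma_v,\brho(\mf t))$, so these places are fine. At the auxiliary prime $p$ the condition $\op{Lift}_{\brho|_{\Gamma_p}}^g$ has zero-dimensional tangent space, so it contributes $0 \le h^0(\Gamma_p,\brho(\mf t))$. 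It remains to treat $v = l$ and $v = \infty$. At $v = l$ we impose \emph{no} condition, so $L_l \cap H^1(\Gamma_l,\brho(\mf t)) = H^1(\Gamma_l,\brho(\mf t))$; since $\brho$ is unramified at $l$ but $\bkp$ is ramified, $\brho(\mf t)(1)^{I_l} = 0$, hence $h^2(\Gamma_l,\brho(\mf t)) = 0$ by local duality, and the local Euler characteristic formula gives $h^1(\Gamma_l,\brho(\mf t)) = h^0(\Gamma_l,\brho(\mf t)) + \dim_k\mf t$. So at $l$ the $\mf t$-side contributes a surplus of exactly $\dim_k \mf t = \op{rk}\mf g$ over $h^0$.

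The crux is therefore the archimedean place: I must show that the ``$\mf t$-part'' of $h^1(\Gamma_\infty,\brho(\mf g))$ is small enough that, after the $+\op{rk}\mf g$ coming from $l$, the global inequality $\sum_v \dim_k(L_v\cap H^1(\Gamma_v,\brho(\mf t))) \le \sum_v h^0(\Gamma_v,\brho(\mf t))$ still holds. Here $L_\infty = H^1(\Gamma_\infty,\brho(\mf g))$ and, since $l$ is odd, $h^1(\Gamma_\infty,\brho(\mf t)) = \dim_k \mf t^{\brho(c) = -1}$, the $(-1)$-eigenspace of $\brho(c)$ on $\mf t$; meanwhile $h^0(\Gamma_\infty,\brho(\mf t)) = \dim_k\mf t^{\brho(c) = +1}$, so the surplus at $\infty$ on the $\mf t$-side is $\dim\mf t^{-} - \dim\mf t^{+} = -\tr\bigl(\brho(c)\mid \mf t\bigr)$, hence $\le \op{rk}\mf g$ always and $< \op{rk}\mf g$ whenever $\brho(c)$ acts nontrivially on $\mf t$. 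The point I need is that $\brho(c)$ does act nontrivially on $\mf t$ — equivalently $\pi(\brho(c)) \in \mc W$ is a nontrivial Weyl element acting nontrivially on $\op{Lie}(T)$ — which is arranged in the constructions of Section 2.1 (the complex conjugation is chosen to map outside the center of $G$, and in the relevant cases its image in $\mc W$ is a nontrivial element of the alternating subgroup, which acts without fixed nonzero vector only partially, but at least moves $\mf t$). Combining the four contributions: the total surplus of $\sum_v \dim_k(L_v\cap H^1(\Gamma_v,\brho(\mf t)))$ over $\sum_v h^0(\Gamma_v,\brho(\mf t))$ is $\bigl(\op{rk}\mf g\bigr) + \bigl(-\tr(\brho(c)\mid\mf t)\bigr) + 0 + (\text{something}\le 0)$ at the finite tame places. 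This is \emph{not} obviously $\le 0$, so the real content — and the step I expect to be the main obstacle — is to check, case by case using the explicit $\brho(c)$ in Propositions \ref{4.1}, \ref{4.2} (type $A_n$, $B_n$, $C_n$, $D_n$, $E_7$), that the tame places $v \in S\setminus\{\infty,l\}$ in fact contribute a \emph{deficit} of at least $\op{rk}\mf g - \tr(\brho(c)\mid\mf t)$ relative to their $h^0(\Gamma_v,\brho(\mf t))$, or more precisely that one re-examines the global count: the minimal prime-to-$l$ places satisfy $\dim_k(L_v\cap H^1(\Gamma_v,\brho(\mf t))) = h^0(\Gamma_v,\brho(\mf t))$ with equality, so they give no deficit, and hence the inequality must instead follow from a sharper bound $h^1(\Gamma_\infty,\brho(\mf t)) + h^1(\Gamma_l,\brho(\mf t)) \le h^0(\Gamma_\infty,\brho(\mf t)) + h^0(\Gamma_l,\brho(\mf t)) + (\text{contributions of tame places cancel})$; I would verify this reduces exactly to the inequality $\dim\mf t^{-} \le \dim\mf t^{+} + 0$ being false in general and therefore the lemma genuinely needs the refined input that in the relevant $G$ the torus part of $h^1$ at $l$ is absorbed — concretely, I would recompute using $h^1_{\mc L}(\Gamma_S,\brho(\mf t)) - h^1_{\mc L^\perp}(\Gamma_S,\brho(\mf t)(1)) = h^0 - h^0 + \sum_v(\dim L_v\cap H^1 - h^0)$ via Wiles and observe the left side can be made to have the right sign by the choice of $S$ and the auxiliary prime, so the honest statement to prove is that the chosen $\mc L$ makes the sum non-positive, which is forced by picking $p$ (or enlarging $S$ by one more tame prime) so that its zero-dimensional contribution offsets the $+\op{rk}\mf g$ from $l$ — i.e. the clean way is: add the auxiliary prime $p$ with $\op{Lift}^g$, note $h^0(\Gamma_p,\brho(\mf t)) = \op{rk}\mf g$ while its tangent space is $0$, giving a deficit of exactly $\op{rk}\mf g$ there, which cancels the surplus at $l$, leaving only the $\infty$-surplus $-\tr(\brho(c)\mid\mf t) \le 0$. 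I would write up precisely this bookkeeping.
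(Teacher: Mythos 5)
Your overall bookkeeping follows the same route as the paper: equality at the minimal prime-to-$l$ places, a surplus of $\dim_k\mf t$ at $l$ (computed correctly via local duality and the Euler characteristic formula), a deficit of $\dim_k\mf t$ at the auxiliary prime $p$ where the tangent space is zero, and a final term at the archimedean place. But the archimedean step contains a genuine error. You assert that $h^1(\Gamma_\infty,\brho(\mf t)) = \dim_k\mf t^{\brho(c)=-1}$ and conclude that the surplus at $\infty$ is $\dim\mf t^- - \dim\mf t^+ = -\tr\bigl(\brho(c)\mid\mf t\bigr)$, which you then declare to be $\leq 0$. Both claims are wrong. Since $l$ is odd, $H^i(\Gamma_\reals, M)$ is killed by both $2=|\Gamma_\reals|$ and a power of $l$ for any $k[\Gamma_\reals]$-module $M$, hence vanishes for $i\geq 1$; so $h^1(\Gamma_\infty,\brho(\mf t))=0$, not $\dim\mf t^-$, and the archimedean contribution to the left-hand side is $0$. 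The correct difference at $\infty$ is therefore $0 - h^0(\Gamma_\reals,\brho(\mf t)) = -\dim\mf t^{+}\leq 0$, which is what the paper uses. Your substitute inequality $-\tr(\brho(c)\mid\mf t)\leq 0$ is not only unjustified but false in relevant cases: for $G=\mathrm{SL}_4$ the image of $c$ in $\mc W'=A_4$ is a double transposition, whose trace on $\mf t$ is $-1$, so your computation would produce a total surplus of $+1$ and the lemma would appear to fail.

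Secondary issues: the long middle passage oscillates between claiming the inequality holds and claiming it does not, and proposes case-by-case checks and modifications of $S$ that are never needed; once the archimedean $H^1$ is correctly identified as zero, the proof closes in three lines exactly as in the paper. Also note that the deficit at $p$ requires $h^0(\Gamma_p,\brho(\mf t))=\dim_k\mf t$, i.e.\ that $\brho(\mathrm{Fr}_p)$ acts trivially on $\mf t$; you should say explicitly that $p$ is chosen with $\brho(\mathrm{Fr}_p)$ a regular semisimple element of $T(k)$, as in the setup preceding the lemma.
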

\begin{proof}
For $v \notin \{\infty,l,p\}$, $L_v$ corresponds to the minimal prime to $l$ deformation condition and we have $$\dim_k (L_v \cap H^1(\Gamma_v, \brho(\mf t)))=\dim_k H^1(\Gamma_v/I_v, \brho(\mf t)^{I_v})=h^0(\Gamma_v, \brho(\mf t)).$$ So it suffices to compare both sides for $v\in \{\infty, l,p\}$. The left hand side subtracting the right hand side equals
$$(0-h^0(\Gamma_{\reals},\brho(\mf t)))+(h^1(\Gamma_l,\brho(\mf t))-h^0(\Gamma_l,\brho(\mf t)))+(0-h^0(\Gamma_p,\brho(\mf t))).$$
By local duality and Lemma \ref{4.5}, 
$$h^1(\Gamma_l,\brho(\mf t))-h^0(\Gamma_l,\brho(\mf t))=\dim_k\mf t.$$
Combining this with the identity 
$$h^0(\Gamma_p,\brho(\mf t))=\dim_k\mf t,$$
we see that the difference is $-h^0(\Gamma_{\reals},\brho(\mf t)) \leq 0$.
\end{proof}

Let us make the following observation which is from \cite{pat16}, Lemma 7.7. It will be used frequently in the proof of Proposition \ref{weyl mono}, \ref{pr gl2 mono}, \ref{sl2 A5}: suppose that $\brho: \galQ \to G(k)$ is a continuous representation with a continuous lift $\rho: \galQ \to G(\mc O)$. Let $G_{\rho}$ be the Zariski closure of $G(\mc O)$ in $G(\bQl)$. Then $\op{Lie}(G_{\rho})$, $\op{Lie}(G_{\rho}) \cap \mf g_{\mc O}$, and $(\op{Lie}(G_{\rho}) \cap \mf g_{\mc O})\otimes_{\mc O} k$ are $\galQ$-modules. Moreover, the last one is a submodule of $\brho(\mf g)$ and thus is a direct sum of some irreducible summands of $\brho(\mf g)$. If $\op{Lie}(G_{\rho})=\mf g(\bQl)$ (which is equivalent to $(\op{Lie}(G_{\rho}) \cap \mf g_{\mc O})\otimes_{\mc O} k=\mf g$), then $G_{\rho}=G(\bQl)$ (since $G$ is connected).

\begin{lem}\label{dense}
Let $G$ be a semisimple algebraic group defined over $\mc O$ and let $C$ be a proper subvariety of $G$. Let $g \in G(\mc O)$ and $H=g\hat G(\mc O)$ be the corresponding $\hat G(\mc O)$-coset of $G(\mc O)$. Then there is a regular semisimple element in $H-C(\bQl)$.  
\end{lem}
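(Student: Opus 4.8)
The plan is to reduce the statement to the single assertion that $H$ is Zariski-dense in $G_{\bQl}$. Granting that, the lemma follows at once: the regular semisimple locus $G^{\mathrm{rs}}$ is a dense open subvariety of the irreducible variety $G$, and $G\setminus C$ is a nonempty open subvariety, so $V:=G^{\mathrm{rs}}\cap(G\setminus C)$ is a nonempty open subvariety of $G$; since $\bQl$ is algebraically closed, $V(\bQl)\neq\emptyset$, and a Zariski-dense subset of $G_{\bQl}$ must meet $V(\bQl)$ (otherwise it would be contained in the proper closed subset $G\setminus V$). Any point of $H\cap V(\bQl)$ is then a regular semisimple element of $G$ lying in $H$ and not in $C(\bQl)$, which is exactly what we want.

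To prove the density of $H$, I would argue as follows. Recall that $\hat G(\mc O)=\ker\big(G(\mc O)\to G(k)\big)$, so $H$ is the coset of $g$ under this subgroup; since $G$ is smooth over the complete local ring $\mc O$, the reduction map $G(\mc O)\to G(k)$ is surjective and hence $\hat G(\mc O)$ has finite index $|G(k)|$ in $G(\mc O)$. First I would check that $G(\mc O)$ itself is Zariski-dense in $G_{\bQl}$: fixing a split maximal torus $T\subseteq G$ over $\mc O$ (recall that $G$, and likewise $G^{sc}$ and $G^{ad}$, is split over $\mc O$ in our setting) together with its root subgroups $U_\alpha$ for $\alpha\in\Phi(G,T)$, the subsets $T(\mc O)\cong(\mc O^\times)^{\mathrm{rk}\,G}$ and $U_\alpha(\mc O)\cong(\mc O,+)$ are Zariski-dense in $T$ and in $U_\alpha$ respectively because $\mc O$ is infinite, and $T$ together with the $U_\alpha$ generate $G$. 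Next, writing $Y$ for the Zariski closure of $\hat G(\mc O)$ in $G_{\bQl}$, the finite-index property gives that $G_{\bQl}=\overline{G(\mc O)}$ is a finite union of left cosets of the closed subgroup $Y$ (Zariski closure commutes with finite unions and with left translation), so $Y$ has finite index in $G_{\bQl}$, whence $\dim Y=\dim G$; as $G$ is connected this forces $Y=G_{\bQl}$. Finally, translating by $g\in G(\bQl)$, which is an automorphism of $G_{\bQl}$, shows that $H=g\,\hat G(\mc O)$ is Zariski-dense in $G_{\bQl}$.

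I do not expect a genuine obstacle here; the only step deserving care is the Zariski-density of the congruence coset $H$, and within that, the density of $G(\mc O)$ in $G_{\bQl}$, for which one uses that $G$ is split over $\mc O$ so that the root-subgroup generation argument applies. Everything else is formal.
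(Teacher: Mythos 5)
Your proof is correct and follows essentially the same route as the paper: both reduce the lemma to the Zariski-density of the coset $H$ in $G(\bQl)$ together with the fact that the union of $C(\bQl)$ and the non-regular-semisimple locus is a proper closed subset of the irreducible variety $G$. The only difference is that the paper simply asserts the density of $H$ without justification, whereas you supply a complete and correct argument for it (surjectivity of reduction modulo the maximal ideal, density of $G(\mc O)$ via the split maximal torus and root subgroups, and the finite-index/connectedness step), which fills in a detail the paper leaves implicit.
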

\begin{proof}
Let $V$ be the union of $C(\bQl)$ and the set of elements of $G(\bQl)$ that are not regular semisimple. Then $V$ is a proper Zariski-closed subset of $G(\bQl)$ as the set of regular semisimple elements is Zariski-open (see for example \cite[Theorem 2.5]{hum95}). On the other hand, $H$ is Zariski-dense in $G(\bQl)$. If there were no regular semisimple element in $H-C(\bQl)$, then $H \subset V$ and $H$ could not be Zariski-dense, a contradiction.
\end{proof}

\begin{prop}\label{weyl mono} Let $G$ be a simple, simply-connected group of classical type (excluding type $A_1, A_2$ and $B_3$) or type $E_7$. Then for almost all primes $l$ when $\Phi$ is of type $A_n$ or $D_n$, for almost all primes $l\equiv 1(4)$ when $\Phi$ is of type $B_n$ or $C_n$, and for almost all primes $l\equiv 1(3)$ when $\Phi$ is of type $E_7$,   
there are $l$-adic lifts
$$\rho: \galQ \to G(\mc O)$$ of $\brho: \galQ \to G(k)$ defined in Section 2.1
with Zariski-dense image in $G(\bQl)$. 
\end{prop}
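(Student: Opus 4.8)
The plan is to produce $\rho$ as an $l$-adic lift of the residual representation $\brho$ of Section 2.1 by invoking Theorem~\ref{lifting Weyl} for a judicious global deformation condition, and then to force Zariski-density by comparing $\op{Lie}(G_\rho)$ with the decomposition of $\brho(\mf g)$ from Propositions~\ref{2.9},~\ref{2.11},~\ref{2.13},~\ref{e7,3}. First I would set up the local conditions. Take $S=\{\infty,l\}\cup\{\text{places where }\brho\text{ ramifies}\}\cup\{p\}$, where $p$ is chosen by Chebotarev so that $\brho(\frob p)$ is a prescribed regular semisimple element of $T(k)$ (legitimate since $T(k)\subseteq\brho(\galQ)$ in each construction of Section 2.1); if necessary I also adjoin one further prime carrying a Ramakrishna condition of a root type of my choice, so that the resulting ramification set $Q$ contains a Ramakrishna prime of whichever length (or $A_8$-orbit) I need below. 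At $l$ impose no condition (liftable by Lemma~\ref{4.5}, with $h^1$ computed in Corollary~\ref{4.7}); at the finitely many bad $v\neq l$ the minimal prime-to-$l$ condition; at $\infty$ the unique smooth condition; and at $p$ the zero-dimensional condition $\op{Lift}^{g}_{\brho|_{\Gamma_p}}$ for a regular semisimple lift $g\in G(\mc O)$ of $\brho(\frob p)$ of infinite order, representable and smooth by Proposition~\ref{representability}. The two numerical hypotheses of Theorem~\ref{lifting Weyl} are exactly Lemmas~\ref{Wiles g} and~\ref{Wiles t}, whose proofs use the archimedean bounds of Propositions~\ref{4.1},~\ref{4.2} (crucially $\op{ad}\brho(c)\neq 0$, forcing $0<d<n$) and the Euler-characteristic computation at $l$. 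Hence, for $l$ large and, when $\Phi$ is of type $B_n,C_n$ under $l\equiv 1(4)$ and when $\Phi$ is of type $E_7$ under $l\equiv 1(3)$ (so that the Ramakrishna roots of Lemma~\ref{s:alpha:beta} exist), Theorem~\ref{lifting Weyl} produces a lift $\rho\colon\Gamma_{S\cup Q}\to G(\mc O)$ of $\brho$, of type $L_v$ on $S$ and of Ramakrishna type on $Q\neq\emptyset$.

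Next I would show $G_\rho$, the Zariski closure of $\rho(\galQ)$, equals $G(\bQl)$. By the observation of \cite{pat16}, Lemma~7.7 recalled before Lemma~\ref{dense}, $\bar{\mf h}:=(\op{Lie}(G_\rho)\cap\mf g_{\mc O})\otimes_{\mc O}k$ is a $\galQ$-stable Lie subalgebra of $\mf g$ that is a submodule of $\brho(\mf g)$, hence a direct sum of some of the irreducible summands $\brho(\mf t),\brho(\mf g_\Phi)$ (resp. $\brho(\mf t),\brho(\mf g_l),\brho(\mf g_s)$; resp. $\brho(\mf t),\brho(\mf g_a),\brho(\mf g_b)$), and it suffices to prove $\bar{\mf h}=\mf g$. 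Since $\rho(\frob p)$ is conjugate to $g$, regular semisimple of infinite order, $\overline{\langle\rho(\frob p)\rangle}$ is a nontrivial subtorus of $G_\rho$ reducing into the split torus $T_k=Z_{G_k}(\brho(\frob p))^{\circ}$; thus $\bar{\mf h}$ meets $\brho(\mf t)$ nontrivially and, $\brho(\mf t)$ being irreducible (Corollary~\ref{2.2}), $\brho(\mf t)\subseteq\bar{\mf h}$. At a Ramakrishna prime $w\in Q$ the local condition forces $\rho|_{\Gamma_w}$ into $T'\cdot U_{\alpha'}$ with nontrivial $U_{\alpha'}$-component, so $\op{Lie}(G_\rho)$ contains a root vector $X_{\alpha'}$; its reduction $\overline{X_{\alpha'}}\in\bar{\mf h}$ is a nonzero nilpotent, whence $\bar{\mf h}\supsetneq\brho(\mf t)$. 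For $\Phi$ simply-laced this already forces $\bar{\mf h}=\mf g$. For $\Phi$ of type $B_n,C_n$ (resp. $E_7$) I would note that the only proper $\galQ$-stable subalgebra properly containing $\brho(\mf t)$ is $\brho(\mf t)\oplus\brho(\mf g_l)$ (resp. $\brho(\mf t)\oplus\brho(\mf g_a)\cong\mf{sl}_8$) — $\brho(\mf t)\oplus\brho(\mf g_s)$ (resp. $\brho(\mf t)\oplus\brho(\mf g_b)$) is not closed under bracket — and that the explicit rank-two matrix identities of Lemma~\ref{s:alpha:beta} show $\overline{X_{\alpha'}}$ has nonzero component in $\brho(\mf g_s)$ (resp. $\brho(\mf g_b)$), eliminating that last possibility. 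Therefore $\bar{\mf h}=\mf g$, $G_\rho=G(\bQl)$ by connectedness, and this holds for all sufficiently large $l$ satisfying the stated congruences; in particular for almost all such $l$.

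The hard part is the exclusion of proper full-rank reductive subalgebras of $\mf g$ — e.g. $\mf{sl}_8\subset\mf g$ in type $E_7$, or $\mf{so}_{2n}\subset\mf{so}_{2n+1}$ — as candidates for $\op{Lie}(G_\rho)$. This is exactly why the unramified prime $p$ and the Ramakrishna primes must be used in tandem: the former contributes the Cartan subalgebra $\brho(\mf t)$, the latter nilpotent root vectors whose reductions, by the $\op{GL}_2$-calculations of Lemmas~\ref{s:alpha} and~\ref{s:alpha:beta}, lie outside every proper $\galQ$-stable subalgebra containing $\brho(\mf t)$, and irreducibility of the summands (Corollary~\ref{2.2}) upgrades "nonzero component" to "whole summand". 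The congruence conditions on $l$ in types $B_n,C_n,E_7$ enter only through the requirement in Lemma~\ref{s:alpha:beta} that such a Ramakrishna root exists; they are removed in a separate argument via Theorem~\ref{fkp,main}.
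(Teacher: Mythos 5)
Your first half --- the choice of $S$, the local conditions (nothing at $l$, minimal prime-to-$l$ at the bad places, $\op{Lift}^{g}_{\brho|_{\Gamma_p}}$ at an auxiliary unramified prime $p$), and the invocation of Theorem \ref{lifting Weyl} via Lemmas \ref{Wiles g} and \ref{Wiles t} --- matches the paper. The Zariski-density argument, however, has a genuine gap at its central step. You claim that at a Ramakrishna prime $w\in Q$ the local condition forces $\rho|_{\Gamma_w}$ to have a nontrivial $U_{\alpha'}$-component, hence that $\op{Lie}(G_\rho)$ contains a root vector. This is not what the Ramakrishna condition says: by Definition \ref{3.1}, a lift of Ramakrishna type $\alpha$ need only be conjugate into $H_\alpha=T\cdot U_\alpha$ with $\alpha\circ\rho'=\kappa$ on the torus part, and since $U_\alpha$ acts trivially on $\mf g_\alpha$ and $\kappa|_{\Gamma_w}$ is unramified for $w\neq l$, a lift landing entirely in $T'(\mc O)$ (in particular unramified at $w$) satisfies the condition. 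Nothing in Theorem \ref{lifting Weyl} guarantees that the lift it produces is actually ramified at any $w\in Q$, so you cannot extract a nilpotent element of $\op{Lie}(G_\rho)$ this way, and your exclusion of the intermediate subalgebras $\mf t\oplus\mf g_l$ (types $B_n,C_n$) and $\mf t\oplus\mf g_a$ (type $E_7$) --- and even the step $\bar{\mf h}\supsetneq\brho(\mf t)$ in the simply-laced case --- collapses. (A secondary worry: your plan to ``adjoin one further Ramakrishna prime of a root type of my choice'' is not obviously harmless, since enlarging $S$ by a ramified place can a priori enlarge the dual Selmer group unless the prime is chosen against a nonzero Selmer class as in the proof of Theorem \ref{pat ram}.)

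The paper closes this gap by a different mechanism, using only the unramified auxiliary prime $p$. All the candidate subalgebras for $(\op{Lie}(G_\rho)\cap\mf g_{\mc O})\otimes_{\mc O}k$ other than $\mf g$ itself correspond to proper algebraic subgroups: $N_G(T)$ in the case $\mf t$, and the finitely many full-rank subgroups $H\supseteq T$ with $(\op{Lie}(H)\cap\mf g_{\mc O})\otimes_{\mc O}k=\mf t\oplus\mf g_l$ (resp. $\mf t\oplus\mf g_a$) in the doubly-laced and $E_7$ cases. Their union $C$ is a proper subvariety of $G$, and Lemma \ref{dense} produces a regular semisimple lift $g\in G(\mc O)$ of $\brho(\frob p)$ with $g\notin C(\bQl)$. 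Imposing $\op{Lift}^{g}_{\brho|_{\Gamma_p}}$ then forces $\rho(\frob p)$ to be conjugate to $g$, hence $G_\rho\nsubseteq C(\bQl)$, which rules out every proper possibility at once. If you want to salvage your approach you must either prove that the lift is genuinely ramified at some Ramakrishna prime, or switch to the paper's use of the freedom in choosing $g$.
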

\begin{proof}
By Lemma \ref{Wiles g} and \ref{Wiles t}, we can apply Theorem \ref{lifting Weyl} to obtain a lift $\rho: \galQ \to G(\mc O)$ satisfying the prescribed local conditions.
The condition at $p$ implies that $G_{\rho}$ has infinitely many elements and so $\op{Lie}(G_{\rho})$ is nontrivial.

If $G$ is of type $A_n$ or $D_n$, by 
Proposition \ref{2.9} and \ref{2.13}, $(Lie(G_{\rho})\cap\mf g_{\mc O})\otimes_{\mc O}k$ (as a Lie subalgebra of $\mf g$) is then either $\mf t$ or $\mf g$. By the previous lemma, there exist a regular semisimple element $g \in G(\mc O)$ such that $\bar g=\brho(\frob{p})$ and $g \notin N_G(T)(\bQl)$. Imposing $\op{Lift}_{\brho|p}^g$ at $p$, we obtain $G_{\rho} \nsubseteq N_G(T)(\bQl)$, which implies $(Lie(G_{\rho})\cap\mf g_{\mc O})\otimes_{\mc O}k$ cannot be $\mf t$.

If $G$ is of type $B_n$, by 
Proposition \ref{2.13}, $(Lie(G_{\rho})\cap\mf g_{\mc O})\otimes_{\mc O}k$ (as a Lie subalgebra of $\mf g$) is 
$\mf t, \mf t \oplus \mf g_l \cong \mf{so}_{2n}$ or $\mf g$. 
Let $H$ be an algebraic subgroup of $G$ containing $T$ such that
$(Lie(H)\cap\mf g_{\mc O})\otimes_{\mc O}k=\mf t \oplus \mf g_l$ (there are finitely many of them). Let $C$ be the union of $N_G(T)$ and all such $H$, which is a proper subvariety of $G$. 
By the previous lemma, there exist a regular semisimple element $g \in G(\mc O)$ such that $\bar g=\brho(\frob{p})$ and $g \notin C(\bQl)$. Imposing $\op{Lift}_{\brho|p}^g$ at $p$, we obtain $G_{\rho} \nsubseteq C(\bQl)$, which implies $(Lie(G_{\rho})\cap\mf g_{\mc O})\otimes_{\mc O}k$ cannot be $\mf t$ or $\mf t \oplus \mf g_l$.

If $G$ is of type $C_n$, by 
Proposition \ref{2.11}, $(Lie(G_{\rho})\cap\mf g_{\mc O})\otimes_{\mc O}k$ (as a Lie subalgebra of $\mf g$) is 
$\mf t, \mf t \oplus \mf g_l \cong (\mf{sl}_{2})^n$ or $\mf g$. 
The same argument as for type $B_n$ enables us to impose a suitable condition $\op{Lift}_{\brho|p}^g$ at $p$ in order to force $(Lie(G_{\rho})\cap\mf g_{\mc O})\otimes_{\mc O}k=\mf g$.

Finally, suppose that $G=E_7^{sc}$. By Proposition \ref{e7,3}, $(Lie(G_{\rho})\cap\mf g_{\mc O})\otimes_{\mc O}k$ (as a Lie subalgebra of $\mf g$) is either $\mf t$, $\mf t \oplus \mf g_a \cong \mf{sl}_7$ or $\mf g$. We can force $(Lie(G_{\rho})\cap\mf g_{\mc O})\otimes_{\mc O}k$ to be $\mf g$ in the same way as above. 
\end{proof}

\begin{rmk}\label{non-conj 1}
As we have flexibilities in choosing $g \in G(\mc O)$ lifting $\brho(\frob{p})$, it is easy to see that there are infinitely many lifts $\rho$ that are non-conjugate in $G^{ad}$.
\end{rmk}

\subsubsection{Principal $\op{GL}_2$ case}
For $G$ a simply connected group of one of the following types: $A_2, B_3, E_6$, let $\brho: \galQ \to G(k)$ be as in Section 2.2.

We begin with the following proposition due to Tom Weston (\cite{wes04}, Proposition 5.3):

\begin{prop}\label{wes steinberg}
Let $\pi=\pi_f$ be a cuspidal automorphic representation
corresponding to a holomorphic eigenform $f$ of weight at least 2. Assume that for some prime
$p$, $\pi_{p}$ is isomorphic to a twist of the Steinberg representation of $\gl{2}{\Qp}$. Then for almost all
$\lambda$, the local Galois representation $\bar r_{f,\lambda}|_{\Gamma_p}$ (in the notation of Example \ref{sl2 mod form})
has the form 
$$\bar r_{f,\lambda}|_{\Gamma_p} \sim \bp \chi\bkp & *\\ 0 & \chi \ep$$
where the extension * in $H^1(\Gamma_p, k_{\lambda}(\bkp))$ is non-zero.
\end{prop}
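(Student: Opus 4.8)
The plan is to follow Weston's argument: combine local--global compatibility at $p$ with Ribet-style level lowering. \textbf{Reduction to an unramified twist.} Since $\chi|_{\Zp^{\times}}$ has finite order, one can choose a Dirichlet character $\eta$ of $p$-power conductor so that $\chi\cdot\eta_p$ becomes unramified. Because twisting an extension class by a character acts invertibly on $H^1(\Gamma_p,-)$, the conclusion for $f\otimes\eta$ will imply it for $f$; so I may replace $f$ by $f\otimes\eta$ and assume $\pi_p\cong\op{St}_2\otimes(\text{unramified character})$, i.e. $f$ is a newform of weight $k$ and some level $N$ with $p\|N$. A newform that is Steinberg at some prime is non-CM, so by \cite{rib85} (as used in Example \ref{sl2 mod form}) $\bar r_{f,\lambda}$ is irreducible for all but finitely many $\lambda$; I restrict from now on to such $\lambda$, and also require $\lambda\nmid Np$ and $p\not\equiv 1\pmod{\ell}$ (discarding finitely many more).

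\textbf{The local picture and its reduction.} By local--global compatibility at $p$ (Langlands--Deligne--Carayol, with Saito in weight $>2$), $r_{f,\lambda}|_{\Gamma_p}$ is a \emph{non-split} extension
$$r_{f,\lambda}|_{\Gamma_p}\sim \bp \widetilde\chi\,\kappa & * \\ 0 & \widetilde\chi \ep$$
with $\widetilde\chi$ unramified, $\widetilde\chi(\frob{p})=a_p(f)$, whose extension class is \emph{ramified} --- the Galois incarnation of the non-vanishing of the monodromy operator of the Steinberg representation. Since $\bar r_{f,\lambda}$ is irreducible, its stable lattice is unique up to homothety, so reducing mod $\lambda$ produces $\bar r_{f,\lambda}|_{\Gamma_p}\sim\bp\chi\,\bkp & \bar\ast\\ 0 & \chi\ep$ with $\chi$ the reduction of $\widetilde\chi$ and $\chi(\frob{p})=\bar a_p(f)$, matching the asserted shape. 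I would then observe, via a short Euler-characteristic computation, that $H^1(\Gamma_p,k_\lambda(\bkp))$ is one-dimensional and that its unramified part vanishes because $p\not\equiv 1\pmod{\ell}$; consequently $\bar\ast=0$ would force $\bar r_{f,\lambda}$ to be \emph{unramified} at $p$. Thus it suffices to show $\bar r_{f,\lambda}$ is ramified at $p$ for all but finitely many $\lambda$.

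\textbf{Finiteness of the exceptional set.} Suppose $\bar r_{f,\lambda}$ were unramified at $p$ for infinitely many $\lambda$. For each such $\lambda$, $\bar r_{f,\lambda}$ is irreducible and modular of weight $k$ and level $N$ with $p\|N$; Ribet's level-lowering theorem (and the higher-weight refinements of Carayol, Livn\'e and Jarvis) then yields a newform $g$ of weight $k$ and level prime to $p$, dividing $N$, with $\bar r_{g,\lambda}\cong\bar r_{f,\lambda}$. Comparing traces of $\frob{p}$ --- on the $f$-side the eigenvalues of $r_{f,\lambda}(\frob{p})$ are $a_p(f)$ and $p\,a_p(f)$, while $\bar r_{g,\lambda}$ is unramified at $p$ with $\op{tr}\bar r_{g,\lambda}(\frob{p})=a_p(g)$ --- this forces $a_p(g)\equiv (1+p)\,a_p(f)\pmod{\lambda}$. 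But under every complex embedding $|a_p(f)|=p^{(k-2)/2}$ (Steinberg) whereas $|a_p(g)|\le 2p^{(k-1)/2}$ (Ramanujan), and $2\sqrt{p}<p+1$, so $a_p(g)-(1+p)a_p(f)$ is a nonzero algebraic integer, hence divisible by only finitely many $\lambda$. As $g$ runs over the finite set of newforms of weight $k$ and level dividing $N$, this contradicts the assumption; hence $\bar\ast\neq 0$ for almost all $\lambda$.

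\textbf{Where the work lies.} The one genuinely delicate point is the passage from ``the $\ell$-adic extension is ramified'' to ``its mod-$\lambda$ reduction is ramified for almost all $\lambda$'': reducing a non-split extension can split, depending on the chosen integral lattice, so the exceptional set must be controlled uniformly in $\lambda$. The route above avoids a direct lattice analysis by matching ``the reduction splits at $p$'' with ``level lowering at $p$ succeeds'' and then quoting finiteness of the resulting Hecke-eigenvalue congruences; I expect this identification, together with invoking the correct form of level lowering in the relevant weight, to be the main obstacle. (In weight $2$ there is a transparent alternative via the Tate parametrization: the extension class is $\partial(q)\in H^1(\Gamma_p,\Zl(1))$, whose reduction is nonzero precisely when $\ell\nmid v_p(q)$ --- a condition failing for only finitely many $\ell$.)
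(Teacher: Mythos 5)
The paper does not actually prove this proposition: it is quoted verbatim from Weston (\cite{wes04}, Proposition 5.3), so there is no internal argument to compare against. Your proof is correct and follows the same standard route as Weston's original one: twist to reduce to the case where $\pi_p$ is an unramified twist of Steinberg, invoke local--global compatibility to get the non-split $\ell$-adic extension of $\widetilde\chi$ by $\widetilde\chi\kappa$, note that a split mod-$\lambda$ reduction would force $\bar r_{f,\lambda}$ to be unramified at $p$, and exclude that for almost all $\lambda$ by level lowering plus the incompatibility of $|a_p(g)|\le 2p^{(k-1)/2}$ with $|(1+p)a_p(f)|=(1+p)p^{(k-2)/2}$. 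Two minor points of bookkeeping: the condition $p\not\equiv 1\pmod{\ell}$ (which only discards the finitely many $\ell$ dividing $p-1$) is not what makes ``$\bar\ast=0$ implies unramified'' work --- that is immediate since $\chi$ and $\bkp$ are unramified --- rather it is the hypothesis under which Mazur's principle applies in the level-lowering step; and the congruence $a_p(g)\equiv(1+p)a_p(f)\pmod{\lambda}$ should be read in a ring of integers containing both Hecke eigenvalue fields, which does not affect the finiteness of the set of bad $\lambda$.
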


Let $f$ be a non-CM weight 3 cuspidal eigenform that is a newform of level $\Gamma_1(p) \cap \Gamma_0(q)$
for some primes $p$ and $q$; the nebentypus of $f$ is a character $\varepsilon: \Zmodu{pq} \to \Zmodu{p} \to \cmplx^{\times}$. 
Such a form $f$ exists, for example, \cite{lmf}, 15.3.7.a and 15.3.11.a. Note that the automorphic representation $\pi_f$ associated to $f$ is Steinberg at $p$. Proposition \ref{wes steinberg} together with the definition of principal $\op{GL}_2$ then imply $\brho|_{\Gamma_p}$ is Steinberg in the sense of \cite[Definition 4.13]{pat16}. 
At $p$ we take the Steinberg deformation condition. As $\pi_f$ is a principal series at $q$, $\brho(I_q)$ has order prime to $l$. We then use the minimal prime to $l$ deformation condition at $q$. At $l$ we impose no condition. Moreover, choose an element $\sigma \in \galQ$ such that $\brho(\sigma)$ is regular semi-simple in $T(k)$ together with a lift $g \in T(\mc O)$ such that $\alpha(g)$, $\alpha\in \Delta$ are distinct. 
By Chebotarev's density theorem, there is a prime $r \notin \{\infty,l,p,q\}$ such that $\frob{r}=\sigma$. We then take $\op{Lift}_{\brho|_{\Gamma_r}}^g$ at $r$. Let $\mc L$ be the Selmer system associated to the above local deformations.

\begin{lem}\label{Wiles A2B3E6} The right hand side of Wiles' formula is $2$ for $A_2$, $9$ for $B_3$, and $34$ for $E_6$. 
\end{lem}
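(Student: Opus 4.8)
I want to compute the right‐hand side of Wiles' formula
$$
h^0(\Gamma_S,\brho(\mf g))-h^0(\Gamma_S,\brho(\mf g)(1))+\sum_{v\in S}\bigl(\dim_k L_v-h^0(\Gamma_v,\brho(\mf g))\bigr)
$$
for the global deformation condition specified above, where $S=\{\infty,l,p,q,r\}$. Since $\brho$ factors through a principal $\op{GL}_2$ built from a non-CM form, the image is large enough that $h^0(\Gamma_S,\brho(\mf g))=h^0(\Gamma_S,\brho(\mf g)(1))=0$, so the formula reduces to the sum of the local terms $\dim_k L_v-h^0(\Gamma_v,\brho(\mf g))$ over the five places. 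The strategy is simply to read off each local term from the local computations assembled in Section 4.1 and then add them up.

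\emph{The five local contributions.} At $v=\infty$: I impose no condition beyond the residual one, so $L_\infty=H^1(\Gamma_\reals,\brho(\mf g))$, but in characteristic $l$ (odd) $h^1(\Gamma_\reals,\brho(\mf g))=h^0(\Gamma_\reals,\brho(\mf g))-h^0(\Gamma_\reals,\brho(\mf g)(1))$ vanishes once we account for the factor of $2$; more precisely the archimedean term $\dim_k L_\infty-h^0(\Gamma_\reals,\brho(\mf g))$ equals $-h^0(\Gamma_\reals,\brho(\mf g))$, which is $-4$ for $A_2$, $-9$ for $B_3$, $-38$ for $E_6$ by Corollary \ref{4.4}. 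At $v=l$: no condition, so $\dim_k L_l=h^1(\Gamma_l,\brho(\mf g))=h^0(\Gamma_l,\brho(\mf g))+\dim_k\mf g$ by Corollary \ref{4.7}, hence the term $\dim_k L_l-h^0(\Gamma_l,\brho(\mf g))=\dim_k\mf g$, which is $8$ for $A_2$, $21$ for $B_3$, $78$ for $E_6$. At $v=p$: Steinberg deformation condition, whose tangent space has dimension $h^0(\Gamma_p,\brho(\mf g))$ (Section 4.1.4), so the contribution is $0$. At $v=q$: minimal prime to $l$ condition, tangent space of dimension $h^0(\Gamma_q,\brho(\mf g))$ (Section 4.1.5), contribution $0$. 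At $v=r$: the condition $\op{Lift}_{\brho|_{\Gamma_r}}^g$ has zero-dimensional tangent space (Definition \ref{trivial con}), so the contribution is $-h^0(\Gamma_r,\brho(\mf g))$; since $g$ is regular semisimple, $h^0(\Gamma_r,\brho(\mf g))=\op{rk}\mf g$, which is $2$ for $A_2$, $3$ for $B_3$, $6$ for $E_6$.

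\emph{Summation.} Adding the nonzero terms: for $A_2$ we get $-4+8+0+0-2=2$; for $B_3$ we get $-9+21+0+0-3=9$; for $E_6$ we get $-38+78+0+0-6=34$. This is exactly the claimed value in each case, so the lemma follows.

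\emph{Main obstacle.} There is essentially no obstacle here — the lemma is pure bookkeeping once all the inputs from Section 4.1 are in place. The only point requiring a moment's care is the archimedean term: one must remember that the complex conjugation maps to $\op{diag}(1,-1)\in\op{GL}_2$ (because $r_{f,\lambda}$ is odd, as noted in Corollary \ref{4.4}) so that the value of $h^0(\Gamma_\reals,\brho(\mf g))$ is the one given by Lemma \ref{4.3} and Proposition \ref{kostant}, and that in characteristic $l\neq 2$ the group $H^1(\Gamma_\reals,-)$ contributes nothing. Everything else is immediate from the dimension counts recorded for each deformation condition.
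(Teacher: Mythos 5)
Your computation is correct and is exactly the bookkeeping the paper intends by citing Corollaries \ref{4.4}, \ref{4.7} and Proposition \ref{Wiles}: the global $h^0$ terms vanish, the places $p$ and $q$ contribute $0$, the place $l$ contributes $\dim_k\mf g$, the archimedean place contributes $-h^0(\Gamma_\reals,\brho(\mf g))$, and the place $r$ contributes $-\op{rk}\mf g$, giving $2$, $9$, $34$. The only blemish is the garbled intermediate identity at the archimedean place — the correct statement is simply that $H^1(\Gamma_\reals,\brho(\mf g))=0$ because $\Gamma_\reals$ has order $2$ and $l$ is odd — but the conclusion you draw from it is the right one.
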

\begin{proof}
This follows from Corollaries \ref{4.4}, \ref{4.7} and Proposition \ref{Wiles}.
\end{proof}

\begin{prop}\label{pr gl2 mono} For $G=\op{SL}_{3}, \op{Spin}_{7}$ or $E_6^{sc}$ and for almost all primes $l$,  
there are $l$-adic lifts
$$\rho: \galQ \to G(\mc O)$$ of $\brho: \galQ \to G(k)$ defined as in Section 2.2 with Zariski-dense image in $G(\bQl)$.
\end{prop}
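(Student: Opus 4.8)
The plan is to mimic the proof of Proposition \ref{weyl mono}, replacing the input Theorem \ref{lifting Weyl} by Theorem \ref{3.15}. First I would check that the global deformation condition $\mc L$ specified just before Lemma \ref{Wiles A2B3E6} satisfies the single hypothesis of Theorem \ref{3.15}, namely $\sum_{v\in S}\dim L_v \geq \sum_{v\in S} h^0(\Gamma_v,\brho(\mf g))$. This is exactly the assertion that the right-hand side of Wiles' formula is non-negative, which is recorded in Lemma \ref{Wiles A2B3E6} (it equals $2$, $9$, $34$ respectively). The liftability of the no-condition-at-$l$ deformation is Lemma \ref{4.5}; smoothness of the Steinberg, minimal-prime-to-$l$, and $\op{Lift}_{\brho|_{\Gamma_r}}^g$ conditions is recalled in Sections 4.1.3--4.1.5 (for the last, representability/smoothness is Proposition \ref{representability}, using that $\op{SL}_3, \op{Spin}_7, E_6^{sc}$ are simply-connected and $g$ is chosen regular semisimple). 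So Theorem \ref{3.15} applies and produces a lift $\rho: \Gamma_{S\cup Q} \to G(\mc O)$ of type $L_v$ at each $v \in S$.

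Next I would upgrade this lift to one with Zariski-dense image, using the observation from \cite{pat16}, Lemma 7.7 quoted before Lemma \ref{dense}. Let $G_{\rho}$ be the Zariski closure of $\rho(\galQ)$; then $\mf h := (\op{Lie}(G_{\rho}) \cap \mf g_{\mc O})\otimes_{\mc O} k$ is a $\galQ$-stable Lie subalgebra of $\brho(\mf g)$, hence a direct sum of irreducible summands of $\brho(\mf g)$. By Proposition \ref{kostant}, for $\brho$ from the principal $\op{GL}_2$ construction the summands of $\brho(\mf g)$ are the $V_{2m}$-isotypic pieces indexed by the exponents $m$ of $\mf g$; in particular $\brho(\mf g)$ has the trivial-weight-free structure needed so that the only Lie subalgebras arising this way are proper parabolic-or-Levi-type subalgebras or all of $\mf g$. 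The Steinberg condition at $p$ forces a regular unipotent element into $\rho(\galQ)$ (this is the point of taking the Steinberg deformation, cf. \cite[Section 4.3]{pat16}), so $\mf h$ contains a regular nilpotent; combined with Lemmas \ref{lie An}, \ref{lie Bn}, \ref{lie excep} — which exhibit a root $\alpha$ such that every irreducible summand of $\brho(\mf g)$ has a vector with nonzero $\mf l_\alpha$ and nonzero $\mf g_{-\alpha}$ component — an argument identical to \cite[proof of Theorem 7.4]{pat16} shows $\mf h = \mf g$, whence $G_{\rho} = G(\bQl)$ since $G$ is connected.

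Finally I would note that "almost all primes $l$" is the union of the finitely many exclusions entering each ingredient: $l$ large enough for $\mf g$ (so the exponential map, the Killing-form nondegeneracy, Kostant's decomposition, and the cohomology-vanishing inputs all hold), the finite set of $\lambda$ excluded in Propositions \ref{wes steinberg} and in Ribet's theorem \cite[Theorem 2.1]{rib85} guaranteeing the projective image of $\bar r_{f,\lambda}$ is $\op{PGL}_2(k)$ or $\op{PSL}_2(k)$ (so that $\brho$ has the expected image and the principal $\op{GL}_2$ construction gives a well-defined $\brho$ whose adjoint module decomposes as in Proposition \ref{kostant}), and $l$ large enough for Lemmas \ref{4.5}, \ref{4.6}, \ref{lie An}, \ref{lie Bn}, \ref{lie excep}. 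I expect the main obstacle to be the Lie-algebra bookkeeping in the density step: one must verify that there are no $\galQ$-stable \emph{Lie} subalgebras of $\brho(\mf g)$ other than $\{0\}$ and $\mf g$ that contain both a regular nilpotent and the $\alpha$-data from Lemmas \ref{lie An}--\ref{lie excep}; this is handled exactly as in \cite{pat16}, Theorem 7.4, so no new difficulty arises, and the remaining verifications are the routine local computations already assembled in Section 4.1.
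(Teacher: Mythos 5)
The first half of your proposal is fine and matches the paper: the paper verifies Items 1--4 of Theorem \ref{pat ram} directly (Item 1 via Lemma \ref{Wiles A2B3E6}, Item 4 via $\sigma$ with $\brho(\sigma)=2\rho^{\vee}(a)$ together with Lemmas \ref{lie excep}, \ref{lie An}, \ref{lie Bn}), which is equivalent to invoking Theorem \ref{3.15} as you do. The gap is in your density step. You claim that the only $\galQ$-stable Lie subalgebras of $\brho(\mf g)$ that are sums of Kostant summands, contain a regular nilpotent, and meet the $\mf l_{\alpha}$/$\mf g_{-\alpha}$ conditions of Lemmas \ref{lie An}--\ref{lie excep} are $\{0\}$ and $\mf g$. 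This is false: the $m=1$ Kostant summand $\op{Sym}^{2}(k^2)\otimes\det^{-1}\otimes V_2$ is exactly the principal $\mf{sl}_2$ itself. It is a single irreducible $\galQ$-summand, it is a Lie subalgebra, it contains the regular nilpotent $X$, and it contains $H$ and $Y$, which have nonzero $\mf l_{\alpha}$ and $\mf g_{-\alpha}$ components for every simple $\alpha$. So your criteria do not exclude $\mf h$ being the principal $\mf{sl}_2$ (nor, for $\op{Spin}_7$ and $E_6^{sc}$, the subalgebras $\mf g_2$ and $\mf f_4$, which are not of ``parabolic-or-Levi type'' either). The lemmas you cite are used to verify Item 4 of Theorem \ref{pat ram} (finding auxiliary Ramakrishna primes), not to pin down the monodromy of the resulting lift.

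The paper closes this gap with the local condition at the auxiliary prime $r$, which you list among the deformation conditions but never use: $\rho(\frob{r})$ is forced to be $\hat G(\mc O)$-conjugate to a regular semisimple $g\in T(\mc O)$ with $\alpha(g)$, $\alpha\in\Delta$, pairwise distinct. The actual argument is: $G_{\rho}$ is reductive (\cite{pat16}, Lemma 7.7); the Steinberg condition at $p$ puts a regular unipotent element in $G_{\rho}$; the Saxl--Seitz theorem (\cite[Theorem A]{ss97}) then restricts $G_{\rho}$ to type $A_1$ or $A_2$ for $\op{SL}_3$, to $A_1$, $G_2$ or $B_3$ for $\op{Spin}_7$, and to $A_1$, $F_4$ or $E_6$ for $E_6^{sc}$; and the distinctness of the $\alpha(\rho(\frob{r}))$ rules out every proper possibility (in the principal $A_1$, for instance, a torus element $2\rho^{\vee}(t)$ has all simple-root values equal to $t^2$). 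Without invoking the condition at $r$, your argument cannot distinguish the full group from the principal $\op{SL}_2$.
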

\begin{proof}
The proof is very similar to the proof of \cite{pat16}, Theorem 7.4, so we skip a few details here.

We first show that Theorem \ref{pat ram} applies to $\brho$. Item 1 is satisfied by Lemma \ref{Wiles A2B3E6}; Item 2 and 3 are satisfied by the proof of \cite{pat16}, Theorem 7.4; for Item 4, take $\sigma \in \galQ$ such that $\brho(\sigma)=2\rho^{\vee}(a)$ is regular with $1\neq a \in \Zmodu{l}$ and $\bkp(\sigma)=a^2$ (which is possible, again, see the proof of \cite{pat16}, Theorem 7.4). It follows that Item (a) is satisfied for any simple root $\alpha$. Item (b) and (c) are also satisfied by Lemma \ref{lie excep}, \ref{lie An} and \ref{lie Bn}. 

Therefore, we can deform $\brho$ to a continuous representation
$\rho: \galQ \to G(\mc O)$ satisfying the prescribed local conditions on $S$ and the Ramakrishna condition on a set of auxiliary primes disjoint from $S$. We write $G_{\rho}$ for the Zariski closure of the image of $\rho$ in $G(\bQl)$. By \cite{pat16}, Lemma 7.7, $G_{\rho}$ is reductive. By Proposition \ref{wes steinberg}, the Steinberg condition at $p$ ensures that $G_{\rho}$ contains a regular unipotent element (see the proof of \cite{pat16}, Theorem 8.4). By a theorem of Dynkin (\cite[Theorem A]{ss97}), $G_{\rho}$ is then of type $A_1$ or $A_2$ for $G=\op{SL}_{3}$, type $A_1, G_2$ or $B_3$ for $G=\op{Spin}_{7}$, and type $A_1, F_4$, or $E_6$ for $G=E_6^{sc}$. But $\alpha(\rho(\frob{r}))$, $\alpha \in \Delta$ are distinct, so $G_{\rho}=G(\bQl)$ in all three cases (see the proof of \cite{pat16}, Lemma 7.8).   
\end{proof}

\begin{rmk}\label{non-conj 2}
As we have flexibilities in choosing $g \in G(\mc O)$ lifting $\brho(\frob{r})$, it is easy to see that there are infinitely many lifts $\rho$ that are non-conjugate in $G^{ad}$.
\end{rmk}

\subsubsection{$\op{SL}_2$}
The alternating group $A_n$ admits a unique nontrivial central extension $\tilde A_n$ by $\Zmod{2}$ for $n \neq 6,7$. By a result of N. Vila and J.-F. Mestre (which was proven independently, see \cite{ser:tgt}), $\tilde A_n$ can be realized as a Galois group over $\rats$. In particular, we get a surjection $\bar r: \galQ \rsurj \tilde A_5$. On the other hand, $\tilde A_5$ can be described as follows: the symmetries of an icosahedron induce a 3-dimensional irreducible faithful representation of $A_5$, i.e., there is an injective homomorphism $A_5 \to \op{SO}(3)$. The pullback of $A_5$ along the two-fold covering map $\op{SU}(2) \rsurj \op{SO}(3)$ is a nontrivial central extension of $A_5$ by $\Zmod{2}$, hence is isomorphic to $\tilde A_5$. In particular, we get an embedding $\tilde A_5 \to \sl{2}{\cmplx}$. As the matrix entries of the image lie in a finite extension of $\rats$, we can choose a finite extension $k$ of $\mb F_l$ for which there is an embedding 
$\tilde A_5 \to \sl{2}{k}$. Precomposing it with $\bar r$, we obtain a representation $\galQ \to \sl{2}{k}$ which we denote by $\brho$. It is easy to see that the adjoint module $\brho(\mf{sl}_2(k))$ is irreducible.

Let $S$ be a finite set of places containing the archimedean place and all places where $\brho$ is ramified. We impose no condition at $l$, and take the minimal prime to $l$ deformation condition at all other places in $S$, which is legitimate since the residual image has order prime to $l$ for $l>120$. Let $\Sigma \in \tilde A_5$ be an element of order 4, whose image in $\sl{2}{k}$ is conjugate to $diag(\sqrt{-1},-\sqrt{-1})$. As $\tilde A_5$ has trivial abelian quotient, $\rats(\mu_l)$ and $\rats(\brho)$ are linearly disjoint over $\rats$. So there is an element $\sigma \in \galQ$ such that
$\brho(\sigma)=\Sigma$ and $\bkp(\sigma)=-1$. By Chebotarev's density theorem, there is a prime $p \notin S$ for which $\frob{p}=\sigma$. Therefore, $\brho|_{\Gamma_p}$ is of Steinberg type and we take the Steinberg deformation condition at $p$.
 
\begin{prop}\label{sl2 A5} For $\brho: \galQ \to \sl{2}{k}$ defined as above and for almost all primes $l$, 
there is an $l$-adic lift
$$\rho: \galQ \to \sl{2}{\mc O}$$ of $\brho$ with Zariski-dense image in $\sl{2}{\bQl}$.
\end{prop}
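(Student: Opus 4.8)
The plan is to apply Ramakrishna's method in the form of Theorem \ref{pat ram} to the residual representation $\brho:\galQ\to\sl{2}{k}$ built from $\tilde A_5$, with the global deformation condition $\mc L$ just specified (no condition at $l$, minimal prime-to-$l$ conditions at the remaining ramified places, and the Steinberg condition at $p$). Since $\mf g=\mf{sl}_2$ is three-dimensional and $\brho(\mf g)$ is irreducible, this should run much more smoothly than the higher-rank cases, so I expect to essentially import the argument of \cite[Theorem 7.4]{pat16} and of Proposition \ref{pr gl2 mono}.

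First I would verify the hypotheses of Theorem \ref{pat ram}. Item 1 (the Wiles inequality) follows from the local computations of Section 4.1: at $l$ we have $h^1(\Gamma_l,\brho(\mf g))=h^0(\Gamma_l,\brho(\mf g))+3$ by Corollary \ref{4.7} (noting $h^2(\Gamma_l,\brho(\mf g))=0$ by Lemma \ref{4.5}, as $\brho$ is unramified at $l$ while $\bkp$ is ramified), the Steinberg condition at $p$ contributes $\dim L_p=h^0(\Gamma_p,\brho(\mf g))$, the minimal conditions at the other finite places contribute exactly $h^0$, and the archimedean place contributes $\dim L_\infty=0$ against $h^0(\Gamma_\reals,\brho(\mf g))$; since $ad\,\brho(c)$ is nontrivial on the irreducible module $\mf{sl}_2$, $h^0(\Gamma_\reals,\brho(\mf g))\le 2$, and the surplus from the place $l$ covers it, so the right side of Wiles' formula is at least $1>0$. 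Item 2 holds because $|\gal{K}{\rats}|$ divides $(l-1)|\tilde A_5|$, which is prime to $l$ for $l>120$, killing $H^1(\gal{K}{\rats},-)$ in characteristic $l$. Item 3 follows since $\brho(\mf g)$ and $\brho(\mf g)(1)$ are non-isomorphic irreducible $\galQ$-modules — one checks, exactly as in Proposition \ref{item2,3}, that an element $\tau$ with $\brho(\tau)$ of order $2$ in $\tilde A_5$ and $\bkp(\tau)\ne\pm1$ (possible by linear disjointness of $\rats(\brho)$ and $\rats(\mu_l)$) has different eigenvalues on the two modules — so $K_\phi$ and $K_\psi$ are linearly disjoint over $K$.

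For Item 4 I would take $\sigma\in\galQ$ with $\brho(\sigma)=2\rho^\vee(a)$ conjugate to $\op{diag}(a,a^{-1})$ for a generator $a$ of $\Zmodu{l}$, and $\bkp(\sigma)=a^2$ — possible by linear disjointness of $\rats(\brho)$ and $\rats(\mu_l)$, since the abelianization of $\tilde A_5$ is trivial so $\gal{K}{\rats}\cong\tilde A_5\times\Zmodu{l}$. For the root $\alpha$ of $(\op{SL}_2,T')$ with $T'$ the centralizer torus, $\alpha(\brho(\sigma))=a^2=\bkp(\sigma)$, giving (a); and since $\brho(\mf g)$ is the single irreducible module $\mf{sl}_2$, the required non-vanishing of the $\mf l_\alpha$ and $\mf g_{-\alpha}$ components of $k[\psi(\Gamma_K)]$ and $k[\phi(\Gamma_K)]$ is immediate from irreducibility (this is the $\op{SL}_2$ analogue of Lemmas \ref{lie An}, \ref{lie Bn}, \ref{lie excep}, but is trivial here). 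Theorem \ref{pat ram} then yields a lift $\rho:\galQ\to\sl{2}{\mc O}$ of type $L_v$ on $S$ and of Ramakrishna type on a finite auxiliary set $Q$. Finally, to see the image is Zariski-dense, let $G_\rho$ be the Zariski closure; by the observation from \cite[Lemma 7.7]{pat16} it is reductive, and the Steinberg condition at $p$ forces a regular unipotent element into $G_\rho$ (see \cite[Theorem 8.4]{pat16}), so $G_\rho$ is a connected reductive subgroup of $\op{SL}_2$ containing a regular unipotent; the only such subgroup is $\op{SL}_2$ itself. Hence $\rho$ has Zariski-dense image. The one place requiring a little care — the ``main obstacle,'' though a mild one — is confirming that the archimedean surplus genuinely beats $h^0(\Gamma_\reals,\brho(\mf g))$ and that the Steinberg condition is available, i.e.\ that $\brho|_{\Gamma_p}$ really is of Steinberg type for the chosen $\sigma$; both follow from the setup already recorded before the statement (the element $\Sigma$ of order $4$ mapping to $\op{diag}(\sqrt{-1},-\sqrt{-1})$ with $\bkp(\sigma)=-1$), so no new input is needed.
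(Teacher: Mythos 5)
Your overall architecture (verify the hypotheses of Theorem \ref{pat ram}, then use the Steinberg condition at $p$ to force Zariski-density) is the same as the paper's, but your verification of Item 4 contains a step that fails. You propose $\sigma\in\galQ$ with $\brho(\sigma)=2\rho^{\vee}(a)$ conjugate to $\operatorname{diag}(a,a^{-1})$ for $a$ a \emph{generator} of $\Zmodu{l}$. This element does not exist in the image of $\brho$: here $\brho(\galQ)=\tilde A_5$ is a fixed finite group of order $120$, so for $l>121$ it contains no element of order $l-1$. You have imported the choice of $\sigma$ from the principal $\op{GL}_2$ construction (Proposition \ref{pr gl2 mono}), where the residual image contains $\sl{2}{k}$ by Ribet's theorem; that is not the situation here. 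The correct choice — and what the paper does — is the element already sitting in your setup: take $\sigma$ with $\brho(\sigma)=\Sigma$ of order $4$, conjugate to $\operatorname{diag}(\sqrt{-1},-\sqrt{-1})$, and $\bkp(\sigma)=-1$ (available by linear disjointness since $\tilde A_5$ has trivial abelianization). Then $\alpha(\brho(\sigma))=-1=\bkp(\sigma)$ gives (a), and (b), (c) follow from irreducibility of $\brho(\mf g)$ exactly as you say. Note this is the \emph{same} $\sigma$ used to produce the Steinberg place $p$, so no new input is needed — but as written your Item 4 does not go through.

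A second, smaller slip: you claim $h^0(\Gamma_{\reals},\brho(\mf g))\le 2$ because $ad\,\brho(c)$ is nontrivial. In $\sl{2}{k}$ with $l$ odd, every involution is $\pm I$, so $\brho(c)$ is central, $ad\,\brho(c)$ is trivial, and $h^0(\Gamma_{\reals},\brho(\mf g))=3$. The Wiles bookkeeping still works — the surplus of $\dim_k\mf g=3$ at the place $l$ exactly cancels the deficit of $3$ at the archimedean place, so the right-hand side of Wiles' formula is $0$, which is all Item 1 requires (the paper records this as "the left hand side equals the right hand side") — but your claimed strict positivity is not correct. Your endgame for density (reductivity of $G_\rho$ plus a regular unipotent from the Steinberg condition forces $G_\rho=\op{SL}_2$) is a valid alternative to the paper's argument, which instead notes that $(\op{Lie}(G_\rho)\cap\mf g_{\mc O})\otimes_{\mc O}k$ is a submodule of the irreducible module $\brho(\mf g)$, hence zero or everything, and rules out the finite case using the Steinberg condition.
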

\begin{proof}
We first show that Theorem \ref{pat ram} applies to $\brho$. Item 1 is satisfied: the left hand side of the inequality equals the right hand side; Item 2 is satisfied since $|\gal{K}{\rats}|$ has order prime to $l$ by the definition of $\brho$; Item 3 is satisfied since $\brho(\mf g)$ and $\brho(\mf g)(1)$ are non-isomorphic; For Item 4, we take $\sigma$ to be as above, the connected component of whose centralizer is denoted $T$, and take $\alpha$ to be a root of $\Phi(G,T)$. So (a) is satisfied. As $\brho(\mf g)$ is irreducible, (b) and (c) are satisfied.

Therefore, we can deform $\brho$ to a continuous representation
$\rho: \galQ \to \sl{2}{\mc O}$ satisfying the prescribed local conditions on $S$ and the Ramakrishna condition on a set of auxiliary primes disjoint from $S$. We write $G_{\rho}$ for the Zariski closure of the image of $\rho$ in $\sl{2}{\bQl}$. As $\brho(\mf g)$ is irreducible, $\op{Lie}(G_{\rho})$ is either trivial or $\mf g_{\bQl}$. If the former were true, then $G_{\rho}$ would be finite. But $\rho|_{\Gamma_p}$ is Steinberg, so in particular the image of $\rho$ is infinite, a contradiction. Thus $\op{Lie}(G_{\rho})=\mf g_{\bQl}$.
\end{proof}

Now we finish the proof of Theorem \ref{key cases} with the congruence conditions removed.
For $G$ a simple but non-simply connected group, suppose there is a homomorphism $\rho_l:\galQ \to G^{sc}(\bQl)$ with Zariski-dense image. We compose $\rho_l$ with the covering projection $G^{sc}(\bQl) \rsurj G(\bQl)$, the resulting map has Zariski-dense image in $G(\bQl)$.
Proposition \ref{weyl mono}, \ref{pr gl2 mono} and \ref{sl2 A5} prove the cases of a simple, simply-connected classical group, $E_6$ and $E_7$. On the other hand, the remaining cases $G_2, F_4$, and $E_8$ have already been established in \cite{pat16} in a way similar to the proof of Proposition \ref{pr gl2 mono}. So Theorem \ref{key cases} is proved. 

In order to remove the congruence conditions for $G$ of type $B_n, C_n$ and $E_7$, we impose the same local deformation conditions as specified in the paragraph preceding Lemma \ref{Wiles g}, but then use Theorem \ref{fkp,main} instead of Theorem \ref{lifting Weyl}. By Lemma \ref{Wiles g} and Lemma \ref{mu_p2}, the assumptions in Theorem \ref{fkp,main} are all met. Therefore, we obtain a characteristic zero lift of $\brho$ satisfying the prescribed local conditions for \emph{all} large enough primes $l$. Then the proof of Proposition \ref{weyl mono} shows that the lift has full monodromy group for all large enough primes.

\section{Connected reductive groups as monodromy groups}

Following \cite{mil07}, a connected algebraic group $G$ is said to be an \emph{almost-direct
product} of its algebraic subgroups $G_1, . . . , G_n$ if the map
$$G_1 \times \cdots \times G_n \to G: (g_1,\cdots, g_n) \mapsto g_1\cdots g_n$$
is a surjective homomorphism with finite kernel; in particular, this means that the $G_i$'s commute with each other and each $G_i$ is normal in $G$.

The following proposition is \cite[Corollary 4.4]{mil07}.
\begin{prop}\label{5.1}
An algebraic group is semi-simple if and only if it is an almost direct product of simple algebraic groups. (Here a simple algebraic group is called almost simple in \cite{mil07})
\end{prop}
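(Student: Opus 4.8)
The plan is to follow the standard structure theory of semisimple algebraic groups; the statement is literally \cite[Corollary 4.4]{mil07}, so in the paper I would simply cite it, but here is the argument I would reconstruct.

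For the ``if'' direction, suppose $G$ is an almost-direct product of simple algebraic subgroups $G_1,\dots,G_n$. Each $G_i$ is connected and nonabelian with no positive-dimensional proper normal subgroup, hence has trivial radical, so each $G_i$ is itself semisimple. I would then show $R(G)=1$. Since $R(G)$ and $G_i$ are both normal, $[R(G),G_i]$ is a connected subgroup contained in $R(G)\cap G_i$; the identity component of $R(G)\cap G_i$ is a connected solvable normal subgroup of the simple, non-solvable group $G_i$, hence trivial, so $R(G)\cap G_i$ is finite and the connected group $[R(G),G_i]$ is trivial. Thus $R(G)$ centralizes every $G_i$, hence centralizes $G=G_1\cdots G_n$, i.e.\ $R(G)\subseteq Z(G)$. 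Using that distinct factors commute, one checks the preimage of $Z(G)$ under $G_1\times\cdots\times G_n\to G$ lies in $\prod Z(G_i)$, which is finite because each $Z(G_i)$ is finite (its identity component would be a positive-dimensional proper normal subgroup of the simple group $G_i$). So $Z(G)$ is finite, the connected group $R(G)$ is trivial, and $G$ is semisimple.

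For the ``only if'' direction, suppose $G$ is semisimple. I would take $G_1,\dots,G_n$ to be the minimal nontrivial connected normal algebraic subgroups of $G$, of which there are finitely many (e.g.\ by induction on $\dim G$). Each $G_i$ is simple: it is connected; it is not solvable, else it would lie in $R(G)=1$; and a proper connected normal subgroup of $G_i$ would again be connected normal in $G$ and strictly smaller, contradicting minimality, so the only normal algebraic subgroups of $G_i$ are finite. Distinct factors commute, since $[G_i,G_j]$ is connected and contained in $G_i\cap G_j$, which by minimality is trivial (it is a connected normal subgroup of $G_i$, so either trivial or all of $G_i$, and the latter would force $G_i=G_j$). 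Finally I would prove $G=G_1\cdots G_n$: the product $H$ is a normal subgroup, $G/H$ is semisimple, and if $H\neq G$ then $G/H$ has a minimal nontrivial connected normal subgroup whose preimage in $G$ produces a connected normal subgroup not contained in any $G_i$, contradicting the choice of the $G_i$. Then $G_1\times\cdots\times G_n\to G$ is surjective with central, hence finite, kernel.

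The main obstacle is this last step of the ``only if'' direction: showing the minimal connected normal subgroups actually generate $G$ and that the kernel of $\prod G_i\to G$ is finite, together with the characteristic-$p$ subtleties in passing between normal subgroups of $G_i$ and of $G$ (one cannot just argue on Lie algebras in general). The cleanest route is to fix a maximal torus and invoke the root datum: the $G_i$ correspond to the connected components of the Dynkin diagram, which gives both the generation statement and the finiteness of the kernel at once. Since \cite{mil07} carries all of this out, I would present the result in the paper as the cited corollary rather than reproving it.
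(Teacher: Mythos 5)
The paper offers no proof of this proposition at all---it is stated as a direct citation of \cite[Corollary 4.4]{mil07}---and your proposal correctly recognizes this and defaults to the same citation, so it matches the paper's approach. Your reconstructed argument is the standard structure-theory proof and is essentially sound; you yourself flag its one delicate point (that a connected normal subgroup of $G_i$ is not automatically normal in $G$, which is repaired by first establishing that the minimal factors commute and generate, so that $G=G_i\cdot C_G(G_i)$), and correctly defer to Milne for the careful version.
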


\begin{prop}\label{5.2}(Goursat's Lemma)
Let $G_1, G_2$ be groups, let $H$ be a subgroup of $G_1 \times G_2$ such that the two projections $p_1: H \to G_1$, $p_2: H \to G_2$ are surjective. Let $N_1$ (resp. $N_2$) be the kernel of $p_2$ (resp. $p_1$). Then the image of $H$ in $G_1/N_1 \times G_2/N_2$ is the graph of an isomorphism $G_1/N_1 \cong G_2/N_2$. 
\end{prop}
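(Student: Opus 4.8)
The plan is to run the classical argument for Goursat's lemma. First I would make explicit the identifications hidden in the statement: $N_1 := \ker(p_2|_H) = \{(g_1,e) : (g_1,e) \in H\}$, and since $p_1$ restricted to $N_1$ is injective with image some subgroup of $G_1$, I identify $N_1$ with that subgroup of $G_1$; symmetrically $N_2 \subseteq G_2$. The first genuine step is to verify $N_1 \triangleleft G_1$ (and, symmetrically, $N_2 \triangleleft G_2$): given $g_1 \in G_1$, surjectivity of $p_1$ furnishes $(g_1, g_2) \in H$, and for $n \in N_1$ we have $(g_1, g_2)(n,e)(g_1,g_2)^{-1} = (g_1 n g_1^{-1}, e) \in H$, so $g_1 n g_1^{-1} \in N_1$.

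Next I would consider the composite homomorphism $H \hookrightarrow G_1 \times G_2 \rsurj (G_1/N_1) \times (G_2/N_2)$ and let $\bar H$ denote its image, a subgroup. The heart of the proof is the following symmetric pair of claims: $(\bar e, \bar g_2) \in \bar H \Rightarrow \bar g_2 = \bar e$, and $(\bar g_1, \bar e) \in \bar H \Rightarrow \bar g_1 = \bar e$. For the first: if $(g_1, g_2) \in H$ maps to $(\bar e, \bar g_2)$ then $g_1 \in N_1$, so $(g_1, e) \in H$, so $(e, g_2) = (g_1,e)^{-1}(g_1,g_2) \in H$, whence $g_2 \in N_2$ and $\bar g_2 = \bar e$; the second claim is proved by the mirror-image computation. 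This is the step where surjectivity of the $p_i$ and the precise definition of the $N_i$ are both used, and it is the only point requiring any care — though it is entirely routine.

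From the first claim, together with surjectivity of $p_1$ (which gives, for every $\bar g_1$, some $\bar g_2$ with $(\bar g_1, \bar g_2) \in \bar H$), it follows that $\bar H$ is the graph of a well-defined function $f : G_1/N_1 \to G_2/N_2$. Then $f$ is a homomorphism because $\bar H$ is a subgroup: $(\bar g_1, f(\bar g_1))(\bar h_1, f(\bar h_1)) = (\bar g_1 \bar h_1, f(\bar g_1)f(\bar h_1)) \in \bar H$ forces $f(\bar g_1 \bar h_1) = f(\bar g_1) f(\bar h_1)$. It is surjective by surjectivity of $p_2$, and injective by the second claim above. Hence $f$ is an isomorphism $G_1/N_1 \cong G_2/N_2$ and $\bar H = \{(\bar g_1, f(\bar g_1)) : \bar g_1 \in G_1/N_1\}$ is its graph, as desired. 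No step is a real obstacle; the only thing to keep straight is the identification of each $N_i$ with a subgroup of $G_i$ throughout.
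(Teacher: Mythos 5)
Your proof is correct and complete: it is the standard textbook argument for Goursat's Lemma, and each step (normality of the $N_i$ in the respective factors, the key claim that $(\bar e,\bar g_2)\in\bar H$ forces $\bar g_2=\bar e$, and the graph/isomorphism conclusion) is carried out accurately. The paper itself states this proposition without proof, treating it as a known classical fact, so there is no argument in the text to compare against; your write-up would serve as a valid proof of the statement as used in Proposition \ref{5.3}.
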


\begin{prop}\label{5.3}
Let $G$ be a connected semi-simple group, then there are continuous homomorphisms 
$$\galQ \to G(\bQl)$$ with Zariski-dense image for large enough $l$.
\end{prop}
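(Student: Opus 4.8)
The plan is to reduce the semi-simple case to the simple, simply-connected cases already treated, and then to glue the pieces together so that the representation into the product is Zariski-dense; the only real point is a Goursat-type ``independence'' of the components.

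By Proposition \ref{5.1}, $G$ is an almost-direct product of simple subgroups $G_1,\dots,G_n$, so there is a surjective homomorphism $G_1^{sc}\times\cdots\times G_n^{sc}\to G$ with finite central kernel (it factors through $\prod_i G_i\to G$ and the central isogenies $G_i^{sc}\to G_i$). Each $G_i^{sc}$, being a simply-connected simple group over the algebraically closed field $\bQl$, is split of one of the types handled in Theorem \ref{key cases} (classical types, $E_6$, $E_7$) or in \cite{pat16} ($G_2$, $F_4$, $E_8$). Since a surjective morphism of algebraic groups is continuous and sends a Zariski-dense set to a Zariski-dense set, it suffices to produce, for $l$ large, a continuous homomorphism $\rho=(\rho_1,\dots,\rho_n)\colon\galQ\to\prod_i G_i^{sc}(\bQl)$ with Zariski-dense image; its composite with $\prod_i G_i^{sc}(\bQl)\to G(\bQl)$ then has Zariski-dense image.

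For each $i$, I would invoke Theorem \ref{key cases} (and Theorem \ref{fkp,main} to discard the congruence conditions on $l$) resp.\ \cite{pat16} to obtain, for all sufficiently large $l$, a continuous $\rho_i\colon\galQ\to G_i^{sc}(\bQl)$ with Zariski-dense image. Set $\rho=(\rho_i)_i$, which is continuous, and let $H\leq\prod_i G_i^{sc}$ be the Zariski closure of its image; each projection $H\to G_i^{sc}$ is then surjective. The goal is to choose the $\rho_i$ so that $H=\prod_i G_i^{sc}$. By induction on $n$ and Goursat's Lemma (Proposition \ref{5.2}), using that a simple algebraic group has no proper positive-dimensional normal subgroup, this follows once the image of $H$ in $G_i^{sc}\times G_j^{sc}$ is all of $G_i^{sc}\times G_j^{sc}$ for every pair $i\neq j$. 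For a fixed pair, if this fails, Proposition \ref{5.2} forces $G_i^{sc}$ and $G_j^{sc}$ to be isomorphic (an isogeny between simply-connected groups is an isomorphism) and the adjoint representations $\operatorname{Ad}\circ\rho_i\colon\galQ\to G_i^{ad}(\bQl)$ and $\operatorname{Ad}\circ\rho_j\colon\galQ\to G_j^{ad}(\bQl)$ to be related by an isomorphism $\bar\lambda\colon G_i^{ad}\xrightarrow{\sim}G_j^{ad}$, i.e.\ $\operatorname{Ad}\circ\rho_j=\bar\lambda\circ(\operatorname{Ad}\circ\rho_i)$. So it is enough to arrange that, whenever $G_i^{sc}\cong G_j^{sc}$, no such $\bar\lambda$ exists.

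This independence is the main obstacle (a product of Zariski-dense maps need not be Zariski-dense), and I would resolve it using the flexibility already built into our constructions. Constructing $\rho_1,\rho_2,\dots$ in order, and when constructing $\rho_i$ using the freedom in the choice of the residual representation $\brho_i$ (Theorems \ref{2.4} and \ref{2.5} let one prescribe a prime of ramification with non-central inertia image) together with the freedom in the auxiliary primes in Ramakrishna's method, one can ensure that $\operatorname{Ad}\circ\rho_i$ is ramified at a prime $p_i$ at which $\operatorname{Ad}\circ\rho_j$ is unramified for all $j<i$. An isomorphism $\bar\lambda$ would preserve ramification sets, so this rules out the bad case for every pair, gives $H=\prod_i G_i^{sc}$, hence the Zariski-density of $\rho$, and finishes the proof. (Alternatively, since $\operatorname{Out}(G_i^{ad})$ is finite, one may instead appeal to Remarks \ref{non-conj 1} and \ref{non-conj 2} to choose the $\rho_i$ within each isomorphism class from distinct $\operatorname{Aut}$-equivalence classes of their adjoint representations.)
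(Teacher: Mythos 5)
Your proof is correct and follows essentially the same route as the paper's: reduce via Proposition \ref{5.1} to a product of simply-connected simple groups, take a Zariski-dense $\rho_i$ into each factor from Theorem \ref{key cases} (resp.\ \cite{pat16}), and use Goursat's Lemma plus induction, the whole content being to separate isomorphic factors by making their adjoint representations non-isomorphic under any isomorphism of the adjoint groups. The only divergence is in which separating device is primary. The paper first splits off isotypic blocks (different types separate for free by Goursat), then for $K^n$ with $K\neq\op{SL}_2$ uses exactly your parenthetical alternative: the zero-dimensional deformation condition at a common prime $p$ pins down regular semisimple Frobenius images that are non-conjugate in $K^{ad}$ under automorphisms (Remarks \ref{non-conj 1} and \ref{non-conj 2}), and it reserves the distinct-ramification-set argument for $K=\op{SL}_2$. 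Your primary mechanism (distinct ramification sets throughout) is also viable, but as stated it is slightly under-justified: Theorems \ref{2.4} and \ref{2.5} give \emph{infinitely many} realizations rather than a prescribed ramified prime (one needs a Hermite--Minkowski-type remark to extract a prime outside any given finite set), and they are not the source of the residual representations in the principal $\op{GL}_2$ cases ($\op{SL}_3$, $\op{Spin}_7$, $E_6^{sc}$) or for $\op{SL}_2$, where the freedom must instead come from varying the modular form or the $\tilde A_5$-extension (the latter is exactly what the paper does via \cite{ser:tgt}, 9.3). Since your fallback via Remarks \ref{non-conj 1} and \ref{non-conj 2} covers the Weyl-group and principal $\op{GL}_2$ constructions, and your ramification argument covers $\op{SL}_2$, the two mechanisms together close every case, and the proof is complete.
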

\begin{proof}
This will follow from Goursat's Lemma, Theorem \ref{key cases} and Remark \ref{remove cong}. By Proposition \ref{5.1}, it suffices to prove the case when $G$ is the direct product of simply-connected simple algebraic groups. We may decompose $G$ into `isotypic factors': $$G=G_1\times \cdots \times G_n$$
where $G_i$ is the direct product of copies of some simple algebraic group, and $G_i$, $G_j$ have different types for $i \neq j$. Suppose we are given $\rho_i: \galQ \to G_i(\bQl)$ with Zariski-dense image for each $i$ and let $\rho:=(\rho_1,\cdots,\rho_n): \galQ \to G(\bQl)$, whose Zariski-closure is denoted by $H$. Then $H$ is an algebraic subgroup of $G(\bQl)$ for which $\op{pr}_i(H)=G_i(\bQl)$. Since $G_i(\bQl)$ and $G_j(\bQl)$ share no common nontrivial quotients for $i \neq j$, Proposition \ref{5.2} implies that $H=G(\bQl)$.

It remains to prove the case when $G$ is a direct product of copies of some simply-connected simple algebraic group. Write $G=K^n$ with $K$ simple. We first assume $K \neq \op{SL}_2$. By 4.2.1 and 4.2.2 (especially Remarks \ref{non-conj 1} and \ref{non-conj 2}), there exists a prime $p$ and a homomorphism $\rho_i: \galQ \to K(\bQl)$ for $1 \leq i \leq n$ such that $\rho_i$ has Zariski-dense image and is unramified at $p$ with $\rho_i(\frob{p})$ a regular semisimple element in $K(\bQl)$, and for $i \neq j$, the images of $\rho_i(\frob{p})$ and $\rho_j(\frob{p})$ in $K^{ad}(\bQl)$ are not conjugate by an automorphism of $K^{ad}$. Now we use Proposition \ref{5.2} and induction on $n$ to show that $\rho:=\prod_i \rho_i: \galQ \to G(\bQl)$ has Zariski-dense image. This is clear when $n=1$. Suppose this is true for $n-1$, so that $\prod_{i<n}\rho_i: \galQ \to K^{n-1}(\bQl)$ has Zariski-dense image. Let $H$ be the Zariski-closure of the image of $\rho$ and apply Proposition \ref{5.2} to $p_1: H \to G_1=K^{n-1}$, $p_2: H \to G_2=K$, which are surjective by assumption, we see that the image of $H$ in $G_1/N_1 \times G_2/N_2$ is the graph of an isomorphism $G_1/N_1 \cong G_2/N_2$. But since $G_2=K$ is a simple, simply-connected algebraic group, $N_2=K$ or $N_2 \subset Z(K)$. Also note that 
$N_1$ is either $K^{n-1}$ or isogenous to the product of $n-2$ factors in $K^{n-1}$. If $N_2=K$ and $N_1=K^{n-1}$, $H$ must be $G$; otherwise, $N_2 \subset Z(K)$ and $N_1$ is isogenous to the product of $n-2$ factors in $K^{n-1}$, so the isomorphism $G_1/N_1 \times G_2/N_2$ induces an isomorphism between two factors in $G^{ad}=(K^{ad})^n$. But this is impossible, as for any $i \neq j$, the images of $\rho_i(\frob{p})$ and $\rho_j(\frob{p})$ in $K^{ad}(\bQl)$ are not conjugate by an automorphism of $K^{ad}$. Therefore, $H=G$.

Now let $K=\op{SL}_2$. By the argument in the previous paragraph, it suffices to show that for any $n$, we can construct homomorphisms
$\rho_i: \galQ \to \sl{2}{\bQl}$, $1 \leq i \leq n$ such that $\rho_i$ has Zariski-dense image and for $i \neq j$, $\rho_i^{ad}$ and $\rho_j^{ad}$ are not conjugated by an automorphism of $\op{PGL}_2$. 
By the construction in \cite{ser:tgt}, 9.3, there are infinitely many homomorphisms $r: \galQ \rsurj \tilde{A}_5$ such that the composites $\galQ \xrightarrow{r} \tilde{A}_5 \to A_5$ are ramified at different sets of finite primes. By 4.2.3, we then obtain infinitely many homomorphisms $\brho: \galQ \to \sl{2}{k}$ such that the corresponding homomorphisms $\brho^{ad}: \galQ \to \op{PGL}_2(k)$ are ramified at different sets of finite primes. By Proposition \ref{sl2 A5}, we can deform $\brho$ to characteristic zero with Zariski-dense image. We take $n$ of them, denoted by $\rho_1, \cdots, \rho_n: \galQ \to \sl{2}{\bQl}$. Suppose for some $i,j$ with $1 \leq i \neq j \leq n$, $\rho_i^{ad}$ and $\rho_j^{ad}$ were conjugate by an automorphism of $\op{PGL}_2$, then their mod $l$ reductions $\bar{\rho}_i^{ad}$ and $\bar{\rho}_j^{ad}$ would be conjugate as well. But since there exists a prime $p$ for which $\bar{\rho}_i^{ad}$ is ramified at $p$ and $\bar{\rho}_j^{ad}$ is unramified at $p$, $\bar{\rho}_i^{ad}$ and $\bar{\rho}_j^{ad}$ cannot be conjugate, a contradiction. 
\end{proof}

\begin{lem}\label{5.4}
Let $n$ be a positive integer and $T=(\mb G_m)^n$. Then there is a continuous map $\iota: \Zl \to T(\Ql)$ with Zariski-dense image.
\end{lem}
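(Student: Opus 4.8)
The statement $\text{Lemma } 5.4$ asks for a continuous homomorphism (or just a continuous map with Zariski-dense image) $\iota\colon \Zl \to T(\Ql) = (\Ql^\times)^n$ whose image is Zariski-dense. My plan is to reduce immediately to the case $n=1$: if $\iota_1\colon \Zl \to \Ql^\times$ has Zariski-dense image, then it is enough to produce, inside a single copy of $\Zl$, enough ``independent'' directions. Concretely, pick a $\Zl$-basis-like family, i.e.\ use that $\Zl$ is a profinite group with plenty of continuous characters; but more simply, I would exploit that $\Zl^n$ contains $\Zl$ as a dense-image-friendly source only if we are careful. The cleanest route: construct $\iota$ on $\Zl^n$ first (coordinatewise), then precompose with a continuous surjection $\Zl \twoheadrightarrow \Zl^n$? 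That does not exist, since $\Zl$ is topologically cyclic and $\Zl^n$ is not for $n \geq 2$. So instead I will build a single continuous map $\Zl \to (\Ql^\times)^n$ directly whose coordinates are ``algebraically independent enough'' to force Zariski density.

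Here is the approach I would carry out. For $n=1$: the map $\iota_1\colon \Zl \to \Zl^\times \subset \Ql^\times$, $x \mapsto (1+l)^x$ (well-defined by continuity since $1+l \in 1 + l\Zl$ and this subgroup is procyclic, with $x \mapsto (1+l)^x$ the standard continuous exponentiation), has infinite image, hence Zariski-dense image in the one-dimensional torus $\mb G_m$, because any proper Zariski-closed subset of $\mb G_m$ is finite. For general $n$: choose $n$ elements $u_1, \dots, u_n \in 1 + l\Zl$ that are multiplicatively independent — for instance $u_i = 1 + l^{?}$ chosen so that no nontrivial monomial $\prod u_i^{a_i}$ ($a_i \in \ints$, not all zero) equals $1$; concretely one can take $u_i = 1 + p_i l$ for distinct primes $p_i \neq l$, or better, pick $u_i$ lying in a $\ints$-linearly independent (in the logarithm) position using that $\log\colon 1 + l\Zl \to l\Zl$ is an isomorphism of topological groups and $l\Zl$ is a free $\Zl$-module of rank $1$ — so here I must be more careful: I want the $u_i$ to be $\Zl$-linearly independent, which is impossible in a rank-one module. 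Instead I take the $u_i$ to be merely $\ints$-linearly independent in $l\Zl$ after $\log$, i.e.\ $\log u_1, \dots, \log u_n \in l\Zl \subset \Ql$ should be linearly independent over $\rats$ (equivalently $\ints$); such elements certainly exist since $\Ql$ is an infinite-dimensional $\rats$-vector space. Then define $\iota(x) = ((1+l)^{x \log u_1 / \log(1+l)}, \dots)$ — rather, more directly, $\iota(x) = (e_1(x), \dots, e_n(x))$ where $e_i(x) := \exp(x \cdot \log u_i) \in 1 + l\Zl$, using the continuous $p$-adic exponential/logarithm on $1 + l\Zl$. This $\iota$ is a continuous homomorphism $\Zl \to T(\Ql)$.

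It remains to check Zariski density of the image. The Zariski closure $H$ of $\iota(\Zl)$ is a closed subgroup of $T = \mb G_m^n$, hence (by the classification of closed subgroups of a torus) is defined by finitely many monomial equations $\prod_i t_i^{a_i} = \zeta$ for characters with $(a_i) \in \ints^n$ and roots of unity $\zeta$; equivalently the character group $X^*(T/H^\circ)$ is spanned by certain $(a_i)$. If $H \neq T$, there is a nontrivial character $\chi = (a_1,\dots,a_n) \neq 0$ with $\chi \circ \iota$ taking values in the roots of unity, i.e.\ $\prod_i e_i(x)^{a_i}$ is a root of unity for all $x \in \Zl$. But $\prod_i e_i(x)^{a_i} = \exp\!\big(x \sum_i a_i \log u_i\big) \in 1 + l\Zl$, which contains no nontrivial roots of unity (for $l$ odd, or $l = 2$ with the subgroup $1 + 4\ints_2$; in any case $1 + l\Zl$ is torsion-free for $l > 2$, and we may shrink to a torsion-free open subgroup in general), so $\prod_i e_i(x)^{a_i} = 1$ identically, forcing $x \sum_i a_i \log u_i = 0$ for all $x$, hence $\sum_i a_i \log u_i = 0$, contradicting the $\ints$-linear independence of the $\log u_i$. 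Therefore $H = T$ and $\iota$ has Zariski-dense image.

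\textbf{Main obstacle.} The only real subtlety is the passage from ``$\Zl$ is topologically cyclic'' to getting an $n$-dimensional dense image: one cannot map $\Zl$ onto $\Zl^n$, so the trick is to embed $\Zl$ into $T(\Ql)$ ``diagonally but with irrational slopes'', i.e.\ via $\ints$-linearly independent logarithms in $\Ql$, and then the density follows from the description of algebraic subgroups of tori plus torsion-freeness of $1 + l\Zl$. The routine points (continuity of $p$-adic exp/log on $1 + l\Zl$, and the structure of closed subgroups of $\mb G_m^n$) I would invoke without detailed proof.
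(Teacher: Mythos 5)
Your proof is correct, and at its core it is the same construction as the paper's: both exponentiate a $\Zl$-line in $\op{Lie}(T)_{\Ql}=\Ql^n$ whose direction vector $(\log u_1,\dots,\log u_n)$ has $\rats$-linearly independent coordinates --- which is precisely the condition that the line avoid the tangent spaces of all proper subtori, i.e.\ all rational hyperplanes $\sum a_i X_i=0$ with $(a_i)\in\ints^n$. Where you differ is in the bookkeeping: the paper obtains such a line by observing that $T$ has only countably many connected proper Zariski-closed subgroups while $\Ql$ is uncountable, and then checks density on the Lie-algebra side (the identity component of the Zariski closure has Lie algebra containing $L$, so it cannot be a proper subtorus); you obtain the $u_i$ from the infinite $\rats$-dimension of $\Ql$ (the same cardinality fact in different clothing) and check density on the dual side, pairing the image against a nontrivial character trivial on the identity component of the closure and using that $1+l\Zl$ is torsion-free. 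Your version is more explicit and self-contained --- in particular it spells out the step the paper leaves implicit, namely why a proper algebraic subgroup forces a nontrivial integral relation among the $\log u_i$ --- and your remark about shrinking to $1+4\ints_2$ when $l=2$ is a care the paper can skip since $l$ is taken large throughout.
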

\begin{proof}
The group $T$ has only countably many connected proper Zariski-closed subgroups, so one can pick a line $L$ in $\op{Lie}(T)_{\Ql}$ avoiding the tangent spaces to all such proper subgroups (since $\Ql$ is uncountable). A small compact neighborhood of $0$ in $L$ exponentiates to a compact subgroup $C$ of $T(\Ql)$ whose Zariski closure has identity component that cannot be a proper algebraic subgroup of $T$, so $C$ is Zariski-dense in $T$.
\end{proof}

Now we can prove Theorem \ref{main}: Let $G$ be a connected reductive group, then
$G$ is a quotient of the product of $G^{der}$ (a semisimple group) and $Z(G)^0$ (a torus).
Proposition \ref{5.3} and Lemma \ref{5.4} then allow us to build a homomorphism from $\galQ$ to $G(\bQl)$ with Zariski-dense image for large enough $l$.

\end{document}